\documentclass[a4paper,12pt]{amsart}
\usepackage[utf8]{inputenc}
\usepackage[T1]{fontenc}
\usepackage[UKenglish]{babel}
\usepackage[margin=18mm]{geometry}
\usepackage{amsmath, amssymb}
\usepackage{color}%cyan,red;magenta,green;yellow,blue

\usepackage{graphicx}

\usepackage{amsmath,amssymb,amsfonts,amsthm}
\usepackage{mathrsfs,eucal,dsfont}
\usepackage{verbatim,enumitem}

\usepackage{hyperref,url}

\newcommand{\R}{\mathds R}

\newcommand{\I}{\mathds 1}

\def\d{{\rm d}}
\def\<{\langle}
\def\>{\rangle}

\def\R{\mathbb R}    
 \def\kk{\kappa} 
  \def\vv{\varepsilon} 
\def\<{\langle} \def\>{\rangle}  
  \def\nn{\nabla}  
\def\d{\text{\rm{d}}}   
  \def\si{\sigma} 
 \def\beq{\begin{align}}  
 
\def\e{\text{\rm{e}}}    
  
 \def\P{\mathbb P}

\def\i{{\rm in}}\def\E{\mathbb E} 
  
  \def\i{{\rm i}} 
\def\to{\rightarrow}
\def\8{\infty}\def\3{\triangle}
\def\1{\lesssim}

\renewcommand{\bar}{\overline}
\renewcommand{\hat}{\widehat}
\renewcommand{\tilde}{\widetilde}

\newtheorem{theorem}{Theorem}[section]
\newtheorem{lemma}[theorem]{Lemma}
\newtheorem{proposition}[theorem]{Proposition}

\theoremstyle{definition}

\newtheorem{remark}[theorem]{Remark}

\numberwithin{equation}{section}
\begin{document}
\allowdisplaybreaks

\title[Quantitative Asymptotics for Time-Inhomogeneous L\'evy-Driven SDEs] {Quantitative Asymptotics for Time-Inhomogeneous L\'evy-Driven SDEs with Asymptotically Vanishing Drifts}

\author{
Jianhai Bao\qquad
Jian Wang}
\date{}
\thanks{\emph{J.\ Bao:} Center for Applied Mathematics, Tianjin University, 300072  Tianjin, P.R. China. \url{jianhaibao@tju.edu.cn}}

\thanks{\emph{J.\ Wang:}
School  of Mathematics and Statistics \& Key Laboratory of Analytical Mathematics and Applications (Ministry of Education) \& Fujian Provincial Key Laboratory
of Statistics and Artificial Intelligence, Fujian Normal University, 350007 Fuzhou, P.R. China. \url{jianwang@fjnu.edu.cn}}

\maketitle

\begin{abstract} In this work,
we are concerned with a class of multi-dimensional time-inhomogeneous stochastic differential equations (SDEs) on
$\R^d$ driven by pure-jump L\'evy processes, where the drift coefficient
$b(t,x)$  satisfies $\lim_{t\to \infty}b(t,x) =0$ for every $x\in \R^d$.
On  account of  three regimes associated with  the index
$\alpha$ of large jumps
corresponding to the driven L\'evy noise, we investigate the quantitative asymptotics  of the corresponding rescaled processes.
More precisely, for
$\alpha\in (0,2)$, we prove that the rescaled processes, governed by time-inhomogeneous SDEs subject to  additive processes, converge with respect to suitably chosen Wasserstein distances to time-homogeneous SDEs driven by symmetric
$\alpha$-stable processes.
Notably, the driven noise in the limiting SDEs depends only on large jumps of the underlying additive processes. In case of $\alpha\ge2$,
a phase transition occurs  and a diffusive phenomenon arises.
In particular, in the setting  $\alpha>2$, we establish the ergodicity of the rescaled process by means of asymptotic pseudotrajectories.
The resulting time-homogeneous limiting SDEs are driven by Brownian motions, even though the transformed time-inhomogeneous SDEs are driven by (discontinuous) additive processes, and the effective noise intensity is determined by the entire L\'evy measure of the original pure-jump process.
As  far as the critical case $\alpha=2 $  is concerned, we demonstrate  that the noise intensity of the limiting SDEs driven by Brownian motions relies merely   on the large-jump part of  the L\'{e}vy measure.
Note that the  L\'{e}vy measure under investigation  is $L^2$-integrable and non-$L^2$-integrable (with respect to large jumps) as soon as $\alpha>2$ and $\alpha=2,$ respectively. This implies that
the scaling transformations we adopted are totally different, and that the approximation between
the infinitesimal generator related to the rescaled time-inhomogeneous process and the counterpart corresponding to the time-homogeneous limiting version  is
 also
 significantly  discrepant.

\medskip

\noindent\textbf{Keywords:} time-inhomogeneous SDE; asymptotic behavior; rescaled process; additive process;  Wasserstein distance; asymptotic pseudotrajectory

\smallskip

\noindent \textbf{MSC 2020:} 60H10, 60G52, 	60J76
\end{abstract}

\section{Background and main results}
\subsection{Background}
It is generally known that the classical (time-homogeneous) diffusion processes can be applied to model numerous  real-world systems, where the underlying environment is constant or unchanged on a large time scale.
By contrast,
the time-inhomogeneous diffusion processes are ideal candidates that are
 better  suited to describe  realistic systems with  time-varying dynamics (e.g.\ financial quantities in financial markets, biological systems, control  systems, climate modeling, to name just a few). As far as time-homogeneous SDEs and time-inhomogeneous SDEs are concerned,  the
 key difference lies in the fact that the latter
 ones
 allow the coefficients to vary with time, which
 implies that the dynamics described by time-inhomogeneous SDEs are influenced  by external factors that change over time. Apparently,   the time evolution (so the Markov transition kernel and the infinitesimal generator) of a time-inhomogeneous diffusion process
 depends on the initial time,  so the path
exhibits a non-stationary feature.  Such  time-dependence might lead to non-existence of steady states and make the long-term analysis  mathematically intricate. More  prominently, the time-varying nature of a time-inhomogeneous diffusion process
means that many approaches, which are applicable to tackle the long-time asymptotics for time-homogeneous
Markov processes,
do not directly apply (or completely fail) to time-inhomogeneous Markov processes. For instance, Harris' ergodic theorem (see e.g. \cite[Theorem 2.1]{HM}) and the Krylov-Bogoliubov existence theorem (see e.g. \cite[Theorem 3.1.1]{DZ}) for invariant probability measures (IPMs for short) are inapplicable to  time-inhomogeneous Markov processes.
In contrast to a number of classical concepts and research topics concerned with time-homogeneous Markov processes, due to the time-varying feature,   important conceptual shifts are required (e.g.,  IPMs vs evolution system  of measures (ESMs for abbreviation), ergodic theorem vs asymptotic pseudotrajectory, equilibrium vs random attractor, semigroup asymptotics vs homogenization) when
time-inhomogeneous Markov processes are taken into account. Based on the aforementioned viewpoint, it is far from a  trivial task to carry out the research work regarding the
long-time behavior of
time-inhomogeneous Markov processes.

 In the past few decades, the asymptotic analysis (e.g., weak ergodicity \cite{SZ,Zei,ZI}, $L_1$-weak ergodicity \cite{Mukh} and asymptotic ergodicity \cite{BBC}) of time-inhomogeneous Markov  processes/chains
  has advanced considerably. In the meantime, notable  progress has been made on the long-term behavior of time-inhomogeneous SDEs/SPDEs. Below, we give an incomplete overview of   related references.
 By means of the pullback method,   \cite[Proposition 2.3]{DR} and \cite[Proposition 2.5]{DR} investigated
 the existence and the uniqueness of an  ESMs   for a semi-linear SPDE with time-dependent dissipative coefficients, and moreover \cite[Proposition 2.4]{DR} revealed that the non-homogeneous Markov semigroup behaves asymptotically as $t\to\infty$ like a limit curve for
 a continuous and bounded observable
 function. \cite{DRb} generalized the Krylov-Bogoliubov criterion (which was initially developed  to the existence of IPMs for autonomous stochastic  systems) to prove the existence of an ESMs for  finite-dimensional  SDEs with time-dependent coefficients. Later on,  \cite[Theorem 3.1]{LL} recast  the Krylov-Bogoliubov criterion for general  time-inhomogeneous Markov processes  and   \cite[Theorem 3.2]{LL} provided a sufficient Lyapunov condition for the  existence of an ESMs. Subsequently, in terms of space-time Lyapunov functions, the existence of an ESMs for SDEs on   evolving manifolds was explored in \cite{CT} and, under suitable curvature
conditions,   the uniqueness was also  established therein via the coupling method. Recently,
under a Lyapunov condition and a
uniform ``minorization'' condition reminiscent of Doeblin's condition,
by following the strategy in \cite{HM}, the weak contraction under the  weighted total variation distance was derived in \cite[Theorem 3.3]{LL} for time-inhomogeneous Markov processes. In particular, the derived  theory   was also applied to non-degenerate SDEs with time-periodic coefficients. With regard to time-periodic SDEs,  \cite{FZZ}   gave  sufficient
conditions for the existence and the  uniqueness of periodic ESMs, which were  built   based on  the existence of IPMs
 of the associated skeleton Markov chains. With the aid of the classical Harris theorem (which is applicable to the corresponding skeleton chain), the ``moving'' convergence and the pullback convergence under the total variation distance were handled in \cite[Theorem 3.2]{FZZ}  for time-periodic   gradient systems and the  Langevin   dynamics.
 Furthermore, we would like to allude to \cite{ALL,CT} concerning functional inequalities with respect to an ESMs for
 nonautonomous Kolmogorov equations, and  \cite{CL,SWY,SWYb} and references therein on  the exploration of finite-time averaging principles for  time-inhomogeneous multi-scale stochastic systems.

The starting point of this paper is our recent work \cite{BSWX}, which  studied quantitative estimates for L\'{e}vy-driven SDEs with different drifts, where the drift coefficient associated with  one of the SDEs is allowed to be time-dependent.
Specifically,  the following time-inhomogeneous SDE on $\R^d$:
\begin{align}\label{eq1}
\d X_t=F(t,X_t) \,\d t+\d L_t,\quad \forall\, t\ge0
\end{align}
was considered in \cite{BSWX}, where  $F:[0,\infty)\times\R^d\to\R^d$ is continuous,
and $(L_t)_{t\ge0}$ is a pure-jump process with the L\'{e}vy measure $\nu(\d z)$. The SDE \eqref{eq1} is assumed to  have a unique strong solution $(X_t)_{t\ge0}$. Roughly speaking, we further suppose that there exist functions $\bar F:\R^d\to \R^d$, $\phi: [0,\infty)\to [0,\infty)$ satisfying that $\lim_{t\to\infty}\phi(t)=0$, and $V:\R^d\to [1,\infty)$ such that for all $t\ge0$ and $x\in \R^d$,
$$|F(t,x)-\bar F(x)|\le \phi(t) V(x),$$ where 
  $\bar F$ fulfills the so-called dissipative condition and $V$ satisfies the Lyapunov-type condition. The
 explicit convergence rates (which depend on $\phi$) were established in \cite[Theorem 2.5]{BSWX}. A typical example showing \cite[Theorem 2.5]{BSWX} is that for all $t\ge0$ and $x\in\R^d,$
$$F(t,x)=\bar F(x)+\phi(t)V(x) .$$ As $\bar F$ is required to fulfill the  dissipative condition (for example, $\bar F(x)=-x |x|^{\gamma-1}$ for some $\gamma\ge1$ and all $x\in \R^d$), in general $\lim_{t\to\infty} F(t,x)\neq
0$ for fixed $x\in \R^d$.

Drawing on the research background in  physics and mathematics, there has been some progress on the asymptotic behavior  of time-inhomogeneous diffusions with drifts that vanish asymptotically in time,
i.e., $\lim_{t\to\infty}F(t,x)=0$ for all $x\in \R^d$.
As a toy example, we consider a time-inhomogeneous Ornstein–Uhlenbeck-type process $(X_t)_{t\ge0}$ on $\R$, which is governed by the following SDE: for all $t\ge0,$
\begin{align}\label{D-}
\d X_t=-t^{-\beta}X_t\,\d t+\d L_t,
\end{align}
where $(L_t)_{t\ge0}$ is a (rotationally) symmetric $\alpha$-stable process with
parameter $\sigma>0$ (i.e., $\E\e^{iuL_t}=\e^{-\sigma^\alpha|u|^\alpha t}$ 
for all $t>0$ and $u\in \R$). Based on the explicit expression of the solution to \eqref{D-} and the assumption  that $(L_t)_{t\ge0}$ is a symmetric $\alpha$-stable Lévy process, we can deduce that for all $t>0,$
\begin{equation}\label{D1-}
\begin{split}
&t^{-\beta/\alpha}X_t\overset{d}\Longrightarrow S_\alpha\Big(\frac{\sigma}{\alpha^{1/\alpha}}\Big),\quad\qquad\,\, \beta<1; \\
&t^{-1/\alpha}X_t\overset{d}\Longrightarrow S_\alpha\Big(\frac{\sigma}{(\alpha+1)^{1/\alpha}}\Big),\quad \beta=1;\\
&t^{-1/\alpha}X_t\overset{d}\Longrightarrow S_\alpha (\sigma),\quad \qquad\qquad\,\,\,\beta>1,
\end{split}
\end{equation}
where the symbol ``$\overset{d}\Longrightarrow$'' means the convergence in distribution, and $S_\alpha (\sigma)$ stands for  the
symmetric $\alpha$-stable distribution with
 parameter $\sigma$. Nevertheless, when  the asymptotically vanishing drift  under consideration is nonlinear in the spatial variable, the study of the asymptotic behavior of the corresponding rescaled solution poses an extremely non-trivial challenge.
The following known results have attracted our attention. \cite{GO} considered a one-dimensional diffusion with a time-inhomogeneous drift coefficient $F(t,x)=\rho\mbox{sgn}(x)|x|^\alpha/{t^\beta}$, and investigated the asymptotic distributions with the help of the Motoo theorem (which is valid since the driven noise is Brownian motion), the ergodic theorem as well as the comparison theorem. \cite{GLa} studied a kinetic stochastic model with a non-linear time-homogeneous friction force $F(t,v)=-t^{\beta}F(v)$ (where $v\mapsto F(v)$ satisfies a homogeneity condition) and a Brownian-type random force and, by making use of the Portmanteau theorem,  investigated  the limit in distribution of the time-space rescaled process corresponding to the pair $(V_t,X_t)$ with $(X_t)_{t\ge0}$ being the position and $(V_t)_{t\ge0}$ its velocity of the particle involved. Soon afterwards, the work \cite{GLa} was extended to a one-dimensional kinetic stochastic model in \cite{GL}, where the drift is of the form $F(t,v)=t^{-\beta}F(v)$ and the driving process is an
$\alpha$-stable L\'{e}vy process, and the long-time behavior   of the process $(X_t,V_t)_{t\ge0}$ is analysed based on moment estimates and the self-similarity of the driving process.

As regards one-dimensional time-inhomogeneous SDEs with  asymptotically vanishing drifts in time parameters, the references \cite{GO,GLa,GL} mentioned previously
focus mainly on the qualitative exploration (e.g., recurrence, transience and convergence in distribution). However, as far as (high-dimensional) time-inhomogeneous SDEs with asymptotically vanishing drifts are concerned,
the quantitative analysis (e.g., the convergence and the explicit convergence rate under suitable (probability) distances) in the infinite time horizon is still vacant at the present time.
Even for such a simple model \eqref{D-},
the  convergence rate (under suitable probability distances) of the rescaled process to an equilibrium is still unavailable.
The above  urges us to  conduct the present research project and fill in the  gaps left over the past several years.

\subsection{Main results}\label{sec1.2}
Throughout the present paper, we assume that
 the
  non-zero
  L\'evy measure $\nu$ is endowed with  a generic jump structure   for the microscopic behavior   together with an exact
  symmetric $\alpha$-stable tail for the macroscopic behavior. Specifically, we make the following assumption on the L\'evy measure $\nu$.
\begin{itemize}\it
\item[ $({\bf H}_1)$] there exist  constants
$c_*>0$, $\alpha\in (0,\infty]$ and a function  $a:\bar{B}_1:=\{z\in \R^d: |z|\le 1\}\to[0,\infty]$ satisfying that $a(z)=a(-z)$ for all $z\in \bar{B}_1$ and $\int_{\{|z|\le 1\}} |z|^2 a(z)\,\d z<\infty$, so that
 \begin{align}\label{Levy}
\nu(\d z) =a(z)\I_{\{|z|\le1\}}\,\d z+\frac{c_*}{|z|^{d+\alpha}}\I_{\{|z|>1\}}\,\d z.
\end{align}
\end{itemize}

In this section, in order to avoid introducing excessive assumptions enforced on $F(t,x)$, we merely choose a typical representative of $F(t,x)$ to state the main results, while further details on the more general asymptotic theory can be found in Section \ref{sec4},
Remark \ref{rema} and Remark \ref{R:7.3}.
To this end, we state the  hypotheses imposed on $F(t,x)$, which need not enjoy  the scaling property in the time/spatial variables,  as follows.

\begin{itemize}
\item[ $({\bf H}_2)$]there exist constants $\beta,\gamma>0$  and $0\neq\lambda\in\R$ such that for all $t>0$ and $x\in\R^d,$
\begin{align*}
F(t,x)=(1+t)^{-\beta}F_*(x)\quad\mbox{ with } \quad F_*(x):=\lambda x(1+|x|^2)^{\frac{1}{2}(\gamma-1)}.
\end{align*}

\end{itemize}

The purpose of this paper is to establish the long-time behavior of the process $(X_t)_{t\ge 0}$ by adopting an argument based on the space-time change.
We first present the form of the SDE solved by the limiting process of
the rescaled process associated with
 $(X_t)_{t\ge0}$.
In detail,
the relationship among the
  parameters $\alpha$, $\beta$ and $\gamma$ in Assumptions $({\bf H}_1)$ and $({\bf H}_2)$   roughly determines
   the following SDEs:
  for some $\eta>0$ and
$\gamma_0\ge0$
   (which will be fixed later on in the specific setting),
\begin{equation}\label{e:limit}
	\d  \overline{Y}_{\!\! t} = \begin{cases}  -    (\overline{Y}_{\!\! t}/\eta
		) \,   \d t+ \d   L_t^{(\alpha)},&\quad \beta>1+ (\gamma-1)/(\eta\wedge2),\\
		\big(-  \overline{Y}_{\!\! t}  /\eta + \lambda \overline Y_{\!\! t}|\overline Y_{\!\! t}|^{\gamma-1} \big)\,\d t+ \d   L_t^{(\alpha)},&\quad \beta=1+ (\gamma-1)/(\eta\wedge2),\\
		\lambda (1+2\gamma_0)\overline Y_{\!\! t}|\overline Y_{\!\! t}|^{\gamma-1}\, \d t+ \d    L_t^{(\alpha)},&\quad \beta<1+ (\gamma-1)/(\eta\wedge2),\end{cases}
\end{equation}
where $a\wedge b: =\min\{a,b\}$  for $a,b\ge0$.
In \eqref{e:limit},  $(L_t^{(\alpha)} )_{t\ge0}$ is
 a  symmetric $\alpha$-stable process with the L\'{e}vy measure $\nu^{(\alpha)}(\d z):=\frac{c_*}{|z|^{d+\alpha}} \,\d z$ when $\alpha\in (0,2)$; $(L_t^{(\alpha)} )_{t\ge0}$ is a $d$-dimensional Brownian motion
with the covariance matrix $c_* \omega_d I_d$ (resp. $\frac{1}{d}\nu(|\cdot|^2) I_d$) when $\alpha=2$ (resp. $\alpha\in (2,\infty]$), in which $\omega_d$ stands for the volume of $\bar B_1$, $I_d$ represents the identity $d\times d$-matrix.
 Meanwhile, when $\alpha>2$, we assume that
 $a(z)=a(|z|)$ for all $z\in \bar B_1$ and so
 the matrix $\int_{\R^d}  (z\otimes z)\,\nu(\d z)=\frac{1}{d}\nu(|\cdot|^2)I_d$ is strictly positive definite,
where
$z\otimes z$ means the tensor product of the vector $z$ with itself.
On some occasions, we  write $(X_t^\mu)_{t\ge0}$
  in lieu of $(X_t)_{t\ge0}$
   when  we would like to   emphasize the initial distribution $\mathscr L_{X_0}=\mu$,
    where,
for a vector-valued random variable $\xi$,
$\mathscr L_\xi$ denotes   its law.

\subsubsection{\bf{Asymptotics: jump scaling with $\alpha\in(0,2)$}}\label{subsection1.2}
For $p>0,$
let  $\mathcal P_p(\R^d)$ be
the set of probability measures (written as   $\mathcal P(\R^d)$)  with finite $p$-th moment. For a   distance-like function
 $\rho:\R^d\times\R^d\to[0,\8)$, let
$\mathcal W_\rho$ be  the quasi-Wasserstein distance induced by $\rho$, which is defined by
\begin{align*}
\mathcal W_\rho(\mu_1,\mu_2)=\inf_{\pi\in\mathscr C(\mu_1,\mu_2)}\int_{\R^d\times\R^d}\rho(x,y)\,\pi(\d x,\d y),\quad\forall\, \mu_1,\mu_2\in\mathcal P(\R^d),
\end{align*}
where
$\mathscr C(\mu_1,\mu_2)$ stands for the collection of all couplings of  $\mu_1$ and $\mu_2$.  In case of the underlying  metric function $\rho(x,y)=|x-y|,$
we shall write $\mathcal W_1$ instead of $\mathcal W_{|\cdot|}$ for simplicity.

\begin{theorem}\label{theorem-1} {\bf($1\le \alpha<2$)}
Assume that $({\bf H}_1)$, $({\bf H}_2)$, $1\le \alpha<\theta_1\le2 $, $ \theta_2\in(0,\alpha)$ and $ \int_{\{|z|\le 1\}} |z|^{\theta_1}  a( z )\,\d z<\infty$ hold.   Suppose further that
one of the following conditions is satisfied:
\begin{enumerate}
\item[$(a)$]$\beta>1+(\gamma-1)/\alpha$,  $(i)$ $\lambda>0$, $ \gamma\in(0,1)$ and $\alpha=1$, or $(ii)$ $\lambda>0$, $ \gamma\in(0,1]$ and $\alpha\in(1,2)$, or $(iii)$ $\lambda<0 $ and $0<\gamma<\alpha/{(1\vee\theta_2)}$;

\item[$(b)$]$\beta=1+(\gamma-1)/\alpha$, $\lambda<0,$
 $(i)$  $\gamma\in(0,1)$
and $\alpha=1$, or $(ii)$  $0<\gamma<\alpha/{(1\vee\theta_2)}$
and $\alpha\in(1,2)$;

\item[$(c)$]$\beta<1+(\gamma-1)/\alpha$, $\alpha\in(1,2)$,  $\lambda<0$,     $1\le \gamma<\alpha/{(1\vee\theta_2)}$.
\end{enumerate}
Then, under   case $(a)$  $($resp. case $(b)$$)$,
there
is a constant $\lambda_1^*>0$ such that for  all   $t\ge  1$ and  $\mu\in\mathcal P_{\theta_2\vee(\gamma(1\vee\theta_2))}(\R^d) $,
\begin{equation}\label{EW8}
	\mathcal W_{|\cdot|^{\theta_1}\wedge|\cdot|^{\theta_2}}\Big(\mathscr L_{t^{- {1}/{\alpha}}X_t^{\mu}},\pi\Big)  \le C_1^*t^{-\lambda_1^*} ,
\end{equation}
where
$\pi\in \mathcal P_{\theta_2}(\R^d)$ is the unique IPM of $(\overline Y_{\!\! t})_{t\ge0}$ solving the first $($resp. the second$)$ SDE in \eqref{e:limit} with $\eta=\alpha$, and the constant $C_1^*>0$ depends on $\mu$ and $\pi$; under   case $(c)$, there
 exists a constant $\lambda_2^*>0$
 such that for  all
 $t\ge1$ and  $\mu\in\mathcal P_{\theta_2\vee(\gamma(1\vee\theta_2))}(\R^d) $,
 \begin{equation}\label{EY-*}
 	\mathcal W_{|\cdot|^{\theta_1}\wedge|\cdot|^{\theta_2}}\Big(\mathscr L_{t^{-q}X_t^{\mu}},\pi\Big)  \le C_2^*
 	t^{-\lambda_2^*},
 \end{equation}
 where $q:= \beta/{(\alpha+\gamma-1)}$,  $\pi\in \mathcal P_{\theta_2}(\R^d)$ is the unique IPM of $(\overline Y_{\!\! t})_{t\ge0}$ which is determined by the third SDE in \eqref{e:limit} with $\eta=\alpha$ and   $\gamma_0=0$,
 and the constant
 $C_2^*>0$ depends on $\mu$ and $\pi$.
\end{theorem}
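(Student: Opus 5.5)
The plan is a space--time change followed by a quantitative comparison between a time-inhomogeneous SDE and a time-homogeneous limit. Set $Y_s:=\e^{-s/\alpha}X_{\e^s}$ in cases $(a)$ and $(b)$, and $Y_s:=\e^{-qs}X_{\e^s}$ in case $(c)$, with $s=\log t$.

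By It\^o's formula and the time change $t=\e^s$, the process $(Y_s)$ solves
\[
\d Y_s=\Big(-\kappa Y_s+\e^{(1-\kappa)s}(1+\e^s)^{-\beta}F_*\big(\e^{\kappa s}Y_s\big)\Big)\d s+\d \widetilde L_s,
\]
where $\kappa\in\{1/\alpha,q\}$. The driving term $\widetilde L_s=\int_0^s\e^{-\kappa u}\,\d L_{\e^u}$ is an additive process whose time-$u$ L\'evy measure is the image of $\e^u\nu$ under $z\mapsto \e^{-\kappa u}z$. Because of the exact stable tail in $({\bf H}_1)$, for $\kappa=1/\alpha$ the large-jump part of this measure equals $\nu^{(\alpha)}$ restricted to $\{|z|>\e^{-u/\alpha}\}$. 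The small-jump part, carried by $a$, contributes a term of order $\e^{u(1-\theta_1/\alpha)}\e^{-u}\ldots$, and one checks that it decays like $\e^{-u(\theta_1-\alpha)/\alpha}$ thanks to $\int|z|^{\theta_1}a<\infty$ and $\theta_1>\alpha$.

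Next I would expand the drift. Since $F_*(x)=\lambda x(1+|x|^2)^{(\gamma-1)/2}$, for large $|x|$ it behaves like $\lambda x|x|^{\gamma-1}$, and
\[
\e^{(1-\kappa)s}(1+\e^s)^{-\beta}F_*(\e^{\kappa s}y)\approx \lambda\e^{(1-\beta+\kappa(\gamma-1))s}y|y|^{\gamma-1}.
\]
With $\kappa=1/\alpha$ the exponent is negative, zero, or positive exactly according to $\beta\gtrless 1+(\gamma-1)/\alpha$. This produces the first two SDEs in \eqref{e:limit} with $\eta=\alpha$, up to an error $\le C\e^{-\delta s}V(y)$, where $V(y)=1+|y|^{\gamma}$; the small-$|x|$ region of $F_*$ gives an exponentially small correction. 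In case $(c)$ the choice $q=\beta/(\alpha+\gamma-1)$ balances $1-\beta+q(\gamma-1)=1-q\alpha$. I would then time-change again with $\d r=\e^{(1-q\alpha)s}\d s$. The noise then again scales to a stable process, the linear term $-qY$ becomes negligible, and the third SDE with $\gamma_0=0$ results.

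The comparison would proceed in two parts. First, for the limiting SDE $\overline Y$ in each regime, I would show exponential contraction in $\mathcal W_{|\cdot|^{\theta_1}\wedge|\cdot|^{\theta_2}}$ and existence of a unique IPM $\pi\in\mathcal P_{\theta_2}$. This uses a reflection/refined basic coupling for the $\alpha$-stable noise together with dissipativity: in case $(a)$ this comes from $-y/\alpha$, with $\lambda>0$ allowed only when $\gamma\le1$ so that the linear term dominates at infinity; in cases $(b)$ and $(c)$ it comes from $\lambda<0$. Second, I would bound the distance between $\mathscr L_{Y_s}$ and $\mathscr L_{\overline Y_s}$ on windows $[s_0,s]$, using the quantitative estimate for L\'evy SDEs with different drifts from \cite[Theorem 2.5]{BSWX}, adapted to additive noise, or equivalently a synchronous/basic coupling plus Gronwall. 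The drift error is $\e^{-\delta s}V$, and the noise discrepancy is $\e^{-\delta' s}$. For these bounds I need uniform-in-time moment bounds $\sup_s\E|Y_s|^{\gamma(1\vee\theta_2)}<\infty$, which come from a Lyapunov function $|y|^{\theta_2}$ or $(1+|y|^2)^{\gamma(1\vee\theta_2)/2}$. The restriction $\gamma<\alpha/(1\vee\theta_2)$ guarantees that these moments are finite under stable tails, and this is where the assumption on $\mu$ enters. Combining the two parts via the splitting $\mathcal W(\mathscr L_{Y_s},\pi)\le \mathcal W(\mathscr L_{Y_s},\mathscr L_{\overline Y_{s-s_0}}^{Y_{s_0}})+\mathcal W(P_{s-s_0}^*\mathscr L_{Y_{s_0}},\pi)$ and choosing $s_0=s/2$ gives $\e^{-\lambda^*s}=t^{-\lambda^*}$.

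The main obstacle I expect is the noise comparison. $\widetilde L$ is time-inhomogeneous, and its small-jump part is unbounded in intensity near time $0$. The quasi-distance $|\cdot|^{\theta_1}\wedge|\cdot|^{\theta_2}$ is needed precisely so that this part can be controlled by $\int|z|^{\theta_1}a$ while the large jumps are controlled at order $\theta_2<\alpha$. I would try first a generator comparison: apply the limit generator to the contraction test function, whose concavity is supplied by the coupling, and bound the difference of generators by $\e^{-\delta s}(1+|y|^{\gamma(1\vee\theta_2)})$. A secondary difficulty is case $(c)$, where the second time change makes the perturbation in the new clock decay only polynomially before one returns to $t$.
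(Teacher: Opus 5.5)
For cases $(a)$, $(b)$, and $(c)$ with $\gamma=1$, your plan is essentially the paper's. Your composite time change in $(c)$ is exactly \eqref{UP-1} with $\theta=q\alpha$. The discrepancy $\nu_t-\nu^{(\alpha)}$ is supported in $\{|z|\le g_t\}$ with $H(t)=C^\star g_t^{\theta_1-\alpha}$. The limit drifts are uniformly dissipative, and the synchronous coupling of Theorem~\ref{thm1} ($\kappa=1$), together with moments of order $\gamma(1\vee\theta_2)<\alpha$, closes the argument.

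Two corrections to your description of the mechanism:
\begin{itemize}
\item No reflection or refined basic coupling is involved.
\item The test function $h(|x-y|^{\theta_1})$ equals $|x-y|^{\theta_1}$ near the diagonal, so it is convex there, not concave.
\end{itemize}
The convexity is forced. For $\alpha\ge1$ the moment $\int_{\{|z|\le g_t\}}|z|\,|\nu_t-\nu^{(\alpha)}|(\d z)$ is in general infinite; e.g.\ for $a\equiv0$ the discrepancy is $c_*|z|^{-d-\alpha}\I_{\{|z|\le g_t\}}\,\d z$. So only a distance behaving like $r^{\theta_1}$ with $\theta_1>\alpha$ at small scales turns the small-jump discrepancy into a finite error. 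This is also why Theorem~\ref{thm3}, whose hypothesis \eqref{WY} is built on $|z|\wedge|z|^\theta$, is used only for $\alpha<1$. Separately, the restriction $\lambda>0\Rightarrow\gamma\le1$ in $(a)$ is needed for the moment bound \eqref{TY-5} of the pre-limit process; the limit drift is $-y/\alpha$ for every $\lambda$.

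The genuine gap is case $(c)$ with $\gamma>1$.
\begin{itemize}
\item The limit drift $\lambda y|y|^{\gamma-1}$ only satisfies $\langle x-y,\bar b(x)-\bar b(y)\rangle\le-L_1|x-y|^{1+\gamma}$ (cf.\ \eqref{2-TY}). The synchronous coupling therefore gives no exponential contraction.
\item A refined basic coupling needs a test function concave near the diagonal. By the previous paragraph, the small-jump discrepancy is then no longer a finite error.
\item You cannot do without contraction on the window either. In the clock of the limit, the leftover linear drift $-\theta Y_r/(\alpha(1+(1-\theta)r))$ decays only like $1/r$. Over your window $s_0=s/2$ its coefficient integrates to exactly $qs/2$, so a Gronwall comparison error grows like $\log t$.
\end{itemize}

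What is missing is the paper's use of super-dissipativity through a nonlinear Gronwall argument. Set $f(r)=\E h(|U_r|^{\theta_1})$ for the synchronous coupling. Jensen's inequality gives $f'\le-cf^{1+(\gamma-1)/\theta_1}+c\,g_*(r)$, where $g_*(r)=H(r)+\E\mathcal V(r,Y_r)^{1\vee\theta_2}$ is of order $(r^{-1}\vee\phi_2(r))^{1\vee\theta_2}+g_r^{\theta_1-\alpha}$. The Bernoulli-type comparison of Lemma~\ref{lemma-*} then yields $f(r)\lesssim r^{-\theta_1/(\gamma-1)}\vee\sup_{u\ge r/2}g_*(u)^{\theta_1/(\theta_1+\gamma-1)}$, uniformly in the initial laws. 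The same lemma gives the polynomial rate \eqref{WY--} for $\mathscr L_{\overline Y_r}\to\pi$. Substituting $r=\varphi_t^{-1}\asymp t^{1-\theta}$ turns both bounds into \eqref{EY-*}.

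A repair within your own scheme could presumably also work. You would need an exponential contraction of the limit, proved separately in a distance controlling $|\cdot|^{\theta_1}\wedge|\cdot|^{\theta_2}$. You would combine it with a comparison window of length $o(r)$ that still tends to infinity, as in the proof of Theorem~\ref{thm5-}$(c)$. With $s_0=s/2$ it does not work.
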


Below, we make some comments on Theorem \ref{theorem-1}.
It is easy to see that $q=\beta/\alpha$ when  $\gamma=1.$ Therefore, concerning the three regimes (i.e., $\beta>1$, $\beta=1$ and $\beta<1$) with $\gamma=1$, the respective space scaling in Theorem \ref{theorem-1} is the same as that given in \eqref{D1-}. Additionally, Theorem \ref{theorem-1} demonstrates that the law of the rescaled process associated with \eqref{D-} converges under a well-tailored Wasserstein distance to the respective equilibrium corresponding to the limiting SDE and moreover provides  an explicit convergence rate. Most importantly,  Theorem \ref{theorem-1} shows   that  the L\'evy measure related to  the limiting SDE is determined merely by the large-jump  part of the original L\'evy measure (i.e., only by the parameters $c_*$ and $\alpha$ in \eqref{Levy}).

In \eqref{EW8} and \eqref{EY-*}, explicit formulas for $\lambda_1^*$ and $\lambda_2^*$ are not given owing to their complexity. Indeed,
by tracing  the proof of Theorem \ref{theorem-1},   $\lambda_1^*$ and $\lambda_2^*$ can be presented explicitly as follows:
\begin{align*}
\lambda_1^*&=\frac{1}{2} (\theta_2/
(16\alpha))\wedge(\theta_1/\alpha-1)  \wedge\frac{1}{2}(1\vee\theta_2)\begin{cases}
 \big( (1-\gamma)/\alpha+\beta-1\big)
	& \mbox{if } \beta>1+(\gamma-1)/\alpha, \\
	& \mbox{if } \beta=1+(\gamma-1)/\alpha, \\
   ( \gamma/\alpha\wedge 1)  	&(i)\,\alpha=1,\gamma\in(0,1),\\
	& \mbox{ or }  (ii)\, \alpha\in(1,2),\gamma\in(0,1],\\
	&
	\mbox{ or }  (iii)\, \alpha\in[1,2), \\
	&\quad\quad\quad\,\,\gamma\in(0,\alpha/{(1\wedge\theta_2)}),
	\\
  \big(1\wedge ((2\wedge(\gamma-1))/\alpha)\big) 	& \mbox{if } \beta<
  1+(\gamma-1)/\alpha,\\
	& \alpha\in(1,2),  \gamma\in(1,\alpha/{(1\vee\theta_2)}),
\end{cases}	
\end{align*}
and
\begin{align*}
\lambda_2^*=
\begin{cases}
\lambda_3^*:=   \big(\theta\left( \theta_1/\alpha -1\right)\big)\wedge\big(((1 \vee \theta_2)  (1-\theta))\wedge(\theta [2 \wedge (\gamma-1)]/\alpha)  \big) , 	&\gamma=1,\\
   \big(\theta_1(1-\theta)/{(\gamma-1)}\big)\wedge   \big(\theta_1\lambda_3^*/{ (\theta_1 + \gamma - 1)}\big )   ,&\gamma>1,
	\end{cases}
\end{align*}
where  $\theta:=\frac{\alpha\beta}{\alpha+\gamma-1}\in(0,1)$. It should be noted that the explicit dependence of the constants $C_1^*$ and $C_2^*$ on the measures $\mu$ and $\pi$ can also be provided. More precisely,
concerning case $(a)$  and  case $(b)$,
 the constant $C_1^*>0$ is linearly dependent on
$\mu(|\cdot|^{\theta_2\vee(\gamma(1\vee\theta_2))}) $ and $\pi(|\cdot|^{\theta_2})$,   and, regarding case $(c)$,
  the constant $C_2^*>0$  depends linearly on $\mu(|\cdot|^{(1\vee\gamma)\theta_2})$ and $ \pi(|\cdot|^{\theta_2})$.
  Moreover,  Theorem \ref{theorem-1} can be extended to a much more general framework;  see Section \ref{sec4} for more details. In particular,   Theorem \ref{theorem-1} is still valid provided that the drift $F(t,x)$ is decomposed into two parts, where one part satisfies Assumption $({\bf H}_2)$ and the other part is a corresponding lower-order perturbation. Additionally,  if Assumption $({\bf H}_2)$ is replaced by the following one:
  \begin{itemize}
  	\item[ $({\bf H}_2')$]there exist constants $\beta,\gamma>0$, $c_0,c_0^*\ge0$  and $0\neq\lambda\in\R$ such that for all $t>0$ and $x\in\R^d,$
  	\begin{align*}
  		F(t,x)=(c_0+t)^{-\beta}F_*(x)\quad\mbox{ with } \quad F_*(x):=\lambda x(c_0^*+|x|^2)^{\frac{1}{2}(\gamma-1)},
  	\end{align*}
  \end{itemize}
   we can also establish a counterpart of Theorem \ref{theorem-1}, where the associated limiting SDE remains unchanged, while the range of the parameter $\gamma$ varies  as $c_0$ and $c_0^* $ change.

In Theorem \ref{theorem-1}, the
index $\alpha\in[1,2)$ is restricted. For the case $\alpha\in(0,1)$,
we can still obtain the asymptotics of $(X_t)_{t\ge0}$. In particular, when
 the integrability with respect to the small jumps of the L\'{e}vy measure  is strengthened and   the  quasi-Wasserstein distance is adjusted accordingly, we have the subsequent theorem.

\begin{theorem}\label{thm2} {\bf($0<\alpha<1$)}\,\,
Assume that $({\bf H}_1)$, $({\bf H}_2)$, $\int_{\{|z|\le1\}}|z|a(z)\,\d z<\infty$ and $\beta\ge 1+(\gamma-1)/\alpha$   with
$0<\gamma<\alpha<1$ hold.	 Then,
there
is a constant $\lambda^*>0$
such that for any $t\ge1$ and $\mu\in\mathcal P_{\gamma }(\R^d)$,
\begin{equation} \label{P-23}
	\mathcal W_{|\cdot|\wedge|\cdot|^\gamma}\Big(\mathscr L_{t^{-{1}/{\alpha}}X_t^{\mu}},\pi\Big)   \le    C^* t^{-\lambda^*},
\end{equation}
where $\pi\in\mathcal P_\gamma(\R^d)$ is the unique IPM  of
$(\overline{Y}_{\!\! t})_{t\ge0}$ solving respectively the first $($resp. second$)$ SDE in \eqref{e:limit} with $\eta=\alpha$ when  $\beta> 1+(\gamma-1)/\alpha$   $($resp. $\beta= 1+(\gamma-1)/\alpha$$)$,
and $C^*>0$ depends linearly  on $\mu(|\cdot|^{\gamma })$ and $\pi(|\cdot|^\gamma)$.
\end{theorem}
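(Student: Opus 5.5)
The plan is a space-time change that turns the problem into comparing a time-inhomogeneous SDE with its time-homogeneous limit \eqref{e:limit}. Set $Y_s:=\e^{-s/\alpha}X_{\e^s-1}$ for $s\ge0$, so that $t=\e^s-1$ and $t^{-1/\alpha}X_t$ differs from $Y_{\log(1+t)}$ only by a deterministic factor $1+O(t^{-1})$. By It\^o's formula,
\begin{align*}
\d Y_s=\Big(-\frac{Y_s}{\alpha}+\e^{s(1-\beta)}\e^{-s/\alpha}F_*(\e^{s/\alpha}Y_s)\Big)\d s+\d \tilde L_s ,
\end{align*}
where $\tilde L_s=\int_0^s \e^{-r/\alpha}\,\d L_{\e^r-1}$ is an additive process. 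Its jump intensity at time $s$ is $\e^{s}\nu(\e^{s/\alpha}\,\d z)$. By $({\bf H}_1)$ the large-jump part of this intensity equals exactly $c_*|z|^{-d-\alpha}\,\d z$ on $\{|z|>\e^{-s/\alpha}\}$. The small-jump part lives on $\{|z|\le \e^{-s/\alpha}\}$. For the drift, $F_*(\e^{s/\alpha}y)\approx\lambda\e^{s\gamma/\alpha}y|y|^{\gamma-1}$, so the extra drift term is $\lambda\e^{s(1-\beta+(\gamma-1)/\alpha)}y|y|^{\gamma-1}$ up to lower-order corrections. This term vanishes exponentially when $\beta>1+(\gamma-1)/\alpha$ and is exactly the nonlinear term of the second SDE in \eqref{e:limit} when equality holds. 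This is why $\eta=\alpha$.

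The first step is an exponential contraction for the limiting SDE $\overline Y$ in $\mathcal W_{|\cdot|\wedge|\cdot|^\gamma}$, with $\pi\in\mathcal P_\gamma$ its unique IPM. Because $\alpha<1$, there is no strong dissipativity from $-y/\alpha$ in a Lipschitz metric with concave weight. I would use reflection coupling for the $\alpha$-stable noise together with the concave function $f(r)=r\wedge r^\gamma$, modified near the origin as in the earlier paper \cite{BSWX}. The $-y/\alpha$ term contributes $-f'(r)r/\alpha\le -\gamma f(r)/\alpha$. In the critical case, the term $\lambda y|y|^{\gamma-1}$ with $\gamma<\alpha<1$ is $\gamma$-H\"older, and its effect is absorbed by the stable coupling gain $\sim r^{\gamma-\alpha}$ at small $r$. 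At large $r$ it is dominated by the linear drift, since $\gamma<1$. A $\gamma$-moment Lyapunov bound via $V(y)=(1+|y|^2)^{\gamma/2}$ gives existence of $\pi$ and uniform-in-$s$ bounds $\E|Y_s|^\gamma\le C(1+\mu(|\cdot|^\gamma))$ for the time-changed process as well. Here $\gamma<\alpha$ guarantees that the stable tail integrates $|z|^\gamma$.

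The second step compares the generators. For a test function $f\in C_b^{1,\gamma}$-type, coming from the Poisson/Kolmogorov equation for $\overline P_r$, I estimate $\mathscr L_s f-\overline{\mathscr L} f$. Three contributions arise.
\begin{enumerate}
\item[(i)] The small-jump term: $\int_{|z|\le\e^{-s/\alpha}}(f(y+z)-f(y))\e^s\nu(\e^{s/\alpha}\d z)$, minus the stable measure there. Using $\int_{|z|\le1}|z|a(z)\,\d z<\infty$ and $|f(y+z)-f(y)|\le \|\nabla f\|_\infty|z|$, this gives $\e^{s}\e^{-s/\alpha}\int|u|a(u)\,\d u\lesssim\e^{-s(1/\alpha-1)}$, which decays since $\alpha<1$. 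The stable mass on the small ball gives the same order. This is why only first-moment integrability of $a$ and the distance $|\cdot|\wedge|\cdot|^\gamma$ are needed.
\item[(ii)] The drift discrepancy $F_*(\e^{s/\alpha}y)$ versus its homogeneous approximation. This is bounded by $\e^{-cs}(1+|y|^\gamma)$, and in the supercritical case by the whole vanishing term $\e^{-s(\beta-1-(\gamma-1)/\alpha)}(1+|y|^\gamma)$.
\item[(iii)] The initial time shift $t$ versus $t+1$, which is handled trivially.
\end{enumerate}
Then the Duhamel formula $P_{0,s}f-\overline P_sf=\int_0^s P_{0,r}(\mathscr L_r-\overline{\mathscr L})\overline P_{s-r}f\,\d r$ applies, with gradient bounds $\|\nabla\overline P_u f\|\lesssim \e^{-cu}u^{-\kappa}$ derived from the first step. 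Alternatively, one can couple $Y$ and $\overline Y$ directly, using synchronous coupling for large jumps and reflection coupling otherwise. Combined with the contraction and the moment bounds, this yields $\mathcal W(\mathscr L_{Y_s},\pi)\lesssim \e^{-\lambda^* s}(1+\mu(|\cdot|^\gamma)+\pi(|\cdot|^\gamma))$. Translating back with $s=\log(1+t)$ gives $t^{-\lambda^*}$.

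The main obstacle is the first step in the critical case. There the merely H\"older, non-dissipative nonlinearity $\lambda y|y|^{\gamma-1}$ with either sign of $\lambda$ must be controlled under a non-Lipschitz concave metric while the noise is only $\alpha<1$ stable. I would first try the refined basic coupling of the stable noise with an explicit $f$ behaving like $r^\gamma$ near zero, and check that the gain $c\,r^{\gamma-\alpha}$ dominates $|\lambda| r^{\gamma}\cdot r^{\gamma-1}$-type terms for small $r$, using $\gamma<\alpha$.
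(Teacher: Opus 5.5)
Your overall plan is sound: the exact time change $t=\e^s-1$ with $\varphi_s'g_s^\alpha=1$, uniform $\gamma$-moments from $\gamma<\alpha$, the small-jump discrepancy of order $\e^{-s(1/\alpha-1)}$ via $\int_{\{|z|\le1\}}|z|a(z)\,\d z<\infty$, and contraction of the limit followed by a comparison step. The gap is in the step you yourself single out as the main obstacle: you justify it with the wrong inequality.

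In the critical case with $\lambda>0$, the limit drift only satisfies $\langle x-y,\bar b(x)-\bar b(y)\rangle\le -|x-y|^2/\alpha+C|x-y|^{1+\gamma}$ (Lemma~\ref{lem0} with $\theta=1+\gamma$).
\begin{itemize}
\item If $f(r)\approx r^{\vartheta}$ near $0$, the drift contributes $+C r^{\vartheta-1+\gamma}$. The coupling gain, using $\nu^{(\alpha)}_x(\R^d)\gtrsim|x|^{-\alpha}$, is $-c\,r^{\vartheta-\alpha}$.
\item If $f$ is linear near $0$ with the concave correction $\int_0^r\e^{-c_\star s^\rho}\,\d s$ of \eqref{LW1}, the gain is $-c\,r^{1+\rho-\alpha}$.
\end{itemize}
Either way, the gain dominates as $r\to0$ iff $\alpha+\gamma>1$ (resp.\ $\rho<\alpha+\gamma-1$), whatever $\vartheta$ is. The inequality $\gamma<\alpha$ plays no role: $\alpha=0.3$, $\gamma=0.2$ satisfies it, and there the drift wins.

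The missing input is that in the critical case $\beta=1+(\gamma-1)/\alpha>0$, from $({\bf H}_2)$, is equivalent to $\gamma>1-\alpha$. This is exactly the hypothesis in Theorem~\ref{thm3}(iii) that the exponent in \eqref{R5--} (here $\alpha$) lies in $(1-\alpha_*,1)$, with $\alpha_*=\gamma$ and $\rho\in(0,\alpha+\gamma-1)$. For $\lambda<0$, and in the supercritical case, the limit drift is monotone and synchronous coupling already contracts, so no gain is needed there.

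Your closing suggestion to take $f\sim r^\gamma$ near the origin also breaks your second step.
\begin{itemize}
\item A contraction in such a metric only gives $|\overline P_{\!\! u}f(x)-\overline P_{\!\! u}f(y)|\lesssim\e^{-cu}|x-y|^\gamma$, not the gradient bound you use in (i)--(ii). No Wasserstein contraction produces a $u^{-\kappa}$ smoothing.
\item In a direct coupling the error $\int_{\{|z|\le g_s\}}|z|^\gamma\,|\nu_s-\nu^{(\alpha)}|(\d z)$ is infinite, since $\gamma<\alpha$.
\end{itemize}
So $f$ must stay linear near $0$; this is why the metric is $|\cdot|\wedge|\cdot|^\gamma$. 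With that choice the contraction directly gives $\|\nabla\overline P_{\!\! u}f\|_\infty\lesssim\e^{-cu}$ for $(|\cdot|\wedge|\cdot|^\gamma)$-Lipschitz $f$.

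With these repairs, your Duhamel route is genuinely different from the paper's.
\begin{itemize}
\item \textbf{Paper's route.} It never uses $\overline P$ analytically. It couples $Y$ and $\overline Y$ directly by Theorem~\ref{thm3}, a refined basic coupling on $\nu_t\wedge\nu^{(\alpha)}$, and applies the same theorem again with $H=h=0$ for $\overline Y$ versus $\pi$.
\item \textbf{Cost of the paper's route.} It must verify \eqref{R5--} with the deficiency $h(t,r)=2^dc^*r^{-d}(g_t\wedge(r/2))^d$. The common part may be empty on $\{|z|\le g_t\}$ (e.g.\ $a\equiv0$), and this costs the extra rate $g_t^{(1+\rho-\alpha)/d}$. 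Your coupling alternative would face the same deficiency.
\item \textbf{Your route.} All contraction sits in $\overline P$, so you avoid this overlap analysis. In exchange you must justify Dynkin's formula for $\overline P_{\!\! s-r}f$ along $Y$. That needs $C^1$-regularity of $\overline P_{\!\! u}f$ while $\bar b$ is only $\gamma$-H\"older. For example, mollify $\bar b$ as in \eqref{WW-40}, keep the contraction uniform in $\varepsilon$, and let $\varepsilon\to0$.
\end{itemize}
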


To proceed, we make some comments on Theorem \ref{thm2}.
Throughout this paper, we are interested in the case $\beta>0$ (see (${\bf H}_2$)) so it is natural to require $\gamma>1-\alpha$,  which is
the exact condition required
in Theorem \ref{thm3} below with $\alpha_*=\gamma$ and $\beta=\alpha$ therein.
In Theorem \ref{thm2}, we present the result regarding the case $\beta\ge1+ (\gamma-1)/\alpha$ while excluding the case $\beta<1+ (\gamma-1)/\alpha$. For the latter case, to derive the counterpart of \eqref{P-23}, it is necessary that the first-order
moment of the rescaled process associated with $(X_t)_{t\ge0}$ is uniformly bounded.
 Nevertheless, such a requirement is impossible in case of $\alpha\in(0,1).$
 Meanwhile, we want to emphasize  that, as stated in  the
 remarks below Theorem \ref{theorem-1}, the convergence 
 rate $\lambda^*$ involved in
Theorem \ref{thm2} can also be provided by inspecting the proof  of
Theorem \ref{thm2}.

\subsubsection{\bf{Asymptotics: diffusive scaling with $\alpha>2$}}\label{sub1.2}
In the previous part, for  the case $\alpha\in(0,2),$
we quantitatively measure
the law of
the rescaled process associated with $(X_t)_{t\ge0}$    and the corresponding
IPM of the
 limiting process $(\overline{Y}_{\!\! t} )_{t\ge0}$ under appropriate  quasi-Wasserstein distances. In case of $\alpha\in(0,2),$
it is worth  stressing   that the limiting process $(\overline {Y}_{\!\! t})_{t\ge0}$ solves an SDE in which the driven noise is
a symmetric $\alpha$-stable noise.  In this and the following  parts, we turn  to analyze  the asymptotics of $(X_t)_{t\ge0}$ as soon as the index $\alpha\ge 2$ related to  large jumps.   In this setting, the corresponding limiting process will change dramatically and the diffusive phenomenon will take place.

In contrast  with the case   $\alpha\in (0,2)$, we adopt a different metric (which is called the asymptotic pseudotrajectory metric for the time being) to quantify    the asymptotics of $(X_t)_{t\ge0}$ in case of $\alpha\ge2$. Such a metric  is closely related  to  the notion of an asymptotic pseudotrajectory  of time-inhomogeneous Markov semigroups (see e.g. \cite[p.\ 3007]{BBC}).
For $\mu_1,\mu_2\in\mathscr P(\R^d)$, define the asymptotic pseudotrajectory metric as follows:
\begin{align*}
 {\rm d}_{\mathcal F}(\mu_1,\mu_2)=\sup_{f\in\mathcal F}|\mu_1(f)-\mu_2(f)|,
\end{align*}
 where $\mathcal F$ is a suitable family of functions $f:\R^d\to\R $ satisfying  $\mu_1(|f|)<\8$ and $\mu_2(|f|)<\8.$

Below,
we
 focus on the case   $\alpha>2$ in $({\bf H}_1)$. The main result in this part is stated as follows.

\begin{theorem}\label{thm5-}{\bf($\alpha>2$)}
Assume that $\alpha>2$, $({\bf H}_1)$ and $({\bf H}_2)$ hold,
and that $a(z)=a(|z|)$ for all $z\in \bar B_1$. Suppose further that one of the following conditions holds:
\begin{itemize}
\item[$(a)$]  $ \beta>(1+\gamma)/2$, $(i)$ $\lambda>0$ and  $\gamma\in(0,1]$, or $(ii)$  $\lambda<0 $ and $0<\gamma<\alpha $;
\item[$(b)$]  $\beta=(1+\gamma)/2$, $\lambda<0$ and
$1\vee(3-d/2)<\gamma<\alpha $;

\item[ $ (c)$]
$ \beta<(1+\gamma)/2$, $\lambda<0$ and
$1\vee(3-d/2)<\gamma<\alpha $.
\end{itemize} Then, for  case $(a)$ $($resp.  case $(b)$$)$,
there exist constants
 $ \lambda_1, t_1>0$   such that for all
 $ t\ge t_1$ and $\mu \in \mathcal P_{ \gamma}(\R^d)$,
\begin{equation}\label{D26}
{\rm d}_{\mathcal F_c}\big(\mathscr{L}_{t^{-1/2}X_t^\mu},\pi  \big)\le C_1 t^{-\lambda_1};
\end{equation}
 regarding  case $(c)$, there exist constants $\lambda_2,t_2>0$ such that for all $t\ge t_2$ and
$\mu\in\mathcal P_{\gamma}(\R^d)$,
\begin{equation}\label{D26-}
{\rm d}_{\mathcal F_c} \big(\mathscr{L}_{t^{- \beta/{(1+\gamma)}}X_t^\mu},\pi  \big)\le C_2 t^{-\lambda_2}.
\end{equation}
Herein, $ \mathcal F_c:=\{f\in C_c^\infty(\R^d):
\sum_{i=0}^3\|\nabla^{i} f\|_\infty\le 1\}$,
$\pi$ represents  the unique IPM  corresponding to $(\overline Y_{\!\! t})_{t\ge0}$ solved respectively by the SDEs in \eqref{e:limit}
with $\eta=2$, $\gamma_0=0$ and driven by Brownian motion with the covariance matrix $\frac{1}{d}\nu(|\cdot|^2) I_d$
and the constants $C_1,C_2>0$ depend on $\mu$ and $\pi$.
\end{theorem}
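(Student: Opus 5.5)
We argue by a space--time change combined with the asymptotic pseudotrajectory strategy of \cite{BBC}. Put $s=\log t$ (more precisely $t=\e^s$, up to a shift) and define the rescaled process $Y_s:=\e^{-\kappa s}X_{\e^s}$, where $\kappa=1/2$ in cases $(a)$ and $(b)$, and $\kappa=\beta/(1+\gamma)$ in case $(c)$.

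\emph{Step 1: the equation for $Y$.} By It\^o's formula,
\[
\d Y_s=\big(-\kappa Y_s+\e^{(1-\kappa)s}F(\e^s,\e^{\kappa s}Y_s)\big)\,\d s+\d \tilde L_s ,
\]
where $\tilde L_s=\int_0^s\e^{-\kappa r}\,\d L_{\e^r}$ is an additive process with time-dependent L\'evy measure
\[
\nu_s(A)=\e^{s}\,\nu\big(\e^{\kappa s}A\big).
\]
For the drift, $(1+\e^s)^{-\beta}F_*(\e^{\kappa s}y)\approx \lambda\,\e^{(1-\beta+\kappa\gamma)s}\,y|y|^{\gamma-1}$ when $|y|\e^{\kappa s}\gg1$. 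With $\kappa=1/2$ the exponent $1-\beta+\gamma/2$ coincides with the exponent $1-\kappa$ of the linear term exactly when $\beta=(1+\gamma)/2$. Hence in case $(a)$ the nonlinear term vanishes, in case $(b)$ it survives with coefficient $\lambda$, and in case $(c)$ the choice $\kappa=\beta/(1+\gamma)$ balances it; the linear term $-\kappa Y$ is then of lower order after a further reparametrisation of time by $\e^{cs}$, matching \eqref{e:limit} with $\eta=2$ and $\gamma_0=0$.

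\emph{Step 2: comparison of generators.} For $f\in\mathcal F_c$, let $\mathscr L_s$ be the generator of $Y$ at time $s$ and $\bar{\mathscr L}$ that of $\bar Y$. Taylor-expanding the jump part to third order, and using symmetry of $\nu$ (here $a(z)=a(|z|)$ and $\alpha>2$, so $\nu(|\cdot|^2)<\infty$), gives
\[
\int\big(f(y+z)-f(y)-\<\nn f(y),z\>\big)\,\nu_s(\d z)=\tfrac{1}{2d}\nu(|\cdot|^2)\,\Delta f(y)+R_s(y),
\]
where $|R_s|\le \|\nn^3f\|_\infty\,\e^{s(1-3\kappa)}\int(|z|^3\wedge \e^{\kappa s}|z|^2)\,\nu(\d z)$. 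Splitting at $|z|=\e^{\kappa s}$, the tail $|z|^{-d-\alpha}$ with $\alpha>2$ yields polynomial decay $\e^{-\epsilon s}$; this is the reason for using $\sum_{i\le3}\|\nn^if\|_\infty\le1$. The drift difference is bounded by $\e^{-\epsilon s}(1+|y|^{\gamma})$, which comes from replacing $(1+\e^s)^{-\beta}$ by $\e^{-\beta s}$ and $(1+|x|^2)^{(\gamma-1)/2}$ by $|x|^{\gamma-1}$. The latter replacement is costly near $y=0$ when $\gamma>1$ because of the singularity, and the conditions $1\vee(3-d/2)<\gamma$ presumably control $\E|Y_s|^{\gamma-2}$-type terms near the origin. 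Altogether,
\[
|(\mathscr L_s-\bar{\mathscr L})f(y)|\le C\e^{-\epsilon s}(1+|y|^\gamma).
\]

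\emph{Step 3: uniform moments and exponential ergodicity of the limit.} Using a Lyapunov function $V(y)=(1+|y|^2)^{\gamma/2}$ for $Y$, with dissipativity supplied by $-\kappa y$ or by $\lambda<0$ and $\gamma<\alpha$ (which ensures the jump contribution is integrable), we show $\sup_s\E V(Y_s)<\infty$. For the limiting diffusion $\bar Y$ we derive $\|\nn^i\bar P_tf\|_\infty\le C\e^{-ct}$ for $i\le3$ and $f\in\mathcal F_c$ recentred, via synchronous coupling and derivative estimates, possibly in a weighted form $\le C\e^{-ct}V$ when the drift is only asymptotically dissipative.

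\emph{Step 4: conclusion.} Apply Duhamel's formula
\[
\E f(Y_{s+T})-\E\bar P_Tf(Y_s)=\int_0^T\E\big[(\mathscr L_{s+r}-\bar{\mathscr L})\bar P_{T-r}f(Y_{s+r})\big]\,\d r .
\]
By Steps 2 and 3 the right side is $O(\e^{-\epsilon s})$, and also $|\E\bar P_Tf(Y_s)-\pi(f)|\le C\e^{-cT}$. Choosing $T=\delta s$ gives the rate $\e^{-\lambda s}=t^{-\lambda}$, which is \eqref{D26} and \eqref{D26-}. In case $(c)$ the extra time change requires tracking an additional factor in the comparison.

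The main obstacle is Step 3, namely the uniform-in-$t$ bounds on up to third derivatives of $\bar P_tf$ for a superlinear or non-globally-Lipschitz drift $\lambda y|y|^{\gamma-1}$. We would attempt this first via Bismut-type formulas combined with the Lyapunov weight.
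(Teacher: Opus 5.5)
Your architecture matches the paper's. You rescale so that $\varphi_t'g_t^2=1$; in case $(c)$ your composite change $\d u=\e^{(1-q)s}\,\d s$, $q=2\beta/(1+\gamma)$, reproduces \eqref{WPPP-1} exactly. You then compare generators by a third-order Taylor expansion, get uniform moments from a Lyapunov function, and conclude with the Duhamel identity behind Theorem~\ref{Th4.1} plus $\mathcal W_1$-ergodicity of the limit.

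\textbf{Main gap: the derivative bounds in Step~3.} This is the step you defer, it is the real content of cases $(b)$ and $(c)$, and your account of it is inaccurate.
\begin{itemize}
\item For $1<\gamma<3$ the limiting drift is not $C^3$ at the origin, since $\nn^3\bar b\sim|y|^{\gamma-3}$. So third-order variation equations and Bismut formulas are not directly available for $\bar b$ itself.
\item The paper passes to a smoothed drift such as $\lambda x(\vv+|x|^2)^{(\gamma-1)/2}$ and proves derivative bounds uniform in $\vv$ (Lemma~\ref{lemma-1}). It uses variation equations for $t\le1$ and Bismut--Elworthy--Li formulas of orders two and three for $t>1$. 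It then removes $\vv$ by a second Duhamel argument, which is what \eqref{WW-37} and \eqref{WW-30} control.
\item Uniformity in $\vv$ requires the negative moments $\E|\overline Y^{\vv,x}_{\!\!s}|^{2(\gamma-3)}$ of the limiting diffusion. These come from heat-kernel bounds via a super-Poincar\'e inequality, and that is where $\gamma>3-d/2$ enters.
\item The condition does not enter through the replacement $(1+|x|^2)^{(\gamma-1)/2}\to|x|^{\gamma-1}$ that you guessed at. That replacement costs only $O(\e^{-cs})(1+|x|^{1\vee(\gamma-2)})$, by \eqref{EP-18}.
\item The bounds obtained carry polynomial weights, e.g.\ $\|\nn^2\overline P_{\!\!t}f(x)\|\lesssim 1+|x|^{(\gamma-2)^+}$. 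Your Step~2 is written for test functions with bounded derivatives. It has to be redone for $\overline P_{\!\!T-r}f$, with the weights absorbed by uniform moments of $Y$. In Lemma~\ref{lemma5} they are also absorbed by large-jump moments of order $2+(\gamma-2)^+<\alpha$.
\end{itemize}

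\textbf{Second gap: Step~4 fails as written in case $(c)$.} After your second time change, the comparison error contains the leftover linear drift, of size $|y|/u$. So the error rate $\Phi(u)$ is not integrable. With only uniformly bounded derivatives of $\overline P_{\!\!t}f$, the Duhamel bound over a window proportional to the current time does not tend to zero, whether you measure time in $s$ or in $u$. Already $\int_u^{(1+\delta)u}v^{-1}\,\d v=\log(1+\delta)$.

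What is needed is a sublinear window, as in the paper's $[t,t+t^{\varrho}]$ with $\varrho<1$ in the original time. It must be long enough that the elapsed limiting time tends to infinity, and short enough that the accumulated error tends to zero. This is combined with $\mathcal W_1$-contraction of the limit. The paper uses exponential contraction obtained by reflection coupling, because the synchronous estimate is only polynomial: $\lambda y|y|^{\gamma-1}$ degenerates at $0$.

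Alternatively, the exponential decay $\|\nn^i\overline P_{\!\!t}f\|\lesssim\e^{-ct}$ you claim in Step~3 would make $\int_0^T\Phi(s+r)\e^{-c(T-r)}\,\d r$ small for any window. In case $(c)$, however, synchronous coupling only gives $\|\nn\overline Y^{x}_{\!\!t}\|_{\rm op}\le1$. That route would therefore need reflection coupling for the gradient and a Bismut-type transfer to higher derivatives, which you have not supplied. In cases $(a)$ and $(b)$ the error is $O(\e^{-\epsilon s})$, and your window proportional to $\log t$ is fine.
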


The constants  $\lambda_1$ and $\lambda_2$ involved in \eqref{D26} and \eqref{D26-}  can  be given explicitly, respectively,   by inspecting the proof of Theorem \ref{thm5-}, even though they are somewhat complex.
Meanwhile, the explicit expressions of $C_1,C_2$ and their dependence on $\mu$ and  $\pi$
can also be furnished (for instance, $C_1$ involved in \eqref{D26} is linearly dependent on $\mu(|\cdot|^{\gamma})$  and  $\pi(|\cdot|)$ concerning   case $(a)$).
Furthermore, 
inspired by the core idea behind the proof of Theorem \ref{thm5-}, the counterpart of Theorem \ref{thm5-} can also be established for
more general models; see Remark \ref{rema} for more related details.

\subsubsection{\bf{Asymptotics: critical scaling with $\alpha=2$}}
In the previous part, we address the asymptotic analysis of the process $(X_t)_{t\ge0}$ in case of the index $\alpha>2.$ In such case, the L\'{e}vy measure $\nu(\d z)$ is 
$L^2$-integrable with respect to large jumps; that is, $\nu(|\cdot|^2\I_{\{|\cdot|>1\}})<\8.$ In the present subsection, we turn to the critical case: $\alpha=2.$ For this case, it is obvious that $\nu(|\cdot|^2\I_{\{|\cdot|>1\}})=\8$. Nevertheless, regarding  this setting, we can still explore the asymptotics of $(X_t)_{t\ge0}$ by choosing  a  suitable time scaling $\varphi_t$ and a space scaling $g_t$ as shown in the subsequent theorem.

\begin{theorem}\label{thm6}{\bf($\alpha=2$)}
	Assume that $\alpha=2$, $({\bf H}_1)$ and $({\bf H}_2)$ hold. Suppose further that one of the following conditions holds:
	\begin{itemize}
		\item[$(a)$]  $ \beta>(1+\gamma)/2$, $(i)$ $\lambda>0$ and  $\gamma\in(0,1]$, or $(ii)$  $\lambda<0 $ and $0<\gamma<2 $;
		\item[$(b)$]  $\beta=(1+\gamma)/2$ with $\gamma=1$,   $\lambda<1/2 $;

		\item[ $ (c)$]
		$ \beta<(1+\gamma)/2$, $\lambda<0$ and
		$1\vee(3-d/2)<\gamma<2 $.
	\end{itemize}
Then,
 for  case $(a)$ $($resp.   case $(b)$ and case $(c)$$)$,
there exists a constant $t_*>0$ such that for all $t\ge t_*$ and
$\mu \in \mathcal P_{1\vee\gamma}(\R^d)$,
 \begin{equation}\label{G4}
 {\rm d}_{\mathcal F_c}\Big(
 \mathscr{L}_{g_{\varphi_t^{-1}}X_t^\mu},\pi\Big )  \le C_0
\begin{cases}
\frac{1}{(\ln t)^{{1}/{2}}} &\mbox{ if } \,    \beta>(1+\gamma)/2,\\
 \frac{1}{(\ln t)^{1/2-\lambda }} &
 \mbox{ if }     \beta=(1+\gamma)/2,\\
\frac{1}{(\ln t)^{{1}/{2}}}  & \mbox{ if }\, 1=\beta<(1+\gamma)/2,
 \end{cases}
\end{equation}
where $\mathcal F_c$ is the same as that given in Theorem $\ref{thm5-}$, $\pi$
  denotes    the unique IPM associated with the respective    SDEs   in \eqref{e:limit}
with $\eta=2$, $\gamma_0=1/{(\gamma-1)}$ and
 driven by Brownian motion with the covariance matrix $c_*\omega_d I_d$,
 $	g_t:=\e^{-(t+1)/2}$ and
 $\varphi_t^{-1}$ is the inverse function of $\varphi_t:=1+2\int_1^{1+t}\frac{\e^{ u}}{ u}\,\d u$ for  cases $(a)$ and $(b)$  $($resp. $g_t:=(\e+t)^{-\gamma_0}$  and $\varphi_t^{-1}$ is the inverse function  of $\varphi_t:=1+ \frac{1}{\gamma_0}
 \int_\e^{\e+t}\frac{ u^{2\gamma_0}}{ \ln u}\,\d u$ with regard  to  case $(c)$$)$, and the constant $C_0>0$ depends on $\mu$ and $\pi$.
\end{theorem}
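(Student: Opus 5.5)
The plan is a space--time change followed by an asymptotic-pseudotrajectory comparison of the rescaled time-inhomogeneous semigroup with the homogeneous limit semigroup. For the appropriate pair $(g,\varphi)$ from the statement, set $Y_s:=g_sX_{\varphi_s}$, so that $g_{\varphi_t^{-1}}X_t=Y_{\varphi_t^{-1}}$. Itô's formula gives a time-inhomogeneous SDE for $(Y_s)$:
\begin{align*}
\d Y_s=\Big(\frac{g_s'}{g_s}Y_s+g_s\varphi_s'F\big(\varphi_s,Y_s/g_s\big)\Big)\d s+\d Z_s ,
\end{align*}
where $Z_s=\int_0^s g_u\,\d L_{\varphi_u}$ is an additive process. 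Its Lévy kernel at time $s$ is $\varphi_s'\,\nu(g_s^{-1}\d z)$.

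With $g_s=\e^{-(s+1)/2}$ and $\varphi_s'=2\e^{1+s}/(1+s)$ (cases (a), (b)), the drift splits as follows.
\begin{itemize}
\item The linear part $g'/g=-1/2$ produces $-\overline Y/\eta$ with $\eta=2$.
\item The part coming from $F$ is $g_s\varphi'_s(1+\varphi_s)^{-\beta}F_*(Y_s/g_s)$. Since $\varphi_s\asymp \e^{s}/s$, for $\beta>(1+\gamma)/2$ this vanishes at a rate that is at least $s^{-1}$ times polynomial factors (exponentially in $s$ unless the critical balance holds). For $\gamma=1$, $\beta=1$ it tends to $\lambda Y_s$ up to an error $O(1/s)$.
\end{itemize}
For the small-argument regime one uses $F_*(x)\approx\lambda x|x|^{\gamma-1}$ for large $|x|$. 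The error near the origin is controlled via $g_s\to0$ and the moment bounds below.

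The key computation is the generator comparison. Take $f\in C_c^\infty$ with $\sum_{i\le3}\|\nabla^if\|_\infty\le1$. The jump part of the generator of $(Y_s)$ is
\begin{align*}
\varphi_s'\int\big(f(y+g_sz)-f(y)-\nabla f(y)\cdot g_sz\I_{\{|z|\le1\}}\big)\nu(\d z).
\end{align*}
Expand to second order on $\{|g_sz|\le 1\}$ and bound the remainder by $\|\nabla^3f\|_\infty|g_sz|^3$; the tail $\{|g_sz|>1\}$ is bounded by $2\|f\|_\infty$. Since $\nu(\d z)=c_*|z|^{-d-2}\d z$ for $|z|>1$, one gets
\begin{align*}
\varphi_s'g_s^2\int_{\{1<|z|\le g_s^{-1}\}}z\otimes z\,\nu(\d z)=\frac{2}{1+s}\cdot c_*\omega_d\,\ln(g_s^{-1})\,I_d+O\Big(\frac{1}{s}\Big)=c_*\omega_d I_d+O\Big(\frac1s\Big).
\end{align*}
Indeed $\ln g_s^{-1}=(s+1)/2$, and the rotational symmetry of the tail gives the isotropic form, with the constant $c_*\omega_d$ arising from $\int_{S^{d-1}}\theta\otimes\theta=\omega_d I_d$ times $\int r^{-1}\d r$. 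This explains exactly why $\varphi$ carries the $1/u$ factor and why only the large-jump part survives. The small-jump contribution is $O(\varphi_s'g_s^2)=O(1/s)$. The cubic remainder and the tail each contribute $O(\varphi_s'g_s^2)=O(1/s)$ after integrating $|z|^3$ up to $g_s^{-1}$. So $\|(\mathcal L_s-\overline{\mathcal L})f\|\le C(1+|y|^{\gamma\vee1})/s$.

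In case (c) the same computation with $g_s=(\e+s)^{-\gamma_0}$ and $\varphi'_s=(\e+s)^{2\gamma_0}/(\gamma_0\ln(\e+s))$ gives diffusion coefficient $c_*\omega_d$. Here $g'/g=-\gamma_0/(\e+s)\to0$, and the $F$-term yields $\lambda(1+2\gamma_0)\overline Y|\overline Y|^{\gamma-1}$. This identity uses $\beta=1$ and the relation $\gamma_0(\gamma-1)=1$, which gives $g\varphi' \varphi^{-1}g^{1-\gamma}\to1+2\gamma_0$ from $\varphi_s\sim(\e+s)^{2\gamma_0+1}/((2\gamma_0+1)\gamma_0\ln s)$.

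To conclude, combine three ingredients.
\begin{enumerate}
\item Uniform-in-time moment bounds $\sup_s\E|Y_s|^{1\vee\gamma}<\infty$, obtained from a Lyapunov function $V(y)=(1+|y|^2)^{(1\vee\gamma)/2}$ applied to $\mathcal L_s$. This uses $\lambda<0$, or $\lambda<1/2$ in case (b), so that $-1/2+\lambda<0$ gives dissipativity.
\item The exponential contraction of the limiting semigroup $\overline P_t$ in $C^3$-type norms, $\|\nabla^i\overline P_tf\|_\infty\le Ce^{-ct}$ and $|\overline P_tf-\pi(f)|\le Ce^{-ct}(1+|y|)$, for the nondegenerate dissipative diffusions in \eqref{e:limit}. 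I expect this to be obtainable from known gradient estimates for Brownian SDEs; the restriction $\gamma>3-d/2$ enters here or in the moment bounds.
\item The Duhamel identity
\begin{align*}
P_{r,s}f-\overline P_{s-r}f=\int_r^s P_{r,u}(\mathcal L_u-\overline{\mathcal L})\overline P_{s-u}f\,\d u.
\end{align*}
\end{enumerate}
Choosing $r=s/2$ makes the first term $O(e^{-cs/2})$ and the integral $O(1/s)$ times the moments.

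This yields a rate $1/s$ in the rescaled time $s=\varphi_t^{-1}\asymp\ln t$. The stated $(\ln t)^{-1/2}$ presumably comes from a cruder handling of the truncation, where the error from $F_*$ near the origin and the non-$C^3$ part cost a square root. In case (b) the loss to $(\ln t)^{-(1/2-\lambda)}$ reflects the weakened contraction rate $1/2-\lambda$ of the OU-type limit.

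The main obstacle is step 2 together with obtaining derivative bounds of $\overline P_t f$ up to third order uniformly in time for the superlinear drift. These must be polynomially weighted and then paired with the moment bounds. I would first try Bismut-type formulas or synchronous coupling for the dissipative drift $\lambda\overline Y|\overline Y|^{\gamma-1}$.
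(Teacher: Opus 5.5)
Your space–time change and generator comparison coincide with the paper's Lemma~\ref{lemma7}. There the relation $\varphi_u'g_u^2\ln g_u=-1$ turns the truncated second moment of the $c_*|z|^{-d-2}$ tail into exactly $c_*\omega_d I_d$, and every remainder is $O(\varphi_u'g_u^2)$.

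The genuine difference is the final step. The paper never uses time decay of $\nabla^i\overline P_tf$. Lemma~\ref{lemma-1} gives only time-uniform bounds, for a smoothed drift $\bar b^\varepsilon$. Proposition~\ref{pro-} then gives $|\mu_s\overline P_tf-\mu_{s+t}(f)|\lesssim\int_s^{s+t}(\phi(u)+\varphi_u'g_u^2)\,\d u$, which does not vanish over $[s/2,s]$. So the paper compares only over a short window $[\varphi_t^{-1},\varphi_{h_t}^{-1}]$ and forgets the initial law by $\mathcal W_1$-contraction of $\overline P$ across that window:
\begin{itemize}
\item in cases (a),(b) the window has length $\approx\ln\ln t$, so the integrated error is $\lesssim\ln\ln t/\ln t$ by \eqref{D15}, and the contraction contributes $\e^{-\frac12\ln\ln t}=(\ln t)^{-1/2}$ (resp.\ $(\ln t)^{-(1/2-\lambda)}$) by \eqref{D22};
\item in case (c) the window has length $\asymp(\ln t)^{1/2}$ against an error rate $\asymp1/\ln t$.
\end{itemize}
That trade-off is where the exponent $1/2$ comes from. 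It does not come from truncation near the origin or from the non-$C^3$ drift, as you guess.

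Your Duhamel argument over $[s/2,s]$ uses $\e^{-c(s-u)}$-decaying derivative bounds. This buys a sharper rate, $O(1/\ln t)$. In cases (a),(b) it is essentially free, since the limit is Ornstein--Uhlenbeck and $\nabla^k\overline P_tf=\e^{-k\kappa t}\,\overline P_t\nabla^kf$, with $\kappa=1/2$ in case (a) and $\kappa=1/2-\lambda$ in case (b). Two corrections are needed.
\begin{itemize}
\item The tail $\{|g_uz|>1\}$ must be bounded by $\|\nabla\overline P_{s-u}f\|_\infty|g_uz|$, as for $\Lambda_3^\varepsilon$ in the proof of Lemma~\ref{lemma7}, not by $2\|f\|_\infty$. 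With your bound the Duhamel integrand has no decay in $s-u$, and you are left with $\int_{s/2}^s\d u/u=\ln2$.
\item In case (c) the error rate is $\varphi_u'g_u^2=1/(\gamma_0\ln(\e+u))$, not $1/u$, and $s=\varphi_t^{-1}\asymp(t\ln t)^{1/(1+2\gamma_0)}$, not $\ln t$. The outcome is still $\asymp1/\ln t$.
\end{itemize}

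The real price of your route is case (c). There you need $x$-uniform, exponentially decaying bounds on $\nabla^i\overline P_tf$ for $i\le3$, for the drift $\lambda(1+2\gamma_0)x|x|^{\gamma-1}$, which is not $C^3$ when $\gamma<2$. The paper avoids exactly this. It smooths to $\bar b^\varepsilon$ and proves only $\varepsilon$-uniform, non-decaying bounds: super-Poincar\'e/ultracontractivity for $t\le1$ (where $\gamma>3-d/2$ is needed for negative moments) and Bismut--Elworthy--Li for $t>1$. It then removes $\varepsilon$ via \eqref{WW-30} and \eqref{WW-37}.

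Your step 2 can plausibly be closed with the same tools, but as written it is an unproved ingredient, and you also need the $\varepsilon$-regularisation.
\begin{itemize}
\item $\mathcal W_1$-contraction gives $\|\nabla\overline P_tf\|_\infty\lesssim\e^{-ct}\|\nabla f\|_\infty$.
\item For $\nabla^2,\nabla^3$, apply the time-$1$ Bismut formulas to $\overline P_{t-1}f-\pi(f)$, using the $x$-uniform moments coming from ultracontractivity.
\end{itemize}

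Finally, your reading of case (b) as a ``loss'' is right only for $\lambda>0$. For $\lambda<-1/2$ the stated exponent $1/2-\lambda$ exceeds $1$, so your $1/\ln t$ does not yield it. The paper's own estimate in case (b), namely $\ln\ln t/\ln t+(\ln t)^{-(1/2-\lambda)}$, does not yield it for $\lambda\le-1/2$ either.
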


Before the end of this section, we summarize some novel aspects of the present work.
\begin{enumerate}
\item[(1)] As far as SDEs with asymptotically vanishing drifts are concerned, the  existing literature focuses on the qualitative analysis of the rescaled solution processes
    essentially only concerned with the case that $\alpha\in (0,2)$; see, for instance,  \cite{GO,GLa,GL} and references therein. Nonetheless, the present work is devoted to   quantitatively exploring and providing explicit convergence rates under well-adapted  probability distances
    for all $\alpha\in (0,\infty]$. Indeed, to the best of our knowledge, we believe that the assertions for the case  $\alpha\in [2,\infty]$ in our paper are new in the literature.
    For SDEs with asymptotically non-vanishing drifts, \cite{BSWX}  addressed the issue of quantitative convergence bounds. However,  the approach adopted in \cite{BSWX}  is not applicable  to the present framework due to the asymptotically vanishing property of drifts under investigation.
     On the one hand, we choose well-designed time scaling and space scaling in respective scenarios to derive the rescaled processes and then to study the corresponding convergence by invoking the coupling approach or the   method based on asymptotic pseudotrajectories.
   On the other hand, we shall emphasize that
     we need to consider  two SDEs with different driven noises in order to deal with the quantitative analysis over an infinite time horizon of the rescaled process subject to an additive process (not necessarily a Lévy process). Therefore, the framework under consideration in the present paper
is essentially distinct from the counterpart studied in \cite{BSWX}.

\item[(2)]In case of the index $\alpha\in(0,2)$ and $\alpha\ge2$ associated with large jumps of the Lévy measure, respectively, the time-homogeneous limiting  SDEs corresponding to the rescaled processes are completely different, where the driven noise of the former one is an  $\alpha$-stable process whereas  the latter   is driven by Brownian motion. It is worth noting that a phase transition   takes place and moreover a diffusive phenomenon occurs as long as the index $\alpha\ge2 $, which  have not yet been explored in the existing literature. Additionally, we realize  that, for the case $\alpha\in(0,2),$ the Lévy measure related to the limiting SDE is determined merely by the large-jump  part of the original Lévy measure; however, in the case $\alpha>2$ (resp. $\alpha=2$), the corresponding noise intensity  depends on the entire (resp. the large-jump part  of) Lévy measure.

\item[(3)]Concerning the case $\alpha\in(0,2)$, we employ  the Wasserstein distances which are constructed  subtly to measure the distance between the
 laws of  the rescaled process and the corresponding time-homogeneous version. By contrast, in the case $\alpha\ge2,$ the SDE  governed by the rescaled process and the limiting SDE are driven by two completely different types of noise, where  one is an additive process and the other one is Brownian motion. In this setup, it is a challenging task to investigate the convergence of the rescaled process via the Wasserstein distance with the aid of the coupling approach or the trick based on Duhamel's principle. To this end,
 we turn to the asymptotic pseudotrajectory distance to portray  convergence of  the rescaled process to the IPM corresponding to  the associated limiting SDE.
 In particular, to quantify the distance
 between laws associated with an  SDE with discontinuous driven noise and an SDE driven by Brownian motion, we believe that the metric $ {\rm d}_{\mathcal F_c}$ with the set of test functions $\mathcal F_c\subset C^3(\R^d;\R)$ is a perfect candidate.
  Meanwhile, the explicit convergence rate under the asymptotic pseudotrajectory distance is provided.
\end{enumerate}

The remaining part of this paper is arranged in the following way. In Section \ref{sec2}, with the aid of the coupling approach, we derive uniform-in-time estimates under well-tailored Wasserstein distances for two  SDEs with different drifts and different driven noises, 
where 
one of which is an additive process and the other a pure-jump L\'{e}vy process. In Section \ref{Section3-},
we present two preliminary results. In detail,  one is to characterize the rescaled noise process $(Z_t)_{t\ge0}$ defined in \eqref{RR-}, and the other
one
 is to provide sufficient conditions for establishing  the uniform-in-time moment boundedness of time-inhomogeneous SDEs driven by additive processes, where the index $\alpha$ of large jumps is allowed to be greater than $2$.
 In Section \ref{sec4}, we establish a  general asymptotic theory for the rescaled process under the Wasserstein distance
 (see Theorems \ref{thm0} and \ref{thm2-1}), and  prove Theorems \ref{theorem-1}  and \ref{thm2} by using the theory developed. Moreover, concerning the case $\nu(\d z)=\nu^{(\alpha)}(\d z)$,
 the corresponding  counterparts   of Theorem  \ref{theorem-1} and  Theorem \ref{thm0} are also derived in
 Subsection \ref{section4.4}. For this special case, the admissible range of the corresponding parameters is substantially relaxed.
 In Section \ref{Section4}, we set up
a general framework (see Theorem \ref{Th4.1}) for the convergence rates of the long-time behaviors of time-inhomogeneous SDEs, which is motivated by the theory on asymptotic pseudotrajectory related to
 time-inhomogeneous Markov semigroups.
Sections \ref{sec6} and  \ref{sec7}
are  devoted   to the proofs of Theorems  \ref{thm5-} and \ref{thm6}, where the quantitative error bound
(see Lemma \ref{lemma5} and Lemma \ref{lemma7}, respectively)
 between the infinitesimal generator related to the rescaled time-inhomogeneous process and the counterpart corresponding to the time-homogeneous limiting version plays an important role.

\section{Uniform-in-time estimates for SDEs with different drifts and different driven  noises: the coupling approach}\label{sec2}
In this part, we establish general theory to tackle the uniform-in-time estimates for SDEs with different drifts and (discontinuous) driven noises. For our purpose,
we focus on the following time-inhomogeneous SDE on $\R^d$:
\begin{align}\label{E1-}
	\d Y_t=b  (t,Y_t)\,\d t+\d Z_t,\quad \forall\,t\ge0,
\end{align}
where $b :[0,\infty)\times\R^d\to\R^d$ is measurable, and $(Z_t)_{t\ge0}$ is an additive process (that is, it has independent increments and is continuous in probability; see \cite[p.\ 3]{Sato}), defined on the filtered probability space $(\Omega,\mathscr F,(\mathscr F_t)_{t\ge0},\P)$, with the time-dependent L\'{e}vy measure $(\nu_t(\d z))_{t\ge0}$.

We assume that
$\bar b(x):=\lim_{t\to\8}b(t,x)$ exists for each fixed $x\in\R^d.$ By regarding
$\bar b(x)$ as a drift term, we consider  the time-homogeneous counterpart of the SDE \eqref{E1-}:
\begin{align} \label{E2-}
	\d \overline {Y}_{\! \! t}=\bar b(\overline {Y}_{\! \! t})\,\d t+\d \bar Z_t,\quad \forall\,t\ge0,
\end{align}
where $(\bar Z_t)_{t\ge0}$ is a pure jump L\'{e}vy process, supported on the same probability space as $(Z_t)_{t\ge0}$, with the (time-independent) L\'{e}vy measure $\bar\nu(\d z)$.

Throughout this section, we always assume that
the
   SDEs  \eqref{E1-} and   \eqref{E2-} are strongly well-posed.
We denote
 by $Y^\mu:=(Y_t^{\mu})_{t\ge0}$ (resp. $\bar Y^\mu:=(\overline {Y}_{\! \! t}^\mu)_{t\ge0})$
 the strong solution to the SDE \eqref{E1-} (resp. \eqref{E2-}) when the distribution of the initial point $Y_0$ (resp. $\overline Y_{\!\!0}$) is $\mu$.

\subsection{Coupling process}\label{section21}
To begin, we introduce some notation.
For   a given threshold $\kk\in(0,\infty]$, let
$(x)_\kk= (|x|\wedge\kk)x/|x|\I_{\{|x|\neq0\}}$
be the  truncated version of $x\in\R^d.$ Denote
by  $\delta_x\ast\mu$
the convolution of    the $\sigma$-finite measure $\mu$ on $(\R^d,\mathscr B(\R^d))$   and the Dirac delta measure $\delta_x$  centered at $x\in\R^d$. For $\sigma$-finite measures $\mu_1$ and $\mu_2,$ set $\mu_1\wedge \mu_2:=\mu_1-(\mu_1-\mu_2)^+$, where $(\mu_1-\mu_2)^+$ denotes the positive part of the Jordan decomposition of the signed measure $\mu_1-\mu_2$. For notational simplicity,
we set   for any $t\ge0$ and $x\in\R^d$,
$$\nu_{t,x}:=( \nu_t \wedge \bar\nu)\wedge(\delta_x\ast( \nu_t\wedge \bar \nu)). $$

In the following analysis, we stipulate  $\delta\in [0,1/2]$. Based on  the following  routine:
\begin{equation*}
	(x,y)\rightarrow
	\begin{cases}
		(x+z,y+z+(x-y)_\kk),\quad\quad &\delta\nu_{t,(y-x)_\kk}(\d z),\\
		(x+z,y+z+(y-x)_\kk),\quad\quad& \delta\nu_{t,(x-y)_\kk}(\d z),\\
		(x+z,y+z),\quad\quad&(\nu_t\wedge\bar\nu)(\d z)-\delta\nu_{t,(y-x)_\kk}(\d z)-\delta\nu_{t,(x-y)_\kk}(\d z),\\
		(x+z,y),\quad\quad&(\nu_t-(\nu_t\wedge\bar\nu))(\d z),\\
		(x,y+z), \quad\quad& (\bar\nu-(\nu_t\wedge\bar\nu))(\d z),
	\end{cases}
\end{equation*}
we define the   operator $\big(\overline{\mathscr L}^{(\delta)}_{\!\! t}\big)_{t\ge0}$ as follows:  for all $h\in C_b^2(\R^{2d})$ and $t\ge0$,
\begin{align}
	\big(\overline{\mathscr L}^{(\delta)}_{\!\! t} h\big)(x,y)&= \<\nn_1h(x,y), b(t,x)\>+\<\nn_2h(x,y),\bar b(y)\>\nonumber \\
		&\quad+
		\delta\int_{\R^d}\big(h(x+z,y+z+(x-y)_\kk)-h(x,y)-\<\nn_1h(x,y), z\> \I_{\{|z|\le1\}}\nonumber\\
		&\qquad\qquad\quad -\<\nn_2h(x,y), z+(x-y)_\kk \>\I_{\{|z+(x-y)_\kk|\le1\}}\big)\nu_{t,(y-x)_\kk}(\d z)\nonumber\\
		&\quad+\delta\int_{\R^d}\big(h(x+z,y+z+(y-x)_\kk)-h(x,y)-\<\nn_1h(x,y), z\> \I_{\{|z|\le1\}}\label{Q10-}\\
		&\qquad\qquad\quad -\<\nn_2h(x,y),  z+(y-x)_\kk\>\I_{\{|z+(y-x)_\kk|\le1\}}\big)\nu_{t,(x-y)_\kk}(\d z)\nonumber\\
		&\quad+\int_{\R^d }\big(h(x+z,y+z)-h(x,y)-\<\nn_1h(x,y)+\nn_2h(x,y), z \>\I_{\{|z|\le1\}}\nonumber\\
		&\qquad\quad\quad  \times\big((\nu_t\wedge\bar\nu)(\d z)-\delta\nu_{t,(y-x)_\kk}(\d z)-\delta\nu_{t,(x-y)_\kk}(\d z)\big)\nonumber\\
		&\quad+\int_{\R^d }\big(h(x+z,y)-h(x,y)-\<\nn_1h(x,y), z\> \I_{\{|z|\le1\}}\big)(\nu_t-(\nu_t\wedge\bar\nu))(\d z)\nonumber\\
		&\quad+\int_{\R^d }\big(h(x ,y+z)-h(x,y)-\<\nn_2h(x,y), z\> \I_{\{|z|\le1\}}\big)(\bar\nu-(\nu_t\wedge\bar\nu))(\d z),\nonumber
\end{align}
where $\nn_1h(x,y) $ and $\nn_2h(x,y) $ stand  for the gradients     in the   variables $x$  and   $y$, respectively.

\begin{lemma}\label{lem1}
	The operator $ (\overline{\mathscr L}^{(\delta)}_{\!\! t})_{t\ge0} $ defined in \eqref{Q10-} is a coupling operator for $(\mathscr L_t)_{t\ge0}$ and $\overline{\mathscr L}$, which are the respective infinitesimal generators of $(Y_t)_{t\ge0}$ and $(\overline {Y}_{\!\! t})_{t\ge0}$.
\end{lemma}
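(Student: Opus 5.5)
The plan is to verify the defining property of a coupling operator directly. For every $f\in C_b^2(\R^d)$, set $h_1(x,y):=f(x)$ and $h_2(x,y):=f(y)$. We need to show
\[
\big(\overline{\mathscr L}^{(\delta)}_{\!\! t}h_1\big)(x,y)=(\mathscr L_tf)(x),\qquad \big(\overline{\mathscr L}^{(\delta)}_{\!\! t}h_2\big)(x,y)=(\overline{\mathscr L}f)(y).
\]
The generators are
\[
\mathscr L_tf(x)=\<\nn f(x),b(t,x)\>+\int_{\R^d}\big(f(x+z)-f(x)-\<\nn f(x),z\>\I_{\{|z|\le1\}}\big)\nu_t(\d z),
\]
and $\overline{\mathscr L}$ is the analogue with $\bar b$ and $\bar\nu$. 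Before manipulating the integrals, I will check that every integral in \eqref{Q10-} is well defined for $h\in C_b^2(\R^{2d})$.

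For the well-definedness, note that all measures involved are dominated by $\nu_t$ or $\bar\nu$. Indeed, $\nu_{t,x}\le \nu_t\wedge\bar\nu$. Moreover, $\nu_{t,x}\le \delta_x\ast(\nu_t\wedge\bar\nu)$, so its shift by $x$ is also dominated by $\nu_t\wedge\bar\nu$. The measure $(\nu_t\wedge\bar\nu)-\delta\nu_{t,(y-x)_\kk}-\delta\nu_{t,(x-y)_\kk}$ is nonnegative because $\delta\le 1/2$. The two remainder measures $\nu_t-(\nu_t\wedge\bar\nu)$ and $\bar\nu-(\nu_t\wedge\bar\nu)$ are nonnegative by the definition of $\wedge$.

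Next, the integrands in the first two jump terms. Their compensator indicators are $\I_{\{|z|\le1\}}$ in the first coordinate and $\I_{\{|z+(x-y)_\kk|\le1\}}$ in the second. This choice makes each integrand $O(|z|^2\wedge1)$ in the $x$-direction, and $O(|z+(x-y)_\kk|^2\wedge1)$ in the $y$-direction integrated against the shifted measure. Both are finite because $\nu_t$ and $\bar\nu$ are Lévy measures.

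Then I compute $\overline{\mathscr L}^{(\delta)}_{\!\! t}h_1$. Since $\nn_2h_1=0$, the $y$-compensators drop out, and the last term vanishes because $h_1(x,y+z)=h_1(x,y)$. What remains are integrals of $f(x+z)-f(x)-\<\nn f(x),z\>\I_{\{|z|\le1\}}$ against the following measures:
\begin{itemize}
\item $\delta\nu_{t,(y-x)_\kk}$,
\item $\delta\nu_{t,(x-y)_\kk}$,
\item $(\nu_t\wedge\bar\nu)-\delta\nu_{t,(y-x)_\kk}-\delta\nu_{t,(x-y)_\kk}$,
\item $\nu_t-(\nu_t\wedge\bar\nu)$.
\end{itemize}
These sum to $\nu_t$. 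Splitting the integral is legitimate since each piece converges absolutely by the well-definedness step, and adding the drift $\<\nn f(x),b(t,x)\>$ gives $\mathscr L_tf(x)$.

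For $h_2$, the first jump term has integrand $f(y+z+(x-y)_\kk)-f(y)-\<\nn f(y),z+(x-y)_\kk\>\I_{\{|z+(x-y)_\kk|\le1\}}$ against $\delta\nu_{t,(y-x)_\kk}(\d z)$. Substituting $w=z+(x-y)_\kk$ turns this into an integral against $\delta$ times the image measure $\delta_{(x-y)_\kk}\ast\nu_{t,(y-x)_\kk}$. The key identity, which I expect to be the main obstacle, is
\[
\delta_{-u}\ast\nu_{t,u}=\nu_{t,-u}.
\]
To prove it, write $m=\nu_t\wedge\bar\nu$. Then $\nu_{t,u}=m\wedge(\delta_u\ast m)$, and shifting gives $\delta_{-u}\ast\nu_{t,u}=(\delta_{-u}\ast m)\wedge m$. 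This uses that translation commutes with $\wedge$, which holds since translation preserves the Jordan decomposition. Finally, $(\delta_{-u}\ast m)\wedge m=m\wedge(\delta_{-u}\ast m)$, using the symmetry $\mu_1\wedge\mu_2=\mu_2\wedge\mu_1$, i.e.\ $\mu_1-(\mu_1-\mu_2)^+=\mu_2-(\mu_2-\mu_1)^+$.

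With $u=(y-x)_\kk$ and $-u=(x-y)_\kk$, the first jump term for $h_2$ becomes an integral against $\delta\nu_{t,(x-y)_\kk}(\d w)$ with the standard compensator $\I_{\{|w|\le1\}}$. The second term transforms symmetrically into one against $\delta\nu_{t,(y-x)_\kk}$. Adding the third term, the two $\delta$-pieces cancel its subtracted parts, which leaves $\nu_t\wedge\bar\nu$. The last term contributes $\bar\nu-(\nu_t\wedge\bar\nu)$, and the fourth term vanishes. The total measure is therefore $\bar\nu$, and with the drift $\<\nn f(y),\bar b(y)\>$ we obtain $\overline{\mathscr L}f(y)$, completing the proof.
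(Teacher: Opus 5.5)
Your proposal is correct and follows essentially the same route as the paper. You apply the coupling operator to functions of $x$ alone and of $y$ alone, and sum the jump measures to recover $\nu_t$. For the $y$-marginal you shift the jump variable and use the translation identity $\delta_{-u}\ast\nu_{t,u}=\nu_{t,-u}$ to recover $\bar\nu$. The only difference is that you prove this identity directly, via translation invariance and symmetry of $\wedge$, and check well-definedness of the integrals, whereas the paper simply invokes the identity as a known fact (Corollary A.2 of LW19).
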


\begin{proof}
	Notice that for any $f\in C^2_b(\R^d)$, $t\ge0 $ and $x\in\R^d,$
	\begin{align}\label{EE1}
		(\mathscr L_t f)(x)=\<\nn f(x), b(t,x)\>+\int_{\R^d}\big(f(x+z)-f(x)-\<\nn f(x),z\>\I_{\{|z|\le1\}}\big)\nu_t(\d z),
	\end{align}
	and
	\begin{align*}
		(\bar{\mathscr L} f)(x)=\<\nn f(x), \bar b(x)\>+\int_{\R^d}\big(f(x+z)-f(x)-\<\nn f(x),z\>\I_{\{|z|\le1\}}\big)\bar \nu(\d z).
	\end{align*}
	To prove the assertion, it suffices to prove that for all $f,g\in C_b^2(\R^d)$, $t\ge0$ and $ x,y\in\R^d$,
	\begin{align}\label{Q11-}
	\big(\overline{\mathscr L}^{(\delta)}_{\!\! t}h\big)(x,y)=(\mathscr L_tf)(x)+(\bar{\mathscr L}g)(y),
	\end{align}
	in which  $h(x,y):=f(x)+g(y)$.  According to the definition of $\overline{\mathscr L}^{(\delta)}_{\!\! t}$, we apparently  have that for all $t\ge0$ and $x\in\R^d,$
	\begin{align*}
		\big(\overline{\mathscr L}^{(\delta)}_{\!\! t}f\big)(x)&= \<\nn f (x ), b(t,x)\>+
		\delta\int_{\R^d }\big(f(x+z)-f(x)- \<\nn f(x), z\> \I_{\{|z|\le1\}}\big)\nu_{t,(y-x)_\kk}(\d z)\\
		&\quad+\delta\int_{\R^d }\big(f(x+z )-f(x )- \<\nn f (x ),  z\> \I_{\{|z|\le1\}}\big)\nu_{t,(x-y)_\kk}(\d z)\\
		&\quad+\int_{\R^d }\big(f(x+z )-f(x )- \<\nn f(x ), z\> \I_{\{|z|\le1\}}\big)\\
		&\qquad\qquad\times\big((\nu_t\wedge \bar\nu)(\d z)-\delta\nu_{t,(y-x)_\kk}(\d z)-\delta\nu_{t,(x-y)_\kk}(\d z)\big)\\
		&\quad +\int_{\R^d }\big(f(x+z )-f(x )-  \<\nn f (x ), z\> \I_{\{|z|\le1\}}\big)(\nu_t- (\nu_t\wedge \bar\nu))(\d z)
		\\
		&=(\mathscr L_tf)(x).
	\end{align*}
	Once more,  the definition of $\overline{\mathscr L}^{(\delta)}_{\!\! t}$  enables us to derive that for all $t\ge0$ and $
y\in\R^d,$
	\begin{align*}
		\big(\overline{\mathscr L}^{(\delta)}_{\!\! t}g\big)(y)&= \<\nn g (y),\bar b(y)\> +
		\delta\int_{\R^d}\big(g(y+z+(x-y)_\kk)-g(y)\\
		&\qquad\qquad\qquad\qquad\qquad \quad-\<\nn g (y), z+(x-y)_\kk\>\I_{\{|z+(x-y)_\kk|\le1\}}\big)\nu_{t,(y-x)_\kk}(\d z)\\
		&\quad+\delta\int_{\R^d}\big(g(y+z+(y-x)_\kk)-g(y)\\
		&\qquad\qquad \quad-\<\nn g (y), z+(y-x)_\kk\>\I_{\{|z+(y-x)_\kk|\le1\}}\big)\nu_{t,(x-y)_\kk}(\d z)\\
		&\quad+\int_{\R^d }\big(g(y+z)-g(y) -\<\nn g (y), z \>\I_{\{|z|\le1\}}\big)\\
		&\qquad\quad\quad\times\big((\nu_t\wedge\bar\nu)(\d z)-\delta\nu_{t,(y-x)_\kk}(\d z)-\delta\nu_{t,(x-y)_\kk}(\d z)\big)\\
		&\quad+\int_{\R^d }\big(g(y+z)-g(y)-\<\nn g (y), z\> \I_{\{|z|\le1\}}\big)(\bar\nu-(\nu_t\wedge\bar\nu))(\d z)
		\\
		&=\<\nn g (y), \bar b(y)\> \\
		&\quad+
		\delta\int_{\R^d}\big(g(y+z )-g(y) -\<\nn g (y), z\>  \I_{\{|z |\le1\}}\big)\big(\delta_{(x-y)_\kk}\ast\nu_{t,(y-x)_\kk}\big)(\d z)\\
		&\quad+\delta\int_{\R^d}\big(g(y+z )-g(y)  -\<\nn g (y), z\> \I_{\{|z |\le1\}}\big)\big(\delta_{(y-x)_\kk}\ast\nu_{t,(x-y)_\kk}\big)(\d z)\\
		&\quad+\int_{\R^d }\big(g(y+z)-g(y) -\<\nn g (y), z\> \I_{\{|z|\le1\}}\big)\\
		&\qquad\quad\quad\times\big((\nu_t\wedge\bar\nu)(\d z)-\delta\nu_{t,(y-x)_\kk}(\d z)-\delta\nu_{t,(x-y)_\kk}(\d z)\big)\\
		&\quad+\int_{\R^d }\big(g(y+z)-g(y)-\<\nn g (y), z\> \I_{\{|z|\le1\}}\big)(\bar\nu-(\nu_t\wedge\bar\nu))(\d z)
		\\
		&=\<\nn g (y),\bar b(y)\> +\int_{\R^d }\big(g(y+z)-g(y)-\<\nn g (y), z\> \I_{\{|z|\le1\}}\big) \bar\nu (\d z) \\
		&= (\bar{\mathscr L}g)(y),
	\end{align*}
	where in the third identity we used the fact  that $\delta_x\ast\nu_{t,-x}=\nu_{t,x}$ for all $ x\in\R^d$; see e.g. \cite[Corollary A.2]{LW19} for more details. Therefore,  \eqref{Q11-} follows, and so the proof is complete.
\end{proof}

By \cite[Theorem 9.8]{Sato},   there are mutually independent additive processes $ ( Z_t^{(1)})_{t\ge0}$ and $ (Z_t^{(2)})_{t\ge0}$
with respective time-dependent L\'{e}vy measures
$(( \nu_t\wedge \bar\nu)(\d z))_{t\ge0}$ and $(( \nu_t-(\nu_t\wedge \bar\nu))(\d z))_{t\ge0}$ such that for all $t\ge0, $
\begin{align*}
	Z_t\overset{d}= Z_t^{(1)}+ Z_t^{(2)}.
\end{align*}
Via the   L\'{e}vy–It\^o decomposition (see e.g. \cite[Theorem 19.2, p.\ 120]{Sato}), there exists a Poisson random measure $N^{(1)}
(\d z,\d t)$
with the L\'{e}vy measure $( \nu_t\wedge \bar\nu)(\d z)\,\d t$ such that for all $t\ge0,$
\begin{align*}
	Z_t^{(1)}=\int_0^t\int_{\{|z|\le1\}}z\tilde N^{(1)}
(\d z,\d s)+\int_0^t\int_{\{|z|>1\}}z  N^{(1)}
(\d z,\d s),
\end{align*}
in which $\tilde N^{(1)}(
\d z, \d t):=N^{(1)}(
\d z, \d t)-( \nu_t\wedge \bar\nu)(\d z)\,\d t$ is the   compensated Poisson measure.
Furthermore, by following  the procedure in \cite[p.\ 3140]{LW19},
$ (Z_t^{(1)})_{t\ge0}$ can be reformulated as below: for all $t\ge0,$
\begin{align*}
	Z_t^{(1)}=&\int_0^t\int_{\{|z|\le1\}\times[0,1]}z\tilde N^{(1)}(\d z,\d u,\d s)+\int_0^t\int_{\{|z|>1\}\times[0,1]}z  N^{(1)}(\d z,\d u,\d s)\\
	=&:\int_0^t\int_{\R^d \times[0,1]}z\bar N^{(1)}(\d z,\d u,\d s),
\end{align*}
where $N^{(1)}(\d z,\d u,\d t)$ is a lifted Poisson random measure on $\R^d\times[0,1]\times[0,\infty)$ with the associated L\'{e}vy measure $( \nu_t\wedge \bar\nu)(\d z)\,\d u\,\d t$, and $\tilde N^{(1)}(\d z,\d u,\d t):=N^{(1)}(\d z,\d u,\d t)-( \nu_t\wedge \bar\nu)(\d z)\,\d u\,\d t$.

In the sequel, we consider
the following SDE on $\R^{2d}:=\R^d\times\R^d$: for any $\delta\in[0,1/2]$ and $t\ge0$,
\begin{equation}\label{Q9}
	\begin{cases}
		\d  {Y_t}^{(\delta)}=b(t,Y_t^{(\delta)})\,\d t+\d   Z_t^{ (1)}+\d   Z_t^{ (2)},\\
		\d   \overline {Y}^{(\delta)}_{\!\! t}=\bar b( \overline {Y}^{(\delta)}_{\!\! t})\,\d t+\d  Z_t^{ (1)}\\
		\quad\quad\quad\quad+\displaystyle\int_{\R^d\times[0,1]}\Lambda_t\big((Y_{t-}^{(\delta)}-  \overline {Y}_{\!\! t-}^{(\delta)})_\kk,z,u\big)\,
 N^{(1)}(\d z,\d u,\d t)+\d   Z_t^{ (3)},
	\end{cases}
\end{equation}
where for any $t\ge0$, $x,z\in\R^d$ and $u\in[0,1],$
\begin{align*}
	\Lambda_t (x,z,u):=x\big(\I_{\{u\le\delta \rho_t(-x,z)\}}-\I_{\{\delta \rho_t(-x,z)<u\le \delta (\rho_t(-x,z)+\rho_t(x,z))\}}\big)~\mbox{ with } ~\rho_t(x,z):=\frac{\d \nu_{t,x}}{\d ( \nu_t\wedge \bar\nu)}(z),
\end{align*}
and $(Z_t^{(3)})_{t\ge0}$ is an  additive process  with the  L\'{e}vy measure $((\bar\nu-(\nu_t\wedge \bar\nu))(\d z))_{t\ge0}$
that is independent of $(Z_t^{(1)})_{t\ge0}$ and $(Z_t^{(2)})_{t\ge0}$.
By following  the argument used in the proof of \cite[Proposition 2.2]{LW19},
we can obtain that
the SDE \eqref{Q9}
has a unique strong solution $(Y_t^{(\delta)} ,\overline{Y}^{(\delta)}_{\!\! t} )_{t\ge0}$.

\begin{lemma}\label{lem4}
	The infinitesimal generator of $(Y_t^{(\delta)},  \overline {Y }^{(\delta)}_{\!\! t})_{t\ge0}$ solving \eqref{Q9} is
	$ (\overline{\mathscr L}^{(\delta)}_{\!\! t})_{t\ge0} $ defined in \eqref{Q10-}. In particular, $(Y_t^{(\delta)},  \overline {Y}^{(\delta)}_{\!\! t})_{t\ge0}$ is a Markov coupling process on $\R^{2d}$ of the processes $(Y_t)_{t\ge0}$ and $(\overline {Y}_{\!\! t})_{t\ge0}$ given by \eqref{E1-} and \eqref{E2-}, respectively.
\end{lemma}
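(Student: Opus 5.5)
The plan is to apply It\^o's formula for semimartingales with jumps to $h(Y_t^{(\delta)},\overline Y^{(\delta)}_{\!\!t})$ for $h\in C_b^2(\R^{2d})$. We then identify the compensator of the resulting jump sum and check that it coincides with $\int_0^t(\overline{\mathscr L}^{(\delta)}_{\!\!s}h)(Y_s^{(\delta)},\overline Y^{(\delta)}_{\!\!s})\,\d s$ up to a martingale. The Markov coupling property then follows from Lemma \ref{lem1}: taking $h(x,y)=f(x)$ or $h(x,y)=g(y)$ shows that each marginal solves the martingale problem for $\mathscr L_t$, respectively $\bar{\mathscr L}$. Strong well-posedness of \eqref{E1-} and \eqref{E2-} then identifies the marginal laws with those of $(Y_t)_{t\ge0}$ and $(\overline Y_{\!\!t})_{t\ge0}$.

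The main step is to compute the jump contributions driven by $N^{(1)}$. At a jump $(z,u)$ of $N^{(1)}$ at time $t$, the pair jumps from $(x,y)$ to $(x+z,\,y+z+\Lambda_t((x-y)_\kk,z,u))$. Integrating in $u\in[0,1]$ against $\d u$ splits the measure $(\nu_t\wedge\bar\nu)(\d z)$ into three parts:
\begin{itemize}
\item on $\{u\le\delta\rho_t((y-x)_\kk,z)\}$ the pair moves to $(x+z,y+z+(x-y)_\kk)$, with total mass $\delta\rho_t((y-x)_\kk,z)(\nu_t\wedge\bar\nu)(\d z)=\delta\nu_{t,(y-x)_\kk}(\d z)$;
\item on the next interval, of length $\delta\rho_t((x-y)_\kk,z)$, the pair moves to $(x+z,y+z+(y-x)_\kk)$, with mass $\delta\nu_{t,(x-y)_\kk}(\d z)$;
\item on the remainder it is a common jump $(x+z,y+z)$, with the residual measure.
\end{itemize}
Here we use $\rho_t\le1$, which holds since $\nu_{t,x}\le\nu_t\wedge\bar\nu$, together with $\delta\le1/2$. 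These guarantee that the $u$-intervals lie in $[0,1]$. The jumps of $Z^{(2)}$ move only $x$, with measure $\nu_t-(\nu_t\wedge\bar\nu)$, and those of $Z^{(3)}$ move only $y$, with measure $\bar\nu-(\nu_t\wedge\bar\nu)$. Independence ensures that no simultaneous jumps occur almost surely. Summing these contributions gives exactly the five integral terms in \eqref{Q10-}, while the drift terms give $\<\nn_1h,b\>+\<\nn_2h,\bar b\>$.

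I expect the main obstacle to be the bookkeeping of the compensation indicators. In \eqref{Q9} the small jumps of $Z^{(1)}$ are compensated, but the additional jump $\Lambda_t$ is integrated against the uncompensated $N^{(1)}$. One must therefore check that the terms $\<\nn_2h,z+(x-y)_\kk\>\I_{\{|z+(x-y)_\kk|\le1\}}$ in \eqref{Q10-} come out correctly, and that all integrals converge. To do this I would write $h(x+z,y+z+w)-h(x,y)-\<\nn_1h,z\>\I_{\{|z|\le1\}}-\<\nn_2h,z\>\I_{\{|z|\le 1\}}$ and add and subtract $\<\nn_2h,z+w\>\I_{\{|z+w|\le1\}}$. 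The leftover term $\<\nn_2h, z\I_{\{|z|\le1\}}-(z+w)\I_{\{|z+w|\le1\}}\>$, with $w=\pm(x-y)_\kk$, integrated against $\delta\nu_{t,\mp w}$, cancels between the two shift terms. This uses the symmetry $\delta_w\ast\nu_{t,-w}=\nu_{t,w}$ from \cite[Corollary A.2]{LW19}, exactly as in the computation in the proof of Lemma \ref{lem1}.

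Integrability holds because $\nu_{t,w}\le\nu_t\wedge\bar\nu$, $h\in C_b^2$, and $|w|\le\kk$. Hence the $N^{(1)}$-integrals minus their compensators are genuine martingales; for $\kk=\infty$ one localizes. This is the argument of \cite[Proposition 2.2]{LW19}, adapted to time-dependent measures, and I would follow it.
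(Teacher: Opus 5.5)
Your proposal is correct and follows essentially the paper's argument: It\^o's formula for the pair, with the $u$-variable splitting $(\nu_t\wedge\bar\nu)(\d z)$ into the two shift events of masses $\delta\nu_{t,(y-x)_\kk}$ and $\delta\nu_{t,(x-y)_\kk}$ plus the common-jump remainder; the only difference is bookkeeping, since the paper absorbs the change of compensation indicators into the noise identity \eqref{E3-} imitated from \cite[(2.15)]{LW19}, whereas you handle it afterwards by adding and subtracting and then cancelling the leftover via $\delta_w\ast\nu_{t,-w}=\nu_{t,w}$. One small correction: integrability of the shift terms does not follow from $\nu_{t,w}\le\nu_t\wedge\bar\nu$, because their integrand does not vanish as $z\to0$; it follows instead from $\nu_{t,w}$ being a finite measure for $w\neq0$, as it is dominated by $\delta_w\ast(\nu_t\wedge\bar\nu)$ near the origin and by $\nu_t\wedge\bar\nu$ away from it.
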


\begin{proof}
	Below, we write $U_t^{(\delta)}  =Y_t^{(\delta)} -\overline {Y}^{(\delta)}_{\!\! t} $ for convenience, and denote by $(\tilde{\mathscr L}_t^{(\delta)} )_{t\ge0}$
 	 the infinitesimal generator of $(Y_t^{(\delta)} ,\overline {Y}^{(\delta)}_{\!\! t} )_{t\ge0}$ which is governed by   \eqref{Q9}.
	By imitating the procedure to derive \cite[(2.15)]{LW19}, we find that for all $t\ge0$,
	\begin{align}\label{E3-}
		\d & Z_t^{ (1)}+\int_{\R^d\times[0,1]}\Lambda_t((U_{t-}^{(\delta)})_{\kappa},z,u)\,N^{(1)}(\d z,\d u,\d t)\nonumber\\
		&=\int_{\R^d\times[0,1]}\Big[(z+(U_{t-}^{(\delta)})_\kk)\I_{\{u\le \delta \rho_t(-(U_{t-}^{(\delta)})_\kk,z)\}}\nonumber\\
		&\qquad\qquad\quad+(z-(U_{t-}^{(\delta)})_\kk)\I_{\{\delta\rho_t(-(U_{t-}^{(\delta)})_\kk,z)<u\le \delta (\rho_t(-(U_{t-}^{(\delta)})_\kk,z)+\rho_t((U_{t-}^{(\delta)})_\kk,z))\}}\\
		&\qquad\qquad\quad+z\I_{\{ \delta (\rho_t(-(U_{t-}^{(\delta)})_\kk,z)+\rho_t((U_{t-}^{(\delta)})_\kk,z))<u\le1\}}\Big]\,\bar N^{ (1)}(\d z,\d u,\d t)\nonumber\\
		&\quad-\int_{\R^d\times[0,1]}\Big[(z+(U_{t-}^{(\delta)})_\kk)\Big(\I_{\{|z +(U_{t-}^{(\delta)})_\kk|\le1\}}-\I_{\{|z|\le1\}}\Big)\I_{\{u\le \delta \rho_t(-(U_{t-}^\delta)_\kk,z)\}}\nonumber\\
		&\qquad\qquad\qquad+(z-(U_{t-}^{(\delta)})_\kk)\Big(\I_{\{|z +(U_{t-}^{(\delta)})_\kk|\le1\}}-\I_{\{|z|\le1\}}\Big)\nonumber\\
		&\qquad\qquad\qquad\quad\times\I_{\{ \delta\rho_t(-(U_{t-}^{(\delta)})_\kk,z)<u\le\delta (\rho_t(-(U_{t-}^{(\delta)})_\kk,z)+
			\rho_t((U_{t-}^{(\delta)})_\kk,z))\}}\Big]\,\nu_t(\d z)\,\d u\,\d t. \nonumber
	\end{align}
	From \eqref{Q9} and \eqref{E3-}, we deduce that for any $h\in C_b^2(\R^{2d})$,  $t\ge0$ and
	$x,y\in\R^d,$
	\begin{align*}
		\big(
\tilde{\mathscr L}^{(\delta)}_{\!\! t}h\big)(x,y)&=\<\nn_1h(x,y),b(t,x)\>+\<\nn_2h(x,y),\bar b(y)\>\\
		&\quad+
		\delta\int_{\R^d }\Big(h(x+z,y+z+(x-y)_\kk)-h(x,y)\\
		&\quad \qquad\qquad-\<\nn_1h(x,y),z\>\I_{\{|z|\le1\}}-\<\nn_2h(x,y),z+(x-y)_\kk\>\I_{\{|z|\le1\}}\Big)\nu_{t,(y-x)_\kk}(\d z)\\
		&\quad +\delta\int_{\R^d }\Big(h(x+z,y+z+(y-x)_\kk)-h(x,y)-\<\nn_1h(x,y),z\>\I_{\{|z|\le1\}}\\
		&\quad \qquad\qquad-\<\nn_2h(x,y),z+(y-x)_\kk\>\I_{\{|z|\le1\}}\Big)\nu_{t,(x-y)_\kk}(\d z)\\
		&\quad+ \int_{\R^d }\Big(h(x+z,y+z )-h(x,y)-\<\nn_1h(x,y),z\>\I_{\{|z|\le1\}}\\
		&\qquad\qquad\,\,-\<\nn_2h(x,y),z \>\I_{\{|z|\le1\}}\Big)\Big((\nu_t\wedge\bar\nu)-\delta\nu_{t,(x-y)_\kk}-\delta\nu_{t,(y-x)_\kk}\Big)(\d z)
		\\
		&\quad-\delta
		\int_{\R^d}\<\nn_2h(x,y), z+(x-y)_\kk\>(\I_{\{|z +(x-y)_\kk|\le1\}}-\I_{\{|z|\le1\}})\nu_{t,(y-x)_\kk}(\d z)\\
		&\quad-\delta
		\int_{\R^d}\<\nn_2h(x,y), z+(y-x)_\kk\>(\I_{\{|z +(y-x)_\kk|\le1\}}-\I_{\{|z|\le1\}})\nu_{t,(x-y)_\kk}(\d z)\\
		&\quad+\int_{\R^d}\big(h(x+z,y)-h(x,y)-\<\nn_1h(x,y),z\>\I_{\{|z|\le1\}}\big)( \nu_t-(\nu_t\wedge \bar\nu))(\d z)\\
		&\quad+\int_{\R^d}\big(h(x ,y+z)-h(x,y)-\<\nn_2h(x,y),z\>\I_{\{|z|\le1\}}\big)( \bar\nu-(\nu_t\wedge \bar\nu))(\d z)\\
		&=\big( \overline{\mathscr L}^{(\delta)}_{\!\! t}h\big)(x,y).
	\end{align*}
	Therefore, we conclude that the infinitesimal generator of $(Y_t^{(\delta)} ,  \overline {Y}^{(\delta)}_{\!\! t} )_{t\ge0}$   is
	$ (\overline{\mathscr L}^{(\delta)}_{\!\! t} )_{t\ge0} $.
The second assertion is a direct consequence of the first one, and the proof is complete.
\end{proof}

\subsection{Uniform-in-time estimates: the uniformly dissipative-type case}
In this subsection, we assume that the drift term $\bar b(x)$ in the SDE \eqref{E2-}
satisfies the uniformly dissipative-type condition.
The main result in this subsection is presented as follows.
\begin{theorem}\label{thm1} Assume that the following assumptions hold:
	\begin{itemize}
		\item[{\rm(i)}]
		There exist constants   $\lambda>0$ and $\kk\ge1$ such that for all $x,y\in\R^d,$
		\begin{align*}
			\<x-y,\bar b(x)-\bar b(y)\>\le -\lambda  |x-y|^{1+\kk }.
		\end{align*}
		\item[{\rm(ii)}]  There exists  a function  $\mathcal V: [0,\8)\times\R^d\to[0,\8)$  satisfying $\lim_{t\to \infty} \mathcal V(t,x)=0$ for all $x\in \R^d$
		so that for all $t\ge0$ and $x\in \R^d$,
		\begin{align}\label{R2}
			|b(t,x)-\bar b(x)|\le  \mathcal V(t,x).
		\end{align}
		\item[{\rm(iii)}] There exist constants   $\theta_1\in (1 ,2]$ and $\theta_2\in (0,\theta_1)$  such that for all $t\ge0$,
		\begin{equation}\label{EE6}
			\begin{split}
			H(t):=&
\int_{\{|z|\le 1\}} |z|^{\theta_1} |\nu_t-\bar\nu|(\d z) \\
&+\left(\int_{\{|z|>1\}} |z|^{\theta_2}|\nu_t-  \bar \nu|(\d z)\right)\I_{\{\theta_2\le 1\}} +\left(\int_{\{|z|>1\}} |z||\nu_t-  \bar \nu|(\d z)\right)^{\theta_2}\I_{\{\theta_2>1\}} <\infty,
			\end{split}
		\end{equation}
		where $|\nu_t-  \bar \nu|$ means the total variation of the signed measure $\nu_t-  \bar \nu$.
	\end{itemize}
	Then,   there exists a constant $C_0>0$ such that for all $t>0$ and   $\mu_1,\mu_2\in\mathcal P_{\theta_2}(\R^d)$,
	\begin{equation}\label{R4}
		\begin{split}
			\mathcal W_{|\cdot|^{\theta_1}\wedge|\cdot|^{\theta_2}}\big(\mathscr L_{Y_t^{\mu_1}},\mathscr L_{\overline Y_{\!\! t}^{\mu_2}}\big) &\le 	C_0
			\begin{cases}
		 \e^{- \lambda_*t}\mathcal W_{|\cdot|^{\theta_1}\wedge|\cdot|^{\theta_2}}(\mu_1,\mu_2)  + \int_0^t\e^{-{  \lambda_*(t-s)}}g_*(s)\,\d s ,&\kk=1,\\
		  t^{-\theta_1/{(\kk-1)}}\vee \sup_{r\ge t/2}(g_*(r))^{1/(1+(\kk-1)/{\theta_1})} ,&\kk>1,
			\end{cases}
		\end{split}
	\end{equation}
where 	$\lambda_*:= 2^{\theta_2/{\theta_1}-4}
	 \theta_2\lambda
$  and    $  g_*(t):=H(t)+   \E\mathcal V(t, Y_t^{\mu_1} )^{1\vee\theta_2}  $.
\end{theorem}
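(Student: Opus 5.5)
We use the Markov coupling $(Y_t^{(\delta)},\overline Y^{(\delta)}_{\!\!t})_{t\ge0}$ of Lemma \ref{lem4}, with initial law an optimal coupling of $(\mu_1,\mu_2)$ for $\mathcal W_{|\cdot|^{\theta_1}\wedge|\cdot|^{\theta_2}}$. The test function is a concave function of $r=|x-y|$ behaving like $r^{\theta_1}$ near $0$ and like $r^{\theta_2}$ at infinity; a convenient choice is
\[
\psi(r)=\int_0^r (1+s)^{\theta_2-\theta_1}s^{\theta_1-1}\,\d s ,
\]
which is comparable to $r^{\theta_1}\wedge r^{\theta_2}$ up to constants depending on $\theta_1,\theta_2$. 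By Lemma \ref{lem1}, $\E h(Y_t,\overline Y_t)$ with $h(x,y)=\psi(|x-y|)$ dominates the left-hand side of \eqref{R4}. It therefore suffices to derive a differential inequality for $t\mapsto \E\psi(|U_t|)$, where $U_t=Y_t^{(\delta)}-\overline Y^{(\delta)}_{\!\!t}$. This is done by applying $\overline{\mathscr L}^{(\delta)}_{\!\!t}$ from \eqref{Q10-} to $h$.

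The generator splits into three groups of terms.

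First, the drift part equals $\psi'(|x-y|)\langle \frac{x-y}{|x-y|}, b(t,x)-\bar b(y)\rangle$. We write $b(t,x)-\bar b(y)=(\bar b(x)-\bar b(y))+(b(t,x)-\bar b(x))$. Condition (i) bounds the first piece by $-\lambda\psi'(r)r^{\kk}$, and \eqref{R2} bounds the second by $\psi'(r)\mathcal V(t,x)$. Because $\psi'(r)\le c\,(1\wedge r^{\theta_2-1})$, the error is at most $c\,\mathcal V(t,x)^{1\vee\theta_2}$ when $\theta_2\le1$. When $\theta_2>1$ we use Young's inequality, absorbing part of the error into the dissipative term.

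Second, the synchronous jumps and the reflection-type jumps governed by $\nu_{t,\pm(x-y)_\kk}$ change $|x-y|$ symmetrically. Concavity of $\psi$ makes their net contribution nonpositive, so we simply discard it. We may take $\delta=0$, or any admissible $\delta$, since no extra contraction is needed beyond (i).

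Third, the unmatched jumps with measures $\nu_t-\nu_t\wedge\bar\nu$ and $\bar\nu-\nu_t\wedge\bar\nu$ are the delicate part. Their total mass is $|\nu_t-\bar\nu|$. We bound
\[
\psi(|r+z|)-\psi(r)-\psi'(r)\langle e,z\rangle
\]
by $c|z|^{\theta_1}$ for $|z|\le1$, using that $\psi$ is locally $\theta_1$-Hölder-smooth in the sense $|\psi(r+s)-\psi(r)-\psi'(r)s|\le c|s|^{\theta_1}$. For $|z|>1$ we bound it by $c|z|^{\theta_2}$ when $\theta_2\le1$, using subadditivity. When $\theta_2>1$ we bound it by $c\,\psi'(r)|z|$, and then apply Young's inequality to the resulting factor $\psi'(r)\int_{|z|>1}|z|\,|\nu_t-\bar\nu|(\d z)$ to produce exactly the $H(t)$ term in \eqref{EE6}. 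This absorption step, keeping the exponent bookkeeping consistent with $1\vee\theta_2$, is where I expect the main difficulty. Note that the compensator indicator $\I_{\{|z+(x-y)_\kk|\le 1\}}$ in \eqref{Q10-} cancels at the level of $h$ depending only on $x-y$ for the synchronous parts, so it causes no trouble.

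Combining the three groups gives
\[
\frac{\d}{\d t}\E\psi(|U_t|)\le -c\lambda\,\E\big[\psi'(|U_t|)|U_t|^{\kk}\big]+C g_*(t).
\]
If $\kk=1$, then $\psi'(r)r\ge \frac{\theta_2}{2^{4-\theta_2/\theta_1}}\psi(r)$ by a direct comparison on $r\le1$ and $r\ge1$, which yields the constant $\lambda_*$. Gronwall's inequality then gives the first line of \eqref{R4}. If $\kk>1$, we restrict to the $r^{\theta_1}$ regime and use Jensen's inequality to obtain $\E\psi'(r)r^{\kk}\gtrsim (\E\psi)^{1+(\kk-1)/\theta_1}$, at least after truncation, since $\psi\le1$ scale is not essential. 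This produces a Bernoulli-type ODE $u'\le -cu^{p}+Cg_*$ with $p=1+(\kk-1)/\theta_1$. Comparing on $[t/2,t]$ with $\sup_{r\ge t/2}g_*(r)$ gives
\[
u(t)\lesssim t^{-1/(p-1)}\vee \big(\sup_{r\ge t/2} g_*(r)\big)^{1/p},
\]
and $t^{-1/(p-1)}=t^{-\theta_1/(\kk-1)}$, which is the second line of \eqref{R4}. A localization and stopping-time argument justifies applying Itô's formula to the unbounded $h$.
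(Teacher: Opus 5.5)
The step you flag as the main difficulty does not go through, and with $H$ as in \eqref{EE6} no argument can make it go through. For $\theta_2>1$ and $|z|>1$ you bound $\psi(r+|z|)-\psi(r)$ by $c\,\psi'(r)|z|$. That would need $\psi'$ to be non-increasing, but your $\psi$ is not concave. Indeed, $\psi''(r)=(1+r)^{\theta_2-\theta_1-1}r^{\theta_1-2}\big((\theta_2-1)r+\theta_1-1\big)$. This is positive near $0$ for every $\theta_2$ (there $\psi(r)\sim r^{\theta_1}/\theta_1$), and positive for all $r$ when $\theta_2\ge1$.

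Since $\psi'(0)=0$ while $\psi(|z|)\asymp|z|^{\theta_2}$, the increment is of order $|z|^{\theta_2}$ whenever $|z|\gg|U_t|$. The error you really get is therefore $\int_{\{|z|>1\}}|z|^{\theta_2}|\nu_t-\bar\nu|(\d z)$. No Young-type absorption reduces it to $\big(\int_{\{|z|>1\}}|z|\,|\nu_t-\bar\nu|(\d z)\big)^{\theta_2}$, because \eqref{R4} is false in this regime.

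Here is a counterexample. Take $d=1$, $b=\bar b=-x$, $\bar\nu=0$, and $\nu_t=m(t)\delta_2$ with $m(t)=1/\log(\e+t)$. Take also $\mu_1=\mu_2=\delta_0$, $\theta_1=2$, $\theta_2=3/2$. Then $\overline Y_{\!\!t}\equiv0$. With probability at least $m(t)/2$ there is a jump in $[t-1,t]$, which forces $Y_t\ge 2/\e$. Hence the left side of \eqref{R4} is at least $c\,m(t)$. The right side is $\lesssim \e^{-\lambda_*t/2}+m(t/2)^{3/2}\lesssim m(t)^{3/2}$, which is smaller for large $t$.

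The paper's proof has the same hole: for $\theta_2>1$, its bound on $I_2(t)$ keeps only the region $\{1<|z|\le|U_t|\}$. This is harmless in Section~\ref{sec4}, where $|\nu_t-\nu^{(\alpha)}|$ does not charge $\{|z|>1\}$. The repair is to keep $\int_{\{|z|>1\}}|z|^{\theta_2}|\nu_t-\bar\nu|(\d z)$ in $H(t)$ for every $\theta_2$. Then $\psi'(s)\le(1+s)^{\theta_2-1}$ gives $\psi(r+a)-\psi(r)\le a(1+r)^{\theta_2-1}+a^{\theta_2}$, and Young's inequality on $\{|U_t|>1\}$ closes the estimate.

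Your drift step has a parallel defect when $\theta_2>1$ and $\kk>1$. On $\{|U_t|\le 1\}$ the error $\psi'(r)\mathcal V\asymp r^{\theta_1-1}\mathcal V$ competes only with $\lambda r^{\theta_1+\kk-1}$. Young's inequality then leaves $\mathcal V^{(\theta_1+\kk-1)/\kk}$, which is not $\lesssim \mathcal V^{\theta_2}$ once $\theta_1-1<\kk(\theta_2-1)$.

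This failure is also genuine. Take no noise, $\bar b(x)=-x|x|$ (so $\kk=2$, $\lambda=1/4$), and $b=\bar b+v(t)$ with $v(t)=1/\log(\e+t)$. Let $\mu_1=\mu_2=\delta_0$, $\theta_1=2$, $\theta_2\in(3/2,2)$. Eventually $Y_t\ge\frac12\sqrt{v(t)}$, so the distance is at least $v(t)/4$. But \eqref{R4} only gives $\lesssim t^{-2}\vee v(t/2)^{2\theta_2/3}$. The paper's bound on $I_1(t)$ likewise drops $\{|U_t|\le1\}$ when $\theta_2>1$. For $\kk=1$ the step is fine if you split according to whether $\mathcal V\le1$ or $\mathcal V>1$.

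Three smaller corrections:
\begin{enumerate}
\item Since $\psi$ is convex near $0$, reflection jumps would contribute with the wrong sign. You must take $\delta=0$. The synchronous jumps then leave $x-y$ unchanged and contribute exactly zero, so no concavity is needed.
\item For $\theta_2\le 1$, obtain the large-jump bound $\psi(r+a)-\psi(r)\le a^{\theta_2}/\theta_2$ from $\psi'(s)\le(1+s)^{\theta_2-1}$. Subadditivity of $\psi$ itself fails near $0$.
\item For $\kk>1$ no truncation is needed: $\psi'(r)r^{\kk}\ge c\,\psi(r)^{1+(\kk-1)/\theta_1}$ holds for all $r\ge0$, exactly as in \eqref{EW--}.
\end{enumerate}
With these changes your argument is essentially the paper's. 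It proves \eqref{R4} for $\theta_2\le1$, and for $\theta_2>1$ only under the restrictions described above.
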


Before proving Theorem \ref{thm1}, it is indispensable to establish  the following   lemma.

\begin{lemma}\label{lem0}
	Let $\theta\in(1,2]$ and
	$h_\theta(x)=|x|^{\theta-2}x$
	for all
	$x\in\R^d.$ Then, for any $x,y\in\R^d,$
	\begin{align}\label{WW2}
		|h_\theta(x)-h_\theta(y)|\le(2^{\theta-1}+3^{\theta-1})|x-y|^{\theta-1} .
	\end{align}
	That is, $\R^d\ni x\mapsto h_\theta(x)$ is  H\"older continuous with the H\"older index $\theta-1.$
\end{lemma}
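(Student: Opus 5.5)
We need to prove that $h_\theta(x)=|x|^{\theta-2}x$ is $(\theta-1)$-Hölder continuous with constant $2^{\theta-1}+3^{\theta-1}$.

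The case $\theta=2$ is trivial, since then $h_\theta(x)=x$. So we take $\theta\in(1,2)$ and set $s:=\theta-1\in(0,1)$. By symmetry in $x$ and $y$ we may assume $|x|\ge|y|$. If $y=0$, then $|h_\theta(x)|=|x|^{s}$ and we are done; the case $x=y=0$ is likewise trivial. Otherwise we split into two regimes according to the size of $|x-y|$ relative to $|x|$.

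\emph{Far regime: $|x-y|\ge |x|/2$.} Bound each term separately. Since $|y|\le|x|$,
\[
|h_\theta(x)-h_\theta(y)|\le |x|^{s}+|y|^{s}\le 2|x|^{s}\le 2\cdot 2^{s}|x-y|^{s}.
\]
We have $2\cdot 2^{s}=2^{\theta}\le 2^{\theta-1}+3^{\theta-1}$ because $3^{s}\ge 2^{s}$. This gives the claim in this regime.

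\emph{Near regime: $|x-y|<|x|/2$.} Write
\[
h_\theta(x)-h_\theta(y)=|x|^{s-1}(x-y)+\big(|x|^{s-1}-|y|^{s-1}\big)y .
\]
The first term has norm $|x|^{s-1}|x-y|$. Since $s-1<0$ and $|x-y|<|x|$, we get $|x|^{s-1}|x-y|=|x-y|^{s}\,(|x-y|/|x|)^{1-s}\le |x-y|^{s}$.

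For the second term, note that $|y|\ge|x|-|x-y|>|x|/2$. The mean value theorem applied to $r\mapsto r^{s-1}$ on $[|y|,|x|]$ gives
\[
\big||x|^{s-1}-|y|^{s-1}\big|\le (1-s)\,|y|^{s-2}\,\big||x|-|y|\big|\le (1-s)|y|^{s-2}|x-y|.
\]
Hence the second term is at most $(1-s)|y|^{s-1}|x-y|\le (1-s)2^{1-s}|x|^{s-1}|x-y|$. By the same absorption argument as for the first term, this is bounded by $2^{1-s}|x-y|^{s}\le 2|x-y|^{s}$.

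Adding the two pieces gives a bound of $3|x-y|^{s}$ in the near regime. It remains to check that $3\le 2^{s}+3^{s}$. This is the only place requiring care with the constant, and it fails as $s\to0$, where the right-hand side tends to $2$. So the near-regime bound must be sharpened.

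\emph{Sharpening the near regime.} Avoid the crude step $(1-s)2^{1-s}\le 2$. Instead, combine the two terms directly as
\[
\big(1+(1-s)2^{1-s}\big)\,|x|^{s-1}|x-y| ,
\]
and use the stronger estimate $|x|^{s-1}|x-y|\le 2^{s-1}|x-y|^{s}$, valid because $|x-y|/|x|<1/2$. This yields the bound $\big(2^{s-1}+(1-s)\big)|x-y|^{s}\le 2|x-y|^{s}$.

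Finally, $2\le 2^{s}+3^{s}$ holds for all $s\ge 0$, since each of $2^{s}$ and $3^{s}$ is at least $1$. Together with the far regime, this proves \eqref{WW2}.
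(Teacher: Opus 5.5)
Your proof is correct, and its skeleton matches the paper's. Both split into a near and a far regime, use $|h_\theta(x)|+|h_\theta(y)|$ in the far regime, and use a first-order estimate in the near regime.

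The difference is in the near regime. The paper splits on $|y|\ge 2|x-y|$ versus $|y|\le 2|x-y|$, without ordering $|x|$ and $|y|$. It writes $h_\theta(x)-h_\theta(y)=\int_0^1\nabla h_\theta(y+u(x-y))(x-y)\,\d u$ and bounds the Jacobian $|z|^{\theta-2}I_d+(\theta-2)|z|^{\theta-4}z\otimes z$ by $(3-\theta)|z|^{\theta-2}$. Since the segment stays at distance at least $|x-y|$ from the origin, this gives the constant $3-\theta$.

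You instead split $h_\theta(x)-h_\theta(y)=|x|^{s-1}(x-y)+(|x|^{s-1}-|y|^{s-1})y$ and need only the scalar mean value theorem for $r\mapsto r^{s-1}$. This avoids differentiating the vector field and checking that the segment misses the origin. Your sharpened near-regime constant $2^{s-1}+1-s\le 3/2$ is also slightly better than $3-\theta=2-s$.

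Your normalisation $|x|\ge|y|$ also improves the far-regime constant from $2^{\theta-1}+3^{\theta-1}$ to $2^{\theta}$. So your argument in fact yields $|h_\theta(x)-h_\theta(y)|\le 2^\theta|x-y|^{\theta-1}$.

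In a final write-up, drop the intermediate bound $3|x-y|^{s}$, which is a dead end, and go directly to the sharpened estimate.
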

\begin{proof}
The proof is split into two cases.
	
	Case (i): {\it $x,y\in\R^d$ with $|y|\ge 2|x-y|$.} For this case, we obviously have for any $s\in[0,1],$
	$$
	|y+s(x-y)|\ge|y|-|x-y|\ge|x-y|.
	$$
	Note   that
	\begin{align*}
		h_\theta(x)-h_\theta(y)&=\int_0^1(\nn_{x-y}h_\theta)(y+s(x-y))\,\d s\\
		&=\int_0^1\big[|y+s(x-y)|^{\theta-2}I_d\\
		&\quad\quad\quad+(\theta-2)|y+s(x-y)|^{\theta-4} (y+s(x-y))\otimes(y+s(x-y))\big](x-y) \,\d s.
	\end{align*}
	Whence, we obtain from  $\theta\in(1,2]$ that
	\begin{align*}
		|h_\theta(x)-h_\theta(y)|
		\le(3-\theta)\int_0^1 |y+s(x-y)|^{\theta-2}|x-y|\,\d s \le (3-\theta)|x-y|^{\theta-1}.
	\end{align*}
	
	Case (ii): {\it $x,y\in\R^d$ with $|y|\le 2|x-y|$.} In this case, via the triangle inequality, it follows that
	\begin{align*}
		|x|\le|y|+|x-y|\le 3|x-y|.
	\end{align*}
	Thus, by means of the triangle inequality once more, we find from $\theta\in(1,2]$ that
	\begin{align*}
		|h_\theta(x)-h_\theta(y)|\le|h_\theta(x)|+|h_\theta(y)|
		\le |x|^{\theta-1}+|y|^{\theta-1}\le (2^{\theta-1}+3^{\theta-1})|x-y|^{\theta-1}.
	\end{align*}

	Therefore, combining both estimates above yields the assertion \eqref{WW2} as long as $\theta\in(1,2]$. 	
\end{proof}

Below, we establish a Gronwall-type inequality which plays an indispensable role in
studying the long-term behavior of SDEs with super-dissipativity (i.e.,
$\kappa>1$ in Theorem \ref{thm1} (i)).

\begin{lemma}\label{lemma-*}
Let    $f:[0,\infty)\to[0,\infty)$ be absolutely continuous, and $g_*:[0,\infty)\to[0,\infty)$
satisfy the following differential inequality: for some constants $c>0$ and $\theta\ge1,$
\begin{align}\label{WP-1}
  f'(t)\le -cf(t)^\theta+g_*(t) ,\quad \mbox{ a.e. } \, t>0.
\end{align}
Then,
 for all $t>0,$
\begin{align}\label{WR-1}
f(t)\le
\begin{cases}
 \e^{-ct}f(0)+\int_0^t\e^{-c(t-s)}g_*(s)\,\d s,&\theta=1,\\
 c_0\big(t^{-1/{(\theta-1)}}\vee \sup_{r\ge t/2} (g_*(r))^{1/\theta}\big),&\theta>1,
 \end{cases}
\end{align} where $$c_0:=((\theta-1)/4)^{-1/(\theta-1)}\vee (2/c)^{1/\theta}.$$
\end{lemma}

\begin{proof}
In case of $\theta=1,$	the assertion \eqref{WR-1} follows from the classical Gronwall inequality. In the sequel, we focus on proving  \eqref{WR-1} for the case $\theta>1.$
		
In the following analysis, we  fix  $s\ge0 $  and set
\begin{align*}
r_s:=(2G(s)/c)^{1/\theta}\quad \mbox{ with } \quad G(s):=\sup_{r\ge s}g_*(r).
\end{align*}
First of all, we show the subsequent statement ({\bf A}): $f(t)\le r_s$ for all $t\ge t_0$ provided that
there exists a constant $t_0\ge s$ such that  $f(t_0)\le r_s$.
To this end, we adopt
a contradiction  argument.
We assume that there exists a constant $t_1>t_0$ such that $f(t_1)>r_s$, and subsequently  define
\begin{align*}
\tau_1=\sup\{t\in[t_0,t_1]: f(t)=r_s\}.
\end{align*}
Thus, the absolute continuity of $f(t)$ (which definitely implies that  $[0,\infty)\ni t\mapsto f(t)$ is continuous) enables us to
conclude  that $f(\tau_1)=r_s$ and $f(t)>r_s$ for all $t\in(\tau_1,t_1]$. This yields
that for all $t\in(\tau_1,t_1]$,
\begin{align*}
f'(t)\le -cf(t)^\theta+g_*(t)<-cr_s^\theta+g_*(t)=-2G(s)+g_*(t)\le0.
\end{align*}
Whence, $(
\tau_1,t_1]\ni t\mapsto f(t)$ is strictly decreasing almost everywhere, which contradicts
the fact that $f(t)>r_s$ for all $t\in(
\tau_1,t_1]$. Therefore, the statement $({\bf A})$ established above holds.

On the one hand, we assume the initial value $f(s)\le r_s$. Define
\begin{align*}
\tau_2=\inf\{t\ge s: f(t)=r_s\}.
\end{align*}
Obviously, we have $f(t)\le r_s$ for all $t\in[s,\tau_2]$ according to the definition of $\tau_2$ and also $f(t)\le r_s$ for all $t>\tau_2 $ by taking the  statement $({\bf A})$ into consideration. So, if $f(s)\le r_s$, we infer that $f(t)\le r_s$ for all $t\ge s.$

On the other hand, we assume that the initial value $f(s)>r_s$.  In this case, we have
$f(t)>r_s$ for all $t\in[s,\tau_2)$, so $cf(t)^\theta/2> G(s) $ for all $t\in[s,\tau_2)$. Whereafter,
we derive from \eqref{WP-1} and the continuity of $f(t)$ that for all $t\in[s,\tau_2)$,
\begin{align*}
f'(t)\le -cf(t)^\theta+G(s)\le-\frac{1}{2}cf(t)^\theta,
\end{align*}
which indeed is a Bernoulli-type differential inequality. As a result, we arrive at
\begin{align*}
f(t)\le \big(f(s)^{1-\theta}+(\theta-1)(t-s)/2\big)^{-1/{(\theta-1)}},\quad \forall\, t \in[s,\tau_2).
\end{align*}
This, together with the statement $({\bf A})$, leads to the subsequent estimate: for any $t\ge s,$
\begin{align*}
f(t)&=f(t)\I_{\{t\in[s,\tau_2)\}}+f(t)\I_{\{t\ge \tau_2 \}}\\
&\le \big(f(s)^{1-\theta}+(\theta-1)(t-s)/2\big)^{-1/{(\theta-1)}}\I_{\{t\in[s,\tau_2)\}}+r_s\I_{\{t\ge \tau_2 \}}\\
&\le  \big(f(s)^{1-\theta}+(\theta-1)(t-s)/2\big)^{-1/{(\theta-1)}}\vee r_s.
\end{align*}

Based on the preceding analysis, by taking $s=t/2,$ we deduce from $\theta>1$ that for all $t>0,$
\begin{align*}
	f(t)
 \le  \big( (\theta-1) t /4\big)^{-1/{(\theta-1)}}\vee \Big(\frac{2}{c}  \sup_{r\ge t/2}g_*(r)\Big)^{1/\theta}.
\end{align*}
Accordingly, the assertion \eqref{WR-1} follows immediately for the case $\theta>1.$
\end{proof}

With the aid of  Lemma \ref{lem0} and Lemma \ref{lemma-*},  we proceed to accomplish the

\begin{proof}[Proof of Theorem $\ref{thm1}$]
	Since the drift $\bar b(x)$
satisfies the uniformly dissipative-type condition, we apply the synchronous coupling approach to carry out the proof of Theorem \ref{thm1}, i.e., we take the parameter  $\delta=0$ involved in  \eqref{Q10-}. Accordingly, the associated coupling   is based on
	the following relationship:
	for all $t\ge0$ and $x,y\in\R^d,$
	\begin{equation*}
		(x,y)\rightarrow
		\begin{cases}
			(x+z,y+z),\quad\quad&(\nu_t\wedge\bar\nu)(\d z),\\
			(x+z,y),\quad\quad&(\nu_t-(\nu_t\wedge\bar\nu))(\d z),\\
			(x,y+z), \quad\quad& (\bar\nu-(\nu_t\wedge\bar\nu))(\d z).
		\end{cases}
	\end{equation*}
	
	By virtue of Lemma \ref{lem1} and Lemma \ref{lem4}, $(Y_t^{(0)},\overline{Y}_{\!\! t}^{(0)})_{t\ge0}$  solving
	\eqref{Q9}
	is a coupling process of
$(Y_t)_{t\ge0}$ and $(\overline{Y}_{\!\! t})_{t\ge0}$
defined by \eqref{E1-} and \eqref{E2-}, respectively.
 Below, we shall write $(Y_t ,\overline{Y}_{\!\! t})_{t\ge0}$ instead of $(Y_t^{(0)},\overline{Y}_{\!\! t}^{(0)})_{t\ge0}$ for notational simplicity.
	For $c_1:=  2^{1- \theta_2/{\theta_1 }} \theta_1 /{\theta_2}$,
	let
	\begin{equation}\label{EW11}
		h(r)=
		\begin{cases}
			r,& 0\le r\le 1,\\
			1+c_1\big((1+r)^{ \theta_2/{ \theta_1}  }-2^{ \theta_2/{ \theta_1}  }\big),& r>1.
		\end{cases}
	\end{equation}
	According to the expression  of $c_1$, it is easy to see that $[0,\8)\ni r\mapsto h (r)$
	is a $C^1$-function.  For  $U_t:=Y_t-\overline{Y}_{\!\! t}$,
	we obviously obtain from \eqref{Q9} with $\delta=0$ that for all $t\ge0,$
	\begin{align*}
		\d U_t =\big(b(t,Y_t )-\bar b(\overline{Y}_{\!\! t} )\big)\,\d t +\d   Z_t^{  (2)}-\d   Z_t^{(3)}.
	\end{align*}
	Thus,  applying  It\^o's formula yields that for all $t\ge0,$
	\begin{align*}
		\d h (|U_t |^{\theta_1}) &=\d M_t+
\theta_1
h'(|U_t |^{\theta_1})|U_t |^{ \theta_1-2}  \<U_t , b(t,Y_t )-\bar b(\overline{Y}_{\!\! t} )\> \,\d t\\
		&\quad+ \int_{\R^d}\big(h ( |U_t +z|^{\theta_1}) - h (|U_t |^{\theta_1}) -
\theta_1
h'(|U_t |^{\theta_1}) |U_t |^{ \theta_1-2} \<U_t ,z\>\I_{\{|z|\le1\}}\big)\\
&\quad\quad\quad\quad\times ( \nu_t-(\nu_t\wedge \bar\nu))(\d z)\,
		\d t
		\\
		&\quad + \int_{\R^d}\big( h ( |U_t -z|^{\theta_1}) - h ( |U_t |^{\theta_1}) -
\theta_1
h'( |U_t |^{\theta_1}) |U_t |^{ \theta_1-2} \<U_t ,z\>\I_{\{|z|\le1\}}\big)\\
&\quad\quad\quad\quad\times  (\bar\nu- (\nu_t\wedge\bar\nu))(\d z)\,
		\d t
		\\
		&=:\d M_t+\big(I_1(t)+I_2(t) +I_3(t) \big)\,\d t,
	\end{align*}
	where $(M_t )_{t\ge0}$ is a martingale.

	Below, we aim at estimating the terms $I_1(t)$, $I_2(t)$ and $I_3(t)$, separately. From (i) and \eqref{R2},
	it is ready to see that
	for any $x,y\in\R^d$ and $t\ge0,$
	\begin{align*}
		|x-y|^{ \theta_1-2} \<x-y,b (t,x)-\bar b(y)\>
		&\le  -\lambda  |x-y|^{\theta_1+\kk-1}+  |x-y|^{\theta_1-1}\mathcal V(t, x).
	\end{align*}
	Next, the definition of $h $ enables us to derive that
	\begin{equation}\label{EW--}
\begin{split}
		h'(r^{\theta_1})r^{\theta_1+\kk-1}=&
		\begin{cases}
			h (r^{\theta_1})^{1+(\kk-1)/{\theta_1}},&  0\le r\le 1, \\
			c_1\theta_2(1+r^{\theta_1})^{ \theta_2/{\theta_1}-1}r^{\theta_1 +\kk-1}/{\theta_1},& r\ge1
		\end{cases}\\
\ge &\begin{cases}h (r^{\theta_1})^{1+(\kk-1)/{\theta_1}},&  0\le r\le 1, \\
 2^{ \theta_2/{\theta_1}-1}
 (2\theta_1/{\theta_2})^{-1-(\kappa-1)/{\theta_2}}	h (r^{\theta_1})^{1+(\kk-1)/{\theta_2}}, &r>1,
\end{cases}\\
\ge&
2^{ \theta_2/{\theta_1}-1}
(\theta_2/{(2\theta_1)})^{1+(\kk-1)/{\theta_2}}h (r^{\theta_1})^{1+(\kk-1)/{\theta_1}},
\end{split}
\end{equation}
	where in the
first
inequality we employed the fact that for all $r\ge1,$
	\begin{align*}
		\frac{(1+r^{\theta_1})^{ \theta_2/{\theta_1}-1}r^{\theta_1+\kk-1 }}{ (1+c_1 ((1+r^{\theta_1})^{\theta_2/{\theta_1}}-2^{\theta_2/{\theta_1}} ))^{1+(\kk-1)/{\theta_2}} }  \ge  2^{ \theta_2/{\theta_1}-1} (2\theta_1/{\theta_2})^{-1-(\kappa-1)/{\theta_2}},
	\end{align*}
and the second inequality is valid by taking advantage  of $\theta_2\in(0,\theta_1)$ and
$h(r^{\theta_1})\ge1$ for all $r\ge1$. 	
Furthermore,  it follows from $\theta_2/{\theta_1}\in(0,1)$ and $\theta_1>1$ that
 for all $r\ge0,$
	\begin{align*}
		h'(r^{\theta_1})r^{\theta_1-1}&=
		\begin{cases}
			r^{\theta_1-1},\quad &r\in[0,1],\\
			2^{1-\theta_2/{\theta_1}}(1+r^{\theta_1})^{ \theta_2/{\theta_1}-1}r^{\theta_1-1},\quad &r>1,
		\end{cases}\\
		&\le \begin{cases}
			r^{\theta_1-1},\quad &r\in[0,1],\\
2^{1-\theta_2/{\theta_1}}r^{\theta_2-1},\quad &r>1,
		\end{cases}\\
&\le \begin{cases}
			2^{1-\theta_2/{\theta_1}},\quad &\theta_2\in (0,1],\\
2^{1-\theta_2/{\theta_1}}r^{\theta_2-1},\quad &\theta_2>1, r>1.
		\end{cases}
	\end{align*}
	Consequently,
	$I_1(t)$ can be dominated  as follows: for some constant $c_2>0 $ and all $t\ge0,$
	\begin{align*}
		I_1(t) &\le   - \lambda_* h (|U_t|^{\theta_1})^{1+(\kk-1)/{\theta_1}} +
\theta_1
2^{1-\theta_2/{\theta_1}}\mathcal V(t, Y_t)\I_{\{\theta_2\in(0,1]\}}\\
&\quad+
\theta_1
2^{1-\theta_2/{\theta_1}}|U_t|^{\theta_2-1}\mathcal V(t, Y_t)\I_{\{|U_t|>1,\theta_2>1\}}\\
		&\le -\frac{3}{4}\lambda_*h (|U_t|^{\theta_1})^{1+(\kk-1)/{\theta_1}} +
c_2\mathcal V(t, Y_t)^{1\vee\theta_2},
	\end{align*}
	where $\lambda_*:=
	2^{ \theta_2/{\theta_1}-1}
	\theta_1\lambda(\theta_2/{(2\theta_1)})^{1+(\kk-1)/{\theta_2}}$.
In the previous estimate, to obtain the second inequality,
we used   the subsequent  fact that
for some constant $c_3>0 $ and all $t\ge0,$
\begin{equation}\label{e:AD}
\begin{split}	
 &|U_t|^{\theta_2-1}\mathcal V(t, Y_t)\I_{\{|U_t|>1,\theta_2>1\}}\\ &\le \Big( \frac{1}{ \theta_1}2^{\theta_2/{\theta_1}-3}\lambda_*|U_t|^{\theta_2 }+c_3\mathcal V(t, Y_t)^{\theta_2}\Big)\I_{\{|U_t|>1,\theta_2>1\}}  \\
	&\le \Big( \frac{1}{ \theta_1}2^{\theta_2/{\theta_1}-3}\lambda_*h (|U_t|^{\theta_1})^{1+(\kk-1)/{\theta_1}} +c_3\mathcal V(t, Y_t)^{\theta_2}\Big)\I_{\{|U_t|>1,\theta_2>1\}},
	\end{split}
\end{equation}
where the first inequality follows from Young's inequality and the second inequality
holds true by noting that $r^{\theta_2}/h(r^{\theta_1})\le 1$ for all $r\ge1.$

	By virtue of $\theta_1\in(1,2]$ and $\theta_2\in(0,\theta_1)$,
	it follows that $[0,\8)\ni r\mapsto h'(r)$ is decreasing  so that for any $a,b\ge0,$
	\begin{align*}
		h (b)-h (a)\le h' (a)(b-a).
	\end{align*}
	On the other hand,  it holds from $1-c_12^{\theta_2/{\theta_1}}=1-2\theta_1/{\theta_2}<0$
that for some constant $c_4>0 $ and all  $x,z\in \R^d$ with $|z|\ge1$,
	\begin{align*}h ( |x+z|^{\theta_1}) - h (|x|^{\theta_1})&\le h ( (|x|+|z|)^{\theta_1}) - h (|x|^{\theta_1})\\
		&\le \begin{cases}h ( (1+|z|)^{\theta_1}),&\quad |x|\le 1,\\
			c_1\left((1+(|x|+|z|)^{\theta_1})^{{\theta_2}/{\theta_1}}-(1+|x|^{\theta_1})^{{\theta_2}/{\theta_1}}\right),&\quad |x|>1,\end{cases}\\
		&\le \begin{cases} c_1(1+ (1+|z|)^{\theta_1})^{ {\theta_2}/{\theta_1}},&\quad |x|\le 1,\\
			c_1(\theta_1(|x|+|z|)^{\theta_1-1}|z|)^{ {\theta_2}/{\theta_1}},&\quad 1\le|x|\le|z|,\\
			c_1\theta_2(1+|x|^{\theta_1})^{{\theta_2}/{\theta_1}-1}(|x|+|z|)^{\theta_1-1}|z|,&\quad |x|>|z|,\end{cases}\\
		&\le  \begin{cases}  c_4|z|^{\theta_2},&\quad |x|\le |z|,\\
			c_4|x|^{\theta_2-1} |z|,&\quad |x|\ge |z|,\end{cases}\\
&\le \begin{cases}  c_4|z|^{\theta_2},&\quad \theta_2\le 1,\\
			c_4|x|^{\theta_2-1} |z|,&\quad \theta_2>1,|x|\ge |z|,\end{cases}
	\end{align*}
	where in the third inequality we   used  the basic inequality:  $(a+b)^\kappa\le a^\kappa+b^\kappa $ for all $a,b\ge0$ and $\kappa\in (0,1)$, the fact that
	\begin{align}\label{J-}
		(|x|+|z|)^{\theta_1}-  |x|^{\theta_1}\le \theta_1(|x|+|z|)^{\theta_1-1}|z|
	\end{align}
	by applying  the mean value theorem and taking $\theta_1\in(1,2]$ into consideration, as well as that
	\begin{align*}
		(1+(|x|+|z|)^{\theta_1})^{ {\theta_2}/{\theta_1}}-(1+|x|^{\theta_1})^{ {\theta_2}/{\theta_1}}&\le \frac{\theta_2}{\theta_1}(1+|x|^{\theta_1})^{{\theta_2}/{\theta_1}-1}\big((|x|+|z|)^{\theta_1}-|x|)^{\theta_1}\big)\\
		&\le \theta_2(1+|x|^{\theta_1})^{ {\theta_2}/{\theta_1}-1}(|x|+|z|)^{\theta_1-1}|z|
	\end{align*}
	by invoking $\theta_2/{\theta_1}\in(0,1)$  and \eqref{J-}.
	Subsequently, by making use of the aforementioned   facts,
	we derive from Lemma \ref{lem0}  that for some constants  $c_5,c_6 >0 $ and all $t\ge0,$
	\begin{align*}
		I_2(t)&=  \int_{\{|z|\le1\}}\Big(h ( |U_t+z|^{\theta_1}) - h (|U_t|^{\theta_1}) -
\theta_1
h'(|U_t|^{\theta_1})|U_t|^{ \theta_1-2} \<U_t,z\> \Big)\,(\nu_t-(\nu_t\wedge \bar\nu))(\d z)\nonumber\\
		&\quad+  \int_{\{|z|>1\}}\Big(h ( |U_t+z|^{\theta_1}) - h (|U_t|^{\theta_1})\Big)\,(\nu_t-(\nu_t\wedge \bar\nu))(\d z)\nonumber\\
		&\le  h'(|U_t|^{\theta_1})\int_{\{|z|\le1\}}\big(  |U_t+z|^{\theta_1}  -
 |U_t|^{\theta_1}  -
 \theta_1
 |U_t|^{ \theta_1-2 }\<U_t,z\>\big)\,(\nu_t-(\nu_t\wedge \bar\nu))(\d z)\nonumber\\
		&\quad+ c_4
		\I_{\{\theta_2\in(0,1]\}}
		\int_{\{|z|>1\}} |z|^{\theta_2}(\nu_t-(\nu_t\wedge \bar \nu))(\d z)\\
		&\quad+ c_4
\I_{\{|U_t|\ge1,\theta_2>1\}} |U_t|^{\theta_2-1}\int_{\{1<|z|\le |U_t|\}}|z|\,(\nu_t-(\nu_t\wedge \bar\nu))(\d z)\\
		&\le
\theta_1
h'(|U_t|^{\theta_1})\int_{\{|z|\le 1\}}\int_0^1\big( |U_t+sz|^{ \theta_1-2} \<U_t+sz,z\> -|U_t|^{ \theta_1-2} \<U_t,z\> \big)\,\d s\,(\nu_t-(\nu_t\wedge \bar\nu))(\d z)\nonumber\\
		&\quad+  c_5
			\I_{\{\theta_2\in(0,1]\}}
		\int_{\{|z|>1\}} |z|^{\theta_2}(\nu_t-(\nu_t\wedge \bar\nu))(\d z) \\
		&\quad+c_5\left(\int_{\{|z|>1\}} |z|(\nu_t-(\nu_t\wedge \bar\nu))(\d z)\right)^{\theta_2}\I_{\{\theta_2>1\}}+ \frac{1}{4}\lambda_*  h (|U_t|^{\theta_1})^{1+(\kk-1)/{\theta_1}}\nonumber\\
		&\le  c_6 \int_{\R^d}\big(|z|^{\theta_1}\I_{\{|z|\le1\}}+|z|^{\theta_2}\I_{\{|z|>1\}}\I_{\{\theta_2\in(0,1]\}}\big)
		 (\nu_t-(\nu_t\wedge \bar\nu))(\d z) \\
		&\quad+ c_6\left(\int_{\{|z|>1\}} |z|(\nu_t-(\nu_t\wedge \bar\nu))(\d z)\right)^{\theta_2}\I_{\{\theta_2>1\}} +\frac{1}{4}\lambda_*  h (|U_t|^{\theta_1})^{1+(\kk-1)/{\theta_1}},\nonumber
	\end{align*} where the second
	inequality is available by following the strategy to derive
\eqref{e:AD}, and
	the last inequality is valid by invoking  $h'(r)\le1$.
	Next, by repeating the preceding procedure to estimate $I_2(t)$, we also have that for some constant $c_7>0 $ and all $t\ge0,$
	\begin{align*}
		I_3(t)\le & c_7 \int_{\R^d} \big(|z|^{\theta_1}\I_{\{|z|\le1\}}+|z|^{\theta_2}\I_{\{|z|>1\}}\I_{\{\theta_2\in(0,1]\}}\big)
		 (\bar\nu -(\nu_t\wedge \bar\nu))(\d z)
		 \\
		&+ c_7\left(\int_{\{|z|>1\}} |z|(\bar\nu-(\nu_t\wedge \bar\nu))(\d z)\right)^{\theta_2}\I_{\{\theta_2>1\}}+\frac{1}{4}\lambda_* h (|U_t|^{\theta_1})^{1+(\kk-1)/{\theta_1}}.
	\end{align*}

	Using the preceding analysis together with the fact
  that $|\nu_t-\bar \nu|=(\nu_t-(\nu_t\wedge \bar \nu))+(\bar \nu-(\nu_t\wedge \bar \nu))$,
	we deduce from Jensen's inequality that for some constant $c_8>0 $ and all $t\ge0,$
	\begin{align*}
		\d \E h ( |U_t |^{\theta_1})
		&\le   \Big[ -\frac{1}{4}\lambda_*  \E h (|U_t|^{\theta_1})^{1+(\kk-1)/{\theta_1}} +c_8g_*(t)
		\Big]\,\d t\\
		&\le \Big[ -\frac{1}{4}\lambda_*  \big(\E h (|U_t|^{\theta_1})\big)^{1+(\kk-1)/{\theta_1}} +c_8g_*(t)
		\Big]\,\d t.
	\end{align*}
	Whereafter,  an application of Lemma \ref{lemma-*} yields  that for all $t\ge0,$
	\begin{align*}
		\E h ( |U_t |^{\theta_1})\le
		\begin{cases}\e^{-\frac{1}{4}\lambda_*t}\E h ( |U_0 |^{\theta_1})+c_9\int_0^t\e^{-\frac{1}{4}\lambda_*(t-s)}g_*(s)\,\d s,&\kk=1,\\
		 c_9\big(t^{-\theta_1/{(\kk-1)}}\vee \sup_{r\ge t/2}g_*(r)^{1/(1+(\kk-1)/{\theta_1})}\big),&\kk>1.
		 \end{cases}
	\end{align*}
	Then,   \eqref{R4} follows by noticing that there exist constants $c_{10},c_{11}>0$ such that for all $r\ge0,$
\begin{align}\label{EW-*}
 c_{10}( r \wedge r^{\theta_2/{\theta_1}})\le h (r )\le c_{11}( r \wedge r^{\theta_2/{\theta_1}})
 \end{align}
	and taking  $(Y_0,\overline Y_{\!\!0})$ such that $\E  (|Y_0-\overline {Y}_{\!\! 0}|^{\theta_1} \wedge|Y_0-\overline {Y}_{\!\! 0}|^{\theta_2}) =\mathcal W_{|\cdot|^{\theta_1}\wedge|\cdot|^{\theta_2}}(\mu_1,\mu_2)$.
\end{proof}

\begin{remark}
	In Theorem \ref{thm1}, $ \bar b(x)$ is assumed  to be uniformly dissipative
(i.e., $\theta=1$ in Assumption {\rm(i)}). For this setting,
	it is quite natural to choose the Euclidean distance $|x-y|$ (which induces the $L^1$-Wasserstein distance); that is, one may take $\theta_1=\theta_2=1$ in the proof above. If so,
then
 the term
	$\psi(x,z):=|x+z|-|x|-\<x/|x|,z\>$ will appear in estimates on $I_2(t)$ and $I_3(t)$. Obviously, in the general setting, we
 only
 have $|\psi(x,z)|\le2|z|$, and so
 both
 $\int_{\{|z|\le1\}}|z|(\nu_t-\nu_t\wedge\bar\nu)(\d z)$ and $\int_{\{|z|>1\}}|z|(\nu_t-\nu_t\wedge\bar\nu)(\d z)$ will be involved in the corresponding estimates. Nevertheless, they
	are not integrable simultaneously in most   cases, for example, even when driven  noises $(Z_t)_{t\ge0}$ and $(\bar Z_t)_{t\ge0}$
	are distinct and have symmetric $\alpha$-stable-like distributions with $\alpha\in (0,2)$. Based on this, in order to make Theorem \ref{thm1} more practical, it is necessary to amend   the associated (quasi)-distance function so   we
	construct a novel
	function $h $ given in \eqref{EW11}.
\end{remark}

\subsection{Uniform-in-time estimates: the partially dissipative-type case}
In this subsection, we move from the uniformly dissipative-type case to the    partially dissipative-type setup. More precisely, we have the following statement.

\begin{theorem}\label{thm3}
	Assume that the following conditions are satisfied:
	\begin{itemize}
		\item[{\rm(i)}] There are constants $\lambda_1,\lambda_2>0$,
		 $\alpha_*\in(0,1]$
		 and $\ell_0\ge1$ such that for all $x,y\in\R^d,$
		\begin{align*}
			\<x-y,\bar b(x)-\bar b(y)\>\le   \lambda_1\I_{\{|x-y|\le\ell_0\}}|x-y|^{1+\alpha_*}-\lambda_2 \I_{\{|x-y|>\ell_0\}}|x-y|^2.
		\end{align*}
		\item[{\rm(ii)}] \eqref{R2} is  satisfied, i.e., there is a function  $\mathcal V: [0,\8)\times\R^d\to[0,\8)$  satisfying $\lim_{t\to \infty} \mathcal V(t,x)=0$ for all $x\in \R^d$
		so that for all $t\ge0$ and $x\in \R^d$,
		$$
		|b(t,x)-\bar b(x)|\le  \mathcal V(t,x).
		$$
		\item[{\rm(iii)}]There are constants $c^*>0$,
		$\beta\in (1-\alpha_*,
		1)$, $\kappa,\theta\in (0,1]$ and a continuous  function $h:[0,\infty)\times [0,\kappa]\rightarrow [0,c^*]$ satisfying that $\lim_{t\to\infty} h(t,r)=0$ for all $r\in [0,\kappa]$ so that
		for all $r\in (0,\kappa]$ and $t\ge0$,  \begin{align}\label{R5--}
			\inf_{|x|\le r}\nu_{t,x}(\R^d) \ge r^{-\beta} (c^*-h(t,r) ),
		\end{align}
		and for all
$t\ge0$,
		\begin{align}\label{WY}
			H(t):=\int_{\R^d} (|z|\wedge |z|^\theta)\, |\nu_t-\bar \nu|(\d z)<\infty.
	\end{align}\end{itemize}
	Then, there is a constant $C_0 >0$ such that for all $t\ge0$ and $\mu_1,\mu_2\in \mathcal P_{\theta}(\R^d)$,
	\begin{equation}\label{R8}
		\begin{split}
			 \mathcal W_{|\cdot|\wedge|\cdot|^\theta}\Big (\mathscr L_{Y_t^{\mu_1}},\mathscr L_{\overline Y_{\!\! t}^{\mu_2}}\Big)
			&\le     C_0 \e^{- \lambda_* t}\mathcal
W_{|\cdot|\wedge|\cdot|^\theta}(\mu_1,\mu_2) \\
 &\quad+ C_0\int_0^t\e^{-\lambda_*(t-s)}\big( H(s) +h^*(s,\kappa) +\E \mathcal V(s, Y_s^{\mu_1} ) \big)\,\d s.
		\end{split}
	\end{equation}
Here, $h^*(t,\kappa):=\sup_{r\le \kappa} (r^{1+\rho-\beta}h(t,r)) $ for $\rho\in(0,\beta+\alpha_*-1)$, and
	$$
	\lambda_*:= (\lambda_\star\e^{-g(\ell_0)}/2)\wedge(\lambda_2\e^{-g(2\ell_0)})\wedge\bigg(
	\frac{
\lambda_2}{3}\bigg(\theta\wedge\frac{ 2 \e^{-g(2\ell_0)}(1+2\ell_0)}{2\ell_0\e^{-g(2\ell_0)}+\int_0^{2\ell_0}\e^{-g(s)}\,\d s}\bigg)\bigg),$$ where $g(r):= c_\star r^\rho$, $ \forall\, r\ge0$,
and
\begin{align}\label{WWW-1}
	\lambda_\star:=  2\lambda_1\ell_0^{\alpha_*-1} ,
\quad c_\star:=8\lambda_1\kk^{\beta-2}\ell_0^{\alpha_*+1-\rho}/{(c^*\rho)}.
\end{align}	
\end{theorem}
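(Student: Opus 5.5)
We use the refined basic coupling of Lemmas \ref{lem1} and \ref{lem4}: we take $\delta=1/2$ in \eqref{Q9} and the threshold $\kk$ of assumption (iii). We write $U_t:=Y_t^{(\delta)}-\overline Y^{(\delta)}_{\!\!t}$ and work with a concave distance-like function $\psi$ built in the spirit of Eberle/Luo–Wang. On $[0,2\ell_0]$ we set
\begin{align*}
\psi(r)=\int_0^r \e^{-g(s)}\,\d s+\frac{2\e^{-g(2\ell_0)}\ell_0}{2\ell_0\e^{-g(2\ell_0)}+\int_0^{2\ell_0}\e^{-g(s)}\,\d s}\,r,
\end{align*}
with $g(r)=c_\star r^\rho$. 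For $r>2\ell_0$ we continue $\psi$ concavely with growth comparable to $r^\theta$, so that $c(r\wedge r^\theta)\le\psi(r)\le C(r\wedge r^\theta)$. This mirrors the role of $h$ in \eqref{EW11}, and the comparability gives the quasi-distance $\mathcal W_{|\cdot|\wedge|\cdot|^\theta}$ at the end. The constants appearing in $\lambda_*$ (namely $\e^{-g(\ell_0)}$, $\e^{-g(2\ell_0)}$ and the ratio involving $\int_0^{2\ell_0}\e^{-g}$) indicate exactly this shape.

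We apply Itô's formula to $\psi(|U_t|)$ using the generator \eqref{Q10-}. The drift part is bounded by (i) together with \eqref{R2}:
\begin{align*}
\psi'(|U|)\langle U/|U|,b(t,Y)-\bar b(\overline Y)\rangle\le \psi'(|U|)\big(\lambda_1|U|^{\alpha_*}\I_{\{|U|\le\ell_0\}}-\lambda_2|U|\I_{\{|U|>\ell_0\}}\big)+\mathcal V(t,Y_t),
\end{align*}
where we used $\psi'\le C$. The two "reflection-type" jump terms with weight $\delta\nu_{t,\pm(x-y)_\kk}$ move $|U|$ to $|U|\pm|U|\wedge\kk$. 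By concavity they contribute
\begin{align*}
\tfrac12\nu_{t,(U)_\kk}(\R^d)\big[\psi(|U|+(|U|\wedge\kk))+\psi(|U|-(|U|\wedge\kk))-2\psi(|U|)\big]\le \tfrac12\nu_{t,(U)_\kk}(\R^d)\,(|U|\wedge\kk)^2\sup_{[|U|-\kk,|U|+\kk]}\psi''.
\end{align*}
For $|U|\le 2\ell_0$ we have $\psi''=-g'\e^{-g}=-c_\star\rho r^{\rho-1}\e^{-g}$. Inserting \eqref{R5--}, namely $\nu_{t,x}(\R^d)\ge r^{-\beta}(c^*-h(t,r))$, the main part gives roughly $-\frac{c^*c_\star\rho}{2}\,\kk^{\beta-2}\cdots r^{1+\rho-\beta}\e^{-g}$ after handling the cases $r\le\kk$ and $r>\kk$ separately. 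Since $\rho<\beta+\alpha_*-1$, we have $1+\rho-\beta<\alpha_*$; combined with the choice of $c_\star$ in \eqref{WWW-1}, this term dominates $\lambda_1 r^{\alpha_*}\psi'(r)$ on $[0,\ell_0]$ with a margin $\lambda_\star\e^{-g(\ell_0)}\psi(r)/2$. The error piece $h(t,r)$ produces exactly the additive term $h^*(t,\kk)$. On $(\ell_0,2\ell_0]$ the linear part in $\psi$ together with $-\lambda_2 r\psi'$ gives decay at rate $\lambda_2\e^{-g(2\ell_0)}$. For $r>2\ell_0$, the $r^\theta$ growth together with $-\lambda_2 r\psi'(r)\le-\lambda_2\theta\psi(r)\cdot(\text{const})$ gives the last entry of $\lambda_*$; the reflection terms there are $\le0$ by concavity.

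The synchronous part $(\nu_t\wedge\bar\nu)-\delta\nu_{t,\cdot}-\delta\nu_{t,\cdot}$ leaves $U$ unchanged. The unmatched parts $\nu_t-\nu_t\wedge\bar\nu$ and $\bar\nu-\nu_t\wedge\bar\nu$ are controlled using $\psi(|U\pm z|)-\psi(|U|)\le\psi(|z|)\le C(|z|\wedge|z|^\theta)$ (by subadditivity of concave $\psi$). The first-order compensator on $\{|z|\le1\}$ is handled by $\psi'\le C$, using $|z|\wedge|z|^\theta=|z|^{\cdot}$ on the small jumps. Altogether these contribute at most $C H(t)$ by \eqref{WY}. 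The outcome is
\begin{align*}
\frac{\d}{\d t}\E\psi(|U_t|)\le-\lambda_*\E\psi(|U_t|)+C\big(H(t)+h^*(t,\kk)+\E\mathcal V(t,Y_t)\big).
\end{align*}
Gronwall's inequality (Lemma \ref{lemma-*} with $\theta=1$), the comparability of $\psi$, and the choice of an optimal initial coupling then yield \eqref{R8}.

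The main obstacle is the small-distance region $|U|\le\ell_0$. There the drift may be expansive like $|U|^{\alpha_*}$ with $\alpha_*\le1$, and the only contraction comes from the reflection jumps. One must verify carefully that $\psi''$ multiplied by the lower bound \eqref{R5--} beats this expansion uniformly, including the interplay between $r$ and $\kk$ when $\ell_0>\kk$. This is where $\beta>1-\alpha_*$ and the exponent $\rho$ enter, and where the remainder $h(t,r)$ must be isolated into $h^*$ rather than absorbed.
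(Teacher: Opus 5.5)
Your architecture matches the paper's: the refined basic coupling with $\delta=1/2$ and threshold $\kk$, and a concave $\psi$ equal to $\int_0^r\e^{-g}$ plus a linear term on $[0,2\ell_0]$, growing like $(1+r)^\theta$ beyond, followed by It\^o's formula, Gronwall and $\psi\asymp r\wedge r^\theta$. The gap is in the small-distance step, which you yourself single out as the main obstacle.

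You bound the reflection term by $(|U|\wedge\kk)^2$ times the supremum of $\psi''$ over $[r-r\wedge\kk,\,r+r\wedge\kk]$. Since $\psi''=-g'\e^{-g}$ is negative and increasing, this supremum is $\psi''(r+r\wedge\kk)$. So the contraction carries the weight $\e^{-g(r+r\wedge\kk)}$, whereas the expansive drift $\lambda_1r^{\alpha_*}\psi'(r)\ge\lambda_1r^{\alpha_*}\e^{-g(r)}$ carries $\e^{-g(r)}$. The loss $\e^{-(g(r+r\wedge\kk)-g(r))}$ is exponentially small in $c_\star$ once $r$ is of order $\kk$ or larger, while the gain is only linear in $c_\star$.

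Concretely, at $r=\ell_0>\kk$ one has $\sup_{c>0}c\,\e^{-c((\ell_0+\kk)^\rho-\ell_0^\rho)}\le(\ell_0+\kk)^{1-\rho}/(\e\rho\kk)$. Hence the ratio of contraction to drift is at most $c^*\kk^{1-\beta}/(2\e\lambda_1\ell_0^{\alpha_*})$ for every $c_\star$. The argument therefore fails whenever $\lambda_1\ell_0^{\alpha_*}$ is large compared with $c^*\kk^{1-\beta}$, which the hypotheses allow.

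The missing ingredient is the centred estimate
\[
\psi(r+\delta)+\psi(r-\delta)-2\psi(r)\le\psi''(r)\,\delta^2,\qquad 0\le\delta\le r\le\ell_0 .
\]
The paper takes this from \cite[Lemma 4.1]{LW19}. For this $\psi$ it also follows directly: $g'\e^{-g}$ is log-convex, hence convex, on $(0,\infty)$ (as $\rho<1$), so $\int_{r-s}^{r+s}g'\e^{-g}\ge 2s\,g'(r)\e^{-g(r)}$. With it the factor $\e^{-g(r)}$ appears on both sides and cancels. Then \eqref{WWW-1} gives $2\lambda_1r^{\alpha_*-1}-\frac12c^*c_\star\rho r^{\rho-2}(\kk\wedge r)^{2-\beta}\le-\lambda_\star$ on $(0,\ell_0]$, after splitting into $r\le\kk$ and $\kk<r\le\ell_0$.

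This cancellation also needs $\psi'(r)\le 2\e^{-g(r)}$ on $[0,\ell_0]$, i.e.\ a linear coefficient not exceeding $\e^{-g(\ell_0)}$. That is why the paper takes $f_\theta(r)=\e^{-g(2\ell_0)}r+\int_0^r\e^{-g(s)}\,\d s$. Your coefficient $a=2\ell_0\e^{-g(2\ell_0)}/\big(2\ell_0\e^{-g(2\ell_0)}+\int_0^{2\ell_0}\e^{-g}\big)$ is a ratio of two quantities of comparable size, so it need not be small. For example, take $\kk=\ell_0=1$, $\rho=1/100$ and $c_\star=10$, which is admissible in \eqref{WWW-1} with $\lambda_1/c^*=1/80$. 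Then $a\approx 0.47$ while $\e^{-g(\ell_0)}=\e^{-10}$. Near $r=\ell_0$ the drift $\lambda_1\ell_0^{\alpha_*}(\e^{-g(\ell_0)}+a)$ then swamps the contraction $4\lambda_1\ell_0^{\alpha_*}\e^{-g(\ell_0)}$, even with the centred estimate.

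The ratio you read off from $\lambda_*$ has a different origin. It equals $\theta c^{**}(1+2\ell_0)^\theta/f_\theta(2\ell_0)$, where $c^{**}=2\e^{-g(2\ell_0)}(1+2\ell_0)^{1-\theta}/\theta$ is the $C^1$-matching constant of the $(1+r)^\theta$ piece. It enters only through the comparison \eqref{LW-4} of $\psi$ with $c^{**}(1+r)^\theta$ on $\{r>2\ell_0\}$.

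With these two corrections, the rest of your sketch goes through as in the paper: the region $(\ell_0,2\ell_0]$, the tail, the bound $CH(t)$ for the unmatched jumps, and the isolation of $h^*$.
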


\begin{proof} In the following analysis, we take $\delta=1/2$ and $\kk\in (0,1]$ (which is stipulated in {\rm(iii)})
	in the coupling process $\big(Y_t^{(\delta)},\overline {Y}_{\!\! t}^{(\delta)}\big)_{t\ge0}$
defined by \eqref{Q9}. In the sequel, we write $(Y_t ,\overline {Y}_{\!\! t})_{t\ge0}$ instead of $\big(Y_t^{(1/2)},\overline {Y}_{\!\! t}^{(1/2)}\big)_{t\ge0}$ for notational simplicity. For $\theta\in(0,1]$ (which is
also
 involved in {\rm(iii)}) and
	$c^{**}:
	= 2 \e^{-g(2\ell_0)}(1+2\ell_0)^{1-\theta}/\theta$,
	define
	\begin{equation}\label{LW1}
		f_\theta (r)=
		\begin{cases}
			\e^{-g(2\ell_0)} r+\int_0^r\e^{-g(s )}\,\d s,&0\le r\le 2\ell_0,\\
			f_\theta(2\ell_0)+c^{**}\big((1+r)^\theta-(1+2\ell_0)^\theta\big), & r>2\ell_0.
		\end{cases}
	\end{equation}
	In terms of the definition  of $c^{**}$,
	it is ready to see that $[0,\infty)\ni r\mapsto f_\theta(r)$ is a $C^1$-function.

	From \eqref{Q9} with $\delta=1/2$, we have that for all $t\ge0,$
	\begin{align*}
		\d  {U}_t = (b(t,Y_t  )-\bar b(\overline {Y}_{\!\! t}))\,\d t-\int_{\R^d\times[0,1]}\Lambda_t ((\bar {U}_{t-})_{\kappa} ,z,u)\,N^{(1 )}(\d z,\d u,\d t) +\d   Z_t^{ (2) }-\d   Z_t^{ (3)},
	\end{align*}
where 	$ {U}_t :=Y_t-\overline {Y}_{\!\! t}$.
	Thus,
	applying  It\^o's formula enables us to derive that for all $t\ge0,$
	\begin{align*}
		\d f_\theta(| {U}_t |  )&= \d M_t+\frac{f'_\theta(| {U}_t|  )}{| {U}_t | }\<U_t, b(t,Y_t)-\bar b(\overline {Y}_{\!\! t})\> \,\d t\\
		&\quad+\int_{\R^d\times[0,1]}\big(f_\theta(| {U}_t-\Lambda_t(( {U}_t)_{\kappa},z,u)|)-f_\theta(| {U}_t|)\big)( \nu_t\wedge \bar\nu)(\d z)\,\d u\,\d t\\
		&\quad+\bigg[\int_{\R^d}\Big(f_\theta(|U_t +z|)-f_\theta(| {U}_t|)-\frac{f'_\theta(| {U}_t|)}{| {U}_t|}\< {U}_t,z\>\I_{\{|z|\le1\}}\Big)\,( \nu_t-(\nu_t\wedge \bar\nu))(\d z) \\
		&\quad\quad +\int_{\R^d}\Big(f_\theta(| {U}_t +z|)-f_\theta(| {U}_t|)-\frac{f'_\theta(| {U}_t|)}{| {U}_t|}\< {U}_t,z\>\I_{\{|z|\le1\}}\Big)\,( \bar \nu-(\nu_t\wedge \bar\nu))(\d z)\bigg]\,\d t \\
		&=:\d M_t+ \big(I_1(t)+I_2(t)+ I_3(t) \big)\,\d t,
	\end{align*}
	where $(M_t )_{t\ge0}$ is a martingale.

	Below, we aim at estimating the terms $I_1(t)$, $I_2(t)$ and $I_3(t)$, one by one.
	By means of   $ f'_\theta\in(0, 2]$,
	we deduce from \eqref{R2} and (i) that for all $t\ge0,$
	\begin{align*}
		I_1(t) \le     f'_\theta(| {U}_t|) \big(\lambda_1
		| {U}_t|^{\alpha_*}\I_{\{| {U}_t|\le \ell_0\}}
		-\lambda_2| {U}_t|\I_{\{| {U}_t|> \ell_0\}}\big) + 2 \mathcal V(t,Y_t).
	\end{align*}
	With the aid of  $\nu_{t,-(U_{t} )_{\kappa}}(\R^d)=\nu_{t,(U_{t})_{\kappa}}(\R^d)$, we derive   that  for all $t\ge0,$
	\begin{align*}
		I_2(t)&=\frac{1}{2} \int_{\R^d}\big(f_\theta(| {U}_t-( {U}_t)_{\kappa}|)-f_\theta(| {U}_t|)\big)\rho_t(-( {U}_t)_{\kappa},z)\,( \nu_t\wedge \bar\nu)(\d z) \\
		&\quad+\frac{1}{2} \int_{\R^d}\big(f_\theta(| {U}_t+( {U}_t)_{\kappa}|)-f_\theta(| {U}_t|)\big)\rho_t(( {U}_t)_{\kappa},z)\,( \nu_t\wedge \bar\nu)(\d z) \\
		&=\frac{1}{2} \big(f_\theta(| {U}_t|-({\kappa}\wedge| {U}_t|))-f_\theta(| {U}_t|)\big)\,\nu_{t,-( {U}_t)_{\kappa}}(\R^d) \\
		&\quad+\frac{1}{2} \big(f_\theta( | {U}_t|+ ({\kappa}\wedge| {U}_t|)  )-f_\theta(| {U}_t|)\big)\nu_{t,( {U}_t)_{\kappa}}(\R^d) \\
		&=\frac{1}{2} \nu_{t,( {U}_t)_{\kappa}}(\R^d) \big(f_\theta(| {U}_t|-({\kappa}\wedge| {U}_t|))
		+f_\theta(| {U}_t|+({\kappa}\wedge| {U}_t|))-2f_\theta(| {U}_t|)\big).
	\end{align*}
	This, together with \eqref{R5--} and the fact that
	\begin{align}\label{SD}
		f_\theta(r-\delta)+f_\theta(r+\delta)-2f_\theta(r)\le0, \quad \forall\, 0\le \delta\le r
	\end{align}
	by noticing that  $[0,\infty)\ni r\mapsto f'_\theta(r)$ is decreasing, leads to the following estimate:  for all $t\ge0,$
	\begin{align*}
		I_2(t) \le\frac{1}{2}  |( {U}_t)_\kappa|^{-\beta} (c^*-h(t, |({U}_t)_{\kappa}|))\big(f_\theta(| {U}_t|-({\kappa}\wedge| {U}_t|))
		+f_\theta(|{U}_t|+({\kappa}\wedge| {U}_t|))-2f_\theta(| {U}_t|)\big).
	\end{align*}
	Again, by noting that $[0,\infty)\ni r\mapsto f'_\theta(r)$ is decreasing and $f_\theta(0)=0$,  we have $f_\theta(b)-f_\theta(a)\le f'_\theta(a)(b-a)$ and $f_\theta(a+b)\le f_\theta(a)+f_\theta(b)$ for all $a,b\ge0.$
	Subsequently,  with the help of  $f'_\theta\in(0, 2],$
	we derive  that for all $t\ge0,$
	\begin{align*}
		I_3(t)&\le f'_\theta(| {U}_t | )\int_{\{|z|\le 1\}}\Big( | {U}_t +z|  - | {U}_t |  -\frac{1}{| {U}_t|}\< {U}_t ,z\>\Big) \,( \nu_t-(\nu_t\wedge \bar\nu))(\d z) \\
		&\quad+   f'_\theta(| {U}_t| )\int_{\{|z|\le 1\}}\Big( | {U}_t +z|  - | {U}_t|  -\frac{1}{| {U}_t|}\< {U}_t ,z\>\Big) \,( \bar \nu-(\nu_t\wedge \bar\nu))(\d z)\\
		&\quad+ \int_{\{|z|> 1\}}\Big( f_\theta(| {U}_t| +|z|)-f_\theta (| {U}_t |)\Big) \,( \nu_t-(\nu_t\wedge \bar\nu))(\d z)\\
		&\quad+ \int_{\{|z|> 1\}}\Big( f_\theta(| {U}_t| +|z|)-f_\theta (| {U}_t |)\Big)  \,( \bar \nu-(\nu_t\wedge \bar\nu))(\d z)\\
		&\le 2 \int_{\{|z|\le 1\}} |z|\,|\bar \nu-\nu_t|(\d z)+\int_{\{|z|> 1\}} f_\theta(|z|)\,|\bar \nu-\nu_t|(\d z)\\
		&\le C_*H(t),
	\end{align*}
	where in the last display $$
	C_*:=2 \vee (2(2\ell_0)^{1-\theta})\vee (2^\theta
c^{**}+f_\theta(2\ell_0)),$$ and we employed the fact that
	$f_\theta(r)\le [(2(2\ell_0)^{1-\theta})\vee (2^\theta
c^{**}+f_\theta(2\ell_0))] r^\theta$ for all $r>0$,
thanks to $\ell_0\ge1$.
	Now, putting together the previous estimates on $I_1(t)$, $I_2(t)$ and $I_3(t)$   yields that for all $t\ge0,$
	\begin{align*}
		\d f_\theta(| {U}_t |  )&\le \big(\Lambda_\theta(| {U}_t |)  + C_*H(t)+ 2 \mathcal V(t,Y_t )\big)\,\d t+\d M_t,
	\end{align*}
	where for all $r>0,$
	\begin{align*}
		\Lambda_\theta(r):= & f'_\theta(r) \big(\lambda_1
		r^{\alpha_*}\I_{\{r\le \ell_0\}}
		-\lambda_2r\I_{\{r> \ell_0\}}\big)\\
		&+\frac{1}{2}(\kk \wedge r)^{-\beta} (c^*-h(t,\kk\wedge r))\big(f_\theta(r-({\kappa}\wedge r))
		+f_\theta(r+({\kappa}\wedge r))-2f_\theta(r)\big).
	\end{align*}
	By virtue of    $f'_\theta(r)>0,f''_\theta(r)<0$ as well as $f'''_\theta(r)>0$ for all $r\in(0,2\ell_0)$, it follows from
	\cite[Lemma 4.1]{LW19} that $f_\theta(r+\delta)+f_\theta(r-\delta)-2f_\theta(r)\le f''_\theta(r)\delta^2$ for all $0\le \delta\le r\le \ell_0$. Then, we find that
	for all $r\in(0,\ell_0]$,
	\begin{align*}
		\Lambda_\theta(r)&=\lambda_1f'_\theta(r)r^{\alpha_*}
		+\frac{1}{2}(\kappa\wedge r)^{-\beta} (c^*-h(t, \kappa\wedge r))\big(f_\theta(r+\kappa\wedge r)+f_\theta(r-\kappa\wedge r)-2f_\theta(r)\big)\\
		&\le \lambda_1f'_\theta(r)r^{\alpha_*}
		+\frac{1}{2}(\kappa\wedge r)^{2-\beta} (c^*-h(t, \kappa\wedge r)) f''_\theta(r)   \\
		&=\Big(\lambda_1\big(\e^{-g(2\ell_0)}+\e^{-g(r)}\big)
		r^{\alpha_*-1}
		 -\frac{1}{2}c^* g'(r)\e^{-g(r)} r^{-1}(\kappa\wedge r)^{2-\beta}\Big)r-\frac{1}{2}(\kappa\wedge r)^{2-\beta}  h(t, \kappa\wedge r)  f''_\theta(r) \\
&\le  \Big(2\lambda_1
 r^{\alpha_*-1}
-\frac{1}{2}c^* c_\star\rho r^{\rho-2}
  (\kappa\wedge r)^{2-\beta}\Big)\e^{-g(r)}r+
  \frac{1}{2}\rho c_\star r^{\rho-1}
  (\kappa\wedge r)^{2-\beta}  h(t, \kappa\wedge r) \e^{-g(r)},
	\end{align*}
	where in the
	second
	 identity we used the facts that for all $r\ge0,$
	\begin{align*}
		f'_\theta(r)=\e^{-g(2\ell_0)}+\e^{-g(r)}\quad \mbox{ and } \quad f''_\theta(r)=-g'(r)\e^{-g(r)}=-c_\star\rho r^{\rho-1}\e^{-g(r)}.
	\end{align*}
Note from $1+\rho-\beta-\alpha_*<0$
and $\rho\in(0,1)$ that 	for all $r\in(0,\ell_0]$,
\begin{align*}
2\lambda_1  r^{\alpha_*-1}-\frac{1}{2}c^* c_\star\rho r^{\rho-2}  (\kappa\wedge r)^{2-\beta}&
=
\begin{cases}
\big(2\lambda_1  -\frac{1}{2}c^* c_\star\rho r^{1+\rho-\beta-\alpha_*}\big) r^{\alpha_*-1},&
0<r\le \kk\\
\big(2\lambda_1r^{\alpha_*-1}  -\frac{1}{2}c^* c_\star\rho\kk^{2-\beta} r^{ \rho-2}\big) ,& \kk<r\le \ell_0\\
\end{cases}\\
&\le
\begin{cases}
 -\frac{1}{4}c^* c_\star\rho r^{ \rho-\beta } ,&
 0<r\le \kk\\
  -\frac{1}{4}c^* c_\star\rho\kk^{2-\beta} r^{ \rho-2}  ,& \kk<r\le \ell_0\\
\end{cases}\\
&\le - \lambda_\star,
\end{align*}
 where in the
 first inequality we took
  the definition of $c_\star$
  into consideration, and $\lambda_\star>0$ was defined in \eqref{WWW-1}.
Furthermore,  by virtue of \eqref{SD},  we achieve that
	$$
	\Lambda_\theta(r)\le
	-2\lambda_2\e^{-g(2\ell_0)}r\I_{\{\ell_0<r\le2\ell_0\}}-\frac{2 }{3}c^{**}
	\theta\lambda_2(1+r)^{\theta } \I_{\{r>2\ell_0\}},
	\quad \forall\, r\ge \ell_0\ge1,
	$$
	where we also exploited the fact that
	\begin{equation}\label{LW-3}
		\begin{split}
		f_\theta'(r)r&=\big(\e^{-g(2\ell_0)}+\e^{-g(r)}\big)r\I_{\{0\le r\le 2\ell_0\}}+c^{**}
		\theta(1+r)^{\theta-1}r\I_{\{r>2\ell_0\}}\\
		&\ge 2\e^{-g(2\ell_0)}r\I_{\{0\le r\le 2\ell_0\}}+\frac{2}{3}c^{**}
		\theta(1+r)^{\theta }
\I_{\{r>2\ell_0\}}.
	\end{split}
	\end{equation}
	Accordingly, based on the preceding analysis, we conclude that for all $t\ge0,$
	\begin{align*}
		\d f_\theta(| {U}_t |  )\le \Big(&- \big((\lambda_\star
		\e^{-g(\ell_0)})\wedge(2\lambda_2\e^{-g(2\ell_0)})\big) | {U}_t|\I_{\{|U_t|\le2\ell_0\}}-\frac{2 }{3}c^{**}
		\theta\lambda_2(1+|U_t|)^{\theta } \I_{\{|U_t|>2\ell_0\}} \\
		&+C_*H(t) + 2\mathcal V(t,Y_t)\\
&+\frac{1}{2}\rho c_\star| {U}_t|^{\rho-1}(| {U}_t|\wedge\kappa)^{2-\beta} h(t,  | {U}_t|\wedge\kappa) \I_{\{| {U}_t|\le\ell_0\}} \Big)\,\d t+ \d M_t.
	\end{align*}
	This, in addition to
	\begin{align}\label{LW-4}
		f_\theta(r)\le 2r, \quad   \forall\, r\le2\ell_0;  \quad
		\frac{1}{2}\bigg(\frac{
			c^{**} (1+2\ell_0)^\theta}{f_\theta(2\ell_0)}\wedge1\bigg)f_\theta(r)\le c^{**}
			 (1+r)^\theta,\quad \forall\, r>2\ell_0,
	\end{align}
	implies that for all $t\ge0,$
	\begin{align*}
		\d f_\theta(| {U}_t |  )&\le \d M_t   -\lambda_*f_\theta(| {U}_t |)\,\d t\\
		&\quad  + \Big(C_*H(t)  + 2\mathcal V(t,Y_t )+\frac{1}{2}\rho c_\star (| {U}_t|\wedge\kappa)^{1+\rho-\beta} h(t,   | {U}_t|\wedge\kappa)
 \Big)\,\d t\\
 &\le \d M_t   -\lambda_*f_\theta(| {U}_t |)\,\d t + \Big(C_*H(t)  + 2\mathcal V(t,Y_t )+\frac{1}{2}\rho c_\star h^*(t,\kappa)
 \Big)\,\d t.
	\end{align*}
	Thereby,  applying Gronwall's inequality and then using $\mathcal W_{f_\theta}(\mathscr L_{Y_t },\mathscr L_{\overline {Y}_{\!\! t}})\le \E f_\theta(| {U}_t|  )$, which is valid by taking Lemma \ref{lem1} and Lemma \ref{lem4} into account,
	and  choosing $(Y_0,\overline {Y}_{\!\! 0})$ such that $\E f_\theta(| {U}_0 |  )=\mathcal W_{f_\theta}(\mu_1,\mu_2)$
	gives that for all $t\ge0$,
	\begin{align*}
		\mathcal W_{f_\theta}\Big(\mathscr L_{Y_t^{\mu_1}},\mathscr L_{\overline {Y}_{\!\! t}^{\mu_2}}\Big)
		\le & \e^{- \lambda_* t}\mathcal W_{f_\theta}(\mu_1,\mu_2)\\
	& + \int_0^t\e^{-\lambda_*(t-s)}\Big ( C_*H(s)  + 2\E\mathcal V(s, Y_s^{\mu_1})
+\frac{1}{2}\rho c_\star  h^*(s,\kappa)
  \Big)\,\d s.
	\end{align*} Correspondingly, by noting  that there are constants $c_1,c_2>0$ so that for all $r\ge0, $
\begin{align}\label{LW-5}
c_1(r\wedge r^\theta)\le f_\theta(r)\le c_2(r\wedge r^\theta),
\end{align}
the assertion \eqref{R8}
	follows.
\end{proof}

Before the end of this section,  we make  a comment on  Theorem \ref{thm3}, which will be applied to prove Theorem \ref{thm4}.

\begin{remark}\label{REm}
Once $\nu_t=\bar \nu$ is
the L\'evy measure
corresponding to a symmetric $\alpha$-stable process
with $\alpha\in (1,2)$,  \eqref{WY} holds true trivially with $
H(t)\equiv0$, and \eqref{R5--} is satisfied
 with $h(t,r)\equiv0$ (see, for example, \cite[Example 1.2]{LW19}). Then, in this special case, the assertion \eqref{R8} holds with $\theta=1$ and correspondingly becomes
\begin{equation*}
 \mathcal W_1(\mathscr L_{Y_t^{\mu_1}},\mathscr L_{\overline {Y}_{\!\! t}^{\mu_2}})   \le    C_0
 \bigg(\e^{- \lambda_* t}\mathcal W_1(\mu_1,\mu_2)   + \int_0^t\e^{-\lambda_*(t-s)}\E   \mathcal V(s, Y_s^{\mu_1} ) \,\d s\bigg).
\end{equation*} The assertion above   has been proved in \cite[Theorem 1.2]{BSWX} and  applied to investigate the exponentially contractive property of distributions corresponding to partially dissipative McKean-Vlasov SDEs with jumps; see \cite[Theorem 2.4]{BSWX} for related  details.
\end{remark}

\section{Two preliminaries}\label{Section3-}

\subsection{Characterization  of the time-space changed noise $(Z_t)_{t\ge0}$}\label{Section3-1}
To treat the long-term behavior of $(Y_t)_{t\ge0}$ solving \eqref{RR-}, it  is necessary  to investigate
the properties of the driven noise $(Z_t)_{t\ge0}$ involved in \eqref{RR-}.
The following proposition provides an explicit expression of
the characteristic function of $(Z_t)_{t\ge0}$, which is defined via the space-time change of a pure jump L\'evy process
 $(L_t)_{\ge0}$. In
 detail,
 for any $t\ge0$, let $$Z_t=\int_0^tg_s\,\d L_{\varphi_s},$$ where $(L_t)_{t\ge0}$ is a pure jump L\'evy process with the L\'evy measure $\nu$,  $\varphi:[0,\8)\to[0,\8)$ is an increasing $C^1$-function, and $g:[0,\8)\to (0,1]$ is a decreasing $C^1$-function obeying $\lim_{t\to\infty}g_t=0$.

\begin{proposition}\label{lem} For any $\xi\in\R^d$ and $t\ge0,$
	\begin{align*}
		\E\e^{i\langle\xi,  Z_t\rangle}=& \exp\bigg(-\int_0^t\int_{\R^d}\big(1-\e^{i\langle\xi, z\rangle}+i\langle\xi, z\rangle\I_{\{|z|\le1\}}\big) \,\nu_s(\d z)\,\d s-i\int_0^t\int_{\R^d} \langle\xi,  z\rangle\I_{\{g_s<|z|\le 1\}} \, \nu_s(\d z)\,\d s \bigg),
	\end{align*} where
	\begin{align}\label{EW}
		\nu_t (\d z):=\varphi_t'(\nu\circ (\bar g_t)^{-1})\,(\d z)  \quad \mbox{ with } \quad \bar g_t (z):=g_tz.
	\end{align}
\end{proposition}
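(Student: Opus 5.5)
We compute the characteristic function of $Z_t=\int_0^t g_s\,\d L_{\varphi_s}$ by approximating the stochastic integral with Riemann sums. Then we identify the exponent through the Lévy–Khintchine formula for the time-changed Lévy process $(L_{\varphi_t})_{t\ge0}$, and finally change variables $z\mapsto g_sz$. Since $g$ is deterministic and $C^1$, the integral can be taken pathwise by integration by parts:
$$Z_t=g_tL_{\varphi_t}-g_0L_{\varphi_0}-\int_0^tL_{\varphi_s}g_s'\,\d s.$$
Alternatively, $Z_t$ is the $L^0$-limit of $\sum_k g_{s_k}(L_{\varphi_{s_{k+1}}}-L_{\varphi_{s_k}})$ over partitions of $[0,t]$. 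Either way, $Z_t$ is a limit in probability of sums of independent increments, and its characteristic function is the limit of the products of the increments' characteristic functions.

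For each increment we use independence and stationarity of the increments of $L$:
$$\E\e^{i\<\xi, g_{s_k}(L_{\varphi_{s_{k+1}}}-L_{\varphi_{s_k}})\>}=\exp\big(-(\varphi_{s_{k+1}}-\varphi_{s_k})\,\psi(g_{s_k}\xi)\big),$$
where $\psi(\eta)=\int_{\R^d}(1-\e^{i\<\eta,z\>}+i\<\eta,z\>\I_{\{|z|\le1\}})\,\nu(\d z)$ is the Lévy exponent of the pure-jump process $L$. We assume there is no drift; otherwise a drift term would have to be carried along in the same way. Because $\varphi\in C^1$ and $s\mapsto\psi(g_s\xi)$ is continuous (dominated convergence with $|1-\e^{iu}+iu|\le u^2/2$ for small jumps and boundedness for large ones), the sum of exponents converges to $\int_0^t\varphi_s'\psi(g_s\xi)\,\d s$. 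Hence
$$\E\e^{i\<\xi,Z_t\>}=\exp\Big(-\int_0^t\varphi_s'\psi(g_s\xi)\,\d s\Big).$$

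It remains to rewrite $\varphi_s'\psi(g_s\xi)$ in terms of $\nu_s=\varphi_s'(\nu\circ\bar g_s^{-1})$. The identity $\<g_s\xi,z\>=\<\xi,g_sz\>$ and the pushforward formula give
$$\varphi_s'\psi(g_s\xi)=\int_{\R^d}\big(1-\e^{i\<\xi,w\>}+i\<\xi,w\>\I_{\{|w|\le g_s\}}\big)\,\nu_s(\d w),$$
because $|z|\le1$ corresponds to $|w|\le g_s$. Since $g_s\le1$, we split
$$\I_{\{|w|\le g_s\}}=\I_{\{|w|\le1\}}-\I_{\{g_s<|w|\le1\}}.$$
This produces the standard compensator term plus the correction $-i\int\<\xi,w\>\I_{\{g_s<|w|\le1\}}\,\nu_s(\d w)$, which is finite because the region is bounded away from $0$ and $\nu_s$ is finite there. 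Substituting yields the stated formula, including the sign of the second term.

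The main obstacle is the justification of the limit passages: convergence of the Riemann sums for the stochastic integral and integrability of the exponent in $s$. We handle these with the bound $|\psi(g_s\xi)|\le C(1+|\xi|^2)$, which is uniform in $s\in[0,t]$ since $g_s\le1$, together with continuity of $\varphi'$ on $[0,t]$.
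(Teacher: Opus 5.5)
Your argument follows the paper's proof. You use left-point Riemann sums for $Z_t$, and independent increments give $\prod_k \e^{-(\varphi_{s_{k+1}}-\varphi_{s_k})\Phi(g_{s_k}\xi)}$ (your $\psi$ is the paper's $\Phi$). You then pass to $\exp\bigl(-\int_0^t\varphi_s'\Phi(g_s\xi)\,\d s\bigr)$, push $\nu$ forward under $z\mapsto g_s z$, and split $\I_{\{|w|\le g_s\}}=\I_{\{|w|\le1\}}-\I_{\{g_s<|w|\le1\}}$.

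The only methodological difference is how you obtain $Z^\pi_t\to Z_t$. Your summation by parts gives almost sure convergence, because $g$ is deterministic and $C^1$ and $s\mapsto L_{\varphi_s}$ is c\`adl\`ag, hence bounded on $[0,t]$. This is shorter and stronger than the paper's bound on $\P(|Z_t^\pi-Z_t|\ge\vv)$. The paper gets that bound from the L\'evy--It\^o decomposition, using Chebyshev and It\^o's isometry for the small jumps and truncating the large jumps at a level $\eta\to\infty$.

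The genuine problem is your last sentence, the claim that substitution yields the stated sign. Put
\[
A:=\int_0^t\int_{\R^d}\big(1-\e^{i\langle\xi,w\rangle}+i\langle\xi,w\rangle\I_{\{|w|\le1\}}\big)\,\nu_s(\d w)\,\d s,\qquad B:=\int_0^t\int_{\R^d}\langle\xi,w\rangle\I_{\{g_s<|w|\le1\}}\,\nu_s(\d w)\,\d s .
\]
Your computation of $\varphi_s'\Phi(g_s\xi)$ is correct, and it gives $\int_0^t\varphi_s'\Phi(g_s\xi)\,\d s=A-iB$. Therefore $\E\e^{i\langle\xi,Z_t\rangle}=\e^{-A+iB}$: the correction term carries a plus sign, not the minus sign in the statement.

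This is not a matter of convention. Relative to truncation at level $1$, $Z$ compensates only its jumps of size $|w|\le g_s$. So it carries the extra deterministic drift $\int_0^t\int_{\{g_s<|w|\le1\}}w\,\nu_s(\d w)\,\d s$, which contributes the factor $\e^{+iB}$.

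A concrete check: take $d=1$, $\nu=\delta_2$, $\varphi_s=s$ and $g\equiv 1/2$ on $[0,t]$. Then $Z_t=N_t$ with $N$ a rate-one Poisson process, so $\E\e^{i\xi Z_t}=\e^{t(\e^{i\xi}-1)}=\e^{-A+iB}$. By contrast, $\e^{-A-iB}$ is off by the factor $\e^{-2it\xi}$.

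The paper's proof reaches the same identity $\int_0^t\varphi_s'\Phi(\xi g_s)\,\d s=A-iB$ and then asserts the displayed formula, so the sign in the statement is a typo there as well. It is harmless for the paper, because the $\nu_s$ used later are symmetric and $B=0$ (see the remark right after \eqref{EW3}). You should conclude with $+i\int_0^t\int_{\R^d}\langle\xi,z\rangle\I_{\{g_s<|z|\le1\}}\,\nu_s(\d z)\,\d s$, or add a symmetry assumption under which this term vanishes. Do not claim agreement with the stated sign.
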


\begin{proof}
	Let $\Phi$ be
	the  characteristic exponent (or the symbol)  of
	the   pure jump L\'{e}vy process $(L_t)_{t\ge0}$, which  is given  by
	\begin{align}\label{E-}
		\Phi(\xi) =\int_{\R^d}\big(1-\e^{i\langle \xi, z\rangle}+i\langle\xi, z\rangle\I_{\{|z|\le1\}}\big)\,\nu(\d z),\quad \forall\,\xi\in\R^d.
	\end{align}
	It suffices to show  that  for any $\xi\in\R^d$ and $t\ge0,$
	\begin{align}\label{E3}
		\E \e^{i\langle\xi ,Z_t\rangle}=\e^{-\int_0^t\varphi_s'\Phi(\xi g_{s })\,\d s},
	\end{align}
	then the assertion follows by noting  that for all $t\ge0,$
	\begin{align*}
		\int_0^t\varphi_s'\Phi(\xi g_{s })\d s&=\int_0^t\varphi_s'\int_{\R^d}\big(1-\e^{i\langle g_s\xi, z\rangle}+i\langle g_s\xi,  z\rangle\I_{\{| z|\le 1\}}\big)\,\nu(\d z)\,\d s\\
		&=\int_0^t\varphi_s'\int_{\R^d}\big(1-\e^{i\langle\xi, g_s z\rangle}+i\langle\xi, g_s z\rangle\I_{\{|g_sz|\le 1\}}\big)\,\nu(\d z)\,\d s\\
		&\quad- i\int_0^t\int_{\R^d}\langle\xi,\varphi_s' g_s z\rangle\I_{\{g_s<|g_sz|\le 1\}} \,\nu(\d z)\,\d s \\
		&=\int_{\R^d}\int_0^t\big(1-\e^{i\langle\xi,  z\rangle}+i\langle\xi,  z\rangle\I_{\{| z|\le 1\}}\big)\,\nu_s(\d z)\,\d s\\
		&\quad -i\int_0^t\int_{\R^d} \langle\xi, z\rangle\I_{\{g_s<|z|\le 1\}} \nu_s(\d z)\d s.
	\end{align*}

	Below, we focus on the establishment of \eqref{E3}.  For any $t\ge0$,  let
	$$\pi=\{0=s_0<s_1<\cdots<s_n=t\}$$ be a partition of $[0,t]$ with mesh $|\pi|\to0$, and define
	\begin{align*}
		Z_t^{\pi}=\int_0^tg_s^{\pi}\,\d L_{\varphi_s}\quad \mbox{ with } \quad
		g_t^{\pi}:=\sum_{j=1}^ng_{s_{j-1}}\I_{[s_{j-1},s_j)}(t).
	\end{align*}
	It is easy to see  that for any $\vv>0$ and $t\ge0$,
	\begin{align*}
		\P(|Z_t^{\pi}-Z_t|\ge\vv)&=\P\Big(\Big|\int_0^t(g_s^{\pi}-g_s)\,\d L_{\varphi_s}\Big|\ge\vv\Big)\\
		&=\P\Big(\Big|\int_{\varphi_0}^{\varphi_t}\big(g_{\varphi_s^{-1}}^{\pi}-g_{\varphi_s^{-1}}\big)\,\d L_s\Big|\ge\vv\Big)\\
		&=\P\Big(\Big|\int_{\varphi_0}^{\varphi_t}\big(g_{\varphi_s^{-1}}^{\pi}-g_{\varphi_s^{-1}}\big)\Big(\int_{\{|z|\le1\}}z\,\tilde N(\d z,\d s)+\int_{\{|z|>1\}}z \,N(\d z,\d s)\Big) \Big| \ge\vv\Big)\\
		&\le \P\Big(\Big|\int_{\varphi_0}^{\varphi_t} \int_{\{|z|\le1\}}\big(g_{\varphi_s^{-1}}^{\pi}-g_{\varphi_s^{-1}}\big)z\,\tilde N(\d z,\d s)\Big| \ge\vv/2\Big)\\
		&\quad+\P\Big(\Big|\int_{\varphi_0}^{\varphi_t}\int_{\{|z|>1\}}\big(g_{\varphi_s^{-1}}^{\pi}-g_{\varphi_s^{-1}}\big) z  \,N(\d z,\d s)\Big| \ge\vv/2\Big)\\
		&=:I_1^\pi(t)+I_2^\pi(t),
	\end{align*}
  	where the third identity holds true by the L\'{e}vy-It\^o decomposition of $(L_t)_{t\ge0}$ with $N(\d z,\d t)$ being the Poisson measure  and $\tilde N(\d z, \d t)= N(\d z,\d t)-\nu(\d z)\d t$ being the corresponding compensated Poisson measure.
	By Chebyshev's inequality and It\^o's isometry, it follows that for all $t\ge0,$
	\begin{align*}
		I_1^\pi(t)\le \frac{4}{\vv^2}\int_{\{|z|\le1\}}|z|^2\,\nu(\d z) \int_{\varphi_0}^{\varphi_t} \big|g_{\varphi_s^{-1}}^{\pi}-g_{\varphi_s^{-1}}\big|^2\d s.
	\end{align*}
	To treat the term $I_2^\pi(t)$, we introduce
	\begin{align*}
		\hat Z_t=\int_0^t\int_{\{|z|>1\}}z\,N(\d z,\d s)\quad \mbox{ with } \quad\triangle \hat Z_t:=\hat Z_t-\hat Z_{t-},\quad \forall\, t\ge0.
	\end{align*}
	It is known from $1-\e^{-r}\le r$ for $r\ge0$
	that for any $\eta>1$ and $t\ge0,$
	$$
	\P\left(\sup_{s\in [0,\varphi_t]}|\triangle \hat Z_s | \ge \eta\right)=1-\exp\left(-\varphi_t\nu\{\cdot\in \R^d:|\cdot|\ge \eta\} \right)\le \varphi_t\nu\{\cdot\in\R^d:|\cdot|\ge \eta\}.
	$$
	This, combined   with Chebyshev's  inequality, yields that for any $\eta>1 $ and $t\ge0,$
	\begin{align*}
		I_2^\pi(t)&\le \P\Big(\Big|\int_{\varphi_0}^{\varphi_t}\int_{\{|z|>1 \}}\big(g_{\varphi_s^{-1}}^{\pi}-g_{\varphi_s^{-1}}\big) z \, N(\d z,\d s)\Big| \ge\vv/2, \sup_{s\in [0,\varphi_t]}|\triangle \hat Z_s | <\eta \Big)\\
		&\quad+ \P\Big(\sup_{s\in [0,\varphi_t]}|\triangle \hat Z_s | \ge \eta\Big)\\
		&\le \P\Big(\Big|\int_{\varphi_0}^{\varphi_t}\int_{\{1<|z|< \eta \}}\big(g_{\varphi_s^{-1}}^{\pi}-g_{\varphi_s^{-1}}\big) z  \,N(\d z,\d s)\Big| \ge\vv/2 \Big) +\P\Big(\sup_{s\in [0,\varphi_t]}|\triangle \hat Z_s | \ge \eta\Big)\\
		&\le \frac{2}{\vv}\int_{\{1<|z|< \eta \}}| z| \, \nu(\d z)\int_{\varphi_0}^{\varphi_t}\big|g_{\varphi_s^{-1}}^{\pi}-g_{\varphi_s^{-1}}\big|\,\d s+ \varphi_t\nu\{\cdot\in\R^d:|\cdot|\ge \eta\}.
	\end{align*}
	Based on the  preceding analysis, for any $\eta>1 $ and $t\ge0,$ we arrive at
	\begin{align*}
		\P(|Z_t^{\pi}-Z_t|\ge\vv) &\le \frac{4}{\vv^2}\int_{\{|z|\le1\}}|z|^2\,\nu(\d z) \int_{\varphi_0}^{\varphi_t} \big|g_{\varphi_s^{-1}}^{\pi}-g_{\varphi_s^{-1}}\big|^2\,\d s\\
		&\quad+ \frac{2}{\vv}\int_{\{1<|z|< \eta \}}| z|  \,\nu(\d z)\int_{\varphi_0}^{\varphi_t}\big|g_{\varphi_s^{-1}}^{\pi}-g_{\varphi_s^{-1}}\big|\,\d s+ \varphi_t\nu\{\cdot\in\R^d:|\cdot|\ge \eta\}.
	\end{align*}
	Therefore,  we conclude that for any $t\ge0,$
	\begin{align}\label{Q8}
		Z_t^\pi\overset{d}\to Z_t\quad \mbox{ as } \quad |\pi|\to0
	\end{align}
	by taking the continuity of $g$
	into consideration, and approaching $|\pi|\to0$ followed by sending  $\eta\to \infty$ in the estimate above.  Subsequently, we derive from \cite[Proposition 2.5(vi)]{Sato}  that
	for any $\xi\in\R^d $ and $t\ge0,$
	\begin{align*}
		\E\e^{i\langle\xi, Z_t\rangle}=\lim_{|\pi|\to0}\E\e^{i\langle\xi, Z_t^{\pi}\rangle }&=\lim_{|\pi|\to0}\E\Big(\Pi_{j=1}^n\e^{i\langle\xi, g_{s_{j-1}}(L_{\varphi_{s_j}}-L_{\varphi_{s_{j-1}}}) \rangle}\Big)\\
		&=\lim_{|\pi|\to0}\Pi_{j=1}^n\E \e^{i\langle\xi, g_{s_{j-1}}(L_{\varphi_{s_j}}-L_{\varphi_{s_{j-1}}})\rangle } \\
		&=\lim_{|\pi|\to0}\Pi_{j=1}^n  \e^{-(\varphi_{s_j}-\varphi_{s_{j-1}})\Phi(\xi g_{s_{j-1}}) }\\
		&=\e^{-\int_0^t\varphi_s'\Phi(\xi g_{s })\d s},
	\end{align*}
	where the third identity holds true due to the independent increments of $(L_t)_{t\ge0}$, the fourth equality is valid since the symbol of $(L_t)_{t\ge0}$ is
$\Phi(\xi)$
given by \eqref{E-}, and the last display is true by virtue of the continuity of $g$ and the differential property of $\varphi$. As a consequence, the statement \eqref{E3}  follows.
\end{proof}

The process $(Z_t)_{t\ge0}$   is an additive process as stated in the following proposition.

\begin{proposition}\label{lem2}
	$(Z_t)_{t\ge0}$ is an additive process.
\end{proposition}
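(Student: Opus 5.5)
To show that $(Z_t)_{t\ge0}$ is an additive process, I will verify the defining properties from \cite[p.\ 3]{Sato} one at a time: $Z_0=0$ a.s., independent increments, and stochastic continuity. Càdlàg paths then come for free, either by choosing a modification or directly from the stochastic integral representation. The first property is immediate, since $Z_0=\int_0^0 g_s\,\d L_{\varphi_s}=0$.

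\emph{Independent increments.} For $0\le t_0<t_1<\cdots<t_n$ we have $Z_{t_j}-Z_{t_{j-1}}=\int_{t_{j-1}}^{t_j}g_s\,\d L_{\varphi_s}$. By the change of variables used in the proof of Proposition~\ref{lem}, this equals $\int_{\varphi_{t_{j-1}}}^{\varphi_{t_j}}g_{\varphi^{-1}_s}\,\d L_s$. Since $\varphi$ is increasing, these intervals are disjoint, so each increment is measurable with respect to the $\sigma$-field generated by the increments of $L$ on $[\varphi_{t_{j-1}},\varphi_{t_j}]$.

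To make this rigorous, I will use the Riemann-type approximations $Z^\pi$ from the proof of Proposition~\ref{lem}, taking partitions that contain all the points $t_j$.
\begin{itemize}
\item For each such partition, the increments of $Z^\pi$ over $[t_{j-1},t_j]$ are finite linear combinations of increments of $L$ over disjoint intervals, so they are independent.
\item The convergence-in-probability estimate established there (the bound before \eqref{Q8}), applied on each subinterval $[t_{j-1},t_j]$, gives $Z^\pi_{t_j}-Z^\pi_{t_{j-1}}\to Z_{t_j}-Z_{t_{j-1}}$ in probability, hence jointly in distribution.
\item Independence survives the limit. To see this, factor the joint characteristic function $\E\exp\big(i\sum_j\langle\xi_j,Z^\pi_{t_j}-Z^\pi_{t_{j-1}}\rangle\big)$ as a product for each $\pi$, then pass to the limit.
\end{itemize}
As a by-product, the same computation gives $\E\e^{i\langle\xi,Z_t-Z_s\rangle}=\exp\big(-\int_s^t\varphi_r'\Phi(\xi g_r)\,\d r\big)$, in analogy with \eqref{E3}.

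\emph{Stochastic continuity.} Using this increment formula, for $s\to t$ we get $\E\e^{i\langle\xi,Z_t-Z_s\rangle}\to1$ for every $\xi$. The reason is that $|\Phi(\xi g_r)|$ is bounded on compacts (because $g_r\in(0,1]$ and $\Phi$ is continuous), $\varphi'$ is continuous, and so $\int_s^t\varphi_r'|\Phi(\xi g_r)|\,\d r\to0$. Therefore $Z_t-Z_s\to0$ in distribution, and hence in probability, because the limit is the constant $0$.

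Finally, the Lévy system of $Z$ is $\nu_t\,\d t$ with $\nu_t$ given by \eqref{EW}, as can be read off the characteristic function in Proposition~\ref{lem}. This is consistent with \cite[Theorem 9.8]{Sato}.

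The main obstacle is the careful passage to the limit for the joint law of increments. I will handle it with the tightness and convergence estimates already established in the proof of Proposition~\ref{lem}, applied on each subinterval.
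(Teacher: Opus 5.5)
Your proof is correct. For independent increments it follows the paper's route: step-function approximations $Z^\pi$, independence inherited from the increments of $L$, factorized characteristic functions in the limit, and Kac's theorem. Your version is also slightly tighter. You factor the joint characteristic function with separate $\xi_j$ over $n$ increments, and you justify the limit by joint convergence in probability, whereas the paper uses a common $\xi$ for only two increments.

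For stochastic continuity you take a different but equally valid route. You use the increment formula $\E\e^{i\langle\xi,Z_t-Z_s\rangle}=\exp\big(-\int_s^t\varphi_r'\Phi(\xi g_r)\,\d r\big)\to1$, while the paper uses a direct L\'evy--It\^o tail estimate modelled on \eqref{Q8}; yours gives the two-sided limit at once, whereas the paper only states $s\uparrow t$.
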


\begin{proof}
	(1) First, we show that $(Z_t)_{t\ge0}$ has independent increments. It is sufficient to verify that $Z_{u_4}-Z_{u_3}$ is independent of $Z_{u_2}-Z_{u_1}$
	for any $0\le u_1<u_2<u_3<u_4 $. Let
	\begin{align*}
		\pi=\{u_1=s_1<s_2<\cdots s_n=u_2\}\quad \mbox{ and } \quad \pi^*=\{u_3=s_1^*<s_2^*<\cdots s_m^*=u_4\}
	\end{align*}
	be the  partitions  of $[u_1,u_2]$ and $ [u_3,u_4]$ with the mesh $|\pi|\to0$ and $|\pi^*|\to0$, respectively.
	Below, define
	\begin{align*}
		Z_{u_1,u_2}^\pi=\int_{u_1}^{u_2}g^\pi_s\,\d L_{\varphi_s}\quad \mbox{ and } \quad Z_{u_3,u_4}^{\pi^*}=\int_{u_3}^{u_4}g^{\pi^*}_s\,\d L_{\varphi_s}
	\end{align*}
	with $g^\pi_s:=\sum_{i=1}^ng_{s_{i-1}}\I_{[s_{i-1},s_i)}(s)$ and $g^{\pi^*}_s:=\sum_{i=1}^mg_{s_{i-1}^*}\I_{[s_{i-1}^*,s_i^*)}(s)$, respectively.
	Since $(L_t)_{t\ge0}$ has   independent increments and the function $g$ is deterministic,
	$Z_{u_1,u_2}^\pi$ is independent of $Z_{u_3,u_4}^{\pi^*}$ so that for any $\xi\in\R^d,$
	\begin{align*}
		\E\e^{i\langle\xi,Z_{u_1,u_2}^\pi+Z_{u_3,u_4}^{\pi^*}\rangle}=\E \e^{i\langle\xi,  Z_{u_1,u_2}^\pi\rangle }\E \e^{i\langle\xi , Z_{u_3,u_4}^{\pi^*} \rangle}
	\end{align*}
	By following exactly the procedure to derive \eqref{Q8}, we have
	\begin{align*}
		Z_{u_1,u_2}^\pi\overset{d}\to Z_{u_2}-Z_{u_1}\quad {{\rm as }}\quad  |\pi|\to0 \quad \mbox{ and } \quad Z_{u_3,u_4}^{\pi^*}\overset{d}\to Z_{u_4}-Z_{u_3}\quad { {\rm as }} \quad |\pi^*|\to0.
	\end{align*}
	Hence, we arrive at
	\begin{align*}
		\E\e^{i\langle\xi, Z_{u_2}-Z_{u_1}+Z_{u_4}-Z_{u_3} \rangle}=\E\e^{i\langle\xi,Z_{u_2}-Z_{u_1}\rangle}\E\e^{i\langle\xi ,Z_{u_4}-Z_{u_3}\rangle}.
	\end{align*}
	This, together with  Kac's theorem,  implies that $Z_{u_4}-Z_{u_3}$ and $Z_{u_2}-Z_{u_1}$ are mutually independent.

	(2) Secondly, we prove that $(Z_t)_{t\ge0}$ is continuous in probability.
	Notice that for any $0\le s<t<t_0$ and $\vv>0,$
	\begin{align*}
		\P(|Z_t-Z_s|\ge\vv)
		&=\P\Big(\Big|\int_{\varphi_s}^{\varphi_t}g_{\varphi^{-1}_u}\,\d L_u\Big|\ge\vv\Big)\\
		&\le \P\Big(\Big|\int_{\varphi_s}^{\varphi_t}\int_{\{|z|\le1\}}g_{\varphi^{-1}_u} z\,\tilde N(\d z,\d u)\Big| \ge\vv/2\Big)\\
		&\quad+\P\Big(\Big|\int_{\varphi_s}^{\varphi_t}\int_{\{|z|>1\}}g_{\varphi^{-1}_u} z\, N(\d z,\d u) \Big|\ge\vv/2\Big),
	\end{align*}
	where the inequality holds true due to the L\'{e}vy-It\^o decomposition of the pure jump L\'evy process $(L_t)_{t\ge0}$ as used in the proof of Proposition \ref{lem}. Once again, by mimicking the trick   to deduce \eqref{Q8},  we conclude that $\lim_{s\uparrow t}\P(|Z_t-Z_s|\ge\vv)=0$ for any $\vv>0$ and
$t>0$, so $(Z_t)_{t\ge0}$ is continuous in probability. The proof is complete.
\end{proof}

\subsection{Uniform  moment estimates}\label{Section3-2}
In  Theorems \ref{thm1} and \ref{thm3}, note that there are error terms
arising from the difference of  drift terms; see, in particular,  \eqref{R4} and \eqref{R8} for more details. To handle the error terms mentioned previously,
it requires to establish a uniform-in-time moment estimate for $(Y_t)_{t\ge0}$ solving \eqref{E1-}, which is stated in the  proposition below.

\begin{proposition}\label{lem3} Let $(Y_t)_{t\ge0}$ be the strong solution to the SDE \eqref{E1-}. Assume that  there exist constants $C_*,C^*>0$ and a continuous function $\psi:[0,\infty)\to[0,\infty)$ satisfying $\lim_{t\to\infty}\psi(t)=0$ such that for all $t\ge0$ and $ x\in\R^d,$
	\begin{align}\label{D14}
		\<x,b(t,x)\>\le -(C_*-\psi(t))|x|^2+C^*.
	\end{align}
	Suppose further that for some  $p>0$ $($which might be greater than $2$$)$,
	\begin{align}\label{EE5}
		\sup_{t\ge0}\bigg(\int_{\{|z|\le1\}}   |z|^2 \,\nu_t(\d z)\bigg)+\sup_{t\ge0}\bigg(\int_{\{|z|>1\}}   |z|^p    \,\nu_t(\d z)\bigg)<\8.
	\end{align}
	Then, there exists a constant  $C_0>0$ such that for all $t\ge0$,
	\begin{align}\label{EE3}
		\E( |Y_t|^p|\mathscr F_{0})\le \e^{-p\int_0^t(C_*/2-\psi(s))\,\d s}(1+|Y_0|^2)^{p/2}+C_0\int_0^t\e^{-p\int_s^t(C_*/2-\psi(u))\,\d u}\,\d s.
	\end{align}
\end{proposition}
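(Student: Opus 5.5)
We apply It\^o's formula to the Lyapunov function $V(x):=(1+|x|^2)^{p/2}$ along $(Y_t)_{t\ge0}$ and aim for a linear differential inequality of the form
\begin{align*}
(\mathscr L_tV)(x)\le -p\big(C_*/2-\psi(t)\big)V(x)+C_0,
\end{align*}
with $\mathscr L_t$ the generator in \eqref{EE1}. Once this holds, we localize by stopping times $\tau_n:=\inf\{t\ge0:|Y_t|\ge n\}$ so that the martingale part vanishes in conditional expectation given $\mathscr F_0$. Gronwall's inequality, applied to $t\mapsto\E(V(Y_{t\wedge\tau_n})|\mathscr F_0)$ with the time-dependent rate $p(C_*/2-\psi(t))$, then gives the bound. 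Letting $n\to\infty$ by Fatou's lemma and using $|Y_t|^p\le V(Y_t)$ yields \eqref{EE3}.

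\textbf{Drift term.} Since $\nabla V(x)=p(1+|x|^2)^{p/2-1}x$, \eqref{D14} gives
\begin{align*}
\<\nabla V(x),b(t,x)\>\le -p(C_*-\psi(t))\frac{|x|^2}{1+|x|^2}V(x)+pC^*(1+|x|^2)^{p/2-1}.
\end{align*}
Writing $|x|^2/(1+|x|^2)=1-1/(1+|x|^2)$, the first term equals $-p(C_*-\psi(t))V(x)$ plus $p(C_*-\psi(t))(1+|x|^2)^{p/2-1}$. That remainder, and the $C^*$ term, are of order $(1+|x|^2)^{p/2-1}$, one power of $|x|^2$ below $V$. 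Since $\psi$ is bounded (it is continuous with limit $0$), Young's inequality gives $c(1+|x|^2)^{p/2-1}\le \frac{pC_*}{8}V(x)+c'$, with $c'$ independent of $t$. This spends part of the $C_*$ budget and leaves room for the jump terms.

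\textbf{Small jumps} ($|z|\le1$). Taylor's formula gives $|V(x+z)-V(x)-\<\nabla V(x),z\>|\le \frac12\sup_{|s|\le1}\|\nabla^2V(x+sz)\||z|^2$. Since $\|\nabla^2V(y)\|\le c(1+|y|^2)^{p/2-1}$ and $|z|\le 1$, this is at most $c(1+|x|^2)^{p/2-1}|z|^2$, where for $p<2$ we use $1+|x+sz|^2\ge c(1+|x|^2)$. Integrating against $\nu_t$ and using the first supremum in \eqref{EE5}, we again get a lower-order term, absorbed as in the drift step.

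\textbf{Large jumps} ($|z|>1$); this is the delicate step, especially for $p>2$, where the bound must remain linear in $V$ with a small constant. For $p\le1$, subadditivity gives $V(x+z)-V(x)\le c|z|^p$, which is bounded after integration. For $p>1$, the mean value theorem gives
\begin{align*}
V(x+z)-V(x)\le p\big(1+(|x|+|z|)^2\big)^{(p-1)/2}|z|\le c\big((1+|x|^2)^{(p-1)/2}|z|+|z|^p\big).
\end{align*}
Integrating, the term $|z|^p$ is controlled by the second supremum in \eqref{EE5}. The term $(1+|x|^2)^{(p-1)/2}\int_{\{|z|>1\}}|z|\,\nu_t(\d z)$ is finite because $|z|\le|z|^p$ for $|z|>1$, and it is again of strictly lower order than $V$. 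So Young's inequality absorbs it into $\frac{pC_*}{8}V+c''$.

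Collecting the three contributions gives $\mathscr L_tV\le -p(C_*-\psi(t)-C_*/2)V+C_0=-p(C_*/2-\psi(t))V+C_0$ for all $t\ge0$, with $C_0$ independent of $t$ because every constant depends only on the suprema in \eqref{EE5}, on $\sup_t\psi(t)$, and on $p,C_*,C^*$. The variation-of-constants form of Gronwall's inequality then gives exactly \eqref{EE3}, with $V(Y_0)=(1+|Y_0|^2)^{p/2}$ as the initial term.
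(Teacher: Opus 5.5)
Your proposal is correct and follows essentially the paper's route. Both use the Lyapunov function $(1+|x|^2)^{p/2}$ and split $\mathscr L_t$ into drift, compensated small jumps and large jumps. Young's inequality then absorbs the lower-order terms to give $\mathscr L_tV\le -p(C_*/2-\psi(t))V+C_0$, followed by It\^o and Gronwall. Your large-jump bounds (subadditivity for $p\le1$, mean value theorem for $p>1$) differ only cosmetically from the paper's $C\,V(x)^{1/2}|z|^p$ and $C\,V(x)^{1-1/p}|z|^p$.

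One point needs tidying: Gronwall cannot be applied directly to $\E(V(Y_{t\wedge\tau_n})|\mathscr F_0)$, because the negative drift term is only available on $[0,\tau_n]$. Instead, apply It\^o's formula to $\e^{p\int_0^t(C_*/2-\psi(s))\,\d s}V(Y_t)$ first, then stop at $\tau_n$ and let $n\to\infty$ by Fatou; this yields exactly \eqref{EE3}.
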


\begin{proof} Let $(\mathscr L_t)_{t\ge0}$ be the infinitesimal generator of the process $(Y_t)_{t\ge0}$, which is given in \eqref{EE1}.
	For given $p>0,$
	let
	$
	\mathcal V_p(x)=(1+|x|^2)^{ p/2} $ for all $x\in\R^d.$
	It is easy to see from \eqref{EE1} that
	for all $t\ge 0$ and $x\in\R^d,$
	\begin{align*}
		(\mathscr L_t\mathcal V_p)(x)&= \<\nn \mathcal V_p(x), b(t,x)\> +\int_{\{|z|\le1\}}\big(\mathcal V_p(x+z) -\mathcal V_p(x) - \<\nn \mathcal V_p(x),z\> \big)\,\nu_t(\d z)\\
		&\quad+ \int_{\{|z|>1\}}\big(\mathcal V_p(x+z) -\mathcal V_p(x) \big)\,\nu_t(\d z)\\
		&=:J_1(t,x)+J_2(t,x)+J_3(t,x).
	\end{align*}

	Provided that we can claim that there is a constant $C_1>0$ such that for all $t\ge 0 $ and   $x\in\R^d$,
	\begin{align}\label{EE4}
		(\mathscr L_t\mathcal V_p)(x)\le -p(C_*/2-\psi(t))\mathcal V_p(x)+C_1,
	\end{align}
	the assertion \eqref{EE3} follows   from It\^o's formula followed by Gronwall's inequality.

	By virtue of \eqref{D14} and  $\nn \mathcal V_p(x)=p(1+|x|^2)^{p/2-1}x$ for all $x\in\R^d,$
	together with  the assumption that $\lim_{t\to\infty}\psi(t)=0$,
	there exists a   constant   $C_2 >0 $ such that  for all $t\ge 0 $ and   $x\in\R^d$,
	\begin{align*}
		J_1(t,x)\le -p(3C_*/4-\psi(t))\mathcal V_p(x)+C_2.
	\end{align*}
	Next, by using the fact that for all $x\in\R^d,$
	\begin{align*}
		\nn^2 \mathcal V_p(x)=p(1+|x|^2)^{p/2-1}I_d+p(p-2)(1+|x|^2)^{p/2-2}x\otimes x,
	\end{align*}
	we find from Young's inequality   that  for some constant $C_3 >0 $ and all $t\ge0$ and $x\in\R^d,$
	\begin{align*}
		J_2(t,x)&= \frac{1}{2}\int_{\{|z|\le1\}}\int_0^1\int_0^s  \<\nn^2\mathcal V_p(x+uz)z,z\> \big)\,\d u\,\d s\,\nu_t(\d z)\\
		&\le \frac{1}{2}p(1+(p-2)^+)\int_{\{|z|\le1\}}\int_0^1\int_0^s  (1+|x+uz|^2)^{p/2-1} |z|^2\,\d u\,\d s\,\nu_t(\d z)\\
		&\le \begin{cases}
			\frac{1}{4}p\int_{\{|z|\le1\}}  |z|^2 \,\nu_t(\d z),\quad & p\in(0,2],\\
\frac{1}{4} p(p-1)(
3+2|x|^2
)^{p/2-1}\int_{\{|z|\le1\}}  |z|^2 \,\nu_t(\d z),\quad & p>2,
		\end{cases}\\
		&\le \begin{cases}
			\frac{1}{4}p\int_{\{|z|\le1\}}  |z|^2\, \nu_t(\d z),\quad &  p\in(0,2],\\
			\frac{1}{8}pC_*\mathcal V_p(x)+C_3\Big(\int_{\{|z|\le1\}}  |z|^2 \,\nu_t(\d z)\Big)^{p/2},\quad & p>2.
		\end{cases}
	\end{align*}
	Furthermore, there exists a constant $C_4>0$ such that for all $x\in\R^d$ and  $z\in\R^d$ with $|z|>1,$
	\begin{align*}
		\mathcal V_p(x+z) -\mathcal V_p(x)&\le
		\begin{cases}
			(2|x||z|+|z|^2)^{p/2},\quad & p\in(0,2],\\
			p\int_0^1(1+|x+sz|^2)^{(p-1)/2}|z|\,\d s,\quad & p>2,
		\end{cases}\\
		&\le
		\begin{cases}
			C_4\mathcal V_p(x)^{1/2} |z|^p,\quad& p\in(0,2],\\
			C_4 \mathcal V_p(x)^{1-1/{p}} |z|^p,  \quad & p>2,
		\end{cases}
	\end{align*}
	where,  in the first inequality,  we exploited the basic inequality:
	$(a+b)^\kk\le a^\kk+b^\kk$ for $a,b\ge0$ and $\kk\in(0,1].$ Subsequently,   Young's inequality implies that for some constant $C_5>0$,
	\begin{align*}
		J_3(t,x)\le \frac{1}{8}pC_*\mathcal V_p(x)+C_5\bigg(\int_{\{|z|>1\}}  |z|^p\,\nu_t(\d z)\bigg)^{2\vee p},\quad \forall\, t\ge0, \forall\, x\in\R^d.
	\end{align*}
As a result,  \eqref{EE4} is available by taking the estimates on $J_1 (t,x)$, $J_2(t,x) $ and $J_3(t,x) $ into consideration and   making use of \eqref{EE5}.
\end{proof}

\section{General asymptotic theory and Proofs of Theorem \ref{theorem-1}
	and Theorem \ref{thm2}}\label{sec4}

\subsection{General asymptotic theory}	
To investigate the long-time behaviors of the process $(X_t)_{t\ge0}$ solving the SDE  \eqref{eq1}, we will adopt the space-time change approach. To this end,
let $\varphi:[0,\8)\to[0,\8)$ be an increasing $C^1$-function, and $g:[0,\8)\to (0,1]$ be a decreasing $C^1$-function satisfying $\lim_{t\to\infty}g_t=0$, which are reference functions for the space and time transformations, respectively. By applying It\^o's formula, the time-space  changed process $(Y_t)_{t>0}:=(g_tX_{\varphi_t})_{t>0} $ solves the subsequent SDE: for all $t\ge0,$
\begin{equation}\label{RR-}
	\d Y_t
	=\big({g'_t}
	Y_t/g_t+ g_t  \varphi_t' F(\varphi_t, Y_t/{g_t}) \big)\,\d t+\d Z_t\quad\mbox{ with } \quad Z_t:=\int_0^tg_s\d L_{\varphi_s}.
\end{equation}
To study   the asymptotics of  $(Y_t)_{t\ge0}:=(g_tX_{\varphi_t})_{t\ge0} $ solving the SDE \eqref{RR-}, we choose  the functions $\varphi$ and $g$ such  that  $\varphi_t'g_t^\alpha=1$
for all $t\ge0$
  in case  the
index of large jumps in Assumption $({\bf H}_1)$ satisfies $\alpha\in(0,2)$. On the one hand, we  take $g_t=\e^{-t/\alpha}$ and $\varphi_t=\e^t$, which obviously satisfy $\varphi_t'g_t^\alpha=1$ for all $t\ge0$. In this case,
by virtue  of \eqref{RR-},  $(Y_t)_{t\ge0} $ solves the following  SDE: for all $t\ge0,$
\begin{align}\label{UP-2}
	\d Y_t=\big(-Y_t/\alpha+\e^{-(1/\alpha-1)t}F(\e^t,\e^{t/\alpha}Y_t)\big)\,\d t+\d Z_t.
\end{align}
On  the other hand, we  choose for some constant $\theta\in(0,1)$,
\begin{align}\label{UP-1}
	g_t=(1+(1-\theta)t)^{-\frac{\theta}{\alpha(1-\theta)}} \quad \mbox{ and } \quad  \varphi_t=(1+(1-\theta)t)^{\frac{1}{ 1-\theta }},\quad \forall\,t\ge0,
\end{align}
which also fulfil $\varphi_t'g_t^\alpha=1$ for all $t\ge0.$
Correspondingly, by invoking  \eqref{RR-} once again,
the associated $(Y_t)_{t\ge0} $ is governed by the  SDE below: for all $t\ge0,$
\begin{align}\label{UP-3}
	\d Y_t= \Big(- \frac{\theta Y_t}{\alpha
		(1+(1-\theta)t)  }  +  g_t^{1-\alpha} F (\varphi_t,Y_t/{g_t} )\Big)\,\d t+ \d  Z_t.
\end{align}

To explore the asymptotic behavior of the space-rescaled process related to $(X_t)_{t\ge0}$, as an intermediate step, we need to establish the asymptotics of
 $(Y_t)_{t\ge0}$ solving the SDE \eqref{UP-2} and the SDE \eqref{UP-3}, respectively. To this end,   we suppose respectively that  the following assumptions are  satisfied:
\begin{enumerate}

\item[$({\bf H}_3)$]\it
 the limit $F_1 (x):=\lim_{t\to\infty}(\e^{-(1/\alpha-1)t}F(\e^t,\e^{t/\alpha} x) ) $  exists, and
there exist constants  $K_1,K_{2}>0$, $K_{3}\ge0$,
 $0\le\gamma_1<\alpha/{(1\vee\theta_2)}$,  decreasing  continuous  functions  $\phi_1,\psi_1  :[0,\infty)\to[0,\infty)$ satisfying $\lim_{t\to\infty} \phi_1 (t)=\lim_{t\to\infty} \psi_1 (t) =0$ such that
for all $t\ge0$ and  $x,y\in\R^d $,
	\begin{flalign}
	 \<x-y,-(x-y)/\alpha+F_1 (x)-F_1 (y)\>&\le -K_1 |x-y|^2,\label{TY-3}
		\\
	 \big|\e^{-(1/\alpha-1)t}F(\e^t,\e^{t/\alpha} x)-F_1 (x)\big|&\le  \phi_1 (t)(1+|x|^{\gamma_1}), \label{TY-4}
		\\
	\<x,-x/\alpha+\e^{-(1/\alpha-1)t}F(\e^t,\e^{t/\alpha} x)\>&\le-(K_2-\psi_1 (t))|x|^2+K_3;\label{TY-5}
	\end{flalign}
\item[$({\bf H}_4)$] 	 the limit
	$F_2 (x):=\lim_{t\to\infty} (g_t^{1-\alpha} F (\varphi_t,x/{g_t} ) )$ exists for  $\varphi_t$ and $g_t $ defined  in \eqref{UP-1},  and  there exist constants  $L_1,L_2>0$, $L_3\ge0$,  $\kk\ge1,$
	$0\le\gamma_2<\alpha/{(1\vee\theta_2)}$,  decreasing continuous   functions  $\phi_2,\psi_2 :[0,\infty)\to[0,\infty)$ fulfilling $\lim_{t\to\infty} \phi_2 (t) = \lim_{t\to\infty} \psi_2 (t) =0$ such that
	for all $t\ge0$ and $x,y\in\R^d $,
	\begin{align}
\<x-y, F_2(x)-F_2 (y)\>&\le -L_1 |x-y|^{1+\kk},\label{TY-6}
		\\
\big|g_t^{1-\alpha} F (\varphi_t,x/{g_t} )-F_2 (x)\big|&\le  \phi_2  (t)(1+|x|^{\gamma_2}),\label{TY-7}
		\\
  g_t^{1-\alpha}\<x,  F (\varphi_t,x/{g_t} )\>&\le-(L_2-\psi_2 (t))|x|^2+L_3.\label{TY-8}
	\end{align}
\end{enumerate}

Obviously, in case of $\beta>1+(\gamma-1)/\alpha$, the function
$$F(t,x)=t^{-\beta}\Big(-|x|^{\gamma-1}+\frac{1}{1+|x|^2}\Big)x,\quad \forall\, t\ge0, x\in\R^d,$$
satisfies $({\bf H}_3)$ with $F_1(x)=0$. In addition, as long as $\beta=1+(\gamma-1)/\alpha$ with $\gamma>1$, the function
$$F(t,x)=-t^{-\beta}|x|^{\gamma-1}x+t^{-\beta}x+b(x),\quad \forall\,  t\ge0, x\in\R^d,$$
where $b:\R^d\to\R^d$ is bounded, fulfils  $({\bf H}_3)$ with $F_1(x)=-x|x|^{\gamma-1}.$
Moreover, concerning
the case $\beta<1+(\gamma-1)/\alpha$ with $\gamma>1$, the function
\begin{align*}
F(t,x)=- t^{-\beta}x|x|^{\gamma-1}+\frac{t^{-s}|x|^{\kk_*-1}x}{1+|x|^2},\quad \forall\,  t\ge0,x\in\R^d
\end{align*}
obeys  $({\bf H}_4)$ with $F_2(x)=-x|x|^{\gamma-1}$
 for some constants $\kk_*\in(0,2 )$ and  $s>\beta(\alpha-1)/{(\alpha+\gamma-1)}$.

In contrast to Theorem \ref{theorem-1} and Theorem \ref{thm2},
which are concerned with a typical candidate of $F(t,x)$, in this section
we establish the respective counterparts for a general $F(t,x)$ when the limit function of the time-space rescaled version satisfies suitable conditions (see $({\bf H}_3)$ and $({\bf H}_4)$ for more details).

\begin{theorem}\label{thm0}
	Assume that $({\bf H}_1)$,   $1\le \alpha<\theta_1\le2 $, $ \theta_2\in(0,\alpha)$ and $ \int_{\{|z|\le 1\}} |z|^{\theta_1}  a( z )\,\d z<\infty$ hold.
	 Then, under Assumption $({\bf H}_3)$, for all $t\ge1$ and  $\mu\in\mathcal P_{\theta_2\vee(\gamma_1(1\vee\theta_2))}(\R^d)$,
	\begin{equation}\label{TY}
		\mathcal W_{|\cdot|^{\theta_1}\wedge|\cdot|^{\theta_2}}\Big(\mathscr L_{t^{- {1}/{\alpha}}X_t^{\mu}},\pi\Big)  \le C_1^* \big(\phi_1((\ln t)/2)^{1\vee\theta_2}\vee t^{-( ((\theta_1/\alpha-1)/2)\wedge  (K_1\theta_2/{32}) ) } \big),
	\end{equation}
	where $\pi\in\mathcal P_{\theta_2}(\R^d)$ is the unique IPM of   $(\overline Y_{\!\! t})_{t\ge0}$ solving  the  subsequent SDE: for all $t\ge0,$
	\begin{equation}\label{EP}
		\d  \overline{Y}_{\!\! t} =
			(-  \overline{Y}_{\!\! t}  /\alpha + F_1( \overline{Y}_{\!\! t}   )  )\,\d t+ \d   L_t^{(\alpha)},
	\end{equation}
and
	$C_1^*>0$ is a constant depending   on $\mu $ and $\pi $; under Assumption  $({\bf H}_4)$,
	for  all $t\ge1$ and
	 $\mu\in \mathcal P_{(1\vee\gamma_2)(1\vee\theta_2)}(\R^d)$,
	\begin{equation}\label{EY}
		\begin{split}
	&	\mathcal W_{|\cdot|^{\theta_1}\wedge|\cdot|^{\theta_2}}\Big(\mathscr L_{t^{-\theta/\alpha}X_t^{\mu}},\pi\Big)  \\
	&\le C_2^*
\begin{cases}		
		 (t^{\theta-1}\vee\phi_2(\varphi_t^{-1}/ 2 ) )^{1\vee\theta_2}\vee t^{-  \theta(\theta_1/\alpha-1) }
		,&\kk=1,\\
		t^{-\theta_1(1-\theta )/{(\kk-1)}}\vee\big(t^{-\theta(\theta_1/\alpha-1)} +(t^{ \theta-1}\vee\phi_2(\varphi_t^{-1} /2 ) )^{1\vee\theta_2}\big)^{1/{(1+(\kk-1)/{\theta_1})}},&\kk>1,
		\end{cases}
		\end{split}
	\end{equation}
	where  $\pi\in \mathcal P_{\theta_2}(\R^d)$ is the unique IPM of $(\overline Y_{\!\! t})_{t\ge0}$  determined by the  SDE below:  for all $t\ge0,$
\begin{equation} \label{EP-1}
	\d  \overline{Y}_{\!\! t} =
  F_2( \overline{Y}_{\!\! t}   )   \,\d t+ \d   L_t^{(\alpha)},
\end{equation}	
 $\varphi_t^{-1}=(t^{1-\theta}-1)/( 1-\theta) $,
 and the constant
	$C_2^*>0$ depends on $\mu$ and $\pi$.
\end{theorem}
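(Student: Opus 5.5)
The proof rests on the space-time change \eqref{RR-}. We apply Theorem \ref{thm1} to the rescaled process $Y_t=g_tX_{\varphi_t}$ and to the limiting SDE, then undo the time change. Consider first $({\bf H}_3)$. Take $g_t=\e^{-t/\alpha}$ and $\varphi_t=\e^t$, so that $Y$ solves \eqref{UP-2} driven by the additive process $Z$ of Propositions \ref{lem} and \ref{lem2}. Its Lévy measure is $\nu_t=\varphi_t'(\nu\circ\bar g_t^{-1})$, which has no extra drift correction because $\nu$ is symmetric. Under $({\bf H}_1)$ and the scaling $\varphi_t'g_t^\alpha=1$, the large-jump part $c_*|z|^{-d-\alpha}\I_{\{|z|>1\}}\d z$ is mapped exactly onto $\nu^{(\alpha)}$ restricted to $\{|z|>g_t\}$. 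Hence
\[
\nu_t-\nu^{(\alpha)}=\e^{t}\,(a\I_{\{|\cdot|\le1\}})\circ\bar g_t^{-1}-\nu^{(\alpha)}\I_{\{|z|\le g_t\}},
\]
and both pieces are supported in $\{|z|\le g_t\}\subset\bar B_1$. Consequently the term $H(t)$ in \eqref{EE6} involves only small jumps:
\[
H(t)\le \e^{t}g_t^{\theta_1}\int_{\{|z|\le1\}}|z|^{\theta_1}a(z)\,\d z+c\,g_t^{\theta_1-\alpha}\lesssim \e^{-t(\theta_1/\alpha-1)}.
\]
This uses $\theta_1>\alpha$ and the integrability assumption on $a$.

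Now apply Theorem \ref{thm1} with $\bar b(x)=-x/\alpha+F_1(x)$, $\bar\nu=\nu^{(\alpha)}$ and $\kk=1$. Condition (i) is \eqref{TY-3} with $\lambda=K_1$, and condition (ii) holds with $\mathcal V(t,x)=\phi_1(t)(1+|x|^{\gamma_1})$ by \eqref{TY-4}. This gives
\[
\mathcal W_{|\cdot|^{\theta_1}\wedge|\cdot|^{\theta_2}}(\mathscr L_{Y_t^{\mu}},\mathscr L_{\overline Y_{\!t}^{\pi}})\le C\e^{-\lambda_*t}+C\int_0^t\e^{-\lambda_*(t-s)}\big(H(s)+\phi_1(s)^{1\vee\theta_2}\E(1+|Y_s|^{\gamma_1(1\vee\theta_2)})\big)\,\d s .
\]
The moment is controlled uniformly in time by Proposition \ref{lem3} with $p=\gamma_1(1\vee\theta_2)<\alpha$. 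Its hypothesis \eqref{D14} is \eqref{TY-5}, and \eqref{EE5} holds because $\nu_t$ has uniformly bounded second moment on $\bar B_1$ and uniformly bounded $p$-th moment outside it (again by scaling, since $p<\alpha$). If the initial law $\mu$ has infinite $2$-moment, I would truncate or use a $(1+|x|^2)^{p/2}$ Lyapunov function directly; Proposition \ref{lem3} already allows any $p>0$.

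To estimate the convolution, split $\int_0^t=\int_0^{t/2}+\int_{t/2}^t$. Since $\phi_1$ is decreasing, this yields a bound of order $\phi_1(t/2)^{1\vee\theta_2}+\e^{-\lambda_* t/2}+\e^{-t(\theta_1/\alpha-1)\wedge\lambda_*/2}$. Taking $\overline Y_0\sim\pi$, stationarity gives $\mathscr L_{\overline Y_t}=\pi$. Existence and uniqueness of $\pi$ come from applying Theorem \ref{thm1} to two copies of \eqref{EP}, which gives a contraction together with a standard Cauchy argument. To undo the time change, note $t^{-1/\alpha}X_t=Y_{\ln t}$. Substituting $s=\ln t$ gives \eqref{TY}, where the exponent $K_1\theta_2/32$ comes from $\lambda_*=2^{\theta_2/\theta_1-4}\theta_2K_1\ge\theta_2K_1/16$, halved by the splitting. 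One technical point is that the initial time is not $0$ but $\varphi_0=1$. I would handle this by starting $X$ with law $\mathscr L_{X_1}$, which is controlled through a moment bound on $[0,1]$.

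Under $({\bf H}_4)$ the argument is the same, but $\varphi,g$ are taken from \eqref{UP-1} and the drift is $-\theta Y/(\alpha(1+(1-\theta)t))+g_t^{1-\alpha}F(\varphi_t,Y/g_t)$. Its limit is $F_2$, with error at most $c(1+t)^{-1}|x|+\phi_2(t)(1+|x|^{\gamma_2})$. In terms of original time, $(1+t)^{-1}\asymp t^{\theta-1}$, which produces the factor $t^{\theta-1}$. In this case $g_t$ is polynomial in $t$, so $H(t)\lesssim g_t^{\theta_1-\alpha}$, which becomes $t^{-\theta(\theta_1/\alpha-1)}$ in original time. Theorem \ref{thm1} is then used with general $\kk\ge1$ (condition (i) being \eqref{TY-6}). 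For $\kk>1$ the super-dissipative branch of \eqref{R4} yields the $\sup_{r\ge t/2}g_*(r)^{1/(1+(\kk-1)/\theta_1)}$ form. Finally, $t^{-\theta/\alpha}X_t=Y_{\varphi_t^{-1}}$ with $\varphi_t^{-1}\asymp t^{1-\theta}$, which converts the rates into \eqref{EY}.

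The main obstacle is the \emph{precise} computation of $|\nu_t-\nu^{(\alpha)}|$ and its truncated moments in $H(t)$, which relies on the exact stable tail cancelling, together with verifying the uniform moment condition \eqref{EE5} for the rescaled noise. A further delicate point is the bookkeeping that the drift error contains $|x|$ and not only $|x|^{\gamma_2}$, which explains why the moment order $(1\vee\gamma_2)(1\vee\theta_2)$ is required.
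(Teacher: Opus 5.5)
Your argument is the paper's. It uses the same space–time changes, the same identification $H(t)=C^\star g_t^{\theta_1-\alpha}$ through exact cancellation of the stable tail, uniform moments from Proposition~\ref{lem3}, Theorem~\ref{thm1} with $\kk=1$ (resp.\ general $\kk$), the splitting of the convolution at $t/2$, and evaluation at $\ln t$ (resp.\ $\varphi_t^{-1}$). The one structural difference is that you couple $Y^\mu$ directly with the stationary solution $\overline Y^{\pi}$, so a single application of Theorem~\ref{thm1} suffices. The paper instead compares $Y^\mu$ with $\overline Y^{\mu}$ and then adds the contraction \eqref{WY--} of the limiting equation through \eqref{WY-}. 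Your route is slightly cleaner: for $\theta_1>1$, $\mathcal W_{|\cdot|^{\theta_1}\wedge|\cdot|^{\theta_2}}$ is only a quasi-distance, so \eqref{WY-} holds only up to a multiplicative constant.

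There is, however, a step that fails in an edge case, and the paper's proof has the same defect.

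\textbf{The case $\alpha=1$ under $({\bf H}_4)$.} The drift error contains the term $\theta|x|/(\alpha(1+(1-\theta)t))$, so you need $\sup_{t}\E|Y_t|^{(1\vee\gamma_2)(1\vee\theta_2)}<\infty$. Proposition~\ref{lem3} gives this only when $(1\vee\gamma_2)(1\vee\theta_2)<\alpha$. The reason is that \eqref{EE5} requires $\sup_t\int_{\{|z|>1\}}|z|^p\,\nu_t(\d z)<\infty$, and for the rescaled measures this holds exactly when $p<\alpha$ (cf.\ \eqref{P3} and \eqref{EP-3}). For $\alpha>1$ the condition is automatic. At $\alpha=1$, which the statement allows, one has $1\vee\theta_2=1=\alpha$ and the condition fails. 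For example, when $F$ is linear in $x$, $Y_t$ has a Cauchy-type tail, so $\E|Y_t|=\infty$ and the right-hand side of \eqref{R4} is infinite. Your $({\bf H}_4)$ argument therefore covers only $\alpha\in(1,2)$, as in case $(c)$ of Theorem~\ref{theorem-1}. You should state this restriction explicitly rather than just recording the required moment order.

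\textbf{The initial time.} You are right that $Y_0=X_{\varphi_0}=X_1$ rather than $X_0$; the paper passes over this by effectively taking $\mu=\mathscr L_{Y_0}$. Your fix needs moment bounds for $X$ on $[0,1]$, i.e.\ growth control of $F(s,\cdot)$ for $s\in[0,1]$. But $({\bf H}_3)$ and $({\bf H}_4)$ only constrain $F(s,\cdot)$ for $s\ge\varphi_0=1$. You must either add such a hypothesis or read $\mu$ as the law of $X_1$.
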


\begin{theorem}\label{thm2-1}
	Assume that $({\bf H}_1)$,
	$\int_{\{|z|\le1\}}|z|a(z)\,\d z<\infty$ and  $\beta\ge 1+ (\gamma-1)/\alpha$ hold
	for
	$0<\gamma<\alpha<1$.
	Suppose further that the following $({\bf H}_3')$
is satisfied:
\begin{enumerate}	
		\item[$({\bf H}_3')$]$({\bf H}_3)$  	holds true for some $ \gamma_1\in[0,\alpha)$, where  \eqref{TY-3} is replaced by the following condition: for some constants $\lambda_1,\lambda_2>0$,
		$\alpha_*\in(0,1]$,
		$\ell_0\ge1$ and  all $x,y\in\R^d,$
		\begin{align}\label{EP-24}
			\<x-y,-(x-y)/\alpha+F_1 (x)-F_1 (y)\>\le \lambda_1\I_{\{|x-y|\le\ell_0\}}|x-y|^{1+\alpha_*}-\lambda_2 \I_{\{|x-y|>\ell_0\}}|x-y|^2.
		\end{align}	
	\end{enumerate}	
	 Then,
	there
	is a constant $\lambda^*>0$
	such that for any $t\ge1$ and $\mu\in\mathcal P_{\gamma\vee\gamma_1}(\R^d)$,
	\begin{equation}\label{P10}
		\mathcal W_{|\cdot|\wedge|\cdot|^\gamma}\Big(\mathscr L_{t^{-{1}/{\alpha}}X_t^{\mu}},\pi\Big)   \le    C^* t^{-\lambda^*},
	\end{equation}
	where $\pi\in\mathcal P_\gamma(\R^d)$ is the unique IPM  of
	$(\overline{Y}_{\!\! t})_{t\ge0}$ solving   the  SDE in \eqref{EP},
	and $C^*>0$ depends linearly  on $\mu(|\cdot|^{\gamma\vee\gamma_1})$ and $\pi(|\cdot|^\gamma)$.
 \end{theorem}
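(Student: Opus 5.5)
We use the exponential space--time change $g_t=\e^{-t/\alpha}$, $\varphi_t=\e^t$. Then $Y_t=g_tX_{\varphi_t}$ solves \eqref{UP-2}, driven by the additive process $Z_t=\int_0^t g_s\,\d L_{\varphi_s}$, with drift $b(t,x)=-x/\alpha+\e^{-(1/\alpha-1)t}F(\e^t,\e^{t/\alpha}x)$. Its limit is $\bar b(x)=-x/\alpha+F_1(x)$. We compare $Y$ with $\overline Y$ solving \eqref{EP}, where $\bar Z=L^{(\alpha)}$ has $\bar\nu=\nu^{(\alpha)}$. Since $\mathscr L_{t^{-1/\alpha}X_t}=\mathscr L_{Y_{\ln t}}$, it suffices to bound $\mathcal W_{|\cdot|\wedge|\cdot|^\gamma}(\mathscr L_{Y_s},\pi)$ by $C\e^{-\lambda^* s}$ and then set $s=\ln t$.

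\textbf{Step 1 (noise comparison).} By Proposition~\ref{lem}, $\nu_t=\varphi_t'\,\nu\circ\bar g_t^{-1}$. From \eqref{Levy}, the large-jump part equals exactly $\frac{c_*}{|z|^{d+\alpha}}\I_{\{|z|>g_t\}}\d z$, because $\varphi_t'g_t^\alpha=1$. The small-jump part is $\e^{t}\e^{td/\alpha}a(\e^{t/\alpha}z)\I_{\{|z|\le g_t\}}\d z$. Hence $|\nu_t-\bar\nu|$ lives on $\{|z|\le g_t\}$. With $\theta=\gamma<\alpha<1$ and $|z|\wedge|z|^\gamma=|z|$ there, \eqref{WY} gives
\[
H(t)\le \e^t g_t\!\int_{\{|z|\le1\}}|z|a(z)\,\d z+\int_{\{|z|\le g_t\}}|z|\,\frac{c_*\d z}{|z|^{d+\alpha}}\le C\e^{-(1/\alpha-1)t},
\]
which decays exponentially since $\alpha<1$. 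The same computation shows that the drift correction term in Proposition~\ref{lem} is harmless. More precisely, the compensator shift $\int_{\{g_s<|z|\le1\}} z\,\nu_s(\d z)$ is of this order, or vanishes by symmetry of $\nu^{(\alpha)}$.

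\textbf{Step 2 (coupling estimate).} We apply Theorem~\ref{thm3} with $\theta=\gamma$, $\mathcal V(t,x)=\phi_1(t)(1+|x|^{\gamma_1})$ from \eqref{TY-4}, and condition (i) given by \eqref{EP-24}. Condition \eqref{R5--} must be checked with $\beta\in(1-\alpha_*,1)$. Note that $\nu_t\wedge\bar\nu\ge\nu^{(\alpha)}\I_{\{|z|>g_t\}}$. By \cite[Example 1.2]{LW19}, $\inf_{|x|\le r}\big(\nu^{(\alpha)}\wedge(\delta_x*\nu^{(\alpha)})\big)(\R^d)\ge c r^{-\alpha}$. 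Truncating at $|z|>g_t$ removes only mass that contributes to $h(t,r)\to0$. So we take $\beta=\alpha$, which requires $\alpha_*>1-\alpha$; this is consistent with $\gamma>1-\alpha$ in the remark after Theorem~\ref{thm2}, since $\alpha_*$ is essentially $\gamma$. We then verify that $h^*(t,\kappa)$ decays exponentially in $t$.

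\textbf{Step 3 (moments and conclusion).} A uniform moment bound $\sup_s\E|Y_s|^{\gamma_1}<\infty$ follows from Proposition~\ref{lem3} with $p=\gamma_1<\alpha$, using \eqref{TY-5}. Condition \eqref{EE5} holds because $\sup_t\int_{\{|z|>1\}}|z|^p\nu_t(\d z)=\int_{\{|z|>1\}}|z|^p\nu^{(\alpha)}(\d z)<\infty$ and $\sup_t\int_{\{|z|\le1\}}|z|^2\nu_t(\d z)<\infty$. This gives $\E\mathcal V(s,Y_s)\le C\phi_1(s)$. In the typical model $(\mathbf H_2)$, $\phi_1(t)$ is exponential, of order $\e^{-(\beta-1-(\gamma-1)/\alpha)t}$ in the strict case, or governed by $(1+\e^t)^{-\beta}$ versus $\e^{-\beta t}$ corrections at equality. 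Existence and uniqueness of $\pi$ come from applying Theorem~\ref{thm3} with $b=\bar b$, which gives exponential contraction. Taking $\mu_2=\pi$ in \eqref{R8}, the convolution $\int_0^s \e^{-\lambda_*(s-r)}\e^{-cr}\,\d r$ is exponentially small. Substituting $s=\ln t$ yields \eqref{P10}. The constant $C^*$ is linear in $\mu(|\cdot|^{\gamma\vee\gamma_1})$ and $\pi(|\cdot|^\gamma)$, since the initial coupling cost is at most $\mu(|\cdot|^\gamma)+\pi(|\cdot|^\gamma)$.

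\textbf{Main obstacle.} The hard step is verifying \eqref{R5--} with a decaying $h$ for the truncated measure $\nu_t\wedge\bar\nu$, and making sure $h^*(t,\kappa)=\sup_{r\le\kappa}r^{1+\rho-\beta}h(t,r)$ decays. The lost mass near $|z|\sim g_t$ must be controlled uniformly in $r\le\kappa$. I would first bound $h(t,r)\le C r^{\alpha}g_t^{-\alpha}\I_{\{r\lesssim g_t\}}$-type terms and check whether they are compatible with the power $r^{1+\rho-\beta}$.
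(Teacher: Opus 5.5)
Your route is the paper's. You use the exponential change $g_t=\e^{-t/\alpha}$, $\varphi_t=\e^t$, and Theorem~\ref{thm3} with $\theta=\gamma$ and $\beta=\alpha$. You prove the bound $H(t)\lesssim\e^{-(1/\alpha-1)t}$ (your Step~1 is exactly the paper's computation) and take uniform moments from Proposition~\ref{lem3} with $p=\gamma_1<\alpha$. Taking $\mu_2=\pi$ directly in \eqref{R8} is a legitimate shortcut for the paper's triangle inequality through $\overline Y^{\mu}$; you still need the contraction of $\overline Y$ for existence and uniqueness of $\pi$.

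The genuine gap is the step you single out yourself: you never verify \eqref{R5--} or the decay of $h^*(t,\kappa)$. Moreover, the form you propose, $h(t,r)\le Cr^{\alpha}g_t^{-\alpha}\I_{\{r\lesssim g_t\}}$, is wrong. With $a\equiv0$ (allowed by $({\bf H}_1)$) one has $\nu_{t,x}\le\nu_t\wedge\nu^{(\alpha)}=\nu^{(\alpha)}\I_{\{|z|>g_t\}}$, so $\nu_{t,x}(\R^d)\le Cg_t^{-\alpha}$ for all $x$. Then \eqref{R5--} with $\beta=\alpha$ forces $h(t,r)\ge c^*-C(r/g_t)^{\alpha}$, so $h(t,r)\to c^*$, not $0$, as $r\downarrow0$. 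What saves the argument is not smallness of $h$ near $r=0$ but the weight $r^{1+\rho-\beta}$, with $1+\rho-\beta>0$.

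To close the step, use that $\nu_t=\nu^{(\alpha)}$ on $\{|z|>g_t\}$, so $\nu_{t,x}$ has density at least $c_*(|z|\vee|z-x|)^{-d-\alpha}$ on $\{|z|>g_t\}\cap\{|z-x|>g_t\}$.

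The paper integrates this over $\{g_t<|z|\le|x|/2\}$, where $|z-x|\ge|z|$ and $|z|\vee|z-x|\le3|x|/2$. This gives $\nu_{t,x}(\R^d)\ge|x|^{-\alpha}\big(c^*-h(t,|x|)\big)$ with $h(t,r)=2^dc^*r^{-d}\big(g_t\wedge(r/2)\big)^d$. This $h$ equals $c^*$ for $r\le2g_t$ and equals $2^dc^*g_t^dr^{-d}$ beyond. Because $0<1+\rho-\alpha<d$, it follows that $h^*(t,\kappa)\lesssim g_t^{1+\rho-\alpha}$, which is exponentially small in $t$ (the paper records the weaker $g_t^{(1+\rho-\alpha)/d}$). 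That bound is pointwise in $|x|$, which is all the proof of Theorem~\ref{thm3} uses (at $x=(U_t)_\kappa$).

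To get \eqref{R5--} literally, integrate instead over $\{|z|\ge2(|x|\vee g_t)\}$, where $|z-x|\ge|z|/2\ge g_t$ and $|z|\vee|z-x|\le3|z|/2$. This gives $\nu_{t,x}(\R^d)\ge c(|x|\vee g_t)^{-\alpha}$. Hence \eqref{R5--} holds with $h(t,r)=c\big(1-(1\wedge(r/g_t))^{\alpha}\big)$, which vanishes for $r\ge g_t$, so $h^*(t,\kappa)\le c\,g_t^{1+\rho-\alpha}$.

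Two further points should be stated as hypotheses rather than waved through; the paper's proof uses both implicitly.
\begin{enumerate}
\item Theorem~\ref{thm3} needs $\beta=\alpha\in(1-\alpha_*,1)$, i.e.\ $\alpha+\alpha_*>1$, which is not among the assumptions.
\item The rate $t^{-\lambda^*}$ needs $\phi_1$ to decay exponentially. Bare $({\bf H}_3')$ only yields a bound $C\big(\phi_1((\ln t)/2)+t^{-\lambda^*}\big)$, as in \eqref{WY7-1}.
\end{enumerate}
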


\subsection{Proofs of Theorem \ref{theorem-1} and Theorem \ref{thm0}}
Before we proceed to prove Theorem \ref{thm0}, we begin with some preliminaries for later use.  In the following analysis, the increasing $C^1$-function $\varphi:[0,\infty)\to[0,\infty)$ and the decreasing $C^1$-function $g:[0,\infty)\to(0,1]$ is postulated to satisfy $\varphi_t'g_t^\alpha=1$ for all $t\ge0,$  where their concrete expressions  are unimportant for the moment. In terms of Proposition \ref{lem} and Proposition  \ref{lem2},
the driven noise $(Z_t)_{t\ge0}$  defined  in \eqref{RR-}
is an additive process with the (time-dependent) L\'{e}vy measure $(\nu_t(\d z))_{t\ge0}$ being given as follows: for all $t\ge0, $
\begin{align}\label{EW3}
	\nu_t (\d z)  =\varphi_t'(\nu\circ (\bar g_t)^{-1})(\d z)  =g_t^{-d-\alpha}a(z/{g_t})\I_{\{| z/{g_t}|\le1\}}\,\d z+\frac{c_*}{|z|^{d+\alpha}}\I_{\{| z/{g_t}|>1\}}\,\d z,
\end{align} where in the second identity  we used  the precondition  $\varphi_t'g_t^\alpha=1$ for all $t\ge0.$ Here, we also used the fact that for all $t\ge0,$
$$\int_{\R^d}  z\I_{\{g_t<|z|\le 1\}} \, \nu_t (\d z)=c_*g_t^{1-\alpha}\int_{\{1<| z|\le 1/{g_t} \}}   \frac{z}{|z|^{d+\alpha}} \,\d z=0.  $$
Obviously,    $(\nu_t(\d z))_{t\ge0}$ is symmetric by invoking   $a(-z)=a(z)$ for all $z\in\bar B_1.$

Next, under the condition that   $ \displaystyle\int_{\{|z|\le 1\}} |z|^{\theta_1} a(z)\,\d z<\infty$ for some $ \theta_1\in(\alpha,2] $,
 we claim that  for all $\theta_2\in(0,\alpha)$  and $t\ge0,$
\begin{equation}\label{P}\begin{split}
		H(t):=&\int_{\R^d}\big(|z|^{\theta_1}\I_{\{|z|\le1\}}+|z|^{\theta_2}\I_{\{|z|>1\}}\I_{\{\theta_2\in(0,1]\}}\big)
		|\nu_t-\nu^{(\alpha)}|(\d z)\\
		&\quad +\left(\int_{\{|z|>1\}}|z|\,|\nu_t-\nu^{(\alpha)}|(\d z)\right)^{\theta_2}\I_{\{\theta_2>1\}}\\
		=&C^\star g_t^{\theta_1-\alpha}<\infty,
\end{split}\end{equation}
where the quantity $C^\star>0$ is defined as below:
$$C^\star:=\int_{\{|z|\le1\}} |z|^{\theta_1} \Big|a( z)-\frac{c_*}{|z|^{d+\alpha}}\Big| \,\d z<\infty.$$
The estimate above   (which corresponds to the prerequisite \eqref{EE6}  in Theorem \ref{thm1})
is necessary when Theorem \ref{thm1} is to be  applied in the subsequent analysis.
Indeed, due to \eqref{EW3} and
$\nu^{(\alpha)}(\d z)=\frac{c_*}{|z|^{d+\alpha}}\,\d z$,  it follows that for all $\theta_2\in(0,\alpha)$ and  $t\ge0$,
\begin{equation}\label{P8}
	\begin{split}
	&\int_{\R^d}\big(|z|^{\theta_1}\I_{\{|z|\le1\}}+|z|^{\theta_2}\I_{\{|z|>1\}}\I_{\{\theta_2\in(0,1]\}}\big)
	|\nu_t-\nu^{(\alpha)}|(\d z)\\
		 &=\int_{\R^d}\big(|z|^{\theta_1}\I_{\{|z|\le1\}}+|z|^{\theta_2}\I_{\{|z|>1\}}\I_{\{\theta_2\in(0,1]\}}\big)
		 \Big|g_t^{-d-\alpha}a(z/{g_t})-\frac{c_*}{|z|^{d+\alpha}}\Big|\I_{\{|z/{g_t}|\le1\}}\,\d z\\
		&= g_t^{ -\alpha}\int_{\{| z|\le1\}}\big((g_t|z|)^{\theta_1}\I_{\{|g_tz|\le 1\}}+(g_t|z|)^{\theta_2}\I_{\{|g_tz|>1\}}\I_{\{\theta_2\in(0,1]\}}\big)\Big|a( z)-\frac{c_*}{|z|^{d+\alpha}}\Big| \,\d z\\
		&= g_t^{ \theta_1-\alpha}\int_{\{| z|\le1\}} |z|^{\theta_1} \Big|a( z)-\frac{c_*}{|z|^{d+\alpha}}\Big| \,\d z\\
		&=C^\star g_t^{ \theta_1-\alpha}.
	\end{split}
\end{equation}
Whence, \eqref{P} follows from the fact  that  $$\int_{\{|z|>1\}}|z|\,|\nu_t-\nu^{(\alpha)}|(\d z)=g_t^{1-\alpha}\int_{\{|g_tz|>1\}}\Big|a( z)-\frac{c_*}{|z|^{d+\alpha}}\Big|\I_{\{|z|\le1\}}\d z=0$$
and   $C^\star<\infty$  by taking $ \theta_1\in(\alpha,2] $ and $ \displaystyle\int_{\{|z|\le 1\}} |z|^{\theta_1} a(z)\,\d z<\infty$ into account.

Whereafter, for $(\nu_t(\d z))_{t\ge0}$ given in \eqref{EW3},
 we move forward to prove the following statement that for any $p\in(0,\alpha),$
\begin{align}\label{P3}
		\sup_{t\ge0}\bigg(\int_{\{|z|\le1\}}   |z|^2\, \nu_t(\d z)\bigg)+\sup_{t\ge0}\bigg(\int_{\{|z|>1\}}   |z|^p    \,\nu_t(\d z)\bigg)<\8,
\end{align}
which is indispensable when   Proposition \ref{lem3} is to be applied. To achieve \eqref{P3},
on the one hand, by exploiting  \eqref{EW3}, along with $g_\cdot\in(0,1]$ and $\int_{\{| z|\le 1\}}|z|^2 a( z) \,\d z<\8$ (which is based on the precondition $\int_{\{| z|\le 1\}}|z|^{\theta_1} a( z) \,\d z<\8$ for $ \theta_1\in(\alpha, 2]$), we find that for all $t\ge0,$
\begin{align*}
	\int_{\{|z|\le1\}}|z|^2\,\nu_t(\d z) &= g_t^{2-\alpha}\bigg(\int_{\{| z|\le 1\}}|z|^2 a( z) \,\d z+c_*\int_{\{1<|z|\le1/{g_t}  \}}\frac{|z|^2}{|z|^{d+\alpha}} \,\d z\bigg) \\
	&\le \int_{\{| z|\le 1\}}|z|^2 a( z) \,\d z+\frac{c_*d\omega_d}{2-\alpha}.
\end{align*}
On  the other hand, we have that  for any $p\in(0,\alpha)$ and $t\ge0$,
$$
\int_{\{|z|>1\}}   |z|^p    \,\nu_t(\d z) =c_*g_t^{p-\alpha}\int_{\{|z|>1/{g_t}\}}\frac{|z|^p}{|z|^{d+\alpha}}\,\d z=\frac{c_*d\omega_d}{\alpha-p} .
$$
Therefore, for any $p\in(0,\alpha)$, \eqref{P3} follows  by combining with the  two estimates  above.

With the above preparations in place, we turn to the

\begin{proof}[Proof of Theorem   $\ref{thm0}$]
In the sequel, let $(X_t)_{t\ge0}$ and $(\overline{Y}_{\!\! t})_{t\ge0}$ be the solutions to  the SDEs \eqref{eq1}
and \eqref{EP} (resp. \eqref{EP-1}), respectively. Via the triangle inequality, it is easy to see that
 for  $p>0$,
 any function $\psi:[
 t_0,\infty)\to[0,\infty)$, and $t\ge t_0$
 with some $t_0>0$,
 \begin{align}\label{WY-}
\mathcal W_{|\cdot|^{\theta_1}\wedge|\cdot|^{\theta_2}}\Big(\mathscr L_{t^{-p}X_t^{\mu}},\pi\Big)\le \mathcal W_{|\cdot|^{\theta_1}\wedge|\cdot|^{\theta_2}}\Big(\mathscr L_{t^{-p}X_t^{\mu}},\mathscr L_{\overline Y_{\!\! \psi_t}^{\mu}}\Big)+\mathcal W_{|\cdot|^{\theta_1}\wedge|\cdot|^{\theta_2}}\Big( \mathscr L_{\overline Y_{\!\! \psi_t}^{\mu}},\pi\Big),
\end{align}
where $\pi$ is the  unique IPM of the SDE   \eqref{EP} (resp. \eqref{EP-1}). Below, we write
 $F^*(x)$ as the corresponding drift  of the SDE
\eqref{EP}   (resp. \eqref{EP-1}).  No matter which of the  SDE   \eqref{EP} or \eqref{EP-1}, by invoking \eqref{TY-3} or  \eqref{TY-6},
 we find  that for all $x,y\in\R^d,$
\begin{align}\label{LW-2}
\<x-y,F^*(x)-F^*(x)\>\le-K_*|x-y|^{1+\kk_*},
\end{align}
where $\kk_*=1$ and $K_*=K_1$ (resp. $\kk_*=\kk$ and $K_*=L_1$)
 as far as the  SDE    \eqref{EP} (resp. \eqref{EP-1}) is concerned.
Then, for  the function $h:[0,\infty)\to[0,\infty)$,  defined in \eqref{EW11},  we deduce from the chain rule and \eqref{EW--} (with $\kk$ therein being replaced by $\kk_*$) that for all $t\ge0 $,
\begin{align*}
\d h(|\overline Y_{\!\! t}^{\mu}-\overline Y_{\!\! t}^{\pi}|^{\theta_1})&=
\theta_1
h'(|\overline Y_{\!\! t}^{\mu}-\overline Y_{\!\! t}^{\pi}|^{\theta_1})|\overline Y_{\!\! t}^{\mu}-\overline Y_{\!\! t}^{\pi}|^{\theta_1-2}\<\overline Y_{\!\! t}^{\mu}-\overline Y_{\!\! t}^{\pi},F^*(\overline Y_{\!\! t}^{\mu})-F^*(\overline Y_{\!\! t}^{\pi})\>\,\d t\\
&\le -
\theta_1
K_* h'(|\overline Y_{\!\! t}^{\mu}-\overline Y_{\!\! t}^{\pi}|^{\theta_1})|\overline Y_{\!\! t}^{\mu_1}-\overline Y_{\!\! t}^{\pi}|^{\theta_1+\kk_*-1}\,\d t\\
&\le -K^*h(|\overline Y_{\!\! t}^{\mu}-\overline Y_{\!\! t}^{\pi}|^{\theta_1})^{1+(\kk_*-1)/{\theta_1}}\,\d t,
\end{align*}
where $K^*:=\theta_1
K_*(\theta_2/{(2\theta_1)})^{1+(\kk_*-1)/{\theta_2}}$. The previous estimate,
along with Lemma \ref{lemma-*},
 implies that
 for all $t\ge0,$
\begin{align*}
 \E h(|\overline Y_{\!\! t}^{\mu}-\overline Y_{\!\! t}^{\pi}|^{\theta_1})\le
 \begin{cases}
\e^{-K^*t}\E h(|\overline Y_{\!\! 0}^{\mu}-\overline Y_{\!\! 0}^{\pi}|^{\theta_1})&\kk_*=1,\\
C_* t^{-\frac{\theta_1}{\kk_*-1}},&\kk_*>1,
	\end{cases}
\end{align*}
where $C_*>0$ is independent of $t$.
 Subsequently, by  choosing the variable $(\overline Y_{\!\! 0}^{\mu},\overline Y_{\!\! 0}^{\pi} )$ such that  $$\E  \big(|\overline Y_{\!\! 0}^{\mu}-\overline Y_{\!\! 0}^{\pi}|^{\theta_1} \wedge|\overline Y_{\!\! 0}^{\mu}-\overline Y_{\!\! 0}^{\pi}|^{\theta_2}\big)=\mathcal W_{|\cdot|^{\theta_1}\wedge|\cdot|^{\theta_2}}(\mu,\pi),$$
 we obtain from  \eqref{EW-*} and  the invariance of $\pi$  that  there exists  a constant $C_0>0$ such that
 for all $t>0,$
\begin{equation}\label{WY--}
	\begin{split}
\mathcal W_{|\cdot|^{\theta_1}\wedge|\cdot|^{\theta_2}}\Big( \mathscr L_{\overline Y_{\!\!  t}^{\mu}},\pi\Big)
&\le  C_0
\begin{cases}
\e^{-K^*t}\mathcal W_{|\cdot|^{\theta_1}\wedge|\cdot|^{\theta_2}}(\mu,\pi),&\kk_*=1,\\
t^{-\frac{\theta_1}{\kk_*-1}},&\kk_*>1.
	\end{cases}
	\end{split}
\end{equation}

Based on the foregoing preparations, 	
we first prove Theorem \ref{thm0}	under Hypothesis $({\bf H}_3)$.  By employing  \eqref{TY-5}, \eqref{P3} as well as Proposition \ref{lem3},
for $q\in(0,\alpha)$ there is a constant
$C_1>0$
such that for all
$t\ge0$ and $\mu\in \mathcal P_{q}(\R^d)$,
\begin{align}\label{EP-3}
	\E|Y_t^{\mu}|^q \le
	C_1\big(1+ \mu(|\cdot|^q) \big),
\end{align}
where $(Y_t^\mu)_{t\ge0}$ solves the SDE \eqref{UP-2}.
As a result,
we deduce from \eqref{TY-3}, \eqref{TY-4}, \eqref{P} with $g_t=\e^{-t/\alpha}$ and  Theorem \ref{thm1} that
for all
$t\ge0$, $\mu_1\in \mathcal P_{\theta_2\vee(\gamma_1(1\vee \theta_2))}(\R^d)$ and $\mu_2\in\mathcal P_{\theta_2}(\R^d)$,
\begin{align*}
	\mathcal W_{|\cdot|^{\theta_1}\wedge|\cdot|^{\theta_2}}\Big(\mathscr L_{Y_{ t}^{\mu_1}},\mathscr L_{\overline {Y}^{\mu_2}_{\! \! t}}\Big) &\le
	C_2\e^{-\lambda t}\mathcal W_{|\cdot|^{\theta_1}\wedge|\cdot|^{\theta_2}}(\mu_1,\mu_2)\\
	&\quad +
	C_3\int_0^t\e^{-\lambda (t-s)}\big  (      \phi_1(s)^{1\vee\theta_2}+   \e^{-(\theta_1 /\alpha-1)s}  \big )\,\d s\\
	&\le C_2\e^{-\lambda t}\mathcal W_{|\cdot|^{\theta_1}\wedge|\cdot|^{\theta_2}}(\mu_1,\mu_2)+C_4\big(\phi_1(t/2)^{1\vee\theta_2}+\e^{-(\theta_1/\alpha-1)t/2} +t\e^{-\lambda  t/2}\big).
\end{align*}
Herein,  $(\overline {Y}_{\!\! t})_{t\ge0}$ is determined by the   SDE   \eqref{EP},
$\lambda :=2^{\theta_2/{\theta_1}-4}K_1  \theta_2
$,
in the second inequality we exploited the fact that for any $\kk_0>0$ and  decreasing continuous function $\bar\phi:[0,\infty)\to[0,\infty)$,
\begin{equation}\label{EP-7}
	\begin{split}	
	\int_0^t\e^{-\kk_0 s}\bar\phi(t-s)\,\d s& =\int_0^{t/2}\e^{-\kk_0 s}\bar\phi(t-s)\,\d s+\int_{t/2}^t\e^{-\kk_0 s}\bar\phi(t-s)\,\d s\\
	& \le \frac{1}{\kk_0}\bar\phi(t/2)+\frac{1}{2}\|\bar\phi\|_\8t\e^{-\kk_0 t/2},
	\end{split}
\end{equation}
and $C_2,C_3, C_4$ are positive constants, where
$C_3$ and $C_4$  depend linearly on $\mu_1(|\cdot|^{\gamma_1(1\vee\theta_2)})$.
So, under Assumption $({\bf H}_3)$, the assertion \eqref{TY} is verifiable
by
putting \eqref{WY-} with $p=1/\alpha$ and $
\psi_t=\varphi_t^{-1}=\ln t$	and   \eqref{WY--} with $\kk_*=1$ and $t$ therein being replaced by $\ln t$
together,   noting that   $g_{\varphi_t^{-1}}=t^{-{1}/{\alpha}}$ for all $t\ge1$
as well as making use of the fact that $\sup_{r\ge0}(r\e^{-c t})\le 1/{(\e c)}$
for all $c>0.$

We then proceed to prove  Theorem \ref{thm0} under Assumption $({\bf H}_4)$. By virtue of \eqref{TY-8}, \eqref{P3} and Proposition \ref{lem3}, the uniform-in-time moment estimate \eqref{EP-3} is still valid for  $(Y_t^\mu)_{t\ge0}$ solving  the SDE \eqref{EP-1}. Moreover, note from \eqref{TY-7} that for all $t\ge0$ and $x\in\R^d,$
\begin{align*}
|-  \theta x /{(\alpha
	(1+(1-\theta)t) ) }  +  g_t^{1-\alpha} F (\varphi_t,x/{g_t} )-F_2 (x)|\le \theta |x| /{(\alpha
	(1+(1-\theta)t) ) }+\phi_2  (t)(1+|x|^{\gamma_2}).
\end{align*}
Thus, taking \eqref{TY-6} and \eqref{P} with  $g_t$  being given in \eqref{UP-1} into consideration and applying
Theorem \ref{thm1}   yield  that
for all
	$\mu_1\in \mathcal P_{(1\vee\gamma_2)(1\vee \theta_2)}(\R^d)$, $\mu_2\in\mathcal P_{\theta_2}(\R^d)$ and $t\ge0,$
	\begin{align*}
		\mathcal W_{|\cdot|^{\theta_1}\wedge|\cdot|^{\theta_2}}\Big(\mathscr L_{Y_{  t}^{\mu_1}},\mathscr L_{\overline {Y}_{\!\! t}^{\mu_2}}\Big) &\le
		\begin{cases} C_5\e^{-\lambda t}\mathcal W_{|\cdot|^{\theta_1}\wedge|\cdot|^{\theta_2}}(\mu_1,\mu_2)\\
		  \quad +C_6\int_0^t\e^{-\lambda (t-s)}    \big(((1/s)\vee\phi_2(s))^{1\vee\theta_2} +  g_s^{ \theta_1-\alpha}\big)   \,\d s,&\kk=1,\\
	  C_7\big(t^{-\theta_1/{(\kk-1)}}\vee   \big(g_{t/2}^{ \theta_1-\alpha}+((1/t)   \vee\phi_2(t/2))^{1\vee\theta_2}\big)^{1/{(1+(\kk-1)/{\theta_1})}}\big),&\kk>1
		\end{cases}
	\end{align*}
In the estimate above,  $(\overline {Y}_{\! \! t})_{t\ge0}$ solves  the  SDE   \eqref{EP-1}, $\lambda :=2^{\theta_2/{\theta_1}-4}L_1\theta_2$,
and $C_5,C_6,C_7>0$ are  constants, where $C_6$ and $C_7$ depend  linearly on  $\mu_1(|\cdot|^{(1\vee\gamma_2)(1\vee\theta_2)})$.
Consequently, by recalling that $g_t$ is given in \eqref{UP-1} and invoking \eqref{EP-7}, we find that
for all
$t\ge0$, $\mu_1\in \mathcal P_{ (1\vee\gamma_2) (1\vee \theta_2)}(\R^d)$ and $\mu_2\in\mathcal P_{\theta_2}(\R^d)$,
\begin{align*}
	&\mathcal W_{|\cdot|^{\theta_1}\wedge|\cdot|^{\theta_2}}\Big(\mathscr L_{Y_{  t}^{\mu_1}},\mathscr L_{\overline {Y}_{\!\! t}^{\mu_2}}\Big)\\ &\le
	\begin{cases} C_5\e^{-\lambda t}\mathcal W_{|\cdot|^{\theta_1}\wedge|\cdot|^{\theta_2}}(\mu_1,\mu_2)\\ \quad +C_8 \Big(((1/t)\vee\phi_2(t/2))^{1\vee\theta_2}+ (1+t)^{-\frac{\theta(\theta_1-\alpha)}{\alpha(1-\theta)}}+t\e^{-\lambda t/2}\Big),&\kk=1,\\
		 C_9\Big(t^{-\theta_1/{(\kk-1)}}\vee   \Big((1+t)^{-\frac{\theta(\theta_1-\alpha)}{\alpha(1-\theta)}}+   ((1/t)\vee\phi_2(t/2))^{1\vee\theta_2}\Big)^{1/{(1+(\kk-1)/{\theta_1})}}\Big),&\kk>1,
		\end{cases}
\end{align*}
where  $C_8,C_9$ are positive constants, whose dependence on $\mu_1$ is the same as that of $C_6,C_7$, respectively.
At length, under Assumption $({\bf H}_4)$, the desired assertion \eqref{EY} is available at once  by noticing that for all $t\ge1,$
	\begin{align*}
	 g_{\varphi_t^{-1}}=t^{-\theta/\alpha},\quad   \quad \varphi_t^{-1}=  (t^{1-\theta}-1)/{(1-\theta)},
	\end{align*}
and making use of \eqref{WY-} with $
\psi_t=\varphi_t^{-1}$	and \eqref{WY--}.
The proof is therefore complete.
\end{proof}

Subsequently, with the aid of Theorem   \ref{thm0}, we complete the
\begin{proof}[Proof of Theorem $\ref{theorem-1}$] To finish  the proof of Theorem \ref{theorem-1}, it suffices to verify the preconditions in Theorem   \ref{thm0} for the three cases,  separately.

Recall that $F_*(x)=\lambda x(1+|x|^2)^{(\gamma-1)/2}$.  It is easy to see  that for all $t\ge0$ and $x\in\R^d$,
\begin{align}\label{1-TY}
\e^{-(1/\alpha-1)t}F(\e^t,\e^{t/\alpha}x)=\lambda \e^{-(1-\gamma+\alpha(\beta-1))t/\alpha} ( 1+\e^{-t} )^{-\beta} x(\e^{-2t/\alpha}+| x|^2)^{(\gamma-1)/2},
\end{align}
and that  for $\theta=\frac{\alpha\beta}{\alpha+\gamma-1}\in(0,1)$, which is valid as long as
$\beta<1+(\gamma-1)/\alpha$,
\begin{equation}\label{EP-5}
g_t^{1-\alpha} F (\varphi_t,x/{g_t} )
 =\lambda \big(1+1/{\varphi_t}\big)^{-\beta} x \big(g_t^2+|x|^2\big)^{(\gamma-1)/2},
\end{equation}
where   $g_t $ and $\varphi_t$ were defined in \eqref{UP-1}.
Consequently, we deduce  that
for all $x\in\R^d,$
\begin{equation}\label{EP-4}
F_1(x)=\lim_{t\to\infty} \big(\e^{-(1/\alpha-1)t}F(\e^t,\e^{t/\alpha}x) \big)=
\begin{cases}
	{0}&\mbox{ if } \beta>1+(\gamma-1)/\alpha,\\
	\lambda x|x|^{\gamma-1}&\mbox{ if } \beta=1+(\gamma-1)/\alpha,
	\end{cases}
\end{equation}
and
\begin{align*}
F_2(x)=\lim_{t\to\infty}\big(g_t^{1-\alpha} F (\varphi_t,x/{g_t} )\big)=\lambda x|x|^{\gamma-1}\quad \mbox{ if }\beta<1+(\gamma-1)/\alpha.
\end{align*}

For case (I): $\beta>1+(\gamma-1)/\alpha$, by using $F_1(x)\equiv{0}$,
it is ready to see from \eqref{1-TY} that \eqref{TY-3} and $\eqref{TY-4}$ are satisfied    for $K_1=1/\alpha$,  $\phi_1(t)=c_1\e^{-(1-\gamma+\alpha(\beta-1))t/\alpha}$ for some constant $c_1>0$ and $\gamma_1=\gamma<\alpha/{(1\vee\theta_2)}$. Obviously, as soon as $\lambda<0,$
\eqref{TY-5} holds true for $K_2=1/\alpha$, $\psi_1(t)\equiv0$ and $K_3=0.$
In case of $\lambda>0,$   it follows from \eqref{1-TY} that for all $t>0$ and $x\in\R^d,$
\begin{align}\label{EP-21}
	\<x,-x/\alpha+\e^{-(1/\alpha-1)t}F(\e^t,\e^{t/\alpha}x)\>
 \le -|x|^2/\alpha+\lambda \e^{-(1-\gamma+\alpha(\beta-1))t/\alpha}(1  +| x|^2)^{\frac{1}{2}(\gamma+1)} .
\end{align}
Whence, with regard to $\lambda>0$ along with  $\gamma\in(0,1]$,
\eqref{TY-5} is still true.
Based on the preceding analysis, under case $(a), $\eqref{EW8} follows right now by applying Theorem   \ref{thm0}.

 Concerning case (II): $\beta=1+(\gamma-1)/\alpha$, we have $F_1(x)=\lambda x|x|^{\gamma-1}$ as shown in \eqref{EP-4}. Accordingly, we derive
 that for any $\lambda<0$ and $x,y\in\R^d$,
\begin{align}\label{EP-17}
\<x-y,-(x-y)/\alpha+F_1(x)-F_2(y)\>\le-|x-y|^2/\alpha
\end{align}
by making use of  the following fact: for all $x,y\in\R^d,$
\begin{align*}
 \<x-y, x|x|^{\gamma-1}-y|y|^{\gamma-1}\> \ge (|x|^\gamma-|y|^\gamma)(|x|-|y|)\ge0.
\end{align*}
As a consequence, \eqref{TY-3} for $K_1=1/\alpha$ can be verified.

 For case (II), by virtue of  Bernoulli's inequality:  $(1+r)^a\ge 1+ar, \forall r>-1,$ for all $a\ge1$ or $a\le0,$
  we deduce from \eqref{1-TY} that for all $t\ge0$ and $x\in\R^d,$
 \begin{align*}
  |\e^{-(1/\alpha-1)t}F(\e^t,\e^{t/\alpha}x)-F_1(x)|
  &\le|\lambda x| \big|(( 1+\e^{-t} )^{-\beta} -1)  \big|(\e^{-2t/\alpha}+| x|^2)^{(\gamma-1)/2} +G(t,x),\\
  &\le |\lambda|\beta\e^{-t}(1+| x|^2)^{ \gamma/2 }+G (t,x),
 \end{align*}
  where $$G (t,x):=|\lambda x| \big |  (  \e^{-2t/\alpha}+| x|^2)^{(\gamma-1)/2}-  | x|^{ \gamma-1 } \big|.$$
  Furthermore, there exist constants $c_2,c_3>0$ such that  for all $t\ge0$ and $x\in\R^d,$
\begin{equation}\label{EP-18}
	\begin{split}
G (t,x)
&\le\begin{cases}
	c_2 \e^{-\gamma t/\alpha} ,&0<\gamma\le1, \\
	(1\vee(\gamma-1)/2)|x|\big((\e^{-2t/\alpha})^{(\gamma-3)/2}\vee(  \e^{-2t/\alpha}+| x|^2)^{ (\gamma-3)/2}\big)\e^{-2t/\alpha},	& \gamma>1,
	\end{cases}\\
		&\le c_3\begin{cases}
		 \e^{-\gamma t/\alpha} ,&0<\gamma\le1, \\
	 \e^{-(2\wedge(\gamma-1)) t/\alpha}    (1 +| x|^{1\vee(\gamma-2)}),& \gamma>1,
	\end{cases}
\end{split}
\end{equation}
where in the first inequality
we exploited the following  estimate:  for any $c_4>0 $  and   $\kk\in(0,1/2)$,
\begin{align*}
r^{\frac{1}{2}}(r^{-\kk}-(c_4+r )^{-\kk})=c_4^{\frac{1}{2}-\kk}(r/{c_4})^{\frac{1}{2}-\kk}\big( 1-(1+ 1/ (r/{c_4})  )^{-\kk}\big)\le \kk c_4^{\frac{1}{2}-\kk}(r/{c_4})^{-(\kk+1/2)},
\end{align*}
 and
the subsequent inequality:    for all $a,b\ge0,$
\begin{align}\label{EP-6}
(a+b)^p-a^p\le (1\vee p) ( b^{p-1}\vee (a+b)^{p-1} )b.
\end{align}
Consequently,    we arrive at
\begin{equation}\label{EP-23}
	\begin{split}
&|\e^{-(1/\alpha-1)t}F(\e^t,\e^{t/\alpha}x)-F_1(x)|\\
&\le |\lambda| \beta\e^{-t}(1 +| x|^2)^{\frac{1}{2} \gamma }+c_3\begin{cases}
	\e^{-\gamma t/\alpha} ,&0<\gamma\le1, \\
	\e^{-(2\wedge(\gamma-1)) t/\alpha}    (1 +| x|^{1\vee(\gamma-2)}),& \gamma>1.
\end{cases}
\end{split}
\end{equation}
This, along with $\gamma_1=\gamma<\alpha/{(1\vee\theta_2)}$,  yields that
 \eqref{TY-4}  also holds
 when
 $\alpha=1$,  $\gamma\in(0,1)$ and
  $\phi_1(t)=c_5 \e^{-(1\wedge(\gamma/\alpha))t} $ for some constant $c_5>0$ (resp.  $1<\alpha<2$,  $\gamma\in(0,1]$ and
  $\phi_1(t)=c_5 \e^{-(1\wedge(\gamma/\alpha))t} $;  $1<\alpha<2$, $\gamma>1$ and  $\phi_1(t)=c_5 \e^{-(1\wedge ((2\wedge(\gamma-1))/\alpha))t} $).
Furthermore, as for the present case (II), \eqref{TY-5} is readily satisfied
for $K_2=1/\alpha$, $K_3=0$ and $\psi_1(t) \equiv0.$
On the basis of the analysis mentioned above, as far as case  $(b)$  is concerned, \eqref{EW8} is available  by applying Theorem   \ref{thm0} once more.

Before we proceed to prove the  assertion \eqref{EY-*}, regarding case (III): $\beta<1+(\gamma-1)/\alpha$,
we begin by proving the subsequent estimate:
 for all $x,y\in\R^d$ and $\gamma\ge1,$
 \begin{align}\label{2-TY}
 \<x-y,x|x|^{\gamma-1}-y|y|^{\gamma-1}\>\ge (2^{1-\gamma}/\gamma)|x-y|^{1+\gamma}.
 \end{align}
 Once \eqref{2-TY} is available,  \eqref{TY-6}  with $\kk=\gamma$ and $L_1=-\lambda 2^{1-\gamma}/\gamma$ follows directly.
Apparently, \eqref{2-TY} is true for $x=y.$
For  $g(x):=|x|^{1+\gamma}/{(1+\gamma)}$, we have that for all ${ 0}\neq x\in\R^d,$
\begin{align*}
\nn^2 g(x)= |x|^{\gamma-1}I_d+ (\gamma-1) |x|^{\gamma-3}(x\otimes x).
\end{align*}
This subsequently implies that
 for all $x,y\in\R^d,$
\begin{align*}
 \<x-y,x|x|^{\gamma-1}-y|y|^{\gamma-1}\>
 &\ge \int_0^1|y+s(x-y)|^{\gamma-1}\,\d s|x-y|^2\\
 &\ge |x-y|^{1+\gamma}\int_0^1\Big|\frac{\<y,x-y\>}{|x-y|^2}+s  \Big|^{\gamma-1}\,\d s,
\end{align*}
where in the second inequality we used the fact that the vector $y-\<y,x-y\>(x-y)/{|x-y|^2}$ is perpendicular to the vector $\<y,x-y\>(x-y)/{|x-y|^2}+s(x-y)$ so that
\begin{align*}
| y+s(x-y)|\ge
 \big| \<y,x-y\> /{|x-y|}+s|x-y| \big|.
 \end{align*}
Correspondingly, \eqref{2-TY} follows by
 employing the fact that for $\gamma\ge1,$
\begin{align*}
\inf_{c\in\R}\int_c^{c+1}|u|^{\gamma-1}\,\d u=\int_{- 1/2 }^{ 1/2 }|u|^{\gamma-1}\,\d u=2^{1-\gamma}/\gamma.
 \end{align*}

 By following the strategy to treat the term $G(t,x)$ mentioned previously, we derive from \eqref{EP-5} that there is a constant $c_6>0$ such that for any $t\ge0$ and $x\in\R^d$,
 \begin{align}\label{WPP-1}
 \big|g_t^{1-\alpha} F (\varphi_t,x/{g_t} )-F_2 (x)\big|
 &\le \frac{ |\lambda|\beta}{\varphi_t}   (1 +|x|^2 )^{ \gamma/2 } +c_6
 \begin{cases}
 	0,&\gamma=1,\\
  g_t^{2\wedge(\gamma-1) }(1 +| x|^{1\vee(\gamma-2)}),&\gamma>1
  \end{cases}
 \end{align}
Therefore,
\eqref{TY-7} is  fulfilled when $\gamma_2=\gamma$ and $\phi_2(t)=c_7((1/{\varphi_t})\vee g_t^{2\wedge(\gamma-1) })$ for some constant $c_7>0$ when $\gamma>1$ (resp. $\phi_2(t)=c_7/{\varphi_t}$ when $\gamma=1$).
Additionally, \eqref{EP-5}, besides $\varphi_0=1$ and $g_t\in(0,1]$,  enables us to derive that for all $\lambda<0$,
$t\ge0$ and $x\in\R^d,$
\begin{align*}
g_t^{1-\alpha}\<x,  F (\varphi_t,x/{g_t} )\>
	&\le\lambda(1+1/{\varphi_t})^{-\beta} |x|^{ \gamma+1 }-\lambda(1+1/{\varphi_t})^{-\beta} g_t^2\big( g_t^2+|x|^2\big)^{(\gamma-1)/2}\\
	&\le  \lambda(1-\beta/{\varphi_t})|x|^{ \gamma+1 }-\lambda  g_t^2\big( 1+|x|^2\big)^{(\gamma-1)/2}.
\end{align*}
Whence,
 \eqref{TY-8} is examinable by means of  Young's inequality.  Finally, by combining the preceding analysis with Theorem   \ref{thm0}, we complete the proof of Theorem \ref{theorem-1}.
\end{proof}

\subsection{Proofs of Theorem \ref{thm2} and Theorem \ref{thm2-1}}
This subsection aims at presenting the proof of Theorem \ref{thm2-1}. Subsequently, we derive Theorem \ref{thm2} as an application thereof.

\begin{proof}[Proof of Theorem {\rm\ref{thm2-1}}] In the subsequent analysis, we take $g_t=\e^{-t/\alpha}$ for all $t\ge0$.
	To accomplish the proof of Theorem \ref{thm2-1}, it is sufficient to   demonstrate  that
the
 corresponding
\eqref{R5--}, \eqref{WY} as well as \eqref{EE5} hold for $\bar \nu=\nu^{(\alpha)}$. Obviously, \eqref{EE5} follows from \eqref{P3}.
According to \eqref{P8} and $\int_{\{|z|\le1\}}|z|a(z)\,\d z<\infty,$
we have that for all $0<\gamma<\alpha<1$ and $t\ge0$,
\begin{equation*}
H(t):=\int_{\R^d} (|z|^\gamma \wedge|z|)\, |\nu_t-\nu^{(\alpha)}|(\d z)
=\e^{-(1/\alpha-1)t}\int_{\{|z|\le1\}}  | z|\Big|a( z)-\frac{c_*}{|  z|^{d+\alpha}}\Big|
\,\d  z<\8.
\end{equation*}
Thus, \eqref{WY} with $\theta=\gamma$ is attainable. Next, we  verify  the precondition \eqref{R5--}.
In terms of the definitions of $\nu_t(\d z)$ (see \eqref{EW3}) and $\nu^{(\alpha)}(\d z)$, we obtain that  for all $t\ge0,$
\begin{align*}
(\nu_t\wedge\nu^{(\alpha)})(\d z)&=\bigg(\Big(\big(g_t^{-d-\alpha}a( z/{g_t})\big)\wedge\frac{c_*}{|z|^{d+\alpha}}\Big)\I_{\{| z/{g_t}|\le1\}} +\frac{c_*}{|z|^{d+\alpha}}\I_{\{| z/{g_t}|>1\}}  \bigg)\,\d z  =:\Lambda_t(z) \,\d z.
\end{align*}
This  implies that for all $t\ge0$ and $x\in\R^d,$
\begin{align*}
(\delta_x\ast( \nu_t\wedge \nu^{(\alpha)}))(\d z)=\Lambda_t(z-x)\,\d z.
\end{align*}
Accordingly, it follows from \cite[Example A.3]{LW19} that for all $t\ge0$ and $x\in\R^d,$
\begin{align*}
\nu_{t,x}(\d z): =\big(( \nu_t \wedge \nu^{(\alpha)})\wedge(\delta_x\ast( \nu_t\wedge \nu^{(\alpha)}))\big)(\d z)
 =(\Lambda_t(z)\wedge \Lambda_t(z-x))\,\d z.
\end{align*}
Note  from $g_\cdot\in(0,1]$ that for all $t\ge0$ and $x\in\R^d,$
\begin{align}\label{WY1}
\nu_{t,x}(\R^d)&=\int_{\{ | z/{g_t}|\le1\}\cap\{ |(z-x)/{g_t}|\le1\}}\bigg(\big(g_t^{-d-\alpha}\big(a(z/{g_t})\wedge a((z-x)/{g_t})\big)\big)\nonumber\\
&\quad\quad\quad\quad\quad\quad\quad\quad\quad\quad\quad\quad\wedge\frac{c_*}{(|z|\vee|z-x|)^{d+\alpha}} \bigg)\,\d z\nonumber\\
&\quad+\int_{\{ |z/{g_t}|\le1\}\cap\{ |(z-x)/{g_t}|>1\}}\bigg(\big(g_t^{-d-\alpha}a(z/{g_t})\big)\wedge\frac{c_*}{(|z|\vee|z-x|)^{d+\alpha}}  \bigg) \,\d z\nonumber\\
&\quad+\int_{\{ |z/{g_t}|>1\}\cap\{ |(z-x)/{g_t}|\le1\}}\bigg(\big(g_t^{-d-\alpha}a((z-x)/{g_t})\big)\wedge\frac{c_*}{(|z|\vee|z-x|)^{d+\alpha}}  \bigg)\,\d z\\
&\quad+c_*\int_{\{ | z|>g_t\}\cap\{ |  z-x  |>g_t\}}  \frac{1}{(|z|\vee|z-x|)^{d+\alpha}} \,\d z\nonumber\\
&\ge c_*\int_{\{ | z|>g_t\}\cap\{ |  z-x  |>g_t\}\cap\{ |  z|\le|x|/2\}}  \frac{1}{(|z|\vee|z-x|)^{d+\alpha}} \,\d z.\nonumber
\end{align}
In case of $|z|\le
|x|/2$,  it holds that $   |z|\le |x|-|z|\le |z-x|\le
3|x|/2$. Consequently,
by recalling the fact that the    volume of a  ball with  radius $r$ in $\R^d$ is $\frac{\pi^{{d}/{2}}r^d}{\Gamma(1+d/2)}$
with $\Gamma(\cdot)$ being the Gamma function, and taking \eqref{WY1} into account,
the lower bound of $\nu_{t,x}(\R^d)$ can be provided  as below: for any $t\ge0$ and $x\in\R^d$,
\begin{align*}
\nu_{t,x}(\R^d)
&\ge c_*\int_{\{ | z|>g_t\} \cap\{ |  z|\le|x|/2\}}  \frac{1}{(|z|\vee|z-x|)^{d+\alpha}} \,\d z \\
&=\frac{c_*2^{d+\alpha}}{3^{d+\alpha}}|x|^{-(d+\alpha)}\bigg(\int_{ \{ |  z|\le|x|/2\}}  \,\d z
-\int_{\{ | z|\le g_t\} \cap\{ |  z|\le|x|/2\}}   \,\d z\bigg)\\
&=\frac{c_*(2/3)^{d+\alpha}\pi^{{d}/{2}} }{\Gamma(d/2+1)} |x|^{-  \alpha  }\big( 2^{-d } -
|x|^{-d}(g_t\wedge (|x|/2))^d \big).
\end{align*}
Whence, \eqref{R5--} is available  with $\beta=\alpha\in(0,1)$,
\begin{align*}
c^*:=\frac{c_*(2/3)^{d+\alpha}\pi^{{d}/{2}} }{2^d\Gamma(d/2+1)}\quad \mbox{ and }\quad h(t,r):=2^dc^*r^{-d}(g_t\wedge (r/2))^d\in (0,c^*],\quad \forall\, t\ge0,
r\in (0,\kappa].
\end{align*}

Based on the aforementioned analysis, we derive from Theorem \ref{thm3}, Proposition \ref{lem3} and \eqref{TY-4}
that
there exists a      constant    $  \lambda_1 >0$ such that
for all $t\ge0$ and $\mu \in\mathcal P_{\gamma\vee\gamma_1}(\R^d)$,
\begin{equation*}
	\begin{split}
 \mathcal W_{|\cdot|\wedge|\cdot|^\gamma}\Big(\mathscr L_{Y_t^{\mu }},\mathscr L_{\overline{Y}_{\!\! t}^{\mu }}\Big)    \le    +C_1\int_0^t\e^{-\lambda_1(t-s)} \big(g_s^{1-\alpha}+h^*(s,\kappa) +\phi_1(s)\big) \,\d s,
 \end{split}
\end{equation*}
where $h^*(t,\kappa):=\sup_{r\le \kappa} (r^{1+\rho-\alpha}h(t,r)) $ for $\rho\in(0,\alpha+\alpha_*-1)$, and $C_1$ is a positive constant depending on $\mu(|\cdot|^{\gamma_1})$. Next,
notice that for
all $\rho\in(0,\alpha+\alpha_*-1)$ and
$r,t\ge0$,
\begin{equation*}
	\begin{split}
		r^{1+\rho-\alpha-d}
		(g_t\wedge (r/2))^d \I_{\{r\le1\}} &=  r^{1+\rho-\alpha-d} \Big(
		(g_t\wedge (r/2))^d \I_{\{r\le g_t^{{1}/{d}}\}}+
		(g_t\wedge (r/2))^d \I_{\{ g_t^{{1}/{d}}<r\le1\}}\Big)  \\
		&\le 2^{-d} r^{1+\rho-\alpha }\I_{\{r\le g_t^{{1}/{d}}\}}+g_t^{(1+\rho-\alpha-d)/d}g_t^d\I_{\{ g_t^{{1}/{d}}<r\le1\}}\\
		&\le g_t^{(1+\rho-\alpha )/d}\Big(2^{-d} \I_{\{r\le g_t^{{1}/{d}}\}} +g_t^{d-1}\I_{\{ g_t^{{1}/{d}}<r\le1\}}\Big)\\
		&\le g_t^{(1+\rho-\alpha )/d},
	\end{split}
\end{equation*}
where in the second inequality we employed the fact that $1+\rho-\alpha-d<0 $ and in the third inequality we utilized $1+\rho-\alpha>0.$ Consequently, we arrive at the following estimate:
for all $t\ge0$ and $\mu  \in\mathcal P_{\gamma\vee\gamma_1}(\R^d)$,
\begin{equation}\label{P-21}
		\mathcal W_{|\cdot|\wedge|\cdot|^\gamma}\Big(\mathscr L_{Y_t^{\mu_1}},\mathscr L_{\overline{Y}_{\!\! t}^{\mu_2}}\Big)    \le    C_1\int_0^t\e^{-\lambda_1(t-s)} \big(g_s^{1-\alpha}+g_s^{(1+\rho-\alpha )/d} +\phi_1(s)\big) \,\d s.
\end{equation}

Once again, by applying Theorem \ref{thm3} with $H(t)=h(t,r)\equiv0$ and $\beta=\alpha$, there exist     constants   $  \lambda_2,C_2>0$ such that
for all $t\ge0$ and $\mu \in\mathcal P_{\gamma\vee\gamma_1}(\R^d)$,
\begin{equation} \label{P-22}
	\begin{split}
		\mathcal W_{|\cdot|\wedge|\cdot|^\gamma}\Big(\mathscr L_{\overline Y_{\!\! t}^{\mu}},\mathscr L_{\overline Y_{\!\! t}^{\pi}}\Big)   &\le    C_2 \e^{- \lambda_2 t}\mathcal W_{|\cdot|\wedge|\cdot|^\gamma}(\mu ,\pi)\\
		&\quad +C_3\int_0^t\e^{-\lambda_2(t-s)} \big(g_s^{1-\alpha}+g_s^{(1+\rho-\alpha )/d} +\phi_1(s)\big) \,\d s,
	\end{split}
\end{equation}
where the constant $C_3>0$  depends linearly on $\mu(|\cdot|^{\gamma_1})$.

 Finally, the assertion \eqref{P10} follows by combining \eqref{P-21} with \eqref{P-22}
and making use of the triangle inequality.
\end{proof}

\begin{proof}[Proof of Theorem {\rm\ref{thm2}}]
Since
 Assumption $({\bf H}_3')$ has been verified in the course of proving Theorem \ref{theorem-1}, the assertion \eqref{P-23} is available by applying Theorem {\rm\ref{thm2-1}} directly.
\end{proof}

\subsection{
Special cases of Theorem \ref{theorem-1} and Theorem \ref{thm0}: $(L_t)_{t\ge0}=(L_t^{(\alpha)})_{t\ge0}$}\label{section4.4}
Once $(L_t)_{t\ge0}=(L_t^{(\alpha)})_{t\ge0}$,
Theorem \ref{thm0} can be further strengthened, as stated in the following theorem.

\begin{theorem}\label{thm4-1} Assume that   $\alpha\in(1,2)$ and $(L_t)_{t\ge0}=(L_t^{(\alpha)})_{t\ge0}$ hold. Suppose further that either
Assumption $({\bf H}_3')$ holds
	or Assumption $({\bf H}_4)$ is satisfied.
	Then,  under Assumption $({\bf H}_3')$, there
	is a constant $\lambda_1^*>0$ such that for all $t\ge 1$ and $\mu\in\mathcal P_{1\vee \gamma_1}(\R^d)$,
	\begin{equation}\label{WY7-1}
		\mathcal W_1\Big(\mathscr L_{t^{-{1}/{\alpha}}X_t^{\mu}},\pi\Big) 	
		\le C_1^*\big(\phi_1((\ln t)/2) +t^{-\lambda_1^*}\big),
	\end{equation}
	where $\pi\in\mathcal P_1(\R^d)$ is the unique IPM of $(\overline Y_{\!\! t})_{t\ge0}$ which is governed by
	the   SDE \eqref{EP},
	and the constant  $C_1^*>0$ depends linearly  on   $\mu(|\cdot|^{1\vee\gamma_1})$ and $\pi(|\cdot|)${\rm;}
	under Assumption $({\bf H}_4)$, there
	exists a constant $\lambda_2^*>0$
	such that for all  $t\ge  1
	$ and  $\mu \in\mathcal P_{1\vee\gamma_2}(\R^d)$,
	\begin{equation}\label{WY7-2}
		\mathcal W_1\Big(\mathscr L_{t^{-\theta/\alpha}X_t^{\mu}},\pi\Big)  \le
		 C_2
		\begin{cases}
		  t^{-(1-\theta)}\vee\phi_2((t^{1-\theta}-1)/(2(1-\theta)))
		  +\e^{-L_1t^{1-\theta}/(4(1-\theta)) } ,&\kk=1,\\
		 \big(t^{-(1-\theta)}\vee\phi_2((t^{1-\theta}-1)/(2(1-\theta)))\big)^{1/\kk},&\kk>1,
		 \end{cases}
	\end{equation}	
	where  $\pi\in\mathcal P_1(\R^d)$ is the unique IPM of $(\overline Y_{\!\! t})_{t\ge0}$ solving  the SDE   \eqref{EP-1},
	and the constant
	$C_2>0$ is linearly dependent  on   $\mu(|\cdot|^{1\vee\gamma_2})$ and $\pi(|\cdot|)$.
\end{theorem}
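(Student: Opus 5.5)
The plan is the same space–time change as in the proof of Theorem \ref{thm0}, with one simplification: when $L=L^{(\alpha)}$, the rescaled noise $Z_t=\int_0^t g_s\,\d L_{\varphi_s}$ has Lévy measure exactly $\nu^{(\alpha)}$ for all $t$. Indeed, \eqref{EW3} with $a(z)=c_*|z|^{-d-\alpha}$ gives $\nu_t=\nu^{(\alpha)}$, since $\varphi_t'g_t^\alpha=1$. By Proposition \ref{lem2}, $Z$ is then a symmetric $\alpha$-stable Lévy process. Hence the solution $Y$ of \eqref{UP-2} (resp. \eqref{UP-3}) and $\overline Y$ of \eqref{EP} (resp. \eqref{EP-1}) can be driven by the same noise, and the term $H(t)$ in Theorems \ref{thm1} and \ref{thm3} vanishes identically. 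This is why we can work directly with $\mathcal W_1$ rather than $\mathcal W_{|\cdot|^{\theta_1}\wedge|\cdot|^{\theta_2}}$, and why no $\theta_1$-integrability rate $t^{-(\theta_1/\alpha-1)}$ appears.

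\textbf{Case $({\bf H}_3')$.} This allows only partial dissipativity \eqref{EP-24}, so I would apply Theorem \ref{thm3} in the form of Remark \ref{REm}, with $\theta=1$, $H\equiv0$ and $h\equiv0$. Condition \eqref{R5--} holds for the $\alpha$-stable measure with $\beta=\alpha\in(1,2)$, so that $\beta\in(1-\alpha_*,1)$ is not required in the same way; this is the borrowed estimate from \cite[Example 1.2]{LW19} and \cite[Theorem 1.2]{BSWX}. It gives
\[
\mathcal W_1(\mathscr L_{Y_t^{\mu}},\mathscr L_{\overline Y_{\!\! t}^{\mu}})\le C\int_0^t \e^{-\lambda_*(t-s)}\E\mathcal V(s,Y_s^\mu)\,\d s,\qquad \mathcal V(s,x)=\phi_1(s)(1+|x|^{\gamma_1}).
\]
Proposition \ref{lem3}, with \eqref{TY-5} and \eqref{P3} for $p=\gamma_1<\alpha$, bounds $\E|Y_s^\mu|^{\gamma_1}$ uniformly, linearly in $\mu(|\cdot|^{\gamma_1})$. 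Applying \eqref{EP-7} then yields $C(\phi_1(t/2)+t\e^{-\lambda_*t/2})$. Next, Remark \ref{REm} applied to $\overline Y$ alone, started from $\mu$ and from $\pi$, gives $\mathcal W_1(\mathscr L_{\overline Y_{\!\! t}^\mu},\pi)\le C\e^{-\lambda_* t}\mathcal W_1(\mu,\pi)$. We would bound $\mathcal W_1(\mu,\pi)\le\mu(|\cdot|)+\pi(|\cdot|)$; here $\pi\in\mathcal P_1$ because $\alpha>1$, via the Lyapunov bound. Finally, the triangle inequality \eqref{WY-} with $\psi_t=\ln t$, together with $g_{\ln t}=t^{-1/\alpha}$, gives \eqref{WY7-1} with $\lambda_1^*=\lambda_*/4$ (absorbing $\ln t\, t^{-\lambda_*/2}$).

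\textbf{Case $({\bf H}_4)$.} Here I would use synchronous coupling under \eqref{TY-6} with the plain distance $|x-y|$, so no $h$-transform is needed and there are no jump terms. Itô's formula for $|U_t|$ gives
\[
\d|U_t|\le\big(-L_1|U_t|^{\kk}+\mathcal V(t,Y_t)\big)\,\d t,\qquad \mathcal V(t,x)=\frac{\theta|x|}{\alpha(1+(1-\theta)t)}+\phi_2(t)(1+|x|^{\gamma_2}).
\]
Taking expectations, applying Jensen's inequality, and invoking Lemma \ref{lemma-*} with the moment bound for $p=1\vee\gamma_2<\alpha$ gives
\[
\E|U_t|\lesssim \e^{-L_1t}\E|U_0|+\big(\tfrac1t\vee\phi_2(t/2)\big)+t\e^{-L_1t/2}\quad(\kk=1),
\]
and $t^{-1/(\kk-1)}\vee(\frac1t\vee\phi_2(t/2))^{1/\kk}$ when $\kk>1$. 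The contraction of $\overline Y$ towards $\pi$ is handled the same way. We then substitute $\psi_t=\varphi_t^{-1}=(t^{1-\theta}-1)/(1-\theta)$ and use $g_{\varphi_t^{-1}}=t^{-\theta/\alpha}$. For $\kk>1$, the $t^{-(1-\theta)/(\kk-1)}$ term from the $\overline Y$ contraction is dominated by $t^{-(1-\theta)/\kk}$ for large $t$, which gives \eqref{WY7-2}. The exponential terms become $\e^{-L_1t^{1-\theta}/(4(1-\theta))}$ after absorbing the polynomial factor.

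The main obstacle is the $({\bf H}_3')$ case. There we must check that Theorem \ref{thm3}, or its $\alpha\in(1,2)$ variant from \cite{BSWX}, applies with $\theta=1$ even though $\alpha_*$ and $\beta=\alpha>1$ fall outside the literal range of (iii). We also need the constant to depend only linearly on $\mu(|\cdot|^{1\vee\gamma_1})$, which is why the moment estimate for $\E|Y_s|^{1\vee\gamma_1}$ requires $1\vee\gamma_1<\alpha$.
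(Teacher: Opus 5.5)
Your proposal is correct and follows essentially the same route as the paper. The noise becomes exactly $L^{(\alpha)}$. The $({\bf H}_3')$ case then uses Theorem \ref{thm3} with $\theta=1$ and $H\equiv h\equiv0$, plus exponential $\mathcal W_1$-contraction of $\overline Y$; the paper cites \cite[Theorem 4.2]{LW19} for this contraction rather than reapplying Theorem \ref{thm3} with $\mathcal V\equiv0$, which is equivalent. The $({\bf H}_4)$ case uses synchronous coupling in $|x-y|$, Jensen, Lemma \ref{lemma-*}, the moment bound from Proposition \ref{lem3}, and the triangle inequality at time $\varphi_t^{-1}$.

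The obstacle you flag is not real. Since $r\le\kappa\le1$ and $\alpha>1$, the stable bound $\inf_{|x|\le r}\nu_x(\R^d)\ge c\,r^{-\alpha}\ge c\,r^{-\beta}$ holds for every $\beta\in(1-\alpha_*,1)$, and $\alpha_*\in(0,1]$ is already part of $({\bf H}_3')$. So condition (iii) of Theorem \ref{thm3} is satisfied literally, with $h\equiv0$.
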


Based on the previous theorem, as far as the case $(L_t)_{t\ge0}=(L_t^{(\alpha)})_{t\ge0}$ is concerned, Theorem \ref{theorem-1}
can also be further refined as the following theorem demonstrates.

\begin{theorem}\label{thm4} Assume that $({\bf H}_2)$, $\alpha\in(1,2)$ and $(L_t)_{t\ge0}=(L_t^{(\alpha)})_{t\ge0}$ hold. Suppose further that one of the following conditions holds:
	\begin{itemize}
		\item[$(a)$] $\beta>1+ (\gamma-1)/\alpha$,     $(i)$ $\lambda>0$ and  $\gamma\in(0,1]$, or $(ii)$  $\lambda<0 $ and $0<\gamma<\alpha $;
		\item[$(b)$]$\beta=1+ (\gamma-1)/\alpha$, 	  $(i)$ $\lambda<0$  and $0<\gamma< \alpha$, or $(ii)$ $0<\lambda<1/\alpha$ and $\gamma=1$, or $(iii)$ $\lambda>0$ and $\gamma\in(0,1)$;
		\item[$(c)$]$\beta<1+(\gamma-1)/\alpha$,  $\lambda<0$ and
		$1\le \gamma<\alpha/{(1\vee\theta_2)}$.
	\end{itemize}
	Then,  under case $(a)$ $($resp. case $(b)$$)$, there
is a constant $\lambda_1^*>0$ such that for all $t\ge 1$ and $\mu\in\mathcal P_1(\R^d)$,
\begin{equation}\label{WY7}
	\mathcal W_1\Big(\mathscr L_{t^{-{1}/{\alpha}}X_t^{\mu}},\pi\Big) 	
	\le C_1^*t^{-\lambda_1^*},
\end{equation}
where $\pi\in\mathcal P_1(\R^d)$ is the unique IPM of $(\overline Y_{\!\! t})_{t\ge0}$ which is governed by
 the first $($resp. the second$)$ SDE in \eqref{e:limit} with $\eta=\alpha$,
 and the constant  $C_1^*>0$ depends linearly  on $\mu(|\cdot|)$ and $\pi(|\cdot|)$;
regarding  case $(c)$, there
exists a constant $\lambda_2^*>0$
such that for all  $t\ge  1
$ and  $\mu \in\mathcal P_1(\R^d)$,
\begin{equation}\label{WY7-}
	\mathcal W_1\Big(\mathscr L_{t^{-q}X_t^{\mu}},\pi\Big)  \le C_2^*  t^{-\lambda_2^*},
\end{equation}	
where $q:= \beta/{(\alpha+\gamma-1)}$, $\pi\in\mathcal P_1(\R^d)$ is the unique IPM of $(\overline Y_{\!\! t})_{t\ge0}$ solving  the third SDE in \eqref{e:limit} with $\eta=\alpha$ and  $\gamma_0=0$,
and the constant
$C_2^*>0$ is linearly dependent  on $\mu(|\cdot|)$ and $\pi(|\cdot|)$.
\end{theorem}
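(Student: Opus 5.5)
Theorem \ref{thm4} follows from Theorem \ref{thm4-1}. The plan is to verify, for the concrete drift $F(t,x)=(1+t)^{-\beta}F_*(x)$ of $({\bf H}_2)$, either $({\bf H}_3')$ (cases $(a)$, $(b)$) or $({\bf H}_4)$ (case $(c)$), and then read off a polynomial rate from \eqref{WY7-1} and \eqref{WY7-2}. Much of this has already been done in the proof of Theorem \ref{theorem-1}:
\begin{enumerate}
\item the identities \eqref{1-TY} and \eqref{EP-5};
\item the limits $F_1$ and $F_2$ in \eqref{EP-4};
\item the error bounds \eqref{EP-23} and \eqref{WPP-1}, which give $\phi_1(t)=c\,\e^{-\varsigma t}$ for some $\varsigma>0$ and $\phi_2(t)=c((1/\varphi_t)\vee g_t^{2\wedge(\gamma-1)})$;
\item the monotonicity estimate \eqref{2-TY}.
\end{enumerate}
Since $(L_t)=(L^{(\alpha)}_t)$, the noise $Z$ has Lévy measure $\nu_t=\nu^{(\alpha)}$ exactly, because $a\equiv c_*|z|^{-d-\alpha}$ and hence $C^\star=0$. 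So Remark \ref{REm} applies: only $\mathcal W_1$ and drift errors enter, and no constraint coming from $\theta_1,\theta_2$ appears. This is why the parameter ranges can be relaxed, e.g. to $\gamma<\alpha$ instead of $\gamma<\alpha/(1\vee\theta_2)$.

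\textbf{Cases $(a)$ and $(b)$.} Rather than uniform dissipativity \eqref{TY-3}, we only need the partially dissipative condition \eqref{EP-24} for $-x/\alpha+F_1(x)$.
\begin{itemize}
\item Case $(a)$: here $F_1=0$, so \eqref{EP-24} holds trivially with $\lambda_2=1/\alpha$.
\item Case $(b)(i)$, $\lambda<0$: \eqref{EP-17} gives it as before.
\item Case $(b)(ii)$, $\gamma=1$ and $0<\lambda<1/\alpha$: the drift is linear with coefficient $-(1/\alpha-\lambda)<0$, so it is uniformly dissipative.
\item Case $(b)(iii)$, $\lambda>0$ and $\gamma\in(0,1)$: this is the key new point. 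The map $x\mapsto x|x|^{\gamma-1}$ is $\gamma$-Hölder, so $\langle x-y,F_1(x)-F_1(y)\rangle\le c|x-y|^{1+\gamma}$. For $|x-y|\le\ell_0$ this is of the form $\lambda_1|x-y|^{1+\alpha_*}$ with $\alpha_*=\gamma$. For $|x-y|>\ell_0$ with $\ell_0$ large, it is absorbed by $-|x-y|^2/\alpha$, giving $\lambda_2=1/(2\alpha)$.
\end{itemize}
The Lyapunov condition \eqref{TY-5} follows as in \eqref{EP-21} with Young's inequality, since the growth exponent is $\gamma+1<2$ or the perturbation has a decaying prefactor. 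The growth bound \eqref{TY-4} holds with $\gamma_1=\gamma<\alpha$, which requires moments of order $\gamma<\alpha$, available from Proposition \ref{lem3} and \eqref{P3}. With $\phi_1$ exponential, $\phi_1((\ln t)/2)$ is a power of $t$, so \eqref{WY7-1} yields \eqref{WY7}.

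\textbf{Case $(c)$.} We verify $({\bf H}_4)$ with $\kk=\gamma\ge1$, using \eqref{2-TY} to get $L_1=-\lambda2^{1-\gamma}/\gamma$; \eqref{TY-8} holds as before. Then $\phi_2$ evaluated at $\varphi_t^{-1}/2\asymp t^{1-\theta}$ decays polynomially in $t$, because $\varphi_s$ and $g_s^{-1}$ are powers of $1+(1-\theta)s$. The term $\e^{-L_1t^{1-\theta}/(4(1-\theta))}$ is also dominated by a power. Hence \eqref{WY7-2} gives \eqref{WY7-}, with $t^{-\theta/\alpha}=t^{-q}$ since $q=\beta/(\alpha+\gamma-1)=\theta/\alpha$.

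\textbf{Main obstacle.} Theorem \ref{thm4-1} is only stated here, not proved, so its own proof is the real work. I would establish it as follows:
\begin{itemize}
\item Apply Theorem \ref{thm3} (resp. Theorem \ref{thm1}) with $\bar\nu=\nu_t=\nu^{(\alpha)}$, $H\equiv0$, $h\equiv0$, $\theta=1$, and $\beta=\alpha\in(1-\alpha_*,1)$ replaced by the stable lower bound of \cite[Example 1.2]{LW19}. This step is delicate since $\alpha>1$; I would check that the coupling argument of Theorem \ref{thm3} works for $\alpha\in(1,2)$ via Remark \ref{REm}, i.e. \cite[Theorem 1.2]{BSWX}.
\item Bound the drift-error term by $\E\,\phi(s)(1+|Y_s|^{\gamma_i})$ via the uniform moments.
\item Combine with contraction of $\overline Y$ toward $\pi$ using the triangle inequality \eqref{WY-}, then time-change through $\varphi_t^{-1}$ using \eqref{EP-7}.
\item For $\kk>1$, use Lemma \ref{lemma-*} with $\theta_1=1$, which produces the exponent $1/\kk$.
\end{itemize}
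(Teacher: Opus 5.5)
Your proposal is correct. For cases $(b)(ii)$, $(b)(iii)$ and $(c)$ it coincides with the paper's proof: check \eqref{EP-24} via the $\gamma$-H\"older continuity of $x\mapsto x|x|^{\gamma-1}$ (Lemma \ref{lem0}), resp.\ check $({\bf H}_4)$ via \eqref{2-TY} and \eqref{WPP-1}, then invoke Theorem \ref{thm4-1}.

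The difference is in cases $(a)$ and $(b)(i)$, which the paper does \emph{not} route through Theorem \ref{thm4-1}. There the limiting drift $-x/\alpha+F_1(x)$ is uniformly dissipative with constant $1/\alpha$. Since the rescaled noise is itself a symmetric $\alpha$-stable process, $Y$ and $\overline Y$ can be driven by the same noise, and this synchronous coupling makes $Y_t-\overline Y_t$ absolutely continuous. This gives directly $\mathcal W_1(\mathscr L_{\overline Y^{\mu_1}_t},\mathscr L_{\overline Y^{\mu_2}_t})\le \e^{-t/\alpha}\mathcal W_1(\mu_1,\mu_2)$ and the bound \eqref{EP-16} (resp.\ its analogue built on \eqref{EP-23}), using only the moment estimate \eqref{EP-14}; \eqref{EP-22} then concludes.

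Your uniform route is equally valid, since \eqref{EP-24} is trivially implied by uniform dissipativity, and it is shorter to state. The cost is that the exponent $\lambda_1^*$ it produces is inherited from the constant $\lambda_*$ of Theorem \ref{thm3} and the contraction rate of \cite[Theorem 4.2]{LW19}. The direct coupling instead yields an explicit exponent, essentially $(1\wedge(1-\gamma+\alpha(\beta-1)))/\alpha$ in case $(a)$.

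Theorem \ref{thm4-1} is actually proved in the paper just before this theorem, essentially as you sketch, so your ``main obstacle'' is already settled. Your worry that \eqref{R5--} needs an exponent in $(1-\alpha_*,1)$ while the stable lower bound has exponent $\alpha>1$ is harmless: for $r\le\kk\le1$ one has $r^{-\alpha}\ge r^{-\beta'}$ for any $\beta'\in(1-\alpha_*,1)$. For the $({\bf H}_4)$ half, the paper uses the same synchronous coupling plus Lemma \ref{lemma-*} with exponent $\kk$, not Theorem \ref{thm1}, which is formulated for $\mathcal W_{|\cdot|^{\theta_1}\wedge|\cdot|^{\theta_2}}$ with $\theta_1>1$.

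Two minor points.
\begin{enumerate}
\item In case $(b)(ii)$ the Lyapunov condition \eqref{TY-5} comes from $\lambda<1/\alpha$, not from Young's inequality: the growth exponent there is exactly $2$ and the prefactor does not decay.
\item Because $\gamma_1=\gamma$, both your route and the paper's actually need $\mu\in\mathcal P_{1\vee\gamma}(\R^d)$, with constants depending on $\mu(|\cdot|^{1\vee\gamma})$. So for $\gamma>1$ the stated $\mu\in\mathcal P_1(\R^d)$ is not literally obtained by either argument.
\end{enumerate}
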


Below, we first prove Theorem \ref{thm4-1}. As an application of this result, we proceed  to  prove Theorem \ref{thm4}.

\begin{proof}[Proof of Theorem $\ref{thm4-1}$]
	As long as  $(L_t)_{t\ge0}=(L_t^{(\alpha)})_{t\ge0}$, it is easy to see that $(Z_t)_{t\ge0}=(L_t^{(\alpha)})_{t\ge0}$ so \eqref{P3} with $(\nu_t)_{t\ge0}=\nu^{(\alpha)}$ holds true for all $p \in(0,\alpha)$. Then,  for any $p\in(0,\alpha)$, \eqref{TY-5} (resp. \eqref{TY-8})  and
	 Proposition \ref{lem3} enable us to derive that there exists a constant $C_1>0$ such that for all $t\ge0$ and $\mu\in\mathcal P_p(\R^d)$,
	\begin{align}\label{EP-8}
		\E|Y_t^\mu|^p\le C_1(1+\mu(|\cdot|^p)),
	\end{align}
	where $(Y_t)_{t\ge0}$ solves the SDE \eqref{UP-2} (resp. the SDE \eqref{UP-3}).

By virtue of \eqref{EP-24}, it follows from \cite[Theorem 4.2]{LW19} that there exist constants $C_2,\lambda^\star_1>0$ such that for all $\mu_1,\mu_2\in\mathcal P_1(\R^d)$ and $t\ge0,$
\begin{align}\label{EP-9}	
\mathcal W_1\big(\mathscr L_{\overline{Y}^{\mu_1}_{\!\! t}},\mathscr L_{\overline{Y}^{\mu_2}_{\!\! t}}\big)	\le C_2\e^{-\lambda_1^\star t}\mathcal W_1(\mu_1,\mu_2).
\end{align}	
Moreover, under Assumption $({\bf H}_3')$, applying  Theorem \ref{thm3} with $H(t)=h(t,r)\equiv0$ and $\theta=1$ (see also the assertion in Remark \ref{REm}) and taking \eqref{EP-7} and  \eqref{EP-8} into account yields that there is some constant  $\lambda_2^\star>0$ such that for all $\mu_1\in\mathcal P_{1\vee\gamma_1}(\R^d)$ and $\mu_2\in\mathcal P_1(\R^d)$,
\begin{equation}\label{EP-10}
	\begin{split}
	\mathcal W_1\big(\mathscr L_{Y_{\! t}^{\mu_1}},\mathscr L_{\overline{Y}^{\mu_2}_{\!\! t}}\big) & \le C_3\bigg(\e^{- \lambda_2^\star t}\mathcal W_1(\mu_1,\mu_2)   + \int_0^t\e^{-\lambda_2^\star(t-s)} \phi_1(s) \,\d s\bigg)\\
	&\le C_4\bigg(\e^{- \lambda_2^\star t}\mathcal W_1(\mu_1,\mu_2)   +\phi_1(t/2)  +\e^{-\lambda_2^\star/4}\bigg),
	\end{split}
\end{equation}
where positive constants $C_3,C_4 $ depend linearly on $\mu_1(|\cdot|^{\gamma_1})$.
Thus, under Assumption $({\bf H}_3')$, the assertion \eqref{WY7-1} follows by combining \eqref{EP-9} with \eqref{EP-10}
and taking advantage of the following
 fact: for all $t\ge1 $ and $\mu\in\mathcal P_1(\R^d)$,
 \begin{align}\label{EP-22}
	\mathcal W_1\big(\mathscr L_{t^{-1/\alpha}X_t^{\mu}},\pi\big)\le \mathcal W_1\big(\mathscr L_{t^{-1/\alpha}X_t^{\mu}},\mathscr L_{\overline Y_{\!\!  \ln t}^{\mu}}\big)+\mathcal W_1\big( \mathscr L_{\overline Y_{\!\! \ln t}^{\mu}},\pi\big),
\end{align}
where $\pi\in\mathcal P_1(\R^d)$ is the unique IPM of $(\overline Y_{\!\! t})_{t\ge0}$ solving
the   SDE \eqref{EP}.

By virtue of \eqref{TY-6},  along with $(\nu_t)_{t\ge0}=\nu^{(\alpha)}$,
we obtain from Jensen's inequality that for all $t\ge0$ and $\mu_1,\mu_2\in\mathcal P_1(\R^d)$,
\begin{align}\label{EP-11}
\mathcal W_1\big(\mathscr L_{Y_{\! t}^{\mu_1}},\mathscr L_{\overline{Y}^{\mu_2}_{\!\! t}}\big)\le \big(\mathcal W_1 (\mu_1,\mu_2 )^{1-\kk}+L_1(\kk-1)t\big)^{-1/{(\kk-1)}}.
\end{align}	
Next, making use of \eqref{TY-6} and \eqref{TY-7} yields that for all $t\ge0$, $\mu_1\in\mathcal P_{1\vee \gamma_2}(\R^d)$ and  $\mu_2\in\mathcal P_1(\R^d)$,
\begin{align*}
 \d |Y_{\! t}^{\mu_1}-\overline{Y}^{\mu_2}_{\!\! t}|\le \big(-L_1|Y_{\! t}^{\mu_1}-\overline{Y}^{\mu_2}_{\!\! t}|^\kk+\phi_2  (t)(1+|Y_{\! t}^{\mu_1}|^{\gamma_2})+\theta |Y_{\! t}^{\mu_1}| /{(\alpha
 	(1+(1-\theta)t) ) } \big)\,\d t.
 \end{align*}
Thus, we conclude from \eqref{EP-7}, \eqref{EP-8} and Lemma \ref{lemma-*} that for all $\mu_1\in\mathcal P_{1\vee \gamma_2}(\R^d)$, $\mu_2\in\mathcal P_1(\R^d)$ and $t\ge0,$
 \begin{equation}\label{EP-12}
 	\begin{split}
 	\mathcal W_1\big(\mathscr L_{Y_{\! t}^{\mu_1}},\mathscr L_{\overline{Y}^{\mu_2}_{\!\! t}}\big)
 	 &\le
 	 \begin{cases}
 	 	\e^{-L_1t}\mathcal W_1 (\mu_1,\mu_2 ) +C_5\big((1/t)\vee\phi_2  (t/2)+\e^{-L_1t/4}\big),&\kk=1,\\
 	 	C_6\big( t^{-1/{(\kk-1)}}\vee\big((1/t)\vee\phi_2(t/2)\big)^{1/\kk}\big),&\kk>1,
 	 \end{cases}
 	 \end{split}
 \end{equation}
 where $C_5,C_6>0$ depend  linearly on $\mu_1(|\cdot|^{1\vee\gamma_2})$. Subsequently, under Assumption $({\bf H}_4)$, \eqref{WY7-2}
 is available by taking
 \eqref{EP-11}
 and \eqref{EP-12} into consideration, and
invoking the triangle inequality below:
 \begin{align*}
 	\mathcal W_1\big(\mathscr L_{t^{-\theta/\alpha}X_t^{\mu}},\pi\big)\le \mathcal W_1\Big(\mathscr L_{t^{-\theta/\alpha}X_t^{\mu}},\mathscr L_{\overline Y_{\!\!  \varphi_t^{-1}}^{\mu}}\Big)+\mathcal W_1\Big( \mathscr L_{\overline Y_{\!\! \varphi_t^{-1}}^{\mu}},\pi\Big),
 \end{align*}
where $\varphi_t^{-1}=(t^{1-\theta}-1)/{(1-\theta)}$, and  $\pi\in\mathcal P_1(\R^d)$ is the unique IPM of $(\overline Y_{\!\! t})_{t\ge0}$ determined by  the SDE   \eqref{EP-1}.
\end{proof}

With the aid of Theorem \ref{thm4-1}, we are ready to establish Theorem  \ref{thm4}.

\begin{proof}[Proof  of Theorem $\ref{thm4}$]
Concerning case (I): $\beta>1+(\gamma-1)/\alpha$, for any $p\in(0,\alpha)$, there exists a constant $C_1>0$ such that for all $t\ge0$ and $\mu\in\mathcal P_p(\R^d)$,
\begin{align}\label{EP-14}
\E|Y_t^\mu|^p\le C_1(1+\mu(|\cdot|^p))
\end{align}
in case of $\lambda<0$ or $\lambda>0$ and $\gamma\in(0,1]$; see \eqref{EP-21} for more details. As for Case (I),
it is easy to see that for any $\mu_1,\mu_2\in\mathcal P_1(\R^d)$ and $t\ge0,$
\begin{align}\label{EP-15}
\mathcal W_1\big(\mathscr L_{\overline Y_{\!\!  t}^{\mu_1}},\mathscr L_{\overline Y_{\!\!  t}^{\mu_2}}\big)=\e^{-t/\alpha}\mathcal W_1(\mu_1,\mu_2),
\end{align}
where $(\overline Y_{\!\!  t})_{t\ge0}$ solves the first SDE in \eqref{e:limit},
and from \eqref{1-TY} and \eqref{EP-14} with $p=1\vee\gamma<\alpha$ that for any $\mu \in\mathcal P_{1\vee\gamma}(\R^d)$ and $t\ge0,$
\begin{equation}\label{EP-16}
	\begin{split}
\mathcal W_1\big(\mathscr L_{Y_{\! t}^{\mu }},\mathscr L_{\overline{Y}^{\mu }_{\!\! t}}\big) &\le  |\lambda|\int_0^t\e^{-(t-s)/\alpha} \e^{-(1-\gamma+\alpha(\beta-1))s/\alpha}  \E(1  +| Y_s^{\mu} |^2)^{\frac{1}{2} \gamma }\,\d s\\
&\le C_2\e^{-(1\wedge (1-\gamma+\alpha(\beta-1))t/\alpha}\big(1+t\I_{\{\gamma=\alpha(\beta-1)\}}\big),
\end{split}
\end{equation}
where $C_2$ is a positive constant  depending on $\mu(|\cdot|^{1\vee\gamma})$. As far as case $(a)$ is concerned,
the assertion \eqref{WY7} follows from \eqref{EP-15} and \eqref{EP-16} and by taking advantage of
\eqref{EP-22}.

With regard to case (II):  $\beta=1+(\gamma-1)/\alpha$, the moment estimate
\eqref{EP-14} is still valid when $\lambda<0$, $0<\lambda<1/\alpha$ with $\gamma=1$,
as well as $\lambda>0$ with $\gamma\in(0,1)$, respectively,
by noting from \eqref{1-TY} that for all $\lambda>0$, $t\ge0$ and $x\in\R^d,$
\begin{align*}
\<x,-x/\alpha+\e^{-(1/\alpha-1)t}F(\e^t,\e^{t/\alpha}x)\>&\le-|x|^2/\alpha+\lambda  ( 1+\e^{-t} )^{-\beta}  (\e^{-2t/\alpha}+| x|^2)^{\frac{1}{2}(\gamma+1)}.
\end{align*}
For case (II), it is easy to see from \eqref{EP-23} and  \eqref{EP-14} that there exists a constant $C_4>0$ such that  for all $t\ge0 $ and $\mu\in\mathcal P_{1\vee\gamma}(\R^d)$,
\begin{align*}
 \mathcal W_1\big(\mathscr L_{Y_{\! t}^{\mu }},\mathscr L_{\overline{Y}^{\mu }_{\!\! t}}\big) &\le C_4 (1 +\mu(| \cdot|^\gamma))\int_0^t\e^{-(t-s)/\alpha}\e^{-s}  \,\d s\\
 &\quad+C_4	\begin{cases}
 	\int_0^t\e^{-(t-s)/\alpha} \e^{-\gamma s/\alpha}\,\d s ,&0<\gamma\le1 ,\\
 	   (1 +\mu(| \cdot|)^{1\vee(\gamma-2)}) \int_0^t\e^{-(t-s)/\alpha} \e^{-(2\wedge(\gamma-1)) s/\alpha}\,\d s ,& \gamma>1.	
 \end{cases}
\end{align*}
Thus, as far as   case  $(i) $ is concerned, the assertion \eqref{WY7}
is available by taking \eqref{EP-7} into consideration and noting that
  for any $\mu_1,\mu_2\in\mathcal P_1(\R^d)$ and $t\ge0,$
 \begin{align*}
 	\mathcal W_1\big(\mathscr L_{\overline Y_{\!\!  t}^{\mu_1}},\mathscr L_{\overline Y_{\!\!  t}^{\mu_2}}\big)\le\e^{-t/\alpha}\mathcal W_1(\mu_1,\mu_2).
 \end{align*}

When $\lambda>0,$
\eqref{TY-4} and \eqref{TY-5} are verifiable as shown in   \eqref{EP-21} and \eqref{EP-23},  respectively. Next,
it follows from Lemma \ref{lem0}  that for any  $\gamma\in(0,1)$ and $x,y\in\R^d,$
\begin{align*}
 \<x-y,-(x-y)/\alpha+F_1(x)-F_1(y)\>&\le-|x-y|^2/\alpha +\lambda (2^\gamma+3^\gamma)|x-y|^{1+\gamma}\\
 &\le\lambda (2^\gamma+3^\gamma)|x-y|^{1+\gamma}\I_{\{|x-y|\le\ell_0\}}-|x-y|^2/{(2\alpha)}\I_{\{|x-y|>\ell_0\}},
 \end{align*}
where $\ell_0:=1\vee (2\lambda(2^\gamma+3^\gamma)/\alpha)^{1/{(1-\gamma)}}$. Hence, \eqref{EP-24} holds true right now. Accordingly, regarding   cases $(ii)$ and $(iii)$,
\eqref{WY7} is provable  by applying Theorem \ref{thm4-1}.

Concerning case $(c)$,
in terms of Theorem \ref{thm4-1} (see  \eqref{WY7-2} for more details), it suffices to examine $({\bf H}_4)$ in order to establish the assertion \eqref{WY7-}. The
  verification of Hypothesis $({\bf H}_4)$ has been presented in detail in the proof of Theorem \ref{theorem-1} so we herein omit the related details.
\end{proof}

\section{Quantitative estimates for the long-time behavior of time-inhomogeneous SDEs: an asymptotic pseudotrajectory approach
}
\label{Section4}
In this section, we consider the following two SDEs: for $t>0, $
\begin{align}\label{D12}
\d Y_t  =b(t,Y_t )\,\d t+\d Z_t,
\end{align} and \begin{align}\label{D11}
\d \overline{Y}_{\!\! t}  =\bar b(\overline{Y}_{\!\! t})\,\d t+ \si\d W_t,
\end{align}
where $b:[0,\infty)\times \R^d\to\R^d$, $\bar b:\R^d\to\R^d$, $ \si\neq0$,
 $(Z_t)_{t\ge0}$ is a $d$-dimensional additive process with the L\'{e}vy measure $(\nu_t(\d z))_{t\ge0}$, and  $(W_t)_{t\ge0}$
is a $d$-dimensional
standard Brownian motion.
Unless stated otherwise, in this section we always assume that the SDEs \eqref{D12} and \eqref{D11} are strongly well-posed.
Denote $(\mathscr L_{\! t})_{t\ge0}$
 and $(P_{\! s,t})_{t\ge s\ge0}$ (resp.\ $\bar {\mathscr L} $
 and $(\overline{P}_{\! s,t})_{t\ge s\ge0}$) by the infinitesimal generator and the semigroup  of $(Y_t)_{t\ge0}$
 (resp. $(\overline Y_{\!\! t})_{t\ge0}$). On some occasions, we write $({Y_{s,t}^x})_{t\ge s\ge0}$ and $(\overline Y_{\!\! s,t}^{x})_{t\ge s\ge0}$ instead of $(Y_t)_{t\ge0}$ and $(\overline Y_{\!\! t})_{t\ge0}$ to emphasize the starting time
  $s$ and the initial value
  $Y_s=\overline Y_{\!\!s}=x\in\R^d$, respectively. In addition, for simplicity,  we write $\overline {P}_{\!\! t}=\overline P_{\!\!0,t}=\overline P_{\!\! s,s+t}$ by taking  the homogeneous property of $(\overline P_{\!\! s,t})_{t\ge s\ge0} $ into consideration. Additionally,
we need to consider the perturbed version of the SDE \eqref{D11}: for
any $\vv\in(0,1]$ and $t>0$,
\begin{align}\label{WW-32}
\d \overline{Y}_{\!\! t}^\vv=\bar b^\vv (\overline{Y}_{\!\! t}^\vv)\,\d t+\sigma\d W_t,
\end{align}
where
$\R^d\ni x\mapsto \bar b^\vv(x)$ is a $C^3(\R^d;\R^d)$-function, $\lim_{\vv\to0}\bar b^\vv(x)=\bar b(x)$ for each fixed $x\in\R^d$, and $(\si,(W_t)_{t\ge0})$ remains untouched as in the SDE \eqref{D11}.
In the sequel,   the SDE \eqref{WW-32} is also supposed to be strongly well-posed. Furthermore, we write $(\overline{Y}_{\!\! t}^{\vv,x})_{t\ge0}$ instead of $(\overline{Y}_{\!\! t}^{\vv })_{t\ge0}$ when $\overline{Y}_{\!\! 0}^{\vv}=x$, and  denote by  $(\overline {P}_{\!\! t}^\vv)_{t\ge0}$ and $\overline {\mathscr{L}}^\vv$    the respective semigroup and infinitesimal generator of $(\overline{Y}_{\!\! t}^\vv)_{t\ge0}$.
Sometimes, we also write $\overline {P}^\vv_{\!\! t}=\overline P^\vv_{\!\!0,t}=\overline P^\vv_{\!\! s,s+t}$ for all $t>0$ and $s\ge0.$

The following theorem provides some sufficient conditions to establish the asymptotics of time-inhomogeneous SDEs with pure-jump additive processes, which is measured
by
using an asymptotic pseudotrajectory.

\begin{theorem}\label{Th4.1}Let $(\mu_t )_{t\ge0}$ be the distribution of the process $(Y_t)_{t\ge0}$
	solving \eqref{D12}.
	Suppose that there
	exist
	a non-increasing function $\Phi: \R_+\to \R_+$ and a
	measurable function $\mathcal V: \R^d\to [1,\infty)$ such that the following conditions hold{\rm:}
	\begin{itemize}
		
		\item[{\rm(i)}] for any $t>0$ and $\vv\in(0,1]$,
		\begin{align*}
			&\overline P_{\!\! t}^\vv( C_c^\infty(\R^d))\cup \overline{\mathscr L}^\vv\,\overline P_{\!\! t}^\vv(C_c^\infty(\R^d))\\
			& \subset \Big\{ f: \frac{\partial}{\partial t}\overline P_{\!\! t}^\vv f=\overline P_{\!\! t}^\vv \overline {\mathscr{L}}^\vv f \hbox{ and } \frac{\partial}{\partial t}P_{s,t} f=P_{s,t}\mathscr{L}_t f \hbox{ for all }  0<s<t\Big\};
		\end{align*}

		\item[{\rm(ii)}] there is some constant $C_0>0$ such that for all $0< r\le s< t $, $x\in\R^d$, $f\in C_c^\infty(\R^d)$ and $\vv\in(0,1]$,
		 \begin{equation}\label{WW-38}
			\big| (\mathscr L_r  - \overline{\mathscr L}^\vv  )(\overline  P_{\!\! s,t}^\vv   f)(x)\big| \le   C_0\sum_{i=0}^3\|\nn^if\|_\8\big( \Phi(r)  \mathcal V(x)+|\bar b^\vv(x)-\bar b(x)|\big),
		\end{equation}
		and
\begin{align}\label{WW-37}		
\lim_{\vv\to0}	\bigg(\sup_{s\le r\le t}\int_{\R^d}\sup_{s\le u\le t}	\E|\bar b^\vv-\bar b|(\overline{Y}_{\!\!  u}^{ x})\mu_r(\d x)\bigg)=0;
\end{align}		
		
		\item [{\rm(iii)}]$\sup_{t\ge0}\mu_t(\Theta)<\infty$    with  $$ \Theta(x):=\sup_{t\ge0} \E \mathcal V(Y_t^{ x}),\quad x\in\R^d;$$
		\item [{\rm(iv)}]	 there is some constant $C_0^*>0$ such that for all $t>0$, $x\in\R^d$ and $f\in C_c^\infty(\R^d)$,
$$|\nn \overline {P}_{\!\! t}  f(x)|\le C_0^*\|\nn f\|_\8,$$ and, for all $0\le s<t$,
		\begin{align}\label{WW-30}
		\int_0^t\int_{\R^d}\varlimsup_{\vv\downarrow0}\E |\bar   b^\vv-\bar b | (\overline{Y}_{\!\!  r}^{\vv,x})\,\d r\mu_s(\d x)=0.
	\end{align}	
		
	\end{itemize} Then, there exists a  constant  $C_* >0$ such that for any $t>0$, $s\ge0$ and $f\in  C_c^\infty(\R^d)$,
	\begin{align}\label{D19-}
		\big|(\mu_s \overline {P}_{\!\! t}  )( f)-\mu_{s+t} (f)\big|
		\le  C_*   \sum_{i=0}^3\|\nn^i f\|_\8   \int_{s }^{s+t} \Phi(u) \,\d u.
	\end{align}
\end{theorem}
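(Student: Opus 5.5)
The plan is a Duhamel (interpolation) argument between the inhomogeneous semigroup $(P_{s,t})$ and the regularized homogeneous semigroup $(\overline P^\vv_{\!\!t})$, followed by the limit $\vv\to0$. Fix $s\ge0$, $t>0$ and $f\in C_c^\infty(\R^d)$. Since $\mu_{s+t}(f)=\mu_s(P_{s,s+t}f)$, it suffices to bound $|\mu_s(P_{s,s+t}f)-\mu_s(\overline P_{\!\!t}f)|$. We split this as
\[
|\mu_s(P_{s,s+t}f-\overline P^\vv_{\!\!t}f)|+|\mu_s(\overline P^\vv_{\!\!t}f-\overline P_{\!\!t}f)|.
\]
For the first term, set $\Psi(r):=P_{s,r}\overline P^\vv_{\!\!s+t-r}f$ for $r\in[s,s+t]$. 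Assumption (i) lets us differentiate in both variables, giving
\[
\Psi'(r)=P_{s,r}(\mathscr L_r-\overline{\mathscr L}^\vv)\overline P^\vv_{\!\!s+t-r}f .
\]
Integrating from $s$ to $s+t$ yields
\[
P_{s,s+t}f-\overline P^\vv_{\!\!t}f=\int_s^{s+t}P_{s,r}(\mathscr L_r-\overline{\mathscr L}^\vv)\overline P^\vv_{\!\!s+t-r}f\,\d r .
\]
Some care is needed at the endpoint $r=s$ and with the open interval in (i); we handle this by integrating over $[s+\eta,s+t-\eta]$ and letting $\eta\downarrow0$ using continuity.

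Next, apply \eqref{WW-38} (with $\overline P^\vv_{\!\!s+t-r}=\overline P^\vv_{\!\!r,s+t}$ by homogeneity) and integrate against $\mu_s P_{s,r}=\mu_r$:
\[
|\mu_s(P_{s,s+t}f-\overline P^\vv_{\!\!t}f)|\le C_0\sum_{i=0}^3\|\nabla^if\|_\infty\int_s^{s+t}\big(\Phi(r)\mu_r(\mathcal V)+\mu_r(|\bar b^\vv-\bar b|)\big)\,\d r .
\]
By (iii), $\mu_r(\mathcal V)\le\mu_r(\Theta)\le\sup_u\mu_u(\Theta)<\infty$, because $\Theta\ge\mathcal V$ (take $t=0$ in the definition of $\Theta$). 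This produces the main term $C\int_s^{s+t}\Phi(u)\,\d u$. The remaining error $\int_s^{s+t}\mu_r(|\bar b^\vv-\bar b|)\,\d r$ should vanish as $\vv\to0$ by \eqref{WW-37}: taking $u=0$ in the inner expectation gives $\E|\bar b^\vv-\bar b|(\overline Y^x_0)=|\bar b^\vv-\bar b|(x)$, so $\sup_{s\le r\le s+t}\mu_r(|\bar b^\vv-\bar b|)\to0$.

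For the second term, compare $\overline P^\vv_{\!\!t}f$ with $\overline P_{\!\!t}f$ by the same Duhamel trick for homogeneous semigroups:
\[
\overline P^\vv_{\!\!t}f-\overline P_{\!\!t}f=\int_0^t\overline P^\vv_{\!\!r}(\overline{\mathscr L}^\vv-\bar{\mathscr L})\overline P_{\!\!t-r}f\,\d r=\int_0^t\overline P^\vv_{\!\!r}\big\langle \bar b^\vv-\bar b,\nabla\overline P_{\!\!t-r}f\big\rangle\,\d r .
\]
Here the gradient bound in (iv) gives $|\nabla\overline P_{\!\!t-r}f|\le C_0^*\|\nabla f\|_\infty$. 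Hence
\[
|\mu_s(\overline P^\vv_{\!\!t}f-\overline P_{\!\!t}f)|\le C_0^*\|\nabla f\|_\infty\int_0^t\int\E|\bar b^\vv-\bar b|(\overline Y^{\vv,x}_r)\,\d r\,\mu_s(\d x).
\]
Taking $\limsup_{\vv\downarrow0}$, this tends to $0$ by \eqref{WW-30}, using Fatou/dominated convergence to move the $\limsup$ inside.

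The main obstacle I expect is justifying the differentiation of $r\mapsto P_{s,r}\overline P^\vv_{\!\!s+t-r}f$: the product rule requires $\overline P^\vv_{\!\!\cdot}f$ to lie in the class where $\partial_rP_{s,r}g=P_{s,r}\mathscr L_rg$, with locally uniform control. Assumption (i) is designed exactly for this, and the $C^3$ regularity of $\bar b^\vv$ is what makes $\overline P^\vv$ smooth. A second, milder obstacle is that $\overline P_{\!\!t-r}f$ need not be regular enough to apply $\overline{\mathscr L}^\vv$ for the second Duhamel formula. If so, I would instead write it by It\^o's formula as $\E[\overline P_{\!\!t-r}f(\overline Y^{\vv,x}_r)]$ along $\overline Y^\vv$, needing only the first derivative, which is bounded by (iv). Letting $\vv\to0$ then gives \eqref{D19-} with $C_*:=C_0\sup_u\mu_u(\Theta)$.
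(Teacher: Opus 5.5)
Your argument is correct. Its second half is exactly the paper's: you remove $\varepsilon$ via the Duhamel formula $\overline P^\varepsilon_t f-\overline P_t f=\int_0^t\overline P^\varepsilon_{t-r}\langle \bar b^\varepsilon-\bar b,\nabla\overline P_r f\rangle\,\mathrm{d}r$, the gradient bound in (iv), and \eqref{WW-30}.

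The comparison of $P_{s,s+t}$ with $\overline P^\varepsilon_t$ is done differently in the paper. It never differentiates $r\mapsto P_{s,r}\overline P^\varepsilon_{s+t-r}f$. Instead it telescopes over the partition $s_k=s+kt/n$. Each increment $(\overline P^\varepsilon_{s_k,s_{k+1}}-P_{s_k,s_{k+1}})\psi_k$, with $\psi_k=\overline P^\varepsilon_{s_{k+1},s+t}f$, is expanded to second order using the one-variable Kolmogorov equations applied to the fixed functions $\psi_k$ and $\overline{\mathscr L}^\varepsilon\psi_k$. Besides the main term this produces remainders of order $(t/n)^2\|(\overline{\mathscr L}^\varepsilon)^2 f\|_\infty$ and $(t/n)\sum_i\|\nabla^i\overline{\mathscr L}^\varepsilon f\|_\infty$, which vanish as $n\to\infty$.

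That is why (i) also contains $\overline{\mathscr L}^\varepsilon\overline P^\varepsilon_t(C_c^\infty(\R^d))$, and why \eqref{WW-38} is applied to $\overline{\mathscr L}^\varepsilon f$ as well as to $f$. The discretization needs only separate time-derivatives of each semigroup acting on fixed functions, which is literally what (i) gives. So your remark that (i) is \emph{designed exactly for} the product rule is not accurate: it is designed for the telescoping.

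Your continuous interpolation is shorter and uses \eqref{WW-38} only for $f$. The price is that the product rule needs a bit more than (i). For example, you need that $h^{-1}(\overline P^\varepsilon_{\tau+h}f-\overline P^\varepsilon_\tau f)\to\overline P^\varepsilon_\tau\overline{\mathscr L}^\varepsilon f$ boundedly and locally uniformly, together with continuity in probability of $r\mapsto Y_{s,r}$, so that $P_{s,r+h}$ can be passed to the limit. Both facts are harmless here, but they are extra input.

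Your use of $\mu_sP_{s,r}=\mu_r$ reduces the error terms directly to $\mu_r(\mathcal V)\le\mu_r(\Theta)$ and $\mu_r(|\bar b^\varepsilon-\bar b|)$. This is cleaner than the paper's route through $\mathbb E\,\mathcal V(Y^x_{s_k,r})$.

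One small caveat: getting $\sup_{r\in[s,s+t]}\mu_r(|\bar b^\varepsilon-\bar b|)\to0$ from \eqref{WW-37} by ``taking $u=0$'' requires \eqref{WW-37} with lower time index $0$, i.e.\ on $[0,s+t]$. That convergence is what both arguments actually use, so this is a matter of how the hypothesis is read rather than a gap.
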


Before we move forward  to finish the proof of Theorem \ref{Th4.1}, let's make some comments on Theorem \ref{Th4.1}.

\begin{remark}\label{remark5.2}
It is worth emphasizing  that it is unnecessary to smooth $\bar b $  in case of $\bar b\in C^3(\R^d;\R^d)$.   In such a case, the corresponding $\overline P_{\!\! t}^\vv$ and
 $\overline{\mathscr L}^\vv$ can be replaced by $\overline P_{\!\! t} $ and
 $\overline{\mathscr L} $, respectively; 
 moveover, Assumption (\rm iv) and \eqref{WW-37} are redundant, and the remainder term $|\bar b^\vv(x)-\bar b(x)|$ in \eqref{WW-38} vanishes. In view of this, we merely adopt a smooth strategy
as soon as $\bar b$ is not a $C^3(\R^d,\R^d)$-function.
Once the noise intensity $\si$ in the SDE \eqref{D11} is non-degenerate,
Assumption \rm(i) can be easily examined. Assumption \rm(ii) describes the uniform
discrepancy between $(\mathscr L_r)_{r\ge0}$ and $\overline{\mathscr L}^\vv  $. It is validated provided that $\nn (\overline P_{\!\! t}^\vv f)(x)$,
$\nn^2 (\overline P_{\!\! t}^\vv f)(x)$ and $\nn^3 (\overline P_{\!\! t}^\vv f)(x)$
(so $\R^d\ni x\mapsto \bar b^\vv(x)$ is required to be a $C^3(\R^d;\R^d)$-function) can be dominated  uniformly with respect to the parameter $\vv\in(0,1]$;
see Lemma \ref{lemma5}
below for more details.
Assumption \rm(iii) demonstrates that $\Theta$ is uniformly integrable with respect to $(\mu_t)_{t\ge0}$, which  can be readily checked in case $\mathcal V$ is of polynomial growth and $(Y_t^x)_{t\ge0}$ enjoys a uniform $p$-th moment for some $p>0.$
As long as $\<y,\nn_y \bar b(x)\>\le0$ for all $x,y\in\R^d,$ the estimate $|\nn \overline {P}_{\!\! t}  f(x)|\le C_0\|\nn f\|_\8$ follows directly via the chain rule.
Assume that there exist a function $h:[0,\infty)\to[0,\infty)$ satisfying $\lim_{\vv\to0}h(\vv)=0$ and some constant $p>0$ such that for all $x\in\R^d $ and $\vv\in(0,1]$,
\begin{align}\label{WW-33}
|\bar b^\vv(x)-\bar b (x)|\le h(\vv)(1+|x|^p).
\end{align}
For example, $b(x)=-x|x|^{\gamma-1}$ and its perturbed version  $\bar b^\vv(x)=-x(\vv+|x|^2)^{(\gamma-1)/2}$ for some $\gamma>1$ satisfy the precondition \eqref{WW-33} by noting from \eqref{EP-6} that for all $x\in\R^d$ and $\vv\in(0,1]$,
\begin{equation}\label{WW-39}
	\begin{split}
|\bar b^\vv(x)-\bar b (x)|&\le|x|\big((\vv+|x|^2)^{(\gamma-1)/2}-|x|^{\gamma-1}\big)\\
&\le (1\vee((\gamma-1)/2))|x|\big(\vv^{(\gamma-3)/2}\vee(\vv+|x|^2)^{(\gamma-3)/2}\big)\vv.
\end{split}
\end{equation}
Whence, \eqref{WW-30} is verifiable as soon as $(\overline{Y}_{\!\!  t}^{\vv,x})_{t\ge0}$ admits finite-time $(1\vee(\gamma-2))$-th moment. For the typical candidate mentioned above, by invoking \eqref{WW-39}, \eqref{WW-37} is verifiable as long as the moment $\E|
\bar Y_t^x|^{1\vee(\gamma-2)}$ is finite in
any finite time interval.
\end{remark}

Now, we proceed to carry out the

\begin{proof}[Proof of Theorem $\ref{Th4.1}$]
	Throughout the proof, we assume    $t>0$, $s\ge0$, $x\in\R^d$, $f\in C_c^\infty(\R^d)$ and $\vv\in(0,1]$.
  All the constants below are independent of them.
  Due to the Markov property of the process $(Y_t)_{t\ge0}$,  $\mu_r =\mu_uP_{u,r} $ for all $r\ge u\ge0.$ Then,
	via
	the semigroup property  of $(P_{s,t} )_{t\ge s\ge 0}$ and $(\overline P_{\!\! s,t}^\vv )_{t\ge s\ge 0}$,
	we deduce that
	for any integer  $n\ge1$,
	\begin{align*}
		(\mu_s \overline P_{\!\! t}^\vv )( f)-\mu_{s+t} (f) &=\mu_s \big( \overline P_{\!\! s,s+t}^\vv  f-P_{\! s,s+t}  f\big)\\
		&=\sum_{k=0}^{n-1}\mu_s \big(P_{s,s_k} \overline P_{\!\! s_k,s+t}^\vv  f-P_{s,  s_{k+1} } \overline P_{\!\! s_{k+1},s+t}^\vv f\big)\\
		&=\sum_{k=0}^{n-1}\mu_{s_k}  \big(\big(\overline P_{\!\! s_k, s_{k+1} }^\vv    - P_{s_k, s_{k+1} }  \big)\psi_k^\vv(f)\big),
	\end{align*}
	in which  $s_k:=s+ \frac{kt}{n}$ and  $\psi_k^\vv(f)(x):=\overline P_{\!\! s_{k+1},s+t}^\vv f(x)$. Next, by leveraging  the Kolmogorov equation and
	(i),
	the quantity $(\overline P_{\!\! s_k, s_{k+1} }^\vv  - P_{s_k, s_{k+1} } )\psi_k^\vv(f)$ involved in the equality above can be rephrased as follows:  for all $k=0,\cdots, n-1$,
	\begin{align*}
		&\big(\overline P_{\!\! s_k, s_{k+1} }^\vv    - P_{s_k, s_{k+1} }  \big)\psi_k^\vv(f)(x)\\
		&=\big(\overline P_{\!\! s_k, s_{k+1} }^\vv  -I\big)\psi_k^\vv(f)(x)-\big(P_{s_k, s_{k+1} } -I\big)\psi_k^\vv(f)(x)\\
		&=\int_{s_k}^{s_{k+1}}\big(\overline P_{\!\! s_k,u}^\vv  \bar{\mathscr L}^\vv  -P_{s_k,u}  \mathscr L_u  \big)\psi_k^\vv(f)(x)\,\d u\\
		&=\int_{s_k}^{s_{k+1}}\big( (\overline P_{\!\! s_k,u}^\vv  -I)\overline{\mathscr L}^\vv  \psi_k^\vv(f)(x)+(I-P_{s_k,u} )\overline{\mathscr L}^\vv  \psi_k^\vv(f)(x)  +P_{s_k,u}  (\overline{\mathscr L}^\vv  -\mathscr L_u )\psi_k^\vv(f)(x)\big)\,\d u\\
		&=\bigg(\int_{s_k}^{s_{k+1}}\int_{s_k}^u  \overline P_{\!\! s_k,r}^\vv   (\overline{\mathscr L}^\vv  )^2 \psi_k^\vv(f)(x)\, \d r\,\d u -\int_{s_k}^{s_{k+1}}\int_{s_k}^u   P_{s_k,r}   (\overline{\mathscr L}^\vv   )^2 \psi_k^\vv(f)(x)\,\d r\,\d u\bigg)\\
		&\quad+\bigg(-\int_{s_k}^{s_{k+1}}\int_{s_k}^u   P_{s_k,r}  (\mathscr L_r  -\overline{\mathscr L}^\vv  ) \overline{\mathscr L}^\vv   \psi_k^\vv(f)(x)\,\d r\,\d u +\int_{s_k}^{s_{k+1}}P_{s_k,u} (\overline{\mathscr L}^\vv  -\mathscr L_u  )\psi_k^\vv(f)(x) \,\d u\bigg)\\
		&=: I_{k,1}^\vv(x)+I_{k,2}^\vv(x),
	\end{align*}
	where $I$ represents  the identity map on $\R$.

	In the subsequent analysis, we aim at treating  the terms $I_{k,1}^\vv(x) $  and $I_{k,2}^\vv(x) $, separately.
	First of all, owing to $s_{k+1}-s_k=t/n$,
	\begin{align}\label{D17}
		I_{k,1}^\vv(x) \le \frac{t^2}{ n^2}\big\|(\overline{\mathscr L}^\vv )^2 \psi_k^\vv(f)\big\|_\8 \le \frac{t^2}{ n^2}\big\| (\overline{\mathscr L}^\vv )^2  f \big\|_\8,
	\end{align}
	where we also utilized the fact that $(\overline{\mathscr L}^\vv )^2 \psi_k^\vv(f)(x)=\overline P_{\!\! s_{k+1},s+t}^\vv  (\overline{\mathscr L}^\vv  )^2f(x)$ (via Kolmogorov's backward equation) to derive the second inequality.
	Note that the quantity  $I_{k,2}^\vv(x) $ can be reformulated as follows:
	\begin{align*}
		I_{k,2}^\vv(x)&=-\int_{s_k}^{s_{k+1}}\int_{s_k}^u   \E\big((\mathscr L_r  -\overline{\mathscr L}^\vv  ) \overline P_{\!\! s_{k+1},s+t}^\vv \overline{\mathscr L}^\vv  (f)(Y_{s_k,r}^{x})\big)\,\d r\,\d u\\
		&\quad+\int_{s_k}^{s_{k+1}}\E\big( (\overline{\mathscr L}^\vv  -\mathscr L_u  )\overline P_{\!\! s_{k+1},s+t}^\vv  (f)(Y_{s_k,u}^{x}) \big) \,\d u.
	\end{align*}
	Thus, applying \eqref{WW-38} enables us to derive that
	\begin{align*}
		|I_{k,2}^\vv(x)|\le C_0 \sum_{i=0}^3\bigg(& \big\|\nn^i \overline{\mathscr L}^\vv    f \big\|_\8\int_{s_k}^{s_{k+1}}\int_{s_k}^u     \big(\Phi(r)\E \mathcal V(Y_{s_k,r}^{ x}) +\E|\bar b^\vv(Y_{s_k,r}^{ x})-\bar b (Y_{s_k,r}^{ x})|\big)\,\d r\,\d u\\
		&+   \|\nn^i f\|_\8 \int_{s_k}^{s_{k+1}}  \big(\Phi(u)\E \mathcal V(Y_{s_k,u}^{ x}) +\E|\bar b^\vv(Y_{s_k,u}^{ x})-\bar b (Y_{s_k,u}^{ x})|\big) \,\d u\bigg).
	\end{align*}
Next, we find  that for all $u\in[s_k,s_{k+1}]$, 	
\begin{align*}
 \int_{\R^d}\E|\bar b^\vv(Y_{s_k,u}^{ x})-\bar b (Y_{s_k,u}^{ x})| \mu_{s_k} (\d x)&\le  \int_{\R^d}\sup_{s\le u\le s+t}\E|\bar b^\vv(Y_{u}^{ x})-\bar b (Y_{u}^{ x})| \mu_{s_k} (\d x)\\
 &\le  \sup_{s\le r\le s+t}\int_{\R^d}\sup_{s\le u\le s+t}\E|\bar b^\vv(Y_{u}^{ x})-\bar b (Y_{u}^{ x})| \mu_r (\d x)\\
 & =:\Gamma(s,s+t,\vv).
\end{align*}
This, besides the definition of $\Theta(x)$ and
	$s_{k+1}-s_k=\frac{t}{n}$, yields that
	\begin{equation}\label{D18}
		\begin{split}
			\mu_{s_k}(|I_{k,2}^\vv  |)&\le C_0  \sum_{i=0}^3\bigg(\frac{t}{n}\big\|\nn^i \bar{\mathscr L}^\vv    f \big\|_\8+ \|\nn^i f\|_\8  \bigg)\\
			&\quad\times\bigg(\Theta(x)\int_{s_k}^{s_{k+1}} \Phi(u)  \,\d u + (s_{k+1}-s_k) \Gamma(s,s+t,\vv)\bigg).
		\end{split}
	\end{equation}
	Now, combining \eqref{D17} with \eqref{D18} and {\rm(iii)}
	implies that
	\begin{equation*}
		\begin{split}
			\big|(\mu_s \overline P_{\!\! t}^\vv  )( f)-\mu_{s+t}(f)\big|
			 \le\frac{t^2}{n }\big\| (\overline{\mathscr L}^\vv )^2  f \big\|_\8 + C_0&\sum_{i=0}^3\bigg(\frac{t}{n}\big\|\nn^i \overline{\mathscr L}^\vv   (f)\big\|_\8+ \|\nn^i f\|_\8  \bigg)\\
			 &\times\bigg(
			\left[\sup_{r\ge0}\mu_r (\Theta)\right] \int_{s }^{s+t}  \Phi(u)  \,\d u+  t   \Gamma(s,s+t,\vv)\bigg).
		\end{split}
	\end{equation*}
Subsequently, 	  by sending $n\to\infty$ it follows that
\begin{align*}
		\big|(\mu_s \overline P_{\!\! t}^\vv  )( f)-\mu_{s+t}(f)\big|
	 &\le  C_0\sum_{i=0}^3  \|\nn^i f\|_\8 \bigg(
		\left[\sup_{r\ge0}\mu_r (\Theta)\right] \int_{s }^{s+t}  \Phi(u)  \,\d u+ t   \Gamma(s,s+t,\vv)\bigg).
\end{align*}	
Whence, by virtue of \eqref{WW-37}, the assertion \eqref{D19-} is available provided that we can show that
\begin{align}\label{WW-34}
\varlimsup_{\vv\downarrow0}\big|(\mu_s \overline P_{\!\! t}^\vv  )( f)-(\mu_s \overline P_{\!\! t}  )( f)\big| =0.
\end{align}

Note that the following Kolmogorov backward equations:
\begin{align*}	
\partial_tu(t,x)=\overline{\mathscr L}u(t,x), \quad u(0,x)=f(x); \quad \partial_tu^\vv(t,x)=\overline{\mathscr L}^\vv u^\vv(t,x), \quad u^\vv (0,x)=f(x)
\end{align*}
hold, where $u(t,x):=\overline P_{\!\! t} f(x)$ and $u^\vv(t,x):=P_t^\vv f(x)$. Thereby, we have that
\begin{align*}
\partial_tv^\vv(t,x)=\overline{\mathscr L}^\vv v^\vv(t,x)+(\overline{\mathscr L}^\vv-\overline{\mathscr L})u(t,x),
\end{align*}
where $v^\vv(t,x):=u^\vv(t,x)-u(t,x)$. 	Via Duhamel's principle, $v^\vv(t,x)$ can be expressed as follows:
\begin{align*}	
	v^\vv(t,x) =\int_0^t\bar P_{t-r}^\vv\<\bar b^\vv-\bar b,\nn \overline P_{\!\! r}f\>(x) \,\d r =\int_0^t \E\big(\<\bar b^\vv-\bar b,\nn \overline P_{\!\! r} f\>(\overline{Y}_{\!\! t-r}^{\vv,x})\big) \,\d r.
\end{align*}	
This, along with Fatou's lemma,  implies that
\begin{align*}	
\varlimsup_{\vv\downarrow0}\big|(\mu_s \overline P_{\!\! t}^\vv  )( f)-(\mu_s \overline P_{\!\! t}  )( f)\big|
&\le  \varlimsup_{\vv\downarrow0}\int_0^t\int_{\R^d}\E\big(|\bar b^\vv-\bar b| (\overline{Y}_{\!\! t-r}^{\vv,x})|\nn \overline P_{\!\! r}f|(\overline{Y}_{\!\! t-r}^{\vv,x})|\big)\,\d r\mu_s(\d x)\\
&\le C_0^*\|\nn f\|_\8\int_0^t\int_{\R^d}\varlimsup_{\vv\downarrow0}\E |\bar b^\vv-\bar b | (\overline{Y}_{\!\!  r}^{\vv,x})\,\d r\mu_s(\d x).
\end{align*}	
Correspondingly, \eqref{WW-34} is available right now by taking {\rm(iv)} into account.
\end{proof}

 To examine \eqref{WW-38}, we further need to introduce an assumption concerning   the difference between drift terms, which is presented precisely as below.
\begin{itemize}\it
	\item[$({\bf A})$]There are  functions  $\phi: \R_+\to\R_+$ and $V:\R^d\to[1,\infty)$ such that for all $t>0$ and $x\in \R^d$,
	\begin{align*}
		|b(t,x)-\bar b(x)|\le \phi(t)V(x).
\end{align*}\end{itemize}

In the sequel,
 we assume that the additive process $(Z_t)_{t\ge0}$  in the SDE \eqref{D12} is given  as in \eqref{RR-}, where   the time-dependent
L\'evy measure $(\nu_t(\d z))_{t\ge0}$ is fixed as follows: for all $t\ge0$,
\begin{align}\label{F3}
	\nu_t(\d z):=g_t^{-(d+ 2)}\bigg( a( z/{g_t})\I_{\{| z/{g_t}|\le1\}}+\frac{c_* }{| z/{g_t}|^{d+\alpha}}\I_{\{| z/{g_t}|>1\}} \bigg)\,\d z,
\end{align}
where $a(z)=a(|z|)$ for all $z\in \bar B_1$. In particular, the L\'evy measure $\nu_t(\d z)$ is rotationally symmetric.
Moreover, we suppose  that $   \si=(\nu(|\cdot|^2)/d)^{1/2}>0$ involved in the SDE \eqref{D11} so  the condition (i) in Theorem \ref{Th4.1} is satisfied.

Under the preceding setting, the following lemma    provides a quantitative discrepancy  between $(\mathscr L_r)_{r\ge0}$ and $\overline{\mathscr L}^\vv$ acting respectively on the semigroup $(\overline{P}^\vv_{\!\! t})_{t\ge0}$.

\begin{lemma}\label{lemma5}
	Assume that  $({\bf A} )$ holds and   that there exist constants $ \theta_1\in[0,\alpha-2), \theta_2\ge0$  and $C_0  >0$ such that for all  $t>0$, $x\in\R^d$, $f\in C_c^\infty(\R^d)$ and  $\vv\in(0,1]$,
	\begin{equation}\label{D8}
		\begin{split}
			&\|\nn \overline {P}_{\!\! t}^\vv  f \|_\8\le C_0\|\nn f\|_\8,\quad \|\nn^2 (\overline {P}_{\!\! t}^\vv  f) (x)\|_{\rm op}\le C_0\sum_{i=0}^2\|\nn^i f\|_\8(1+|x|^{\theta_1}),\\
			&\|\nn^3 (\overline {P}_{\!\! t}^\vv  f)(x) \|_{\rm op}\le C_0\sum_{i=0}^3\|\nn^i f\|_\8(1+|x|^{\theta_2}).
		\end{split}
	\end{equation}
	Then,
	there is a constant $C_0^*>0$ such that for all $0< r\le s< t $,  $x\in\R^d$, $f\in C_c^\infty(\R^d)$ and $\vv\in(0,1],$
	\begin{equation}\label{D2}
		\begin{split}
		\big| (\mathscr L_r  - \overline{\mathscr L}^\vv  )(\overline  P_{\!\! s,t}^\vv  f)(x)\big| \le C_0^*\sum_{i=0}^3\|\nn^if\|_\8  \big(&\phi(r)V(x)+ (1+|x|^{ \theta_1\vee\theta_2} )g_r^{\alpha_*} +|\bar b^\vv(x)-\bar b (x)|\big),
			\end{split}
	\end{equation}
where $\alpha_*:=((1\wedge(\alpha-2))/2)\wedge(\alpha-(2+\theta_1))$.

\end{lemma}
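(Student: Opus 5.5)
Set $u:=\overline P^\vv_{\!\! s,t}f=\overline P^\vv_{\!\!t-s}f$. The plan is to write $(\mathscr L_r-\overline{\mathscr L}^\vv)u$ as a drift discrepancy plus a jump--diffusion discrepancy and bound each piece with \eqref{D8}. Using \eqref{EE1} and the generator of \eqref{WW-32},
\begin{align*}
(\mathscr L_r-\overline{\mathscr L}^\vv)u(x)=\<\nn u(x),b(r,x)-\bar b(x)\>+\<\nn u(x),\bar b(x)-\bar b^\vv(x)\>+\Big(\int_{\R^d}\big(u(x+z)-u(x)-\<\nn u(x),z\>\big)\nu_r(\d z)-\tfrac{\sigma^2}{2}\Delta u(x)\Big).
\end{align*}
Here the compensator cutoff $\I_{\{|z|\le1\}}$ can be removed because $\nu_r$ is rotationally symmetric, and $\int|z|\,\nu_r(\d z)<\infty$ since $\alpha>2$. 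The first two terms are handled directly. By (${\bf A}$) and the gradient bound in \eqref{D8}, they are bounded by $C_0\|\nn f\|_\8(\phi(r)V(x)+|\bar b^\vv(x)-\bar b(x)|)$.

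For the third term, first note that \eqref{F3} is the pushforward of $\nu$ under $z\mapsto g_rz$, multiplied by $g_r^{-2}$. Hence
\[
\int|z|^2\,\nu_r(\d z)=\nu(|\cdot|^2)=d\sigma^2 .
\]
By rotational symmetry, $\int z\otimes z\,\nu_r(\d z)=\sigma^2 I_d$. This lets us rewrite the third term as
\[
\int\Big(u(x+z)-u(x)-\<\nn u(x),z\>-\tfrac12\<\nn^2u(x)z,z\>\Big)\nu_r(\d z).
\]
Now split the integral at $|z|\le\rho$ and $|z|>\rho$, with $\rho=\rho(r)$ chosen below.

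On $\{|z|\le\rho\}$, a third-order Taylor expansion together with the $\nn^3$ bound in \eqref{D8} gives a bound of order
\[
(1+|x|^{\theta_2}+\rho^{\theta_2})\int_{|z|\le\rho}|z|^3\,\nu_r(\d z),
\]
using $|x+\xi z|^{\theta_2}\lesssim|x|^{\theta_2}+\rho^{\theta_2}$. On $\{|z|>\rho\}$, a second-order remainder together with the $\nn^2$ bound gives
\[
(1+|x|^{\theta_1})\int_{|z|>\rho}|z|^2\,\nu_r(\d z)+\int_{|z|>\rho}|z|^{2+\theta_1}\,\nu_r(\d z).
\]
The last integral is finite because $\theta_1<\alpha-2$, so this is where that hypothesis is used.

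Scaling the integrals with \eqref{F3} gives $\int_{|z|\le\rho}|z|^3\nu_r\lesssim g_r\int_{|w|\le\rho/g_r}|w|^3\nu(\d w)$. This is $\lesssim g_r(\rho/g_r)^{(3-\alpha)^+}$, with a logarithmic factor if $\alpha=3$. Likewise $\int_{|z|>\rho}|z|^{2}\nu_r\lesssim(\rho/g_r)^{2-\alpha}$ once $\rho\ge g_r$, and $\int_{|z|>\rho}|z|^{2+\theta_1}\nu_r\lesssim g_r^{\theta_1}(\rho/g_r)^{2+\theta_1-\alpha}$.

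The main obstacle is balancing these terms to reach the exponent $\alpha_*$. I would take $\rho=g_r^{1/2}$, so that $\rho/g_r=g_r^{-1/2}$. Then:
\begin{itemize}
\item the large-jump tail is $g_r^{(\alpha-2)/2}$;
\item the moment term is $g_r^{\theta_1+(\alpha-2-\theta_1)/2}\le g_r^{(\alpha-2-\theta_1)}\wedge\cdots$;
\item the small-jump term is $g_r\cdot g_r^{-(3-\alpha)^+/2}\le g_r^{1/2}$;
\item the $\rho^{\theta_2}$ factor is bounded since $g_r\le1$.
\end{itemize}
The logarithm at $\alpha=3$ is absorbed by the room between $\alpha_*$ and the exponents obtained. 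Collecting terms gives a bound by $g_r^{\alpha_*}$ with $\alpha_*=((1\wedge(\alpha-2))/2)\wedge(\alpha-2-\theta_1)$. Replacing the $\|\nn^iu\|$ norms by $\sum_{i\le3}\|\nn^if\|_\8$ through \eqref{D8} then yields \eqref{D2}. I would double-check the exponent bookkeeping in the case $\alpha\in(2,3)$ in particular.
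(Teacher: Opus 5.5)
Your proof is correct and follows the paper's argument: the same decomposition, a third-order Taylor expansion with the $\nabla^3$ bound for small jumps, a second-order remainder with the $\nabla^2$ bound for large jumps, and scaling of $\nu_r$. The only difference is the cut-off: the paper cuts at the fixed level $|z|=1$, which already gives $g_r^{1\wedge(\alpha-2)}$ (up to a logarithm at $\alpha=3$, absorbed by halving the exponent) and $g_r^{\alpha-(2+\theta_1)}$, hence $\alpha_*$ directly, so no $r$-dependent balancing is needed and your $\rho=g_r^{1/2}$ only weakens the tail to $g_r^{(\alpha-2)/2}$, still enough since $\alpha_*\le(\alpha-2)/2$; note also one bookkeeping slip, namely $g_r^{(\alpha-2+\theta_1)/2}\le g_r^{\alpha-2-\theta_1}$ is false in general (take $\theta_1=0$), but $g_r^{(\alpha-2+\theta_1)/2}\le g_r^{(\alpha-2)/2}\le g_r^{\alpha_*}$ suffices.
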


\begin{proof}
	Hereinafter, we assume  $\theta_1\in[0,\alpha-2)$, $\theta_2\ge0$,  $0<r\le s< t $, $x\in\R^d$, $f\in C_c^\infty(\R^d)$ and $\vv\in(0,1]$. Apparently, by recalling $   \si=(\nu(|\cdot|^2)/d)^{1/2}$ involved in the SDE \eqref{D11},
	we have that
	\begin{align*}
		(\mathscr L_r  - \overline{\mathscr L}^\vv )(\overline P_{\!\! s,t}^\vv f)(x)=&\<\nn (\overline P_{\!\! s,t}^\vv f)(x), b (r,x)-\bar b^\vv(x)\> +\Lambda^\vv (r,s,t,x)\\
		&-\frac{1}{2d} \nu(|\cdot|^2) \mbox{trace}\big(\nn^2 (\overline P_{\!\! s,t}^\vv  f)(x)\big),
	\end{align*}
	where
	\begin{equation}\begin{split}\label{F1}
			\Lambda^\vv (r,s,t,x): =&\int_{\R^d}\big((\overline P_{\!\! s,t}^\vv f)(x+z)-(\overline P_{\!\! s,t}^\vv f)(x)-\<\nn (\overline P_{\!\! s,t}^\vv f)(x),z\>\I_{\{|z|\le1\}}\big) \nu_r(\d z)\\
			=&\int_{\R^d}\big((\overline P_{\!\! s,t}^\vv f)(x+z)-(\overline P_{\!\! s,t}^\vv f)(x)-\<\nn (\overline P_{\!\! s,t}^\vv f)(x),z\> \big) \nu_r(\d z).
	\end{split}\end{equation}Indeed,  in \eqref{F1},  we used the fact that $\int_{\{|z|>1\}}\< \nn  (\overline P_{\!\! s,t}^\vv f)(x),z\>\,\nu_r(\d z)=0 $ by taking advantage of  the symmetry of the L\'evy measure $\nu_r(\d z)$ (which is true due to $a(z)=a(-z)$) and $\alpha>2$.

	By means of  $({\bf A} )$ and  the first hypothesis in \eqref{D8},
	we obtain that
	\begin{align}\label{G3}
		|\<\nn (\overline P_{\!\! s,t}^\vv f)(x), b(r,x)-\bar b^\vv(x)\>|\le C_0\|\nn f\|_\8\big(\phi(r)V(x)+|\bar b^\vv(x)-\bar b (x)|\big).
	\end{align}
	Next, applying the third-order Taylor expansion   yields   that
	\begin{align*}
		&\Lambda^\vv (r,s,t,x) -\frac{1}{2d} \nu(|\cdot|^2) \mbox{trace}\big(\nn^2 (\overline P_{\!\! s,t}^\vv  f)(x)\big)\\
		&= \int_{\{|z|\le 1\}}\int_0^1\int_0^u\int_0^\theta
		\nn_z
		\nn_z \nn_z (\overline P_{\!\! s,t}^\vv f)(x+\rho z)
		\,\d \rho\,\d \theta\,\d u \,\nu_r(\d z)\\
		&\quad +\int_{\{|z|> 1\}} \int_0^1\int_0^u\<\nn^2 (\overline P_{\!\! s,t}^\vv f)(x+\theta z)-\nn^2 (\overline P_{\!\! s,t}^\vv f)(x),z\otimes z\>_{\rm HS}\,\d\theta\,\d u \,\nu_r(\d z) \\
		&=: J_1^\vv(r,s,t,x)+J_2^\vv(r,s,t,x),
	\end{align*}
	where in the first equality we employed the fact that
	\begin{align*}
		\int_{\R^d}\<\nn^2 (\overline P_{\!\! s,t}^\vv f)(x),z\otimes z\>_{\rm HS} \nu_r (\d z)=\frac{1}{d} \nu(|\cdot|^2) \mbox{trace}\big(\nn^2 (\overline P_{\!\! s,t}^\vv  f)(x)\big)
	\end{align*}
	by taking the   structure  of $\nu_r(\d z)$, given in \eqref{F3}, into consideration and making use of
  the rotationally symmetry of the L\'evy measure $\nu_t(\d z).$

	By invoking   \eqref{F3} once more, besides the third prerequisite in  \eqref{D8},
	there exist constants $C_1,C_2>0$ such that
	\begin{align}\label{WW-36}
		\begin{split}
	&	|J_1^\vv(r,s,t,x)|\\
	&\le  C_0\sum_{i=0}^3\|\nn^if\|_\8\int_{\{|z|\le 1\}}\int_0^1\int_0^u\int_0^\theta  |z|^3 (1+|x+\rho z|^{\theta_2})  \,\d \rho\,\d \theta\,\d u\, \nu_r(\d z)\\
		&\le \frac{1}{6}C_02^{( \theta_2-1)^+}\sum_{i=0}^3\|\nn^if\|_\8\int_{\{|z|\le 1\}}\big( (1+|x|)^{ \theta_2 } |z|^3 +|z|^{3+ \theta_2} \big)  \,\nu_r(\d z)\\
		&\le \frac{1}{6}C_02^{(\theta_2-1)^+}\sum_{0=1}^3\|\nn^if\|_\8\\
		&\quad\times\Bigg((1+|x|)^{ \theta_2} g_r\bigg( \int_{\{|z|\le1\}}|z|^3a(z)\,\d z+\frac{c_*d\omega_d}{3-\alpha}\big(g_r^{\alpha-3}-1\big)\bigg)\\
		&\quad\quad\quad+g_r^{1+ \theta_2}\bigg( \int_{\{|z|\le1\}}|z|^{3+ \theta_2}a(z)\,\d z+\frac{c_*d\omega_d}{3+ \theta_2-\alpha}\big(g_r^{\alpha-3-\theta_2}-1\big)\bigg)\Bigg)\\
		&\le C_1 \sum_{i=0}^3\|\nn^if\|_\8\Big( \big(g_r^{1\wedge(\alpha-2)}\I_{\{\alpha\neq 3\}}+g_r(1+\log(1/{g_r}))\I_{\{\alpha=3\}}\big) (1+|x|)^{ \theta_2}\\
		&\quad\quad\quad\quad\quad\quad\quad\qquad+g_r^{(1+\theta_2)\wedge(\alpha-2) }\I_{\{\alpha\neq3+\theta_2\}}+g_r^{1+\theta_2}(1+\log(1/ {g_r})\I_{\{\alpha=3+\theta_2\}}\Big)\\
		&\le C_2 \sum_{i=0}^3\|\nn^if\|_\8 g_r^{(1\wedge(\alpha-2))/2} (1+|x|^{ \theta_2}),
	\end{split}
		\end{align}
where in the penultimate inequality we used the fact that $\lim_{r\downarrow0}(a^r-1)/r=\ln a$  for $a>0$	and in the last display we employed the basic inequality: $r\log(1/r)\le r^\theta/{(\e(1-\theta))}$ for all $r\in(0,1]$ and $\theta\in(0,1)$.

	In view  of $\theta_1\in[0,\alpha-2),$  we deduce from the second assumption  in \eqref{D8} that for some constant $C_3>0,$
	\begin{align}\label{WW-35}
		\begin{split}
		|J_2^\vv(r,s,t,x)|
		&\le C_0\sum_{i=0}^2\|\nn^i f\|_\8
		 \int_{\{|z|> 1\}} \int_0^1\int_0^s\big( 2+|x+\theta z|^{ \theta_1}  +|x|^{ \theta_1} \big) |z|^2\,\d\theta\,\d s \,\nu_r(\d z)\\
		&\le \frac{1}{2}C_0\big(1+2^{(\theta_1-1)^+}\big)\sum_{i=0}^2\|\nn^i f\|_\8 \int_{\{|z|> 1\}} \big( 1+|x|^{\theta_1}+|z|^{\theta_1}\big) |z|^2  \,\nu_r(\d z)\\
		&\le\frac{1}{2}c_*d\omega_dC_0\big(1+2^{(\theta_1-1)^+}\big)\sum_{i=0}^2\|\nn^i f\|_\8  g_r^{\alpha-2} \bigg(\frac{  1+|x|^{\theta_1}  }{\alpha-2}+\frac{ {g_r}^{- \theta_1 }}{\alpha-(2+\theta_1)}\bigg)\\
		&\le C_3 \sum_{i=0}^2\|\nn^i f\|_\8 g_r^{\alpha-(2+\theta_1)} (1+|x|^{\theta_1}).
	\end{split}
	\end{align}
	
At last,  \eqref{D2} follows by taking advantage of 	\eqref{G3} and combining \eqref{WW-36} with \eqref{WW-35}. 	
\end{proof}

 Concerning the case $\bar b(x)=-\lambda x|x|^{\gamma-1}$ for $\lambda>0$ and $\gamma>1 $ and the perturbed version $\bar b^\vv(x)=-\lambda x(\vv+|x|^2)^{(\gamma-1)/2}$,
the prerequisite \eqref{D8} will be  verified in detail in Lemma \ref{lemma-1}, where the super-Poincar\'e inequality and the high-order Bismut-Elworthy-Li formulas
play  an important role in deriving the finite-time and infinite-horizon bounds, respectively.

By combining  Theorem \ref{Th4.1} with Lemma \ref{lemma5}, we immediately have the following statement, which indicates that $(\mu_t)_{t\ge0}$ as  the law of $(Y_t)_{t\ge0}$
is   an asymptotic pseudotrajectory of $(\overline {P}_{\!\! t})_{t\ge0}$ provided that the function $\phi$ involved in (${\bf A} $) satisfies   $\lim_{t\to\infty}\phi(t)=0$.

\begin{proposition}\label{pro}
	Assume that $({\bf A})$ and  Assumptions {\rm(i)-(\rm iv)}  in Theorem $\ref{Th4.1}$ hold, where \eqref{WW-38} therein is replaced by \eqref{D8} and
	\begin{align}\label{D16}
	  \Theta(x):=\sup_{t\ge0}\E \big(V(Y_t^{x})+ |Y_t^{x}|^{ \theta_1\vee\theta_2 }\big),\quad \forall\, x\in\R^d.
	\end{align}
	Then, there exists a  constant  $C_1^* >0$ such that for any $t>0$, $s\ge0$ and $f\in C_c^\infty(\R^d)$,
	\begin{align}\label{D19}
		\big|(\mu_s \overline {P}_{\!\! t}  )( f)-\mu_{s+t} (f)\big|
		\le  C_1^* \sum_{i=0}^3 \|\nn^if\|_\8   \int_{s }^{s+t} \big(\phi(u) + g_u^{\alpha_*}\big) \,\d u,
	\end{align}
where	$\alpha_*:=((1\wedge(\alpha-2))/2)\wedge(\alpha-(2+\theta_1))$.	
	In particular, there exists a constant $C_2^*>0$  such that for all $t>0$ and
	$s\ge 0$,
	$${\rm d}_{\mathcal F_c}\big(  \mu_{s}\overline {P }_{\!\! t },\mu_{s+t} \big)\le C_2^*  \int_{s }^{s+t} \big(\phi(u) + g_u^{\alpha_*}\big) \,\d u, $$
	where $\mathcal F_c:=\{f\in C_c^\infty(\R^d): \sum_{i=0}^3 \|\nn^if\|_\8\le 1\}.$
\end{proposition}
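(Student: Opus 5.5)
The plan is to apply Theorem~\ref{Th4.1} with the choice $\Phi(u):=\phi(u)+g_u^{\alpha_*}$ and a suitable Lyapunov-type function $\mathcal V$. The only hypothesis of Theorem~\ref{Th4.1} not assumed directly is \eqref{WW-38}, and Lemma~\ref{lemma5} supplies it from \eqref{D8}. Since $g$ is decreasing, $\Phi$ is non-increasing as long as $\phi$ is; if not, one replaces $\phi$ by its non-increasing majorant, which is harmless since only $\int_s^{s+t}\Phi$ enters.

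First, set
\[
\mathcal V(x):=V(x)+1+|x|^{\theta_1\vee\theta_2}\ge1.
\]
Lemma~\ref{lemma5} gives, for $0<r\le s<t$,
\[
\bigl|(\mathscr L_r-\overline{\mathscr L}^\vv)(\overline P^\vv_{\!\!s,t}f)(x)\bigr|\le C_0^*\sum_{i=0}^3\|\nabla^if\|_\infty\bigl(\phi(r)V(x)+g_r^{\alpha_*}(1+|x|^{\theta_1\vee\theta_2})+|\bar b^\vv(x)-\bar b(x)|\bigr).
\]
Bounding both $\phi(r)V(x)$ and $g_r^{\alpha_*}(1+|x|^{\theta_1\vee\theta_2})$ by $\Phi(r)\mathcal V(x)$ yields \eqref{WW-38}.

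Second, the function $\Theta$ of Theorem~\ref{Th4.1}(iii) is then $\sup_{t\ge0}\E\mathcal V(Y^x_t)$, which is bounded by $1$ plus the quantity in \eqref{D16}. Hence assumption (iii), understood with \eqref{D16}, gives $\sup_t\mu_t(\Theta)<\infty$. The remaining conditions (i), (iv) and \eqref{WW-37} are assumed as they stand. Theorem~\ref{Th4.1} therefore yields \eqref{D19-} with $\Phi=\phi+g^{\alpha_*}$, which is \eqref{D19} with $C_1^*:=C_*$.

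Third, for the ``in particular'' statement, take $f\in\mathcal F_c$, so that $\sum_{i=0}^3\|\nabla^if\|_\infty\le1$. Applying \eqref{D19} and taking the supremum over $f\in\mathcal F_c$ gives the bound on ${\rm d}_{\mathcal F_c}(\mu_s\overline P_{\!\!t},\mu_{s+t})$ with $C_2^*=C_1^*$.

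The only delicate point is bookkeeping. One must check that the constant $C_0$ in \eqref{WW-38} is uniform in $\vv$, $r$, $s$ and $t$, which holds because Lemma~\ref{lemma5}'s $C_0^*$ is. One must also check that the weight $\mathcal V$ built from $V$ and $|x|^{\theta_1\vee\theta_2}$ is exactly the one whose uniform integrability is encoded in \eqref{D16}. No new estimate is needed.
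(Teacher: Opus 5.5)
Your proposal is correct and is exactly the paper's argument: the paper says the proposition follows immediately by combining Lemma~\ref{lemma5} with Theorem~\ref{Th4.1}, taking $\Phi=\phi+g^{\alpha_*}$ and the weight $\mathcal V$ built from $V$ and $|\cdot|^{\theta_1\vee\theta_2}$ as encoded in \eqref{D16}, just as you do. One small correction to your monotonicity aside: replacing $\phi$ by its non-increasing majorant is not harmless, since it can enlarge $\int_s^{s+t}\Phi(u)\,\d u$ and you would then prove a weaker bound; it is also unnecessary, because the proof of Theorem~\ref{Th4.1} uses $\Phi$ only through the integrals $\int_{s_k}^{s_{k+1}}\Phi(u)\,\d u$ and never its monotonicity.
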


\section{Proof of Theorem \ref{thm5-}}\label{sec6}
 Throughout this section, we focus on the SDE
  \eqref{WW-32}, where the associated $\bar b^\vv(x)$ is given as follows:  for all $x\in\R^d$ and $\vv\in(0,1]$,
 \begin{align}\label{WW-40}
  \bar b^\vv(x)=-\lambda x(\vv+|x|^2)^{(\gamma-1)/2}\quad \mbox{ with } \gamma>1 \quad \mbox{ and }\quad  \si= (\nu(|\cdot|^2)/d)^{{1}/{2}}.
 \end{align}
 Obviously, $\R^d\ni x\mapsto \bar b^\vv(x)$ is a $C^3(\R^d;\R^d)$-function,  although
 the original $b(x)=-\lambda x|x|^{\gamma-1}$ with $\gamma\in( 1,3)$ need not be.

 As a  preliminary warm-up, we first derive  the uniform-in-time  growth-bound estimates on $\nn (\overline {P}_{\!\! t}^\vv f)(x)$, $\nn^2 (\overline {P}_{\!\! t}^\vv f)(x)$ and $\nn^3 (\overline {P}_{\!\! t}^\vv f)(x)$, which further shows the validity of \eqref{D8}
 for the case we are interested in.

\begin{lemma}\label{lemma-1}
For any $\gamma>1\vee(3-d/2)$,  there exists a constant $C_0>0$ such that  for all $t>0$, $x\in \R^d$ and $f\in C_b^3(\R^d)$,
\begin{equation}\label{WW-27}
	\begin{split}
\|\nn \overline {P}_{\!\! t}^\vv f \|_\8 &\le \|\nn f\|_\8, \quad
\|\nn^2 (\overline {P}_{\!\! t}^\vv f)(x)\|_{\rm op}\le C_0 \sum_{i=0}^2 \|\nn^i f\|_\8(  1+|x|^{(\gamma-2)^+} ),\\
\|\nn^3 (\overline {P}_{\!\! t}^\vv f)(x)\|_{\rm op}&\le C_0\sum_{i=0}^3 \|\nn^i f\|_\8 \big(1+|x|^{2( \gamma-2)^+}+|x|^{ ( \gamma-3)^+}\big),
\end{split}
\end{equation}
where $(\overline {P}_{\!\! t}^\vv)_{t\ge0} $ is the semigroup of $(\overline {Y}_{\!\! t}^\vv)_{t\ge0}$ solving \eqref{D11} with $\bar b^\vv(x) $ and $\si $ being  given in \eqref{WW-40}.
\end{lemma}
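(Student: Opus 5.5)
The plan is to work with derivative flows of $\overline Y^{\vv,x}_{\!\! t}$ and to use a short-time Bismut–Elworthy–Li (BEL) argument to get bounds that hold uniformly in time. Write $J_t:=\nn_x \overline Y^{\vv,x}_{\!\! t}$. It solves the linear ODE $\d J_t=\nn\bar b^\vv(\overline Y^{\vv,x}_{\!\! t})J_t\,\d t$ with $J_0=I_d$. Since $\bar b^\vv=-\nn U^\vv$ with $U^\vv(x)=\frac{\lambda}{\gamma+1}(\vv+|x|^2)^{(\gamma+1)/2}$ convex (here $\lambda>0$), the matrix $\nn\bar b^\vv$ is negative semidefinite. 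Hence $\frac{\d}{\d t}|J_tv|^2\le0$, so $\|J_t\|_{\rm op}\le1$. From $\nn\overline P^\vv_{\!\! t}f(x)=\E[\nn f(\overline Y^{\vv,x}_{\!\! t})J_t]$ this gives the first estimate in \eqref{WW-27} immediately.

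For the higher derivatives I introduce the second and third derivative flows:
\begin{align*}
\d K_t&=\big(\nn\bar b^\vv(\overline Y^{\vv,x}_{\!\! t})K_t+\nn^2\bar b^\vv(\overline Y^{\vv,x}_{\!\! t})(J_t,J_t)\big)\,\d t,\\
\d L_t&=\big(\nn\bar b^\vv L_t+3\nn^2\bar b^\vv(J_t,K_t)+\nn^3\bar b^\vv(J_t,J_t,J_t)\big)(\overline Y^{\vv,x}_{\!\! t})\,\d t,
\end{align*}
both started at $0$. By the same monotonicity argument, $|K_t|\le\int_0^t|\nn^2\bar b^\vv(\overline Y^{\vv,x}_{\!\! s})|\,\d s$, and $|L_t|$ is bounded similarly. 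Uniformly in $\vv\in(0,1]$ one has
\[
|\nn^2\bar b^\vv(y)|\lesssim|y|^{\gamma-2}\quad\mbox{and}\quad|\nn^3\bar b^\vv(y)|\lesssim|y|^{\gamma-3}.
\]
These are singular at the origin when $\gamma<2$ (resp. $\gamma<3$).

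On $t\in(0,1]$ I would write
\[
\nn^2\overline P^\vv_{\!\! t}f(x)=\E\big[\nn^2f(J_t,J_t)+\nn f\,K_t\big],
\]
and similarly for $\nn^3$. I would then bound $\E\int_0^1|\overline Y^{\vv,x}_{\!\! s}|^{\gamma-2}\,\d s$ and the analogous squares by $C(1+|x|^{(\gamma-2)^+})$, and the third-order quantities by $C(1+|x|^{2(\gamma-2)^+}+|x|^{(\gamma-3)^+})$. For large $|y|$ this uses the uniform moment bounds $\sup_{s\le1}\E|\overline Y^{\vv,x}_{\!\! s}|^p\le C(1+|x|^p)$, which follow from the Lyapunov argument of Proposition \ref{lem3}. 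Near the origin I would use Girsanov (or the local Gaussian-type bound on the density of $\overline Y^{\vv,x}_{\!\! s}$, uniform in $\vv$), which makes $\E|\overline Y^{\vv,x}_{\!\! s}|^{q}$ integrable in $s$ for $q>-d$. The singular terms enter quadratically after Cauchy–Schwarz, through $\E|\overline Y|^{2(\gamma-3)}$. This is exactly where the hypothesis $\gamma>3-d/2$ (that is, $2(\gamma-3)>-d$) is needed.

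For $t>1$ I would not differentiate over the whole interval, since $K_t$ would grow linearly in $t$. Instead I write $\overline P^\vv_{\!\! t}f=\overline P^\vv_{\!\! 1}g$ with $g:=\overline P^\vv_{\!\! t-1}f$, which satisfies $\|g\|_\8\le\|f\|_\8$ and $\|\nn g\|_\8\le\|\nn f\|_\8$ by the first step. I then apply the second- and third-order BEL formulas on $[0,1]$ (non-degenerate noise $\si>0$):
\[
\nn^k\overline P^\vv_{\!\! 1}g(x)=\E\big[(g(\overline Y^{\vv,x}_{\!\! 1})-g(x))\,M^{(k)}_1\big],\qquad k=2,3.
\]
The random weights $M^{(k)}_1$ are polynomial in the stochastic integrals $\int_0^1\si^{-1}J_s\,\d W_s$, $\int_0^1\si^{-1}K_s\,\d W_s$ and in $L_1$. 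Subtracting $g(x)$ is allowed because the weights have zero mean. Then $|g(\overline Y_{\!\! 1})-g(x)|\le\|\nn f\|_\8|\overline Y_{\!\! 1}-x|$, and by Cauchy–Schwarz the $L^2$ norms of the weights are controlled by the same moment quantities as in the short-time step. This yields the bounds in \eqref{WW-27} with constants independent of $t$ and $\vv$.

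The main obstacle is the uniform-in-$\vv$ control of the singular weights near the origin when $\gamma<3$. Concretely, I need $\E\big(\int_0^1|\overline Y^{\vv,x}_{\!\! s}|^{\gamma-3}\,\d s\big)^2$ and $\E\big(\int_0^1|\overline Y^{\vv,x}_{\!\! s}|^{\gamma-2}\,\d s\big)^4$ to be bounded by polynomials in $|x|$. I would first try a Girsanov comparison of $\overline Y^{\vv,x}$ with $x+\si W$ on $[0,1]$. The Radon–Nikodym density has all moments, since the drift has polynomial growth and the process has uniform moments. This reduces the problem to Gaussian moments $\E|x+\si W_s|^{2(\gamma-3)}\lesssim s^{(\gamma-3)}$, which are finite and time-integrable precisely when $2(\gamma-3)>-d$.
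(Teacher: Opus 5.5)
Your architecture is the paper's. You get $\|\nn\overline Y^{\vv,x}_{\!\!t}\|_{\rm op}\le1$ by convexity, which settles the first bound. You differentiate directly on $(0,1]$. For $t>1$ you split $\overline P^\vv_{\!\!t}=\overline P^\vv_{\!\!1}\overline P^\vv_{\!\!t-1}$ and use Bismut--Elworthy--Li weights on $[0,1]$. The one difference is that you control the singular moments by Girsanov rather than by the ultracontractivity bound \eqref{WW-25}, and that is exactly where the gap lies.

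After Cauchy--Schwarz (H\"older) in time you need $\int_0^1\E|\overline Y^{\vv,x}_{\!\!s}|^{2(\gamma-3)}\,\d s$ and $\int_0^1\E|\overline Y^{\vv,x}_{\!\!s}|^{4(\gamma-2)}\,\d s$ bounded uniformly in $x$ and $\vv$. You assert that $\E|x+\si W_s|^{q}$ is integrable in $s$ near $0$ whenever $q>-d$. That is false. The condition $q>-d$ only makes it finite for each fixed $s>0$. Already at $x=0$ one has $\E|\si W_s|^{q}=c_q s^{q/2}$, which is integrable on $(0,1]$ only if $q>-2$.

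For $q=2(\gamma-3)$ this forces $\gamma>2$, and $q=4(\gamma-2)$ forces $\gamma>3/2$. The hypothesis $\gamma>1\vee(3-d/2)$ gives neither once $d\ge3$: for example $d=3$ with $\gamma\in(3/2,2]$, or $d\ge4$ with $\gamma\in(1,2]$. This is not mere slack in your estimate. On $\{\sup_{r\le s}|\si W_r|\le1\}$ the Girsanov density is bounded below uniformly in $\vv$, so $\int_0^1\E(\vv+|\overline Y^{\vv,0}_{\!\!s}|^2)^{\gamma-3}\,\d s\to\infty$ as $\vv\downarrow0$ when $\gamma\le2$. Nor would \eqref{WW-25} help: it is uniform in $x$ and $\vv$, but it grows like $\exp(C_8t^{-(1+\gamma)/(\gamma-1)})$ as $t\downarrow0$.

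The remedy is to drop Cauchy--Schwarz in time and use the Markov property instead. For $\psi\ge0$ and $m\in\mathbb N$,
\[
\E\Big(\int_0^1\psi(\overline Y^{\vv,x}_{\!\!s})\,\d s\Big)^m\le m!\,\Big(\sup_{y\in\R^d}\E\int_0^1\psi(\overline Y^{\vv,y}_{\!\!s})\,\d s\Big)^m .
\]
Take $\psi(y)=|y|^{-a}$ with $a=3-\gamma$ or $a=2-\gamma$ (when positive). You then only need $\sup_y\E\int_0^1|\overline Y^{\vv,y}_{\!\!s}|^{-a}\,\d s$ bounded uniformly in $\vv$. This holds because $3-\gamma<2\wedge d$ under the hypothesis.

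For that last bound the Girsanov step itself needs repair. Because $|\bar b^\vv(y)|^2$ grows like $|y|^{2\gamma}$, Novikov-type estimates fail, and polynomial moments of $\overline Y^{\vv,x}$ do not control the density. What the gradient structure does give you is an explicit form: the density of the law of $\overline Y^{\vv,x}$ on $[0,s]$ with respect to that of $x+\si W$ is $\exp\big(-\si^{-2}(U^\vv(x+\si W_s)-U^\vv(x))-\int_0^s(\tfrac{1}{2\si^2}|\nn U^\vv|^2-\tfrac12\Delta U^\vv)(x+\si W_r)\,\d r\big)\le\e^{\si^{-2}U^\vv(x)+Cs}$, with $C$ independent of $\vv$. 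This bound is exponential in $|x|^{1+\gamma}$, so you should localize:
\begin{itemize}
\item For starting points in a ball $\bar B_R$, it gives $\E|\overline Y^{\vv,y}_{\!\!s}|^{-a}\le C_R\,\E|\si W_s|^{-a}=C_R's^{-a/2}$.
\item For general $y$, use the strong Markov property at the hitting time of $\bar B_R$; before that time the integrand is at most $R^{-a}$.
\end{itemize}

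Finally, subtracting $g(x)$ in the BEL formula is counterproductive. Since $\|\overline Y^{\vv,x}_{\!\!1}-x\|_{L^2}$ is of order $|x|$, Cauchy--Schwarz yields $(1+|x|)(1+|x|^{(\gamma-2)^+})$ instead of the growth claimed in \eqref{WW-27}. As $\|f\|_\8$ is allowed on the right-hand side, use $\|g\|_\8\le\|f\|_\8$, as the paper does.
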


\begin{proof}
In the following analysis, let $(\overline {Y}_{\!\! t}^{\vv,x})_{t\ge0}$ solve the SDE \eqref{WW-32}, where $\bar b^\vv(x) $ and $\si $ are given in \eqref{WW-40}, and set $t>0$, $x,y\in\R^d,$ $f\in C_b^3(\R^d)$ and $\vv\in(0,1]$.
By   It\^o's  formula and Young's inequality, for any $p>0$  there exist  constants $C_1,C_2>0$ such that
\begin{align*}
\d(1+|\overline {Y}_{\!\! t}^{\vv,x}|^2)^{p/2}&=-p\lambda  (1+|\overline {Y}_{\!\! t}^{\vv,x}|^2)^{ p/2 -1} |\overline {Y}_{\!\! t}^{\vv,x}|^2(\vv+|\overline {Y}_{\!\! t}^{\vv,x}|^2)^{ (\gamma -1)/2} \,\d t+pd\sigma^2(1+|\overline {Y}_{\!\! t}^{\vv,x}|^2)^{p/2-1} \,\d t\\
&\quad+p(p-2)\sigma^2(1+|\overline {Y}_{\!\! t}^{\vv,x}|^2)^{p/2-2}| \overline {Y}_{\!\! t}^{\vv,x}|^2\,\d t+\d M_t^{\vv,p} \\
&\le \Big(-\frac{1}{2}p\lambda |\overline {Y}_{\!\! t}^{\vv,x}|^{ p+\gamma-1  }+C_1\Big)  \,\d t+\d M_t^{\vv,p}\\
&\le \big(-2^{-(p+\gamma-3)/2} p\lambda (1+|\overline {Y}_{\!\! t}^{\vv,x}|^2)^{ (p+\gamma-1) /2 } +C_2\big)  \,\d t+\d M_t^{\vv,p}\\
&\le \big(-2^{-(p+\gamma-3)/2} p\lambda (1+|\overline {Y}_{\!\! t}^{\vv,x}|^2)^{  p  /2 } +C_2\big)  \,\d t+\d M_t^{\vv,p},
\end{align*}
where $(M_t^{\vv,p})_{t\ge0}$ is a martingale.
Thus, for any $p>0,$ the Gronwall inequality enables  us to derive that there exist constants $C_3,C_4>0$ such that
\begin{align}\label{WW-4}
 \E |\overline {Y}_{\!\! t}^{\vv,x}|^p\le  \e^{-C_3 t }(1+|x|^2)^{p/2}+C_4.
\end{align}

Below, we will verify \eqref{WW-27} according to $t\in (0,1]$ and $t>1$, respectively. First, assume that $t\in (0,1]$.
With \eqref{WW-4} at hand, in the following part we aim at showing  that, for   $\gamma>1\vee(3-d/2)$,  there exists a constant $C_5>0$ (independent of $\vv$) such that  for all   $t\in(0,1]$ and $j=1,2$,
\begin{equation}\label{WW-26}
	\begin{split}
\|\nn  \overline {Y}_{\!\! t}^{\vv,x}\|_{\rm op} &\le 1,\quad 	\E\|\nn^2 \overline {Y}_{\!\! t}^{\vv,x}\|_{\rm op}^{2j} \le  C_5 (1+|x|^{2j(\gamma-2)^+}),\\
 \E\|\nn^3 \overline {Y}_{\!\! t}^{\vv,x}\|_{\rm op}^2 &\le  C_5 \big(1+|x|^{4( \gamma-2)^+}+|x|^{2( \gamma-3)^+}\big).
\end{split}
\end{equation}
Once \eqref{WW-26} is available, \eqref{WW-27} holds true for all $t\in(0,1]$ and $\vv\in(0,1]$ by noting that
\begin{align*}
\|\nn (\overline {P}_{\!\! t}^{\vv}f)(x)\|_{\rm op}&\le \|\nn f\|_\8\E\| \nn \overline {Y}_{\!\! t}^{\vv,x}\|_{\rm op},\\
\|\nn^2(\overline {P}_{\!\! t}^\vv f)(x)\|_{\rm op}&\le\|\nn^2 f\|_\8\E \| \nn \overline {Y}_{\!\! t}^{\vv,x}\|_{\rm op}^2+\|\nn f\|_\8\E\|\nn^2\overline {Y}_{\!\! t}^{\vv,x}\|_{\rm op},\\
\|\nn^3(\overline {P}_{\!\! t}^\vv f)(x)\|_{\rm op}&\le\|\nn^3 f\|_\8\E \| \nn \overline {Y}_{\!\! t}^{\vv,x}\|_{\rm op}^3+3\|\nn^2 f\|_\8\E(\|\nn^2\overline {Y}_{\!\! t}^{\vv,x}\|_{\rm op}^2\|\nn\overline {Y}_{\!\! t}^{\vv,x}\|_{\rm op})\\
&\quad+\|\nn f\|_\8\E\|\nn^3\overline {Y}_{\!\! t}^{\vv,x}\|_{\rm op}.
\end{align*}

Below, we proceed to prove \eqref{WW-26}.
For $\gamma>1\vee(3-d/2)$, it is easy to see that  there exists a constant  $C_6>0$ such that for $t\in (0,1]$ and $j=1,2,$
\begin{equation}\label{WW-28}
	\begin{split}
\|\nn  \overline {Y}_{\!\! t}^{\vv,x}\|_{\rm op} &\le 1, \quad
 \|\nn^2 \overline {Y}_{\!\! t}^{\vv,x}\|_{\rm op}^{2j} \le C_6t^{2j-1} \int_0^t(\vv+ |\overline {Y}_{\!\! s}^{\vv,x}|^2)^{   j(\gamma -2)  }\,\d s ,\\
\ \|\nn^3 \overline {Y}_{\!\! t}^{\vv,x}\|_{\rm op}^2&\le C_6t^{3}\int_0^t (\vv+  |\overline {Y}_{\!\! t}^{\vv,x}|^2)^{ 2 ( \gamma -2 )}\,\d s+C_6t \int_0^t(\vv+ |\overline {Y}_{\!\! s}^{\vv,x}|^2)^{   \gamma -3   }\,\d s.
\end{split}
\end{equation}
When $\bar b^\vv(x)=-\lambda x(\vv+|x|^2)^{(\gamma-1)/2}$ and $\si=(\nu(|\cdot|^2)/d)^{{1}/{2}}$,
the associated SDE \eqref{WW-32} can be rewritten as below: for all $t>0,$
\begin{align*}
\d \overline {Y}_{\!\! t}^{\vv,x}=-\nn V_\vv(\overline {Y}_{\!\! t}^{\vv,x})\,\d t+\sigma\d W_t,
\end{align*}
where $V_\vv(x):= \frac{\lambda}{1+\gamma} (\vv+|x|^2)^{(1+\gamma)/2}$.
Obviously, the unique  IPM of $(\overline {Y}_{\!\! t}^\vv)_{t\ge0}$ admits the following form:
\begin{align*}
\mu_\vv(\d x)=C_{Z_\vv}^{-1} \exp(-2 V_\vv(x)/{\sigma^2})\,\d x,
\end{align*}
where $C_{Z_\vv}$ is the positive normalization constant,
and the corresponding infinitesimal generator $\bar{\mathscr L}^\vv$   is symmetric in $L^2(\mu_\vv)$. Since the semigroup $(\overline {P}_{\!\! t}^\vv )_{t\ge0}$ satisfies Wang's Harnack inequality,
  the transition kernel $p^\vv(t,x,y)$ is absolutely continuous with respect to the IPM $\mu_\vv$; see e.g. \cite[Theorem 1.4.1]{Wang} for related details. Let $q^\vv(t,x,y)$  be the transition density with respect to the invariant measure $\mu_\vv$, that is,
\begin{align*}
p^\vv(t,x,y)=C_{Z_\vv}^{-1}q^\vv(t,x,y)\exp(-2 V_\vv(y)/{\sigma^2}).
\end{align*}

According to \cite[Corollary 2.5]{Wang-b}, the IPM $\mu_\vv$ satisfies the following super-Poincar\'e inequality: for all  $f\in C_c^1(\R^d)$ and $r>0$,
\begin{align*}
\mu_\vv(f^2)\le r\mathscr E_\vv(f,f)+\beta_{\rm SP}(r)\mu_\vv(|f|)^2,
\end{align*}
where  $\mathscr E_\vv(f,f):=\mu_\vv(|\nn f|^2)$ and $ \beta_{\rm SP}(r):=\exp(C_7(1+r^{-(1+\gamma)/{(2\gamma)}}))$ for some constant $C_7>0$ independent of $\vv$. Thus,
\cite[Theorem 3.3.15]{Wang-c} enables us to derive that there is some constant $C_8>0$ (independent of $\vv\in(0,1]$) such that for all $t>0$,
\begin{align*}
\|\overline {P}_{\!\! t}^\vv\|_{1\to\8}\le\exp\big(C_8(1+t^{-(1+\gamma)/{(\gamma-1)}})\big).
\end{align*}
This, besides \cite[Proposition 3.3.11]{Wang-c}, implies that for all $t>0$ and $x,y\in \R^d$,
\begin{align*}
q^\vv(t,x,y)\le \exp\big(C_8(1+t^{-(1+\gamma)/{(\gamma-1)}})\big).
\end{align*}
 As a result, we derive that
\begin{align*}
p^\vv(t,x,y)\le C_{Z_\vv}^{-1} \exp\big(C_8(1+t^{-(1+\gamma)/{(\gamma-1)}})\big)\exp
 (- 2 V(y)/ {\sigma^2} ).
\end{align*}
Furthermore, note that
\begin{align*}
C_{Z_\vv}&\ge\int_{\R^d}\exp(- c_\star (1+|x|^2)^{(1+\gamma)/2} )\,\d x\\
&\ge\exp(- 2^{(\gamma-1)/2} c_\star    )\int_{\R^d}\exp(-  2^{(\gamma-1)/2}c_\star |x|^{1+\gamma}   )\,\d x\\
&=C_9:=\frac{\exp(- 2^{(\gamma-1)/2} c_\star    )d\omega_d\Gamma(d/{(1+\gamma)})}{(1+\gamma)(2^{(\gamma-1)/2}c_\star)^{d/{(1+\gamma)}}},
\end{align*}
where $c_\star:=\frac{2\lambda}{\sigma^2(1+\gamma)}$. Therefore,
we find that for any $q>-d$,
\begin{equation}\label{WW-25}
\begin{split}
\E|\overline {Y}_{\!\! t}^{\vv,x}|^q&\le \frac{1}{C_9}\exp \big(C_8(1+t^{-(1+\gamma)/{(\gamma-1)}})\big)\int_{\R^d}|y|^q\exp(-c_\star
 |y|^{1+\gamma} )\,\d y\\
 &=\frac{1}{C_9} \exp\big(C_8(1+t^{-(1+\gamma)/{(\gamma-1)}})\big)\frac{2\pi^{d/2}c_\star^{-\frac{q+d}{1+\gamma}}}{\Gamma(d/2)(1+\gamma)}\Gamma\Big(\frac{q+d}{1+\gamma}\Big),
\end{split}
\end{equation}
where in the identity we exploited the following identity:  for all $q>-d$,
\begin{align*}
\int_{\R^d}|y|^q\e^{-c_\star|y|^{1+\gamma}}\,\d y= \frac{2\pi^{d/2}c_\star^{-\frac{q+d}{1+\gamma}}}{\Gamma(d/2)(1+\gamma)}\Gamma\Big(\frac{q+d}{1+\gamma}\Big).
\end{align*}
 Consequently, \eqref{WW-26} is attainable by combining \eqref{WW-28}
with \eqref{WW-25}.

 In order to show that \eqref{WW-27} for all $t>1$ is still valid, we appeal to the Bismut-Elworthy-Li formula. By the semigroup property of $(\overline {P}_{\!\! t}^\vv)_{t\ge0}$,
  the  Bismut-Elworthy-Li formula up to third order enables us to  deduce that for all $t>1$, $x,y_1,y_2,y_3\in\R^d$ and  $f\in\mathscr B_b(\R^d)$,
\begin{align*}
	\nn_{y_1}\nn_{y_2}(\overline {P}_{\!\! t}^\vv f)(x)&= \E \big((\overline {P}_{\!\! t-1}^\vv f)(\overline {Y}_{\!\! 1}^{\vv,x})\mathcal M_1^{\vv, (2) } \big),\quad   \nn_{y_1}\nn_{y_2}\nn_{y_3}(\overline {P}_{\!\! t}^\vv f)(x) = \E \big((\overline {P}_{\!\! t-1}^\vv f)(\overline {Y}_{\!\! 1}^{\vv,x})\mathcal M_1^{\vv, (3) } \big),
\end{align*}
where $(\mathcal M_t^{\vv,(2)})_{t>0}$ and $(\mathcal M_t^{\vv,(3)})_{t>0}$ are given respectively as below: for all $t>0,$
\begin{align*}
\mathcal M_t^{\vv,(2)} :&=\frac{1}{t}\int_0^t\<\nn_{y_1}\nn_{y_2}\overline {Y}_{\!\! s}^{\vv,x},\d W_s\>\\
&\quad+\frac{1}{t^2} \bigg( \int_0^t\<\nn_{y_1}\overline {Y}_{\!\! s}^{\vv,x},\d W_s\>\int_0^t\<\nn_{y_2}\overline {Y}_{\!\! s}^{\vv,x},\d W_s\>-\int_0^t\<\nn_{y_1}\overline {Y}_{\!\! s}^{\vv,x},\nn_{y_2}\overline {Y}_{\!\! s}^{\vv,x}\>\,\d s \bigg),
\end{align*}
and for $\Lambda:=\{(1,2,3),(1,3,2),(2,3,1)\}$,
\begin{align*}
\mathcal M_t^{\vv,(3)} :&=\frac{1}{t^3}\int_0^t\<\nn_{y_1} \overline {Y}_{\!\! s}^{\vv,x},\d W_s\>\int_0^t\<\nn_{y_2} \overline {Y}_{\!\! s}^{\vv,x},\d W_s\>\int_0^t\<\nn_{y_3} \overline {Y}_{\!\! s}^{\vv,x},\d W_s\>\\
	&\quad+\frac{1}{t^2}\sum_{(i,j,k)\in \Lambda}\bigg(\int_0^t\<\nn_{y_i}\nn_{y_j}\overline {Y}_{\!\! s}^{\vv,x},\d W_s\>\int_0^t\< \nn_{y_k}\overline {Y}_{\!\! s}^{\vv,x},\d W_s\>\\
	&\quad \quad\quad\quad \quad\quad\quad \quad-\int_0^t\<\nn_{y_i}\overline {Y}_{\!\! s}^{\vv,x}, \nn_{y_j}\overline {Y}_{\!\! s}^{\vv,x}\>\d s\int_0^t\<\nn_{y_k}\overline {Y}_{\!\! s}^{\vv,x}, \d W_s\>\bigg)\\
	&\quad+\frac{1}{t}\int_0^t\<\nn_{y_1}\nn_{y_2}\nn_{y_3}\overline {Y}_{\!\! s}^{\vv,x},\d W_s\>-\frac{1}{t^2}\int_0^t\<\nn_{y_1}\nn_{y_2}\overline {Y}_{\!\! s}^{\vv,x},\nn_{y_3}\overline {Y}_{\!\! s}^{\vv,x}\>\,\d s\\
	&\quad-\frac{1}{t^2}\int_0^t\int_s^t\<(D_s\nn_{y_1}\nn_{y_2}\overline {Y}_{\!\! r}^{\vv,x})^\top\nn_{y_3}\overline {Y}_{\!\! s}^{\vv,x},\d W_r\>\,\d s
\end{align*}
with $D_\cdot$ being the Malliavin divergence operator.
Subsequently,
from  $\|\nn  \overline {Y}_{\!\! t}^{\vv,x}\|_{\rm op}  \le 1$,  \eqref{WW-26} as well as It\^o's isometry,
 it follows    that for some constant $c_1>0,$
\begin{align*}
	\E|\mathcal M_1^{\vv,(2)}| \le  \bigg(  \int_0^1\E|\nn_{y_1}\nn_{y_2}\overline {Y}_{\!\! s}^{\vv,x}|^2 \d  s\bigg)^{1/2} +2|y_1|\cdot|y_1|  \le c_1  (  1+|x|^{(\gamma-2)^+}   )|y_1|\cdot|y_1|.
\end{align*}
Thereby, we arrive at the estimate below: for all $t>1$,
\begin{align*}
	\|\nn^2( \overline {P}_{\!\! t}^\vv f)(x)\|_{\rm op}\le c_1\|f\|_\8  (  1+|x|^{(\gamma-2)^+}).
\end{align*}
Once more, by invoking $\|\nn  \overline {Y}_{\!\! t}^{\vv,x}\|_{\rm op}  \le 1$ and \eqref{WW-26}, we find that for some constant $c_2>0,$
\begin{align*}
	\E|\mathcal M_1^{\vv,(3)}|\le c_2\bigg(1+ |x|^{2( \gamma-2)^+}+|x|^{ ( \gamma-3)^+}+\bigg(\int_0^t\int_s^t\E\| D_s\nn ^2 \overline {Y}_{\!\! r}^{\vv,x}  \|^2_{\rm op}\,\d r\,\d s\bigg)^{1/2}\bigg)|y_1|\cdot|y_2|\cdot|y_3|.
\end{align*}
Via the chain rule,  we have  that for all $t\ge s $,
\begin{align*}
 \d D_s\nn_{y_1}\nn_{y_2}\overline {Y}_{\!\! t}^{\vv,x}
 &= \big(\nn \bar b^\vv(\overline {Y}_{\!\! t}^{\vv,x})D_s\nn_{y_1}\nn_{y_2}\overline {Y}_{\!\! t}^{\vv,x}+\nn^2\bar b^\vv(\overline {Y}_{\!\! t}^{\vv,x})(D_s\overline {Y}_{\!\! t}^x,\nn_{y_1}\nn_{y_2}\overline {Y}_{\!\! t}^{\vv,x})\big)\,\d t\\
 &\quad+ \big(\nn^3\bar b^\vv(\overline {Y}_{\!\! t}^{\vv,x})(D_s\overline {Y}_{\!\! t}^{\vv,x},\nn_{y_1} \overline {Y}_{\!\! t}^{\vv,x},\nn_{y_2} \overline {Y}_{\!\! t}^{\vv,x} )+
 \nn^2\bar b^\vv(\overline {Y}_{\!\! t}^{\vv,x})(D_s\nn_{y_1} \overline {Y}_{\!\! t}^{\vv,x},\nn_{y_2} \overline {Y}_{\!\! t}^{\vv,x} )\\
 &\quad\quad\quad+\nn^2\bar b^\vv(\overline {Y}_{\!\! t}^{\vv,x})(\nn_{y_1} \overline {Y}_{\!\! t}^{\vv,x},D_s\nn_{y_2} \overline {Y}_{\!\! t}^{\vv,x}) \big)\,\d t,
\end{align*}
where for all $t\ge s,$
\begin{align*}
 \d D_s\overline {Y}_{\!\! t}^{\vv,x}&=  \nn b(\overline {Y}_{\!\! t}^{\vv,x})D_s\overline {Y}_{\!\! t}^{\vv,x}\,\d t\quad \mbox{ with } \quad D_s\overline {Y}_{\!\! s}^{\vv,x}=\si I_d,\\
 \d D_s\nabla_y\overline {Y}_{\!\! t}^{\vv,x}&= \big(\nn \bar b^\vv(\overline {Y}_{\!\! t}^{\vv,x})D_s\nabla_y\overline {Y}_{\!\! t}^{\vv,x}+\nn^2\bar b^\vv(\overline {Y}_{\!\! t}^{\vv,x})(D_s\overline {Y}_{\!\! t}^{\vv,x},\nabla_y\overline {Y}_{\!\! t}^{\vv,x})\big)\,\d t.
\end{align*}
By virtue of
\begin{align*}
\|D_s\overline {Y}_{\!\! t}^{\vv,x}\|_{\rm op}\le |\sigma|\quad \mbox{ and } \quad \|D_s\nabla_y\overline {Y}_{\!\! t}^{\vv,x}\|_{\rm op}\le c_3|y|\int_s^t(\vv+ |\overline {Y}_{\!\! r}^{\vv,x}|^2)^{   (\gamma -2)/2  }\,\d r,\quad \forall t\ge s\ge0
\end{align*}
for some constant $c_3>0,$ along with  $\|\nn  \overline {Y}_{\!\! t}^{\vv,x}\|_{\rm op}  \le 1$,
there exist constants $c_4,c_5>0$ such that for all $0<s\le r\le1$ and $\gamma>1\vee(3-d/2)$,
 \begin{align*}
\E\|D_s \nn^2 \overline {Y}_{\!\! r}^{\vv,x})\|_{\rm op}^2&\le c_4 \int_s^r\big(\E\big(\|\nn^2\bar b^\vv(\overline {Y}_{\!\! u}^{\vv,x})\|_{\rm op}^2 \|\nn^2\overline {Y}_{\!\! u}^{\vv,x}\|_{\rm op}^2\big) +\E\|\nn^3\bar b^\vv(\overline {Y}_{\!\! u}^{\vv,x})\|_{\rm op}^2\big)\,\d u\\
&\quad+c_4\int_s^r\|\nn^2\bar b^\vv(\overline {Y}_{\!\! v}^{\vv,x}) \|_{\rm op}^2\int_s^v(\vv+ |\overline {Y}_{\!\! u}^{\vv,x}|^2)^{    \gamma -2   }\d u\,\d v\\
&\le c_5\big(1+|x|^{4(\gamma-2)^+}+|x|^{2(\gamma-3)^+}\big).
 \end{align*}
Based on the analysis above, the assertion \eqref{WW-27} for all $t>1$ is established.
\end{proof}

With Proposition \ref{pro} at hand, we proceed  to accomplish the

\begin{proof}[Proof of Theorem $\ref{thm5-}$]
 Below, let $q =  2\beta/{ (1+\gamma)}\in(0,1)$ in case of $\beta<(1+\gamma)/2$.
 For
  $g_t=\e^{-t/2}$ with $\varphi_t= \e^t$, and
  \begin{align}\label{WPPP-1}
 g_t =  (1+(1- q)t)
 ^{-\frac{q}{2(1- q)}} \quad \mbox{ with } \quad  \varphi_t:= (1+(1-   q)t)^{\frac{1}{1-   q}},
  \end{align}
 the  process  $(Y_t)_{t\ge0}:=(g_tX_{\varphi_t})_{t\ge0} $
solves respectively the following SDEs: for $t>0,$
\begin{equation}\label{F6}\begin{split}
\d Y_t
 &=\big(-  Y_t/2 +   \lambda \e^{-(\beta -(\gamma+   1)/2)t } ( 1+\e^{-t} )^{-\beta} Y_t(\e^{- t }+| Y_t|^2)^{(\gamma-1)/2} \big)\,\d t+ \d Z_t,\\
\d Y_t,
 &=  \bigg(-\frac{ Y_t }{ 2(1+(1- q)t) }   +   \lambda \big(1+1/{\varphi_t}\big)^{-\beta} Y_t \big(g_t^2+|Y_t|^2\big)^{(\gamma-1)/2}\bigg)\,\d t+  \d  Z_t,
\end{split}\end{equation}
where $(Z_t)_{t\ge0}$ is an additive process with the L\'{e}vy measure $(\nu_t(\d z))_{t\ge0}$ being given in \eqref{F3}.

For  case  (\rm I): $\beta>(1+\gamma)/2$,
we take
\begin{align*} b(t,x)=-x/2+\lambda \e^{-(\beta -(\gamma+   1)/2)t } ( 1+\e^{-t} )^{-\beta} x(\e^{- t }+| x|^2)^{(\gamma-1)/2}~ \mbox{ and }~  \bar b(x)=-x/2.
\end{align*}
 Obviously, \eqref{D8} with $\overline {P}_{\!\! t}^\vv$ therein being replaced by $\overline {P}_{\!\! t} $ holds true with $\theta_1=\theta_2=0,$ and for $\gamma>1,$
 \begin{align*}
 |b(t,x)-\bar b(x)|\le |\lambda| \e^{-(\beta -(\gamma+   1)/2)t }   (1+| x|^2)^{ \gamma/2 },
 \end{align*}
 so that $({\bf A} )$ is satisfied with $\phi(t)=|\lambda| \e^{-(\beta -(\gamma+   1)/2)t }$ and $V(x)=(1+| x|^2)^{ \gamma/2 }$. Furthermore, in case of $\lambda<0$
 or $\lambda>0$ with $\gamma\in(0,1]$,
 it follows that
  for some constants $c_1,c_2>0,$
\begin{align}\label{KL}
\<x,b (t,x)\>\le -\big(1/2-c_1\e^{-(\beta -(\gamma+   1)/2)t }\big)|x|^2+c_2.
\end{align}
Moreover,  we find from \eqref{F3} that
\begin{equation}\label{WW-42}
	\begin{split}
\sup_{t\ge0}\bigg(\int_{\{|z|\le 1\}}|z|^2\,\nu_t(\d z)\bigg)= &\int_{\{|z|\le 1\}}|z|^2  a(z) \,\d z+c_*\sup_{t\ge0}\bigg(\int_{\{1<|z|<1/{g_t}\}}\frac{|z|^2}{|z|^{d+\alpha}}\,\d z\bigg)\\
\le&\int_{\{|z|\le 1\}}|z|^2  a(z) \,\d z+c_* \bigg(\int_{\{|z|>1\}}\frac{|z|^2}{|z|^{d+\alpha}}\,\d z\bigg) <\8,
\end{split}
\end{equation}
and that for any  $t\ge0$ and $p\in(0,\alpha)$ with $\alpha>2$,
\begin{align}\label{WW-41}
\int_{\{|z|>1\}}|z|^p\,\nu_t(\d z)= c_*\e^{-\frac{1}{
2}(p-2)t}\int_{\{|z|>1/{g_t}\}}\frac{|z|^p  }{|z|^{d+\alpha}}   \,\d z=\frac{c_*d\omega_d}{(\alpha-p)\e^{
(\alpha-2)t/2}}\le \frac{c_*d\omega_d}{\alpha-p}.
\end{align}
Hence,   \eqref{EE5} is fulfilled. Next, with the aid of   \eqref{KL} and Proposition \ref{lem3}, for any $p\in(0,\alpha)$, there exists a constant  $c_3:=c_3(p)>0 $ such that
\begin{equation}\label{e:sss}
\sup_{t\ge0}\E  |Y_t |^p\le c_3(1+\E  |Y_0 |^p).
\end{equation}

Below, let $(\mu_t)_{t\ge0}$ be the law of $(Y_t)_{t\ge0}$ solving the SDEs in \eqref{F6} and
$\mathscr{L}_{Y_0}=\mu_0=\mu\in\mathscr P(\R^d)$.
Therefore, by applying Proposition \ref{pro}, there exists a  constant  $c_4 >0$
(which is dependent linearly on $\mu(|\cdot|^{\gamma})$)
 such that for all
 $s,t\ge0$,
$$
 {\rm d}_{\mathcal F_c}\big(\mu_{s+t},\mu_s\overline {P }_{\!\! t} \big)
 \le  c_4
 \int_{s}^{s+t} \big(\e^{-(\beta -(\gamma+   1)/2)u }  + \e^{- \alpha_*u/2}\big)  \,\d u,
$$
where  $(\overline {P}_{\!\! t})_{t\ge0}$ is the Markov semigroup generated by $(\overline Y_{\!\! t})_{t\ge0}$ solving the first SDE in \eqref{e:limit} with $\eta=2$. So,  with the help of the fact that for all $s\ge1$ and $t\ge0,$
\begin{align*}
\varphi_s^{-1}=\ln s, \quad \varphi_{s+t}^{-1}-\varphi_s^{-1}=\ln(1+t/s),\quad g_{\varphi_s^{-1}}=s^{-1/2},
\end{align*}
we deduce that for all  $s\ge1 $ and $t\ge0$,
\begin{align*}
{\rm d}_{\mathcal F_c}\big(\bar\mu_{s+t},\bar\mu_s\overline {P }_{\!\! \ln(1+t/s)} \big)
\le c_4
 \int_{\ln s}^{\ln(s+t)} \big(\e^{-(\beta -(\gamma+   1)/2)u }  + \e^{- \alpha_*u/2}\big)  \,\d u,
\end{align*}
where $\bar\mu_s $ means the law of $ X_s/{s^{  1/2 }} $. Apparently,
 there exists a   constant   $c_5 >0 $
(which   depends linearly on $\mu(|\cdot|)$ and $\pi(|\cdot|)$)
such that for all $t\ge1$ and $\theta\in(0,1),$
\begin{align*}
{\rm d}_{\mathcal F_c}
\big(\bar\mu_{t^\theta}\overline {P }_{\!\!(1-\theta) \ln t },\pi \big)
\le \mathcal W_1\big(\bar\mu_{t^\theta}\overline {P }_{\!\!(1-\theta) \ln t },\pi \big) \le  \mathcal W_1\big(\bar\mu_{t^\theta},\pi \big) t^{-
(1-\theta)/2}\le c_5t^{-
(1-\theta)/2},
\end{align*}
where $\pi$ is the unique IPM associated with $(\overline Y_{\!\! t})_{t\ge0}$ solving
 the first   SDE in \eqref{e:limit} with $\eta=2$,
 and in the last inequality we used \eqref{e:sss}.
Then,
by the triangle inequality, we deduce that for all $\theta\in(0,1)$ and $t\ge1,$
\begin{equation}\label{D1}
\begin{split}
{\rm d}_{\mathcal F_c}\big(\bar\mu_{t},\pi\big)&={\rm d}_{\mathcal F_c}\big(\bar\mu_{t^\theta+t-t^\theta},\pi\big)\\
&\le {\rm d}_{\mathcal F_c}\big(\bar\mu_{t},\bar\mu_{t^\theta}\overline {P }_{\!\! (1-\theta)\ln t } \big)+{\rm d}_{\mathcal F_c}\big(\bar\mu_{t^\theta}\overline {P }_{\!\!(1-\theta) \ln t },\pi \big)\\
&\le c_4
\int_{\theta\ln t }^{\ln t} \big(\e^{-(\beta -(\gamma+   1)/2)u } + \e^{-
\alpha_*u/2}\big)  \,\d u+c_5t^{-
(1-\theta)/2}.
\end{split}
\end{equation}
Furthermore, we have that for all $\kk>0$, $s\ge1$ and $t\ge0,$
\begin{align*}
	\int_{\ln s}^{\ln(s+t)}\e^{-\kk u}\,\d u=\frac{1}{\kk}s^{-\kk}\big(1-(1+t/s)^{-\kk}\big).
\end{align*}
This, besides \eqref{D1}  with $\theta=1/2$, subsequently implies that for all $t\ge1$,
\begin{align*}
	{\rm d}_{\mathcal F_c}\big(\bar\mu_{t},\pi\big)
	&\le c_4 \left(\frac{t^{-  (2\beta-\gamma-1)/4}}{ \beta-(\gamma+1)/2}+\frac{ 2t^{-  \alpha^*/4}}{\alpha^*}\right)+c_5t^{-1/4  }.
\end{align*}
Therefore, the assertion \eqref{D26} is provable for  case  (\rm I).

For  case  (\rm II): $\beta=(1+\gamma)/2$,  we take
 \begin{align*}
b(t,x)=-x/2+\lambda  ( 1+\e^{-t} )^{-\beta} x(\e^{- t }+| x|^2)^{(\gamma-1)/2} \quad \mbox{ and } \quad \bar b(x)=-x/2+\lambda x|x|^{\gamma-1}.
\end{align*}
Whence, for $\gamma>1$, we obtain from \eqref{EP-23} with $\alpha=2$ that for
some constants $c_6,c_7>0 $,
\begin{equation*}
	\begin{split}
	 |b(t,x)-\bar b(x)|
		 &\le  |\lambda| \beta\e^{-t}(1 +| x|^2)^{ \gamma/2 }+c_6
			\e^{-(2\wedge(\gamma-1)) t/2}    (1 +| x|^{1\vee(\gamma-2)})   \\
			&\le c_7
			\e^{-(2\wedge(\gamma-1)) t/2}  (1+|x|^\gamma),
			\end{split}
\end{equation*}
so that
$({\bf A})$  is      satisfied for $V(x)=1+|x|^\gamma$. For $\vv\in(0,1]$, let $\bar b^\vv(x)=-x/2+\lambda x(\vv+|x|^2)^{(\gamma-1)/2}$. It is easy to see from \eqref{WW-39} that for some constant $c_8 >0 $,
\begin{equation}\label{WW-43}
	\begin{split}
|\bar b^\vv(x)-\bar b (x)|&\le |\lambda|\cdot|x|\cdot|(\vv+|x|^2)^{(\gamma-1)/2}-|x|^{\gamma-1}|\\
&\le  |\lambda|(1\vee((\gamma-1)/2))|x|\big(\vv^{(\gamma-3)/2}\vee(\vv+|x|^2)^{(\gamma-3)/2}\big)\vv \\
&\le c_8(1+|x|^{(\gamma-2)^+})\vv^{1\wedge(\gamma/2)}.
\end{split}
\end{equation}
Next, by virtue of $\lambda<0,$
we obtain that
\begin{align*}
\<x,b(t,x)\>\vee\<x,\bar b(x)\>\vee\<x,\bar b^\vv(x)\> \le -|x|^2/2.
\end{align*}
This, besides  \eqref{WW-42} and \eqref{WW-41},  implies that, for any $p>0,$ there is a constant  $c_9 :=c_9(p)>0$ such that
\begin{align}\label{WW-44}
\E|\overline Y_{\!\! t}|^p+\E|\overline Y_{\!\! t}^\vv|^p\le c_9 (1+\E|\overline Y_{\!\! 0}|^p) \quad \mbox{ and } \quad \E  |Y_t |^p\le c_9(1+\E  |Y_0 |^p)
\end{align}
where $(\overline Y_{\!\! t})_{t\ge0}$   solves the second SDE in \eqref{e:limit} with $\eta=2$, and $(\overline Y_{\!\! t}^\vv)_{t\ge0}$ and $(Y_t)_{t\ge0} $  are determined respectively by the subsequent SDEs:
\begin{align*}
\d \overline Y_{\!\! t}^\vv
&=\big(-  \overline Y_{\!\! t}^\vv/2 +   \lambda   \overline Y_{\!\! t}^\vv(\vv+| \overline Y_{\!\! t}^\vv|^2)^{ (\gamma-1)/2} \big)\,\d t+ (\nu(|\cdot|^2)/d)^{{1}/{2}}\d W_t,\\
 \d Y_t
 &=\big(-  Y_t/2 +   \lambda  ( 1+\e^{-t} )^{-\beta} Y_t(\e^{- t }+| Y_t|^2)^{{1}/{2}(\gamma-1)} \big)\,\d t+ \d Z_t.
\end{align*}
Consequently, by invoking \eqref{WW-43} and \eqref{WW-44}, assumptions
 \eqref{WW-37}, (\rm iii) and (\rm iv) in Theorem \ref{Th4.1} are satisfied; see Remark \ref{remark5.2} for more details.
In addition,
\eqref{WW-38} is valid as long as $ 1\vee(3-d/2)<\gamma<\alpha$ by taking advantage of Lemma \ref{lemma5} and
Lemma \ref{lemma-1}. Accordingly,
 concerning  case  (\rm II), the  assertion \eqref{D26} is also available by following   the procedure to derive \eqref{D26} as in     case  (\rm I).

For case (\rm III): $\beta<(1+\gamma)/2$,  we set
\begin{align*}
b(t,x):=  -\frac{ x }{ 2(1+\kk t) }   +   \lambda \big(1+1/{\varphi_t}\big)^{-\beta} x \big(g_t^2+|x|^2\big)^{(\gamma-1)/2}  \quad   \mbox{ and }  \quad  \bar b(x):=\lambda x|x|^{\gamma-1},
\end{align*}
where $\kk:=1-q.$
Thus, by virtue of \eqref{WPP-1}, there exists a constant $c_1^*>0$ such that
\begin{align*}
|b(t,x)-\bar b(x)|\le   c_1^*(1+\kk t)^{-\alpha_1}(1 +|x|^\gamma ),
\end{align*}
where  $\alpha_1:=1\wedge  (q(1\wedge((\gamma-1)/2))/\kk). $
Therefore,
$({\bf A} )$ holds true for $V(x)=1 +|x|^\gamma$. It is easy to see that
 for some constants $c_2^*,c_3^*>0,$
\begin{align*}
\<x,b(t,x)\>\le (\lambda-c_2^*/{\varphi_t})
|x|^{1+\gamma}+c_3^*.
\end{align*}
Thus, \eqref{WW-44} is still valid so  assumptions  \eqref{WW-37}, (\rm iii) and (\rm iv) in Theorem \ref{Th4.1} remain true, where the corresponding  $(Y_t)_{t\ge0} $  solves the second SDE in \eqref{F6} and
$(\overline Y_{\!\! t}^\vv)_{t\ge0}$ is  governed  by the   SDE below:
\begin{align*}
	\d \overline Y_{\!\! t}^\vv
	 =   \lambda   \overline Y_{\!\! t}^\vv(\vv+| \overline Y_{\!\! t}^\vv|^2)^{ (\gamma-1)/2}  \,\d t+ (\nu(|\cdot|^2)/d)^{{1}/{2}}\d W_t.
\end{align*}
Recall that \eqref{WW-38} is  true  by combining  Lemma \ref{lemma5} with
Lemma \ref{lemma-1} in case of   $ 1\vee(3-d/2)<\gamma<\alpha$.
Subsequently,   once $ 1\vee(3-d/2)<\gamma<\alpha$,
we obtain  from Proposition \ref{pro} that there exists a  constant  $c_4^* >0$
(which    depends linearly  on $\mu(|\cdot|^\gamma)$)
such that for all
 $s,t\ge0$,
\begin{align}\label{KL-2}
 {\rm d}_{\mathcal F_c
 }\big(\mu_{s+t},\mu_s \overline {P}_{\!\! t}  \big)
 \le  c_4^*   \int_{s}^{s+t} (1+\kk u)^{-(\alpha_1\wedge\alpha_2)} \,\d u,
\end{align}
where $(\bar {P_t})_{t\ge0}$ is the Markov semigroup corresponding to $(\overline Y_{\!\! t})_{t\ge0}$, which is governed by  the third  SDE in \eqref{e:limit} with $\eta=2$ and $\gamma_0=0$, and  $\alpha_2:=  q\alpha_*/{(2\kk)}.$

Direct calculations show  that for $s\ge1$ and $t\ge0,$
\begin{align*}
 \varphi_s^{-1}= ( s^{\kk}-1)/\kk,\quad g_{\varphi_s^{-1}}=s^{-q/2},\quad h_{s,s+t}:=\varphi_{s+t}^{-1}-\varphi_s^{-1}= ((s+t)^\kk-s^\kk)/{\kk}.
 \end{align*}
Thus,
 it follows from \eqref{KL-2} that for all  $s\ge1 $
 and $t\ge0$,
 \begin{align*}
 	{\rm d}_{\mathcal F_c
 	}\big(\bar\mu_{s+t},\bar\mu_s\overline {P }_{\!\! h_{s,s+t}} \big)
 	\le  c_4^*    \int_{\varphi_s^{-1}}^{\varphi_{s+t}^{-1}} (1+\kk u)^{-(\alpha_1\wedge\alpha_2)}  \,\d u,
 \end{align*}
 where $\bar \mu_s $ denotes the law of $s^{-q/2}X_s  $. Accordingly,
 the triangle inequality yields  that for all $t\ge1$,
\begin{align*}
	{\rm d}_{\mathcal F_c}\big(\bar\mu_{t+t^\varrho},\pi\big)
	&\le {\rm d}_{\mathcal F_c}\Big(\bar\mu_{t +t^\varrho},\bar\mu_{t}\overline {P }_{\!\! h_{t,t+t^\varrho}} \Big)+{\rm d}_{\mathcal F_c}\Big(\bar\mu_{t }\overline {P }_{\!\! h_{t,t+t^\varrho}},\pi \Big)\\
	&\le c_4^*\int_{(t^{\kk}-1)/\kk}^{((t+t^\varrho)^\kk-1)/\kk}(1+\kk u)^{-(\alpha_1\wedge\alpha_2)}  \,\d u+{\rm d}_{\mathcal F_c}\Big(\bar\mu_{t}\overline {P }_{\!\! h_{t,t+t^\varrho}},\pi \Big),
\end{align*}
where $\varrho:=\frac{1}{2}(1+q) \in (0,1)$, and
$\pi$ is the unique IPM associated with $(\overline Y_{\!\! t})_{t\ge0}$ determined by
 the third   SDE in \eqref{e:limit} with $\eta=2$ and $\gamma_0=0$.
Moreover, we find that  for   all $ t >0$, $s\ge1$ and $ \theta>0$,
\begin{align*}
 	\int_{(s^\kk-1)/\kk}^{((s+t)^\kk-1)/\kk}   (1+\kk u)^{ -\theta } \,\d u&= \ln(1+t/s) \I_{\{\theta =1\}}  +\frac{(s+t)^{\kk(1-\theta)}-s^{\kk(1-\theta )}}{\kk(1-\theta )}
 	\I_{\{\theta \neq1\}}.
  \end{align*}
 This implies that for all $t\ge1$,
 \begin{align*}
 & \int_{(t^\kk-1)/\kk}^{((t+t^\varrho)^\kk-1)/\kk}(1+\kk u)^{-(\alpha_1\wedge\alpha_2)}   \,\d u\\
 &=   \ln (1+t^{\varrho-1})\I_{\{\alpha_1\wedge\alpha_2 =1\}}+\frac{(t+t^\varrho)^{\kk(1-(\alpha_1\wedge\alpha_2))}-t^{\kk(1-(\alpha_1\wedge\alpha_2))}}{\kk(1-(\alpha_1\wedge\alpha_2))}\I_{\{\alpha_1\wedge\alpha_2\neq1\}}\\
 &\le t^{\varrho-1}  +t^{\varrho+\kk(1-(\alpha_1\wedge\alpha_2))-1}\I_{\{ \alpha_1\wedge\alpha_2\in(0,1) \}},
 \end{align*}
 where in the inequality we employed the basic inequality: $\ln(1+r)\le r$ for all $r>-1,$
 and that  $((s+t)^{\theta}-s^{\theta })/{\theta }\le  ts^{\theta -1}$  for all $s>0,t\ge0$ and $\theta \in(0,1)$.
 Furthermore, note that
 \begin{align*}
(2^{\kk }-1)r\le (1+r)^{\kk }-1 ,\quad \forall\, r\in(0,1) .
 \end{align*}
 This thus leads to the estimate below: for all $t\ge1 $,
$$
 h_{t,t+t^\varrho} = \frac{t^\kk}{\kk}((1+t^{\varrho-1})^{\kk}-1) \ge \frac{1}{\kk}(2^\kk-1)t^{\kk+\varrho-1}.
$$
 Hence,
 there exist  constants $\lambda_0,c_5^*>0$
 such that for all $t\ge1,$
\begin{align*}
{\rm d}_{\mathcal F_c}\Big(\bar\mu_{t}\overline {P }_{\!\! h_{t,t+t^\varrho}},\pi \Big)\le \mathcal W_1 \Big(\bar\mu_{t}\overline {P }_{\!\! h_{t,t+t^\varrho}},\pi \Big)\le c_5^*\e^{-\lambda_0 h_{t,t+t^\varrho} }\le c_5^*\e^{-\frac{1}{\kk}\lambda_0 (2^\kk-1)t^{\kk+\varrho-1} },
\end{align*} where in the second inequality we used \cite[Theorem 1.3]{LW16} (or \cite[Corollary 2.3]{Ebe}).

Based on the aforementioned analysis, we
obtain the assertion \eqref{D26}
by noting that $\frac{1}{2}(t+t^\varrho)\le t\le t+t^\varrho$ for all $t\ge1$.
\end{proof}

At the end of this section, inspired by the proof of Theorem \ref{thm5-}, we further elaborate on  potential generalizations of Theorem \ref{thm5-}.
\begin{remark}\label{rema}
Assume that there exist constants $\lambda,C_0>0$ such that for all $t>0$ and $x\in\R^d,$
\begin{align*}
\<x,-x/2+\e^{t/2}F(\e^t,\e^{t/2}x)\> &\le -\lambda|x|^2+C_0  \\
\mbox{ or } \quad \<x,  F(\varphi_t, x/{}g_t)\>/{g_t} &\le -\lambda|x|^2+C_0,
\end{align*}
 where $(g_t)_{t\ge0}$ and $(\varphi_t)_{t\ge0}$ are defined in \eqref{WPPP-1}, and
 that there exist constants $c_1^*,c_2^*,\ell_0>0$ such that for all $x,y\in\R^d,$
\begin{align*}
 	\<x-y,-(x-y)/2+ F_1 ( x )-F_1 (   y ) \> &\le
	c_1^*|x-y|^2\I_{\{|x-y|\le \ell_0\}}-c_2^*|x-y|^2\I_{\{|x-y|> \ell_0\}} \\
	\mbox{ or }\quad \<x-y,  F_2 ( x )-F_2 (   y ) \> &\le
	c_1^*|x-y|^2\I_{\{|x-y|\le \ell_0\}}-c_2^*|x-y|^2\I_{\{|x-y|> \ell_0\}},
\end{align*}
where
$
	F_1(x):=\lim_{t\to\infty}(\e^{ t/2 }F(\e^t,\e^{t/2}x))$ and   $   F_2(x):=\lim_{t\to\infty}(g_t^{1-\alpha}F(\varphi_t, x/{g_t})). $
Additionally, assume that  there exist a function $h_*:[0,\infty)\to[0,\infty)$ satisfying $\lim_{t\to\infty}h_*(t)=0$ and a constant $\alpha_*\in(0,\alpha)$ such that for all $t>0$ and $x,y\in\R^d,$
\begin{align*}
|\e^{t/2}F(\e^t,\e^{t/2}x)-F_1(x)|&\le h_*(t)(1+|x|^{\alpha_*})\\
\mbox{ or } \quad |g_t^{1-\alpha}F(\varphi_t, x/{g_t})-F_2(x)|&\le h_*(t)(1+|x|^{\alpha_*}).
\end{align*}
Furthermore, the corresponding parameterized semigroup satisfies \eqref{D8}. Subsequently, we can establish a counterpart of Theorem   \ref{thm5-} provided that some additional  prerequisites are satisfied.
\end{remark}

\section{Proof of Theorem \ref{thm6}}\label{sec7}
In this part, we continue to  consider the SDE \eqref{eq1},  where we set   the index $\alpha=2$ in $({\bf H}_1)$. Throughout this section, we assume that $ \si=(c_* \omega_d)^{1/2}$  in the SDE \eqref{D11}, and that
the additive process $(Z_t)_{t\ge0}$ involved in the SDE \eqref{D12} was given in \eqref{RR-}, in which   the time-dependent
L\'evy measure $(\nu_t(\d z))_{t\ge0}$ is stipulated as follows: for
$(\varphi_t)_{t\ge0}$ and $(g_t)_{t\ge0}$ satisfying $\varphi_t'g_t^2\log g_t=-1$,
\begin{align}\label{F3-}
\nu_t(\d z):=\varphi_t'g_t^{-d}\left( a( z/{g_t})\I_{\{|z/{g_t}|\le1\}}+c_*| z/{g_t}|^{-(d+2)} \I_{\{| z/{g_t}|>1\}} \right)\d z,\quad \forall\,t\ge0.
\end{align}

In what follows, we derive an analogous version of
 Lemma \ref{lemma5}.

\begin{lemma}\label{lemma7}
Assume that $({\bf A} )$  and \eqref{D8} with $\theta_1=\theta_2=0$ hold. Then, there exists a constant $C_*>0$ such that
for all $0< r\le s< t$, $x\in\R^d$, $f\in C_c^\infty(\R^d)$ and $\vv\in(0,1]$,
\begin{equation}\label{D9}
\big|  (\mathscr L_r  - \overline{\mathscr L}^\vv  )(\overline  P_{\!\! s,t}^\vv  f)(x)\big|  \le C_*\left(\phi(r)V(x)+\varphi_r'g_r^{ 2}+|\bar b^\vv(x)-\bar b (x)|\right)\sum_{i=0}^3 \|\nn^i f\|_\8 .
\end{equation}
\end{lemma}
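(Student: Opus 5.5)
We follow the proof of Lemma \ref{lemma5}, with the time-dependent Lévy measure \eqref{F3-} replacing \eqref{F3}. Since $\si=(c_*\omega_d)^{1/2}$, we decompose
\begin{align*}
(\mathscr L_r-\overline{\mathscr L}^\vv)(\overline P_{\!\! s,t}^\vv f)(x)=\<\nn(\overline P_{\!\! s,t}^\vv f)(x),b(r,x)-\bar b^\vv(x)\>+\Lambda^\vv(r,s,t,x)-\tfrac12 c_*\omega_d\,\mathrm{trace}\big(\nn^2(\overline P_{\!\! s,t}^\vv f)(x)\big),
\end{align*}
with $\Lambda^\vv$ as in \eqref{F1}. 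The drift term is handled exactly as in \eqref{G3}: by $({\bf A})$ and the first bound in \eqref{D8} it is at most $C_0\|\nn f\|_\8(\phi(r)V(x)+|\bar b^\vv(x)-\bar b(x)|)$.

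For the jump part we use the rotational symmetry of $\nu_r$ (inherited from $a(z)=a(|z|)$). First, the compensator in $\Lambda^\vv$ may be restricted to $\{|z|\le g_r\}$ rather than $\{|z|\le1\}$ at no cost, because $\int_{\{g_r<|z|\le 1\}}z\,\nu_r(\d z)=0$. Then we Taylor-expand to second order on $\{|z|\le 1\}$. By symmetry, $\int_{\{|z|\le1\}}\<\nn^2 u(x),z\otimes z\>_{\rm HS}\nu_r(\d z)=\frac1d\int_{\{|z|\le1\}}|z|^2\nu_r(\d z)\,\mathrm{trace}(\nn^2u(x))$, where $u=\overline P_{\!\! s,t}^\vv f$. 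The key computation is
\begin{align*}
\int_{\{|z|\le1\}}|z|^2\nu_r(\d z)=\varphi_r'g_r^{2}\Big(\int_{\{|z|\le1\}}|z|^2a(z)\,\d z+c_*d\omega_d\log(1/g_r)\Big).
\end{align*}
Since $\varphi_r'g_r^2\log(1/g_r)=1$, the second-order term equals $\frac12 c_*\omega_d\,\mathrm{trace}(\nn^2u)$ plus an error of size $\varphi_r'g_r^2\|\nn^2u\|_\8$. The $\frac12 c_*\omega_d$ term cancels exactly against the Brownian generator. This cancellation is the critical-case analogue of the $\nu(|\cdot|^2)$ identity used in Lemma \ref{lemma5}, and it is exactly why the intensity here is $c_*\omega_d$ and involves only the large-jump part.

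It remains to bound the leftover pieces, using \eqref{D8} with $\theta_1=\theta_2=0$, so that $\nn^2u$ and $\nn^3u$ are bounded by $C_0\sum_{i\le3}\|\nn^if\|_\8$.

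The third-order Taylor remainder on $\{|z|\le1\}$ is controlled by $\|\nn^3u\|_\8\int_{\{|z|\le1\}}|z|^3\nu_r(\d z)$. We compute
\begin{align*}
\int_{\{|z|\le1\}}|z|^3\nu_r(\d z)=\varphi_r'g_r^{2}\Big(g_r\int_{\{|z|\le1\}}|z|^3a(z)\,\d z+c_*d\omega_d(1-g_r)\Big)\lesssim \varphi_r'g_r^2,
\end{align*}
using $\int|z|^3a\le\int|z|^2a<\8$ on the unit ball.

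For the large jumps $\{|z|>1\}$ we do not expand to second order, since $|z|^2$ is not integrable there. Instead we use the first-order remainder $u(x+z)-u(x)-\<\nn u(x),z\>$, the odd part being removed by symmetry. This remainder is bounded by $\min(2\|\nn u\|_\8|z|,\ \tfrac12\|\nn^2u\|_\8|z|^2)$, and we choose the former. This gives $\lesssim\|\nn f\|_\8\int_{\{|z|>1\}}|z|\nu_r(\d z)=c_*d\omega_d\varphi_r'g_r^2\cdot g_r^{-1}\cdot g_r$. To check this, substitute $w=z/g_r$: then $\nu_r(\{|z|>1\})$-type integrals read $\varphi_r'g_r^{2}\int_{\{|w|>1/g_r\}}\cdots$, and $\int_{\{|w|>1/g_r\}}|w|^{-(d+1)}\d w\asymp g_r$, so the whole term is $\asymp\varphi_r'g_r^2$.

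We expect the main obstacle to be this bookkeeping of the logarithmically divergent second moment. The bound on $\{|z|\le1\}$ must absorb exactly $\log(1/g_r)$ via the normalisation $\varphi_r'g_r^2\log g_r=-1$, while the tail on $\{|z|>1\}$ must be estimated at first order to avoid a divergent $|z|^2$ integral. Collecting the three estimates yields \eqref{D9}.
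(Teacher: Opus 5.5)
Your proposal is correct and follows the paper's proof essentially step by step. Both use the same decomposition: the drift error (handled as in \eqref{G3}); a second-order term on $\{|z|\le 1\}$ whose contribution from the annulus $\{g_r<|z|\le 1\}$ equals exactly $\frac12 c_*\omega_d\,\mathrm{trace}(\nabla^2\overline P_{\!\! s,t}^{\vv} f)$ because $\varphi_r' g_r^2\log(1/g_r)=1$; a third-order remainder of order $\varphi_r' g_r^2$; and a first-order bound on $\{|z|>1\}$ of order $c_*d\omega_d\varphi_r' g_r^2\|\nabla \overline P_{\!\! s,t}^{\vv} f\|_\infty$. The only cosmetic difference is that the paper rescales $z\mapsto g_r z$ before expanding.

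Two minor remarks. First, the rotational symmetry $a(z)=a(|z|)$ is not assumed in the $\alpha=2$ setting, but you do not need it: the $a$-part of the second-order term can be bounded directly by $\tfrac12\varphi_r' g_r^2\|\nabla^2\overline P_{\!\! s,t}^{\vv} f\|_\infty\int_{\{|z|\le 1\}}|z|^2a(z)\,\d z$. Second, in your tail check the prefactor after substituting $z=g_r w$ is $c_*\varphi_r' g_r$ rather than $\varphi_r' g_r^2$; the value $c_*d\omega_d\varphi_r' g_r^2$ you state for $\int_{\{|z|>1\}}|z|\,\nu_r(\d z)$ is nonetheless correct.
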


\begin{proof}
Below, we stipulate  $0< r\le s< t $, $x\in\R^d$, $f\in C_c^\infty(\R^d)$ and $\vv\in(0,1]$.
Note that
\begin{align*}
(\mathscr L_r- \overline{\mathscr L}^\vv )(\overline P_{\!\! s,t}^\vv  f)(x)=&\<\nn (\overline P_{\!\! s,t}^\vv f)(x), b (r,x)-\bar b^\vv(x)\>+\Lambda^\vv (r,s,t,x)\\ &-\frac{1}{2}c_* \omega_d\mbox{trace}\big(\nn^2 (\overline P_{\!\! s,t}^\vv  f)(x)\big),
\end{align*}
where the quantity $\Lambda^\vv(r,s,t,x)$ is defined as in \eqref{F1} with    $\nu_r(\d z)$   therein being   given in \eqref{F3-}.

By applying Taylor's expansion and invoking variable substitution, we derive that
\begin{align*}
\Lambda^\vv(r,s,t,x)
&=\varphi_r'g_r^{-d}\bigg(\int_{\{|z|\le 1\}}\big((\overline P_{\!\! s,t}^\vv  f)(x+ z)-(\bar P_{s,t}^\vv  f)(x)-\<\nn (\overline P_{\!\! s,t}^\vv  f)(x), z\> \big)\, \nu_r^*(\d z)\\
&\quad\quad\quad\quad\quad+ \int_{\{|z|>1\}}\big((\overline P_{\!\! s,t}^\vv  f)(x+ z)-(\overline P_{\!\! s,t}^\vv  f)(x) \big) \,\nu_r^*(\d z)\bigg)\\
&=  \frac{1}{2}\varphi_r'g_r^{ 2} \int_{\{|z|\le 1/g_r\}}\<\nn^2(\overline P_{\!\! s,t}^\vv  f)(x),z\otimes z\>_{\rm HS} \,\nu( \d z)\\
&\quad  +\varphi_r'g_r^{ 3}\int_{\{|z|\le 1/{g_r}\}}\int_0^1\int_0^u\int_0^\theta  \nn_z\nn_z\nn_z (\overline P_{\!\! s,t}^\vv  f)(x+\rho g_rz)
\,\d \rho\,\d \theta\,\d u \, \nu (\d z)\\
&\quad   + c_*\varphi_r'g_r\int_{\{|z|>1/g_r\}}\int_0^1\<\nn (\overline P_{\!\! s,t}^\vv  f)(x+ \theta g_rz), z\> |z|^{-(d+2)} \,\d \theta \,\d z \\
&=:\Lambda_1^\vv(r,s,t,x)+\Lambda_2^\vv(r,s,t,x)+\Lambda_3^\vv(r,s,t,x),
\end{align*}
where
\begin{align*}
\nu_r^*(\d z):= \left( a( z/{g_r})\I_{\{|z/{g_r}|\le1\}}+c_*| z/{g_r}|^{-(d+2)} \I_{\{| z/{g_r}|>1\}} \right)\d z.
\end{align*}
With  the aid of  \eqref{D8} with $\theta_1=0$, it follows from the symmetry of $a(z)$ that
\begin{align*}
\bigg|\int_{\{|z|\le 1\}}\<\nn^2(\overline P_{\!\! s,t}^\vv f)(x),z\otimes z\>_{\rm HS} a(z) \,\d z\bigg|&=\frac{c^\star}{d}\big|\mbox{trace}(\nn^2(\overline P_{\!\! s,t}^\vv f)(x))\big| \\
&\le 2c^\star C_0(\|  f\|_\8+\|\nn f\|_\8+\|\nn^2 f\|_\8),
\end{align*}
where $c^\star:=\int_{\{|z|\le1\}}|z|^2a(z)\,\d z<\infty$, and in the inequality we used the fact that for $A\in\R^d\otimes\R^d,$
\begin{align*}
|\mbox{trace}(A)|=\Big|\sum_{i=1}^d\<e_i,Ae_i\>\Big|\le d\|A\|_{\rm op}
\end{align*}
with $(e_i)_{1\le i\le d}$ being the orthogonal basis in $\R^d$.
Next, due to
$\varphi_r'g_r^2\log g_r=-1$, we derive that
\begin{align*}
\varphi_r'g_r^2\int_{\{1<|z|\le 1/g_r\}}\<\nn^2(\overline P_{\!\! s,t}^\vv  f)(x),z\otimes z\>_{\rm HS} \nu( \d z)
&=c_*\omega_d\mbox{trace}( \nn^2 (\overline P_{\!\! s,t}^\vv  f)(x)).
\end{align*}
Whereafter, we arrive at the following estimate:
\begin{align*}
 \Big|\Lambda_1^\vv(r,s,t,x)-\frac{1}{2}c_*\omega_d\mbox{trace}( \nn^2 (\overline P_{\!\! s,t}^\vv  f)(x))\Big|
 \le c^\star\varphi_r'g_r^{ 2}C_0(\|f\|_\8+\|\nn f\|_\8+\|\nn^2 f\|_\8).
\end{align*}
Next, we derive  from \eqref{D8} with $\theta_2=0$  that
\begin{align*}
\Lambda_{2}^\vv(r,s,t,x)&\le\frac{1}{3} C_0\varphi_r'g_r^{ 3}
\sum_{i=0}^3 \|\nn^i f\|_\8 \int_{\{|z|\le 1/g_r\}} |z|^3\nu(\d z)\\
&\le \frac{1}{3}C_0 \varphi_r'g_r^{ 3}\sum_{i=0}^3 \|\nn^i f\|_\8\left(c^\star+ c_*d\omega_d  (1/{g_r}-1)\right).
\end{align*}
In addition, by means of the first assumption  in \eqref{D8}, we obtain that
\begin{align*}
|\Lambda_3^\vv(r,s,t,x)|&=c_*\varphi_r'g_r\bigg|\int_{\{|z|>1/g_r\}}\int_0^1\<\nn (\bar P_{s,t}^\vv  f)(x+ \theta g_rz), z\>\frac{1}{|z|^{d+2}}\d \theta \,\d z\bigg|\\
&\le  c_*d\omega_d \varphi_r'g_r^2C_0\|\nn f\|_\8.
\end{align*}
Finally, based on the estimates of  $\Lambda_1^\vv ,\Lambda_2^\vv ,\Lambda_3^\vv $, as well as the fact \eqref{G3}, we conclude that the  assertion \eqref{D9} follows.
\end{proof}

With the help  of Lemma \ref{lemma7}, along with $\varphi_t'g_t^2\log g_t=-1$ for all $t\ge0$,
we have the following statement, which is parallel to Proposition \ref{pro}.

\begin{proposition}\label{pro-}
Assume that $({\bf A})$ and  Assumptions {\rm(i)-(\rm iv)} in Theorem $\ref{Th4.1}$  hold, where \eqref{WW-38} therein is replaced by \eqref{D8} with $\theta_1=\theta_2=0$.
Then, there exists a  constant  $C_{**} >0$ such that for any $t>0$, $s\ge0$ and $f\in C_c^\infty(\R^d)$,
$$
\big|(\mu_s \bar {P_t}  )( f)-\mu_{s+t} (f)\big|
 \le  C_{**}  \sum_{i=0}^3 \|\nn^if\|_\8   \int_{s }^{s+t} \Big(\phi(u) -  \frac{1}{\ln g_u}  \Big) \,\d u,
$$
where  $\phi(\cdot)$ is introduced in $({\bf A} )$.
\end{proposition}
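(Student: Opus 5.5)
The proof will follow Proposition \ref{pro} exactly: Lemma \ref{lemma7} plays the role of Lemma \ref{lemma5}, and we feed it into Theorem \ref{Th4.1}.

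\emph{Step 1: the generator bound.} Lemma \ref{lemma7} applies, since $({\bf A})$ and \eqref{D8} with $\theta_1=\theta_2=0$ are assumed. It gives, for $0<r\le s<t$,
\[
\big|(\mathscr L_r-\overline{\mathscr L}^\vv)(\overline P^\vv_{\!\! s,t}f)(x)\big|\le C_*\big(\phi(r)V(x)+\varphi_r'g_r^2+|\bar b^\vv(x)-\bar b(x)|\big)\sum_{i=0}^3\|\nn^if\|_\8 .
\]
The normalization $\varphi_r'g_r^2\log g_r=-1$ turns the middle term into $\varphi_r'g_r^2=-1/\ln g_r$. Since $g$ is decreasing with values in $(0,1)$, this quantity is positive and non-increasing, and it tends to $0$.

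\emph{Step 2: matching the form of \eqref{WW-38}.} Set $\Phi(r):=\phi(r)-1/\ln g_r$ and $\mathcal V:=V$. Because $V\ge1$, we have
\[
\phi(r)V(x)-\frac{1}{\ln g_r}\le \Big(\phi(r)-\frac{1}{\ln g_r}\Big)V(x)=\Phi(r)\mathcal V(x).
\]
So \eqref{WW-38} holds with constant $C_*$. Theorem \ref{Th4.1} asks for $\Phi$ to be non-increasing, so I assume $\phi$ is non-increasing, as in all the intended applications; otherwise I would replace $\phi$ by $\sup_{u\ge r}\phi(u)$. The remaining hypotheses of Theorem \ref{Th4.1} are given: (i) and (iv) directly, \eqref{WW-37} directly, and (iii) with $\Theta(x)=\sup_t\E V(Y^x_t)$. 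No polynomial weight is needed here because $\theta_1=\theta_2=0$.

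\emph{Step 3: conclude.} Apply \eqref{D19-}. It yields $|(\mu_s\overline P_{\!\! t})(f)-\mu_{s+t}(f)|\le C_{**}\sum_{i=0}^3\|\nn^if\|_\8\int_s^{s+t}\Phi(u)\,\d u$, which is exactly the claim with $C_{**}=C_*\cdot C_0\sup_r\mu_r(\Theta)$.

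The only delicate point is Step 2, namely absorbing the $V$-free term $\varphi_r'g_r^2$ into $\Phi\mathcal V$ and checking that $\Phi$ is monotone. Both are handled by $V\ge1$ and by the monotonicity of $-1/\ln g_r$.
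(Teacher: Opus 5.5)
Your proposal is correct and matches the paper's argument: Lemma \ref{lemma7} together with $\varphi_r'g_r^2=-1/\ln g_r$ yields \eqref{WW-38} with $\Phi=\phi-1/\ln g$ and $\mathcal V=V$ (the $V$-free term is absorbed using $V\ge1$), and Theorem \ref{Th4.1} then gives the bound. One refinement: the non-increasing hypothesis on $\Phi$ in Theorem \ref{Th4.1} is never used in its proof, which needs only $\Phi\ge0$ and $\int_{s_k}^{s_{k+1}}\int_{s_k}^{u}\Phi(r)\,\d r\,\d u\le \frac{t}{n}\int_{s_k}^{s_{k+1}}\Phi(r)\,\d r$, so you can keep $\phi$ itself rather than passing to $\sup_{u\ge r}\phi(u)$, which would give a weaker bound than the one stated.
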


Now, we are in the position to present the
\begin{proof}[Proof of Theorem $\ref{thm6}$]
 For the present setting (i.e., the stable index $\alpha=2$),  examinations of
assumptions  \eqref{WW-37}, (\rm iii) and (\rm iv) in Theorem \ref{Th4.1} can be carried out analogously to those
 in the proof of Theorem   \ref{thm5-},  so we herein omit the corresponding  details.  Based on this, in the subsequent proof we mainly focus on the parts that differ from the proof of Theorem   \ref{thm5-}.

For the  case  $\beta\ge (1+\gamma)/2$,
by  taking
\begin{align}\label{G1}
g_t=\e^{-(t+1)/2}\quad\mbox{ and } \quad \varphi_t=1+2\int_1^{1+t} \frac{\e^u}{u} \,\d u,\quad \forall\, t\ge0,
\end{align}
the associated  process $(Y_t)_{t\ge0}:=(g_tX_{\varphi_t})_{t\ge0}$ solves the SDE: for all $t>0,$
\begin{align}\label{G13}
\d Y_t=
\Big(-  \frac{1}{2}Y_t+\frac{2\lambda\e^{ \frac{1}{2}(1+\gamma )(1+t)}Y_t(g_t^2+|Y_t|^2)^{(\gamma-1)/2}}{(1+t)(1+\varphi_t)^{\beta}}\Big)\,\d t+\d Z_t .
\end{align}
As for the case $1=\beta<(1+\gamma)/2,$  we choose
\begin{align}\label{P21}
g_t=(\e+t)^{-\gamma_0}\quad \mbox{ and } \quad \varphi_t=1+\frac{1}{
\gamma_0}\int_\e^{\e+t}\frac{ u^{2\gamma_0}}{ \ln u}\,\d u,\quad \forall\, t\ge 0,
\end{align}
where $\gamma_0:= 1/(\gamma-1)$.
Whence, the corresponding $(Y_t)_{t\ge0}:=(g_tX_{\varphi_t})_{t\ge0}$ satisfies the following SDE: for all $t>0, $
\begin{align}\label{P26}
\d Y_t=\Big(-\frac{\gamma_0 Y_t}{\e+t} +\frac{\lambda(\e+t)^{\gamma_0(1+\gamma)}Y_t(g_t^2+|Y_t|^2)^{(\gamma-1)/2}}{
\gamma_0(1+\varphi_t)^{\beta}\ln (\e+t)}  \Big)\,\d t+\d Z_t.
\end{align}

First of all, for   $(\nu_t(\d z))_{t\ge0}$ defined  in \eqref{F3-},
we  claim that for all $p\in(0,2)$,
 \begin{align}\label{P22}
\sup_{t\ge0
}\bigg(\int_{\{|z|\le1\}}|z|^2\,\nu_t(\d z)\bigg)+\sup_{t\ge0}\bigg(\int_{\{|z|>1\}}|z|^p\,\nu_t(\d z)\bigg)<\infty.
 \end{align}
 In what follows, we assume that functions $(g_t)_{t\ge0}$ and $(\varphi_t)_{t\ge0}$ are given by either \eqref{G1} or \eqref{P21}, and set     $(\mu_t)_{t\ge0}$ to be the law of $(Y_t)_{t\ge0}$ determined by the SDE   \eqref{G13} or the SDE \eqref{P26}, where the associated  initial distribution is
 $\mathscr{L}_{Y_0}=\mu_0=\mu\in\mathscr P(\R^d)$.
It is ready to see that  $g_t^2\varphi_t'\ln g_t=-1$. Thus, due to $-1/{\ln g_t}\le  (1/{\gamma_0})\vee 2 $ for all $t\ge0$,
 we find that for all $t\ge0$,
\begin{align*}
\int_{\{|z|\le1\}}|z|^2\nu_t(\d z) =g_t^2\varphi_t'\int_{\{|z|\le 1\}}|z|^2 a(z) \,\d z+c_*d\omega_d \le ((1/{\gamma_0})\vee 2)\int_{\{|z|\le 1\}}|z|^2 a(z) \,\d z+c_*d\omega_d,
\end{align*}
and that  for any $p\in(0,2)$ and $t\ge0$,
\begin{align*}
\int_{\{|z|>1\}}|z|^p\nu_t(\d z) =\frac{c_*d\omega_dg_t^2\varphi_t' }{ 2-p  }\le \frac{((1/{\gamma_0})\vee 2)c_*d\omega_d  }{  2-p  }.
\end{align*}
Accordingly,  \eqref{P22} follows immediately.

\ \

For case (I): $\beta>(1+\gamma)/2$,
 we take
\begin{align*}
b(t,x)=-  x/2  +\frac{2\lambda \e^{ \frac{1}{2}(1+\gamma )(1+t)}x(g_t^2+|x|^2)^{(\gamma-1)/2}}{(1+t)(1+\varphi_t)^{\beta}}\quad \mbox{ and } \quad  \bar b(x)=-   x/2,\quad \forall\,t\ge0,\,x\in\R^d,
\end{align*}
so that
for all $t\ge0$ and $x\in\R^d,$
\begin{align*}
|b (t,x)-\bar b(x)| \le  \phi_1(t)(1+|x|^{\gamma})\quad \mbox{ with } \quad \phi_1(t):=\frac{2|\lambda| \e^{ \frac{1}{2}(1+\gamma )(1+t)} }{(1+t)(1+\varphi_t)^{\beta}}
\end{align*}
and
\begin{align}\label{KL-6}
\<x,b(t,x)\>\le  - \frac{1}{2}  |x|^2+\phi_1(t)(1+|x|^2)^{(\gamma+1)/2}\I_{\{\lambda>0\}}.
\end{align}
A direct calculation shows that $\int_0^t\frac{\e^{1+s}}{1+s}\,\d s\sim\frac{\e^{1+t}}{1+t}$ as $t\to\infty$ (here and in what follows $f(t)\sim g(t)$ means that $\lim_{t\to\infty} f(t)/g(t)=1$) so  there exist some constants $c_1 >0$ and $c_2>1$ such that for all $t\ge0,$
\begin{align}\label{KL-8}
 c_1\e^{1+t}/{(1+t)}\le \varphi_t\le  c_2\e^{1+t}/{(1+t)},
\end{align}
and
\begin{align}\label{KL-7}
c_1\e^{-(\beta-(1+\gamma)/2)(1+t)}(1+t)^{\beta-1}\le  \phi_1  (t)\le c_2\e^{-(\beta-(1+\gamma)/2)(1+t)}(1+t)^{\beta-1}.
\end{align}
Thus, \eqref{KL-7}, along with \eqref{P22}, \eqref{KL-6} and  Proposition \ref{lem3}, enables us to derive that, for any $p\in(0,2)$ and $\beta>(1+\gamma)/2$,
there is a constant $c_3:=c_3(p)$
such that
\begin{align}\label{G5}
\sup_{t\ge0}\E|Y_t|^p\le c_3(1+\E|Y_0|^p).
\end{align}
Subsequently, we deduce from Proposition \ref{pro-} that there exists a constant $c_4>0$
 (which is  dependent linearly  on $\mu(|\cdot|^{\gamma})$)
 such that for any $t,s>0$ and $f\in C_c^\infty(\R^d)$,
 $$
 \big|\mu_{s+t} (f)-(\mu_s \overline {P}_{\!\! t}  )( f)\big|
 \le  c_4   \sum_{i=1}^3 \|\nn^if\|_\8   \int_{s }^{s+t} \Big( \frac{(1+u)^{\beta-1} }{ \e^{  (\beta-(1+\gamma)/2) u }}+\frac{2}{u+1}\Big)\,\d u ,
 $$
where $(\overline {P}_{\!\! t})_{t\ge0} $ is the semigroup of $(\overline{Y}_{\!\! t})_{t\ge0}$ solved by the first SDE in \eqref{e:limit} with $\eta=2.$ The previous estimate, along with the triangle inequality,  obviously implies that for all $t\ge\e$,
\begin{equation}\label{D13}
\begin{split}
	{\rm d}_{\mathcal F_c} (\bar\mu_{h_t},\pi )&\le {\rm d}_{\mathcal F_c}\Big(\bar\mu_{h_t},\bar\mu_{t}\overline {P }_{\!\! \varphi_{h_t}^{-1}-\varphi_{t}^{-1}} \Big)+{\rm d}_{\mathcal F_c}\Big( \bar\mu_{t}\overline {P }_{\!\! \varphi_{  h_t}^{-1}-\varphi_{t}^{-1}} ,\pi\Big)\\
&\le c_4    \int_{\varphi_{t}^{-1}}^{ \varphi_{h_t}^{-1}} \Big( \frac{(1+u)^{\beta-1} }{ \e^{  (\beta-(1+\gamma)/2) u }}+\frac{2}{u+1}\Big)\,\d u +{\rm d}_{\mathcal F_c}\Big( \bar\mu_{t}\overline {P }_{\!\! \varphi_{h_t}^{-1}-\varphi_{t}^{-1}} ,\pi\Big),
\end{split}
\end{equation}
where $\bar\mu_t$ is the law of $g_{\varphi_t^{-1}}X_t$, and $h_t:=t(1+t^{\phi(t)}) $ for $\phi(t):=\frac{3\ln\ln t}{\ln t}$.
In particular, we have that  for all $t\ge\e,$
\begin{align}\label{D24}
\ln t\le \ln h_t\le \ln 2+(1+\phi(t))\ln t \le(2+\phi(t))\ln t\le 5\ln t
\end{align}
by taking advantage of $\phi(t)\in[0,3]$.

From \eqref{KL-8}, there exists a constant $t_1\ge\e$ such that for all $t\ge t_1,$
\begin{align}\label{D0}
\ln t\le \varphi_t^{-1}\le \ln t+2\ln\ln t.
\end{align}
This yields that for all $t\ge t_1,$
\begin{align}\label{D23}
\varphi_{h_t}^{-1}-\varphi_{t}^{-1}&\ge \ln (t(1+t^{\phi(t)} ))-(\ln t+2\ln\ln t) \ge \phi(t)\ln t-2\ln\ln t=\ln\ln t
\end{align}
by making use of the definition of $\phi(t)$,  and that for some constant $c_5>0$ and all $t\ge t_1,$
\begin{align}\label{D10}
\varphi_{h_t}^{-1}-\varphi_{t}^{-1}\le c_5(1+\ln t+ \ln\ln t).
\end{align}
By means of \eqref{D0}, there exist constants $c_6 >0$ and $t_2\ge t_1$ such that for all $t\ge t_2,$
\begin{equation}\label{D15}
\begin{split}
\int_{\varphi_{t}^{-1}}^{ \varphi_{h_t }^{-1}}\frac{1}{u+1}\,\d u\le\ln \varphi_{ h_t }^{-1} -\ln\varphi_{t}^{-1}
&\le\ln\left(\frac{ \ln h_t +2\ln\ln h_t}{\ln t}\right)\\
&\le\ln\left(1+\phi(t)+\frac{  c_6\ln\ln t}{\ln t}\right) \le  \frac{(c_6+3)\ln\ln t}{\ln t},
\end{split}
\end{equation}
where in the second inequality we employed \eqref{D24}.
Due to \eqref{D0} and \eqref{D10},
we find that for some constant $c_7>0$ and all $t\ge t_2,$
\begin{equation}\label{D20}
\begin{split}
&\int_{\varphi_{t}^{-1}}^{ \varphi_{h_t}^{-1}} \big( (1+u)^{\beta-1}  \e^{- (\beta-(1+\gamma)/2) u }\big)\,\d u\\
&\le \big(\I_{\{\beta\le1\}} +(1+\varphi_{ h_t }^{-1})^{\beta-1}\I_{\{\beta>1\}}\big)\big(\varphi_{ h_t}^{-1}-\varphi_{t}^{-1}\big)\e^{- (\beta-(1+\gamma)/2)\varphi_{t}^{-1}}\\
&\le c_7(\ln t)^{1\vee \beta}t^{- (\beta-(1+\gamma)/2) }.
\end{split}
\end{equation}
Furthermore, note from \eqref{D23} that there exists a  constant  $c_8 >0$
 (which is linearly dependent on $\mu(|\cdot|)$ and $\pi(|\cdot|)$)
 such that for all $t\ge t_1,$
\begin{align}\label{D22}
{\rm d}_{\mathcal F_c}\Big( \bar\mu_{t}\overline {P }_{\!\! \varphi_{h_t}^{-1}-\varphi_{t}^{-1}} ,\pi\Big)
\le \mathcal W_1\Big( \bar\mu_{t}\overline {P }_{\!\! \varphi_{h_t}^{-1}-\varphi_{t}^{-1}} ,\pi\Big)
\le c_8 \e^{-\frac{1}{2}(\varphi_{h_t}^{-1}-\varphi_{t}^{-1} )}\le c_8(\ln t)^{- 1/2}.
\end{align}
Now, plugging  \eqref{D15}, \eqref{D20} as well as \eqref{D22} back into \eqref{D13} yields that for all $t\ge t_2$ and some
constant $c_9>0$,
\begin{equation}\label{D21}
\begin{split}
	{\rm d}_{\mathcal F_c} (\bar\mu_{h_t},\pi )
 \le c_9   \bigg(\frac{\ln\ln t}{\ln t} +(\ln t)^{1\vee \beta}t^{- (\beta-(1+\gamma)/2) }+(\ln t)^{-
1/2}\bigg).
\end{split}
\end{equation}
This immediately yields the assertion \eqref{G4}.

\ \

For case (II): $\beta=(1+\gamma)/2$ with $\gamma=1$, we choose
\begin{align*}
	b(t,x)=-  x/2  +\lambda\phi_2(t)x\quad \mbox{ and } \quad  \bar b(x)=(-   1/2+\lambda)x,\quad \forall\,t\ge0,\,x\in\R^d,
\end{align*}
where $\phi_2(t):=\frac{2  \e^{  (1+t)} }{(1+t)(1+\varphi_t) }$.
It follows that for all $t\ge0$ and $x\in\R^d,$
\begin{align*}
\<x,b(t,x)\>\le -(1/2-\lambda)|x|^2+ |\lambda|\cdot  |\phi_2(t)-1|\cdot|x|^2.
\end{align*}
This, together with \eqref{P22} and  $\phi_2(t)\to1$ as $t\to\infty$, implies that
\eqref{G5} still holds true.
Applying twice the integration by parts formula yields that for any $t\ge 0,$
\begin{align*}
\varphi_t=1-4\e+\frac{2\e^{1+t}}{1+t} +\frac{2\e^{1+t}}{(1+t)^2}+4\int_0^t\frac{\e^{1+s}}{(1+s)^3}\,\d s.
\end{align*}
This leads to the following identity: for any $t\ge0, $
 \begin{align*}
\phi_2(t)  -1  =-\frac{1/{ (1+t) }+\Lambda_t}{ 1 + 1/{ (1+t) }+\Lambda_t}
\end{align*}
with
 \begin{align*}
 \Lambda_t:=\frac{1+t}{ \e^{1+t}}\bigg(1-2\e+2\int_0^t\frac{\e^{1+s}}{(1+s)^3}\,\d s \bigg).
 \end{align*}
Next, there exist constants $c_0,t_0>0$ such that
$
 0\le \Lambda_t\le c_0/{(1+t)^2}
$
for all $t\ge t_0 $. Accordingly, we have that for all $t\ge t_0,$
\begin{align*}
\int_{\varphi_t^{-1}}^{\varphi_{h_t}^{-1}}|\phi_2(u) -1|\,\d u
 \le (1+c_0)\big(\ln \varphi_{ h_t }^{-1} -\ln\varphi_{t}^{-1}\big),
\end{align*}
 where $h_t$ was defined as in \eqref{D13}.  Moreover, \eqref{D22} still holds true with   $
 1/2-\lambda$
 in place of $1/2$. Correspondingly, regarding case (II), the assertion \eqref{G4} is available by applying  Proposition \ref{pro-} once more,
 taking \eqref{D15} into account, and noting that for all $t\ge0$ and $x\in\R^d,$
 \begin{align*}
|b(t,x)-\bar b(x)|\le |\lambda|\cdot  |\phi_2(t)-1|\cdot|x|.
 \end{align*}

\ \

In the following analysis, we consider   case (III): $1=\beta<(1+\gamma)/2$ to verify \eqref{G4}. In this case, the corresponding $(Y_t)_{t\ge0}$ is determined by the SDE \eqref{P26}. Accordingly, we can take
\begin{align*}
\bar b(x)=\lambda
 (1+2  \gamma_0 )
 x|x|^{\gamma-1}\quad \, \mbox{ and }\, \quad b(t,x)=- \frac{\lambda_0 x}{\e+t} +\lambda\phi_3(t)x(g_t^2+|x|^2)^{(\gamma-1)/2},
\end{align*}
where
$  \phi_3(t):=\frac{(\e+t)^{\gamma_0(1+\gamma)}}{
	\gamma_0(1+\varphi_t) \ln (\e+t)}$
with  $(\varphi_t)_{t\ge0}$ being  defined in \eqref{P21}.
By invoking  $\gamma\in(1,2)$,  we find that
  for all $t\ge0$ and $x\in\R^d,$
\begin{align*}
\<x,b (t,x)\>\le - \gamma_0|x|^2 /{(\e+t)}  +\lambda(1+2\gamma_0)|x|^{\gamma+1}+|\lambda|\cdot|\phi_3(t)-(1+2\gamma_0)|(1+|x|^2)^{\gamma/2},
\end{align*}
 where $\phi_3(t)\to1+2\gamma_0$
 as $t\rightarrow\infty $ by noting from  $\gamma_0(1+\gamma)=2\gamma_0+1$ that
\begin{align} \label{WPPP}
	\int_\e^{\e+t}\frac{s^{2\gamma_0}}{\ln s}\,\d s\sim\frac{ (\e+t)^{2\gamma_0+1}}{ (2\gamma_0+1)\ln(\e+t) }\quad \mbox{ as } t\to\infty.
\end{align}
 Subsequently, \eqref{P22} and  $\phi_3(t)\to1+2\gamma_0$ as $t\to\infty$ enable us to deduce that
\eqref{G5} is still valid for $(Y_t)_{t\ge0}$ solving the SDE \eqref{P26}. Furthermore,
notice from $\gamma\in(1,2)$ that for all $t\ge0$ and $x\in\R^d,$
\begin{align*}
	|b(t,x)-\bar b(x)| \le \frac{\lambda_0 |x|}{\e+t}+|\lambda|\cdot|\phi_3(t)-(1+2\gamma_0)|(1+|x|^2)^{\frac{\gamma}{2}}+|\lambda|(1+2\gamma_0)g_t|x|.
\end{align*}
Whereafter,  by applying Proposition \ref{pro-}, there exists a constant $c_1>0$
 (which  depends linearly   on $\mu(|\cdot|)$ and $\mu(|\cdot|^{\gamma})$)
 such that for all $t\ge0,$
 \begin{equation*}
\begin{split}
	{\rm d}_{\mathcal F_c} (\bar\mu_{h_t},\pi )&\le {\rm d}_{\mathcal F_c}\Big(\bar\mu_{h_t},\bar\mu_{t}\overline {P }_{\!\! \varphi_{h_t}^{-1}-\varphi_{t}^{-1}} \Big)+{\rm d}_{\mathcal F_c}\Big( \bar\mu_{t}\overline {P }_{\!\! \varphi_{  h_t}^{-1}-\varphi_{t}^{-1}} ,\pi\Big)\\
&\le c_1 \int_{\varphi_{t}^{-1}}^{ \varphi_{h_t}^{-1}}\Big( |\phi_3(u)- (1+2\lambda_0)
 |+\frac{1}{\ln(\e+u)}\Big)\,\d u +{\rm d}_{\mathcal F_c}\Big( \bar\mu_{t}\overline {P }_{\!\! \varphi_{h_t}^{-1}-\varphi_{t}^{-1}} ,\pi\Big),
\end{split}
\end{equation*}
where $h_t:=t(1+  \phi (t))$ with $\phi (t):= (\ln t)^{ {1}/{2}}/{\varphi_t^{-1}}$.

By applying twice  the integration by parts formula, we have that for  all $t\ge0,$
 \begin{align*}
 	\varphi_t&=1+\frac{
 (\e+t)^{2\gamma_0+1}}{
 \gamma_0(1+2\gamma_0)\ln(\e+t)}+\frac{ (\e+t)^{2\gamma_0+1}}{
 \gamma_0(1+2\gamma_0)^2(\ln(\e+t))^2}\\
 &\quad+\frac{2}{
 \lambda_0(1+2\gamma_0)^2}\int_\e^{\e+t}\frac{u^{2\gamma_0}}{(\ln u)^3}\,\d u-\frac{\e^{2\gamma_0+1}}{
 \gamma_0(1+2\gamma_0)}-\frac{\e^{2\gamma_0+1}}{
 \gamma_0(1+2\gamma_0)^2}.
 \end{align*}
 The identity above, together with $\lambda_0(1+\gamma)=2\lambda_0+1$, yields that for all $t\ge0,$
 \begin{align*}
 	\phi_3(t)-(1+2\gamma_0)&=
 	(1+2\gamma_0)\Big(\frac{\phi_3(t)}{1+2\gamma_0}-1\Big)\\
 	& =-(1+2\gamma_0)\times\frac{1/{((2\gamma_0+1) \ln(\e+t)) }+\Lambda_t^*}{1+1/{((2\gamma_0+1) \ln(\e+t)) }+\Lambda_t^*},
 \end{align*}
 where
 \begin{align*}
 	\Lambda_t^*:=\frac{
 \gamma_0(1+2\gamma_0)\ln(\e+t)}{(\e+t)^{2\gamma_0+1}}\bigg(\frac{2}{
 \gamma_0(1+2\gamma_0)^2}\int_\e^{\e+t}\frac{u^{2\gamma_0}}{(\ln u)^3}\,\d u
 	+2-\frac{\e^{2\gamma_0+1}}{
\gamma_0(1+2\gamma_0)}-\frac{\e^{2\gamma_0+1}}{
 \gamma_0(1+2\gamma_0)^2}\bigg).
 \end{align*}
As a consequence, there exists a  constant  $c_2>0$ such that for all  $t\ge0,$
\begin{align*}
 |\phi_3(t)-
 (1+2\gamma_0)
 |\le \frac{c_2}{\ln(\e+t)}.
\end{align*}
The previous estimates  further give   that there is a constant $c_3>0$ such that for all $t\ge0$,
 \begin{equation*}
\begin{split}
	{\rm d}_{\mathcal F_c} (\bar\mu_{h_t},\pi )
&\le c_3 \int_{\varphi_{t}^{-1}}^{ \varphi_{h_t}^{-1}} \frac{1}{\ln(\e+u)} \,\d u +{\rm d}_{\mathcal F_c}\Big( \bar\mu_{t}\overline {P }_{\!\! \varphi_{h_t}^{-1}-\varphi_{t}^{-1}} ,\pi\Big)\\
&\le \frac{c_3(\varphi_{h_t}^{-1}-\varphi_{t}^{-1} )}{\ln(\e+\varphi_{t}^{-1})} +{\rm d}_{\mathcal F_c}\Big( \bar\mu_{t}\overline {P }_{\!\! \varphi_{h_t}^{-1}-\varphi_{t}^{-1}} ,\pi\Big).
\end{split}
\end{equation*}
Next, by means of \eqref{WPPP},  there exist constants $c_4,c_5,t_0>0$ such that for all $t\ge t_0,$
\begin{align*}
\frac{c_4\ln\varphi_t^{-1}}{(\varphi_t^{-1})^{2\gamma_0}}\le (\varphi_t^{-1})'\le\frac{c_5\ln\varphi_t^{-1}}{(\varphi_t^{-1})^{2\gamma_0}}\quad \mbox{ and } \quad  c_4 t\ln t \le (\varphi_t^{-1})^{1+2\gamma_0}\le c_5 t\ln t.
\end{align*}
Consequently,   by invoking $h_t-t=t \phi (t) $ and $\phi (t) = (\ln t)^{ {1}/{2}}/{\varphi_t^{-1}}$,
there are constants $c_6,c_7,t_1>0$ such that for all $t\ge t_1,$
 \begin{align*}
 \varphi_{h_t}^{-1}-\varphi_{t}^{-1}\ge \frac{c_6\ln \varphi_t^{-1} }{( \varphi_t^{-1})^{2\gamma_0}}\times t\phi(t) \ge \frac{c_6  \phi(t)\varphi_t^{-1}\ln \varphi_t^{-1} }{c_5 \ln t }\ge \frac{ c_6\phi(t)\varphi_t^{-1}}{c_5(1+2\gamma_0) }=\frac{ c_6(\ln t)^{\frac{1}{2}}}{c_5(1+2\gamma_0) },
 \end{align*}
and
\begin{align*}
 \frac{ \varphi_{h_t}^{-1}-\varphi_{t}^{-1}  }{\ln(\e+\varphi_{t}^{-1})}\le \frac{c_7\ln\varphi_t^{-1}}{(\varphi_t^{-1})^{2\gamma_0}}\times t\phi(t)\times\frac{1}{\ln\varphi_t^{-1}}\le  \frac{c_7\phi(t)\varphi_t^{-1}}{c_4 \ln t}=\frac{c_7 }{c_4 (\ln t)^{\frac{1}{2}}}.
 \end{align*}
Furthermore, for $\gamma\in(1,2)$,
 we have that
  for some constants $c_8,c_9,c_{10}>0$ (which are linearly dependent on $\mu(|\cdot|)$ and $\pi(|\cdot|)$) and $\lambda^\star>0$,
 \begin{align*}
{\rm d}_{\mathcal F_c}\Big( \bar\mu_{t}\overline {P }_{\!\! \varphi_{h_t}^{-1}-\varphi_{t}^{-1}} ,\pi\Big)\le c_8\e^{-\lambda^\star(\varphi_{h_t}^{-1}-\varphi_{t}^{-1})}\le c_9\e^{-c_{10}(\ln t)^{ {1}/{2}}}\le\frac{c_9}{c_{10}\e(\ln t)^{ {1}/{2}}},
\end{align*}
 where
 the first inequality follows from \cite[Theorem 1.3]{LW16} (or \cite[Corollary 2.3]{Ebe}), and in the last inequality we used the fact that $\max_{r\ge0}(r\e^{-cr})\le1/(c\e)$ for given $c>0.$
At length, concerning case (III), the assertion \eqref{G4}
 is verifiable based on the preceding analysis.
\end{proof}

To conclude this paper, we make a  comment  regarding Theorem \ref{thm6}.

\begin{remark}\label{R:7.3}
Under the corresponding assumptions imposed in Remark \ref{rema},  Theorem \ref{thm6} can also be generalized  to the setting in which $F$ under consideration need not satisfy the requirement given in $({\bf H}_2)$. Since the  spirit is similar,  we do not provide further details  in the present work.

\end{remark}

\ \

\noindent {\bf Acknowledgements.}\,\,
The research of Jianhai Bao is supported by the National Key R\&D Program of China (2022YFA1006004) and the National Natural Science Foundation  of China (No. 12531007).
The research of Jian Wang is supported by the National Key R\&D Program of China (2022YFA1006003) and the National Natural Science Foundation  of China (Nos. 12225104 and 12531007).

\end{document}